\let\OLDthebibliography\thebibliography
\renewcommand\thebibliography[1]{
	\OLDthebibliography{#1}
	\setlength{\parskip}{0pt}
	\setlength{\itemsep}{2pt} 
}
\theoremstyle{definition}
\newtheorem{df}{Definition}[section]
\newtheorem{eg}[df]{Example}
\newtheorem{rem}[df]{Remark}
\newtheorem{ass}[df]{Assumption}
\newtheorem{cv}[df]{Convention}
\theoremstyle{plain}
\newtheorem{thm}[df]{Theorem}
\newtheorem{pp}[df]{Proposition}
\newtheorem{co}[df]{Corollary}
\newtheorem{lm}[df]{Lemma}
\newcommand{\fk}{\mathfrak}
\newcommand{\mc}{\mathcal}
\newcommand{\wtd}{\widetilde}
\newcommand{\wht}{\widehat}
\newcommand{\wch}{\widecheck}
\newcommand{\ovl}{\overline}
\newcommand{\tr}{\mathrm{t}} 
\newcommand{\End}{\mathrm{End}} 
\newcommand{\id}{\mathbf{1}}
\newcommand{\Res}{\mathrm{Res}}
\newcommand{\Span}{\mathrm{Span}}
\newcommand{\bk}[1]{\langle {#1}\rangle}
\newcommand{\scr}{\mathscr}
\newcommand{\yk}{\mathfrak y}
\newcommand{\im}{\mathbf{i}}
\newcommand{\sgm}{\varsigma}
\newcommand{\SX}{S_{\fk X}}
\newcommand{\mbb}{\mathbb}
\newcommand{\blt}{\bullet}
\newcommand{\Vbb}{\mathbb V}
\newcommand{\Wbb}{\mathbb W}
\newcommand{\Mbb}{\mathbb M}
\newcommand{\Gbb}{\mathbb G}
\newcommand{\Cbb}{\mathbb C}
\newcommand{\Nbb}{\mathbb N}
\newcommand{\Zbb}{\mathbb Z}
\newcommand{\Pbb}{\mathbb P}
\newcommand{\Ebb}{\mathbb E}
\newcommand{\cbf}{\mathbf c}
\newcommand{\wt}{\mathrm{wt}}
\newcommand{\btl}{\blacktriangleleft}
\newcommand{\btr}{\blacktriangleright}
\newcommand{\svir}{\mathcal V\!\mathit{ir}}
\newcommand{\Ker}{\mathrm{Ker}}
\newcommand{\Sbf}{\mathbf{S}}
\newcommand{\Lss}{{L_{0,\mathrm{s}}}}
\newcommand{\Lni}{{L_{0,\mathrm{n}}}}
\numberwithin{equation}{section}
\title{Convergence of Sewing Conformal Blocks}
\author{{\sc Bin Gui}
}
\date{}
\begin{document}\sloppy 
	\pagenumbering{arabic}

	\maketitle



\tableofcontents

	







\newpage

\begin{abstract}
In a recent work \cite{DGT19b}, Damiolini-Gibney-Tarasca showed that for a $C_2$-cofinite rational vertex operator algebra $\Vbb$, sheaves of conformal blocks are locally free and satisfy the factorization property. In this article, we  prove that if $\Vbb$ is $C_2$-cofinite, the sewing of conformal blocks is convergent. This proves a conjecture proposed by Zhu \cite{Zhu94} and Huang \cite{Hua16}, generalizing previous results on the convergence of products, iterates, and traces (but not pseudo-traces) of vertex operators and intertwining operators in \cite{Zhu96,Hua95,Hua05b,HLZ11} and on the convergence of projective factors related to the central charge of the Virasoro algebra in \cite{Hua97}.
\end{abstract}

\section*{Introduction}

In conformal field theory (CFT), conformal blocks are linear functionals defined for $N$-pointed compact Riemann surfaces, together with a vertex operator algebra (VOA) $\Vbb$ and $\Vbb$-modules $\Wbb_1,\dots,\Wbb_N$. One can sew a (possibly disconnected) $N$-pointed compact Riemann surface along pairs of points (with local coordinates) to obtain a new pointed compact Riemann surface with possibly higher genus or more marked points. Corresponding to this geometric sewing construction, one can also sew conformal blocks by taking contractions \cite{Seg88,Vafa87,TK88,TUY89,BFM91,Zhu94,Hua97,Hua05a,Hua05b,NT05,Hua16,DGT19b}. In this article, we prove that for $\Vbb$ satisfying natural conditions,  sewing conformal blocks is convergent, solving a conjecture  proposed in \cite[Conj. 8.1]{Zhu94} and \cite[Problem 2.2]{Hua16}. Our result provides a necessary step for the construction of higher genus rational (Euclidean) CFT in the sense of Segal \cite{Seg88}. We hope it will also benefit other approaches to Euclidean CFT, such as the functional analytic one in \cite{Ten17,Ten19a,Ten19b,Ten19c}.

Conformal blocks were first studied by physicists \cite{BPZ84,FS87,MS89}. In mathematics, conformal blocks were defined and explored by Tsuchiya-Kanie \cite{TK88} for  Weiss-Zumino-Witten (WZW) models and genus $0$ Riemann surfaces, and were generalized by Tsuchiya-Ueno-Yamada to stable curves of all genera in \cite{TUY89}. For minimal models, conformal blocks over any stable curve was studied by  Beilinson-Feigein-Mazur \cite{BFM91}. In particular, factorization was proved for these models in \cite{TUY89,BFM91}.   A definition of conformal blocks for quasi-primarily generated VOAs was given by Zhu in \cite{Zhu94}, and was generalized to any ($\Nbb$-graded) VOA on families of compact Riemann surfaces by Frenkel and Ben-Zvi in \cite{FB04}. In \cite{DGT19a,DGT19b}, Damiolini-Gibney-Tarasca defined conformal blocks for VOAs associated to (algebraic) families of stable curves, and showed that when $\Vbb$ is $C_2$-cofinite and rational,  sheaves of conformal blocks are vector bundles with projectively flat connections, and proved the factorization property and a (formal) sewing theorem. Their results generalize those of Tsuchiya-Ueno-Yamada \cite{TUY89} for WZW models, those of Nagatomo-Tsuchiya \cite{NT05} for genus $0$ curves, and part of the results of Huang for genus $0$ and $1$ curves \cite{Hua95,Hua05a,Hua05b}. In \cite{DGT19c}, the authors proved Chern characters of these bundles give cohomological field theories, and called CFT-type $C_2$-cofinite rational VOAs of CohFT-type. In the conformal net approach to CFT, conformal blocks were defined and investigated by Bartels-Douglas-Henriques \cite{BDH17}.

\subsection*{Sewing conjecture}

Conformal blocks are the chiral halves of correlation functions of full CFT. Assume throughout this article that $\Vbb$ is a positive-energy vertex operator algebra (VOA), i.e. the $L_0$-grading on $\Vbb$ is $\Vbb=\bigoplus_{n\in\Nbb}\Vbb(n)$ where each $\Vbb(n)$ is finite-dimensional. For an $N$-pointed  compact Riemann surface $\fk X=(C;x_1,\dots,x_N)$ where  $x_1,\dots,x_N$ are distinct points of a compact Riemann surface $C$\footnote{We do not assume $C$ to be connected},  one associates to these points $\Vbb$-modules $\Wbb_1,\dots,\Wbb_N$. Then a conformal block $\uppsi$ is a  linear functional on $\Wbb_\blt=\Wbb_1\otimes\cdots\otimes\Wbb_N$ ``invariant" under the actions of certain global sections  related to $\Vbb$. If one has a family of $N$-pointed compact Riemann surfaces parametrized by a base manifold $\mc B$, then one allows $\uppsi$ to vary holomorphically over $\mc B$. The vertex operator $Y(\cdot,z)$ can be regarded as a conformal block associated to $\Pbb^1$ with distinct points $0,z,\infty$ and modules $\Vbb,\Vbb,\Vbb'$ (where $\Vbb'$ is the contragredient module of $\Vbb$). More generally, if $\Wbb_1,\Wbb_2,\Wbb_3$ are $\Vbb$-modules, an intertwining operator $\mc Y(\cdot,z)$ of type $\Wbb_3\choose\Wbb_1\Wbb_2$ (as defined in \cite{FHL93}) corresponds to a conformal block associated to $(\Pbb^1,0,z,\infty)$ and $\Wbb_1,\Wbb_2,\Wbb_3'$.

A fundamental problem in rational CFT is to prove the convergence of sewing conformal blocks.  Suppose  we have an $(N+2)$-pointed compact Riemann surface
\begin{align*}
\wtd{\fk X}=(\wtd C;x_1,\dots,x_N,x',x'').
\end{align*}
Then we can sew $\wtd{\fk X}$ along the pair of points $x',x''$ to obtain another Riemann surface with possibly higher genus. More precisely, we choose $\xi,\varpi$ to be local coordinates of $\wtd C$ at $x',x''$. Namely, they are univalent  (i.e. holomorphic and injective) functions defined respectively in neighborhoods $U'\ni x',U''\ni x''$ satisfying $\xi(x')=0,\varpi(x'')=0$. For each $r>0$ we let $\mc D_r=\{z\in\Cbb:|z|<r\}$ and $\mc D_r^\times=\mc D_r-\{0\}$. We choose $r,\rho>0$ so that the neighborhoods $U',U''$ can be chosen to satisfy that $\xi(U')=\mc D_r$ and $\varpi(U'')=\mc D_\rho$, that  $U'\cap U''=\emptyset$, and that none of $x_1,\dots,x_N$ is in $U'$ or $U''$. Then, for each $q\in\mc D_{r\rho}^\times$, we remove the closed subdiscs of $U',U''$ determined respectively by $|\xi|\leq \frac {|q|}\rho$ and $|\varpi|\leq \frac{|q|}r$, and glue the remaining part using the relation $\xi\varpi=q$. Then we obtain an $N$-pointed compact Riemann surface
\begin{align*}
\fk X_q=(\mc C_q;x_1,\dots,x_N)
\end{align*}
which clearly depends on $\xi$ and $\varpi$.

Now, if we associate finitely-generated $\Vbb$-modules $\Wbb_1,\dots,\Wbb_N,\Mbb,\Mbb'$ (where $\Mbb'$ is the contragredient (i.e. dual) module of $\Mbb$) to $x_1,\dots,x_N,x',x''$, and choose a conformal block $\uppsi$ associated to $\wtd{\fk X}$ and these $\Vbb$-modules, then its \textbf{sewing} $\mc S\uppsi$ is an $\Wbb_\blt^*=(\Wbb_1\otimes\cdots\Wbb_N)^*$-valued formal series of $q$ defined by sending each $w_\blt=w_1\otimes\cdots\otimes w_N\in\Wbb_\blt$ to
\begin{align*}
\mc S\uppsi(w_\blt)=\uppsi(w_\blt\otimes q^{L_0}\btr\otimes\btl)\quad\in\Cbb\{q\}[\log q]
\end{align*}
where $\btr\otimes\btl$ is the element of the ``algebraic completion" of $\Mbb\otimes\Mbb'$ corresponding to the identity element of $\End_\Cbb(\Mbb)$, and $L_0$ is the zero mode of the Virasoro operators $\{L_n:n\in\Zbb\}$. (Cf. \cite{Seg88,Vafa87,TUY89,Hua97,DGT19b}.) The \textbf{sewing conjecture}, as proposed in \cite[Conj. 8.1]{Zhu94} and \cite[Problem 2.2]{Hua16}, says that if $\Vbb$ satisfies nice properties (such as the $C_2$-cofiniteness), then $\mc S\uppsi(w_\blt)$ converges absolutely to a (possibly) multivalued function on $\mc D_{r\rho}^\times$. Moreover, for each $q\in\mc D_{r\rho}^\times$, $\mc S\uppsi(\cdot,q)$ defines a conformal block associated to $\fk X_q$ and $\Wbb_1,\dots,\Wbb_N$. If we sew $\wtd C$ along $n$ pairs of points, and if we let  $x_1,\dots,x_N$ and  $\wtd C$ and $\uppsi$    vary and be parametrized holomorphically by variables $\tau_\blt=(\tau_1,\dots,\tau_m)$, then  sewing  conformal blocks is also absolutely  convergent with respect to $q_1,\dots,q_n$ and (locally) uniform with respect to $\tau_\blt$. 

In this article, we give a complete proof of the sewing conjecture (see Section \ref{lb56} for the main result), which complements the results of \cite{DGT19b} on local freeness and factorization of sheaves of conformal blocks, and provides a necessary step of constructing rational conformal field theories on arbitrary (families of) compact Riemann surfaces. We remark that a sewing theorem (Thm. 8.5.1) was proved in \cite{DGT19b}. In that theorem, one treats the (infinitesimal) formal disc $\mathrm{Spec}\mbb C[[q]]$ instead of the analytic disc $\mc D_{r\rho}$, which is sufficient for application  in the algebraic category.  In particular, the convergence of sewing is not needed and not proved in \cite{DGT19b}. In the analytic category, which is the one we are working in, the convergence is necessary. (Also, note  that $\mc D_{r\rho}$ is not well-defined in the algebraic category.) 

Historically, the sewing conjecture was proved in some special cases. Our result is general in the following aspects:
\begin{enumerate}[label=(\alph*)]
\item We consider any $\Nbb$-graded $C_2$-cofinite VOA $\Vbb$ and any finitely-generated (admissible) $\Vbb$-module.
\item We consider  compact $N$-pointed Riemann surfaces of all genera.
\item We consider any (analytic) coordinates $\xi,\varpi$ at $x',x''$, and prove the convergence on $\mc D_{r\rho}$ whenever $r,\rho$ satisfy the previously described conditions. (Namely, we  prove the convergence not only when $q$ is small.)
\item We consider sewing along several pairs of points, and allow $\wtd{\fk X}$ and $\uppsi$ to be parametrized holomorphically by some $\tau_\blt$. 
\end{enumerate}
To our knowledge, no previous results have covered all these aspects. Nevertheless, even those partial results have played extremely important roles in the development of a rigorous mathematical theory of conformal field theory. For instance: convergence in the genus $0$ case is necessary for the statement of braiding and operator product expansions (fusion) of intertwining operators \cite{TK88,Hua05a}; convergence of self-sewing a $3$-pointed $\Pbb^1$ (which leads to a $1$-pointed elliptic curve) is necessary for the statement of modular invariance of VOA characters \cite{Zhu96}; convergence of sewing a general $N$-pointed $\Pbb^1$ is necessary for the proof of Verlinde conjecture and the rigidity and modularity of the tensor category of VOA modules \cite{Hua05b,Hua08a,Hua08b}.

\subsection*{History of the proof of convergence}

In \cite{TK88}, Tsuchiya and Kanie proved for type $A_1$ WZW models the convergence of the  products of intertwining operators, i.e. the convergence of sewing conformal blocks of such VOAs from (possibly disconnected) genus $0$ to genus $0$ Riemann surfaces. Their method applies directly to any WZW model. The local coordinates $\xi,\varpi$ in their result are $z,z^{-1}$ at $0,\infty$.  Their method is to show that the formal series $\mc S\uppsi$ satisfies a differential equation (the Knizhnik–Zamolodchikov equation) with simple pole at $q=0$, and the coefficients of the differential equation is a (matrix-valued) analytic function of $q$ and $\tau_\blt$ (parametrizing $\wtd{\fk X}$ and $\uppsi$). Any result about convergence proved after \cite{TK88}, including ours, follows this pattern. The difficulty is, of course,  to find such differential equations. 

In \cite{TUY89}, Tsuchiya-Ueno-Yamada showed that for all WZW-models and all compact Riemann surfaces, there exist local coordinates $\xi,\varpi$ at $x',x''$ (which are called $z,w$ and described in \cite[Lemma 6.1.2]{TUY89} and \cite[Lemma 5.3.1]{Ueno97}) such that such differential equations exist for small $q$. This leads immediately to the convergence of $\mc S\uppsi$ under those conditions, which was later explicitly claimed in \cite[Thm. 5.3.4]{Ueno97}. Generalizing their result  to any $\xi,\varpi$ is not  straightforward.

In \cite{Zhu96}, Zhu proved the convergence for any CFT-type $C_2$-cofinite VOA and ordinary $\Vbb$-modules,  for  self-sewing an $(N+2)$-pointed $\Pbb^1$ along $0,\infty$ with respect to local coordinates $z,z^{-1}$ to an $N$-pointed elliptic curve,  assuming that the $N$  $\Vbb$-modules $\Wbb_1,\dots,\Wbb_N$ not associated $0,\infty$ are the vacuum module $\Vbb$.  Its generalization to any $\Vbb$-modules $\Wbb_1,\dots,\Wbb_N$ is non-trivial and proved by Huang in \cite{Hua05b}. When  $\Wbb_1,\dots,\Wbb_N$ are grading-restricted generalized $\Vbb$-modules  (equivalently, finitely-generated admissible $\Vbb$-modules \cite{Hua09}), the convergence was proved by Fiordalisi in \cite{Fio16} involving pseudo-traces. (Note that our results do not cover pseudo-traces.)  Meanwhile, for $C_2$-cofinite VOAs and ordinary $\Vbb$-modules, for sewing several pointed $\Pbb^1$ to a pointed  $\Pbb^1$, and assuming the local coordinates at sewing points are $z$ or $z^{-1}$, Huang proved in \cite{Hua05a} the convergence of sewing conformal blocks for any $\Vbb$-modules. This result, with the help of \cite{Hua98}, can be generalized to any local coordinates $\xi,\varpi$ at sewing points. When $\Wbb_1,\dots,\Wbb_N$ are grading-restricted generalized $\Vbb$-modules, the convergence is due to Huang-Lepowski-Zhang \cite{HLZ11}.  The solution of sewing conjecture is more or less complete in genus $0$. When $\Vbb$ is holomorphic, the convergence of sewing with respect to suitable local coordinates was proved by \cite{Cod19}.

\subsection*{Idea of the proof}

Our proof of convergence is  motivated by \cite{DGT19b}. To explain the idea, we assume for simplicity that the $\Vbb$-modules are semisimple. It was shown in \cite{DGT19b} that by sewing a conformal block $\uppsi$, we get a formal conformal block $\mc S\uppsi$ as a formal series of $q$ (Thm. 8.5.1)  annihilated by $\nabla_{q\partial_q}$ (Rem. 8.3.3) where $\nabla$ is a connection of the sheaf of conformal blocks on the infinitesimal disk $\mathrm{Spec}(\Cbb[[q]])$ defined in \cite{DGT19a}. The definition of $\nabla$ is unique up to a projective term. In other words, if we choose $\nabla$ to be an (analytic) connection defined on the analytic disc $\mc D_{r\rho}$, then $\mc S\uppsi$ is annihilated by $\nabla_{q\partial_q}$ plus a projective term $f$ which  is a priori only a formal power series of $q$. A key step of proving the convergence of $\mc S\uppsi$ is to show that $f$ converges. Then, using a finiteness theorem \ref{lb21} analogous to \cite[Thm. 8.4.2]{DGT19b}, we obtain the desired differential equation.

It turns out that the connection $\nabla$  is determined by a (relative) projective structure $\fk P$ on $\wtd{\fk X}$. Moreover,   when $\xi,\varpi$ belong to $\fk P$, the projective term $f$ equals $0$. Thus, for a chosen $\fk P$ and the corresponding $\nabla$, if we assume $\xi,\varpi$ belong to $\fk P$, then $\nabla_{q\partial_q}\mc S\uppsi=0$, which will provide the differential equation.\footnote{This observation is due to Liang Kong and Hao Zheng.} The vanishing of $f$ is due to that of the Schwarzian derivatives between local coordinates belonging to $\fk P$. In general, one cannot expect that $\xi,\varpi$ belong to the same projective structure. To resolve this issue, we fix a projective structure $\fk P$, and find an explicit formula of $f$ in terms of $\fk P,\xi,\varpi$, and the other local coordinates. That formula (see \eqref{eq91}) shows that $f$ is analytic.

\subsection*{Outline}

To carry out the above ideas, we  first define and study some basic properties of sheaves of conformal blocks on complex curves and analytic families of curves. This is achieved in Sections 3 and 6. To prepare for this task, we first review Huang's change of coordinate formulas \cite{Hua97} in Section 1. This formula is used to define sheaves of VOAs on curves and families of curves in Sections 2 and 5. In Section 4, we follow \cite[Sec. 6.1]{TUY89} and give a precise description of how to sew a compact Riemann surface and form a family of curves over the disc $\mc D_{r\rho}$. In fact, we describe the simultaneous sewing for a family of compact Riemann surfaces over a complex manifold $\wtd{\mc B}$, which yield a family of complex curve over $\mc B:=\wtd{\mc B}\times\mc D_{r\rho}$. 

In Section 7, we prove a finiteness theorem which will turn the relation $(\nabla_{q\partial_q}+f)\mc S\uppsi=0$ into a differential equation with analytic coefficients (provided that $f$ converges) and simple poles at $q=0$. Section 8 recalls some basic facts about Schwarzian derivatives, and Section 9 prepares for the calculation of the projective term $f$. In Section 10, we give a proof that $\mc S\uppsi$ is a formal conformal block using an argument similar to but slightly different from the one in \cite{DGT19b}. Then, in Section 11,  we prove the convergence of sewing conformal blocks associated to a family $\wtd{\fk X}$ of compact Riemann surfaces along a pair of (families of) points. This result is generalized to sewing along several pairs of points in Section 13. In particular, our convergence theorem in the most general form is given in Theorem \ref{lb55} of that section. In Section 12, we show that the sewing map $\uppsi\mapsto\mc S\uppsi$ (defined in a suitable and natural way) is injective, assuming $\Vbb$ is $C_2$-cofinite and the modules for sewing are semisimple.  If $\Vbb$ is also CFT-type and rational, then this map is also bijective due to the factorization property proved in \cite{DGT19b}.  Using these results, we give an analytic version of the sewing theorem (Thm. 8.5.1) of \cite{DGT19b}.

We would like to point out that although many ideas are common in the analytical and algebraic settings, there are some subtle differences that will lead to different proof strategies. See Remark \ref{lb54} for instance.

\subsection*{Acknowledgment}

I would like to thank Chiara Damiolini, Angela Gibney, Yi-Zhi Huang, Liang Kong, Robert McRae, Nicola Tarasca, Chuanhao Wei, Shilin Yu, Hao Zheng for helpful discussions. I am grateful to Xiaojun Wu for answering my many questions in complex geometry. Section 12 is motivated by a  \href{https://mathoverflow.net/questions/300460/linear-independence-of-genus-one-correlation-functions/321362#321362}{Mathoverflow question}  asked by Andr\'e Henriques. The author is partially supported by an AMS-Simons travel grant.

\section{Change of coordinates}\label{lb58}

Throughout this article, we let $\Nbb=\{0,1,2,\dots\}$ and $\Zbb_+=\{1,2,3,\dots\}$. Also, $\Cbb^\times=\Cbb-\{0\}$. If $W$ is a vector space and $z$ is a (formal) variable, we \index{z@$[[z]],[[z^{\pm 1}]],((z)),\{z\}$} define 
\begin{gather*}
W[[z]]=\bigg\{\sum_{n\in\mathbb N}w_nz^n:\text{each }w_n\in W\bigg\},\\
W[[z^{\pm 1}]]=\bigg\{\sum_{n\in\mathbb Z}w_nz^n:\text{each }w_n\in W\bigg\},\\
W((z))=\Big\{f(z):z^kf(z)\in W[[z]]\text{ for some }k\in\mbb Z \Big\},\\
W\{z\}=\Big\{\sum_{n\in\mbb C}w_nz^n :\text{each $w_n\in W$}\Big\}.
\end{gather*}
$W[\log z]\{z\}$ and $W\{z\}[\log z]$ are understood in the obvious way by treating  $z$ and $\log z$ as two unrelated formal variables. In particular, they are subspaces of $W\{\log z,z\}$. On   $W\{\log z,z\}$, we define a linear operator $\partial_z$ by (choosing $w\in W$)
\begin{align}
\partial_z(w\cdot z^k(\log z)^l)=w\cdot(kz^{k-1}(\log z)^l+lz^{k-1}(\log z)^{l-1}). \label{eq101}	
\end{align}
Clearly $z\partial_z$ preserves the subspaces $W[\log z]\{z\}$ and $W\{z\}[\log z]$.

Let $\Vbb$ be a vertex operator algebra (VOA for short) in the sense of \cite{FHL93}. We let $\id$ \index{1@$\id$} and $\cbf$ \index{c@$\cbf$} be respectively the vacuum vector and the conformal vector of $\Vbb$. For each $u\in\Vbb$, we write the vertex operator as $Y(v,z)=\sum_{n\in\Zbb}Y(v)_nz^{-n-1}$ where each $Y(v)_n\in\End(\Vbb)$. Then $\{L_n=Y(\cbf)_{n+1} \}$ are the Virasoro operators with a central charge  $c\in\Cbb$. We write $\Vbb=\bigoplus_{n\in\Zbb}\Vbb(n)$ where $\Vbb(n)$ is the eigenspace of $L_0$ with eigenvalue $n$. We write $\wt(v)=n$ if $v\in\Vbb(n)$. We assume $L_0$ has no negative eigenvalues (i.e., $\Vbb=\oplus_{n\in\Nbb}\Vbb(n)$), and each $\Vbb(n)$ is finite dimensional.

In this article,  a \textbf{$\Vbb$-module} $\Wbb$ (with vertex operators $Y_\Mbb(v,z)=\sum_{n\in\Zbb}Y_\Mbb(v)_nz^{-n-1}$ and Virasoro operators $L_n=Y_\Mbb(\cbf)_{n+1}$) means a \textbf{finitely-admissible $\Vbb$-module}. This means that $\Wbb$ is a weak $\Vbb$-module in the sense of \cite{DLM97}, that $\Wbb$ is equipped with a diagonalizable operator $\wtd L_0$ satisfying $[\wtd L_0,Y_\Wbb(v)_n]=Y_\Wbb(L_0 v)_n-(n+1)Y_\Wbb(v)_n$, \index{L0@$\wtd L_0$}  that the eigenvalues of $\wtd L_0$ are in $\Nbb$, and that each eigenspace $\Wbb(n)$ is finite-dimensional. (Without the last finite-dimension condition, $\Wbb$ is an admissible (i.e. $\Nbb$-gradable) module in the usual sense.) Let \index{W@$\Wbb(n),\Wbb_{(n)}$}
\begin{align*}
\Wbb=\bigoplus_{n\in\Nbb}\Wbb(n)	
\end{align*}
be the grading given by $\wtd L_0$, then \begin{align}
Y_\Wbb(v)_m\Wbb(n)\subset \Wbb(n+\wt(v)-m-1).\label{eq40}
\end{align}

In \eqref{eq40}, by taking $v=\cbf$, we see that  each $\Wbb(n)$ is $L_0$-invariant, or equivalently, $[\wtd L_0,L_0]=0$. We can decompose $L_0|_{\Wbb(n)}$ and hence $L_0$ into mutually commuting the semi-simple part and nilpotent part $L_0=\Lss+\Lni$.  \index{L0@$\Lss,\Lni$}  $\wtd L_0$ clearly commutes with $\Lss$ and $\Lni$. We let
\begin{align*}
\Wbb=\bigoplus_{n\in\Cbb}\Wbb_{[n]}	
\end{align*}
be the $\Lss$-grading of $\Wbb$. Then we have (cf. \cite[Prop. 2.19]{HLZ14})
\begin{align}
	Y_\Wbb(v)_m\Wbb_{[n]}\subset \Wbb_{[n+\wt(v)-m-1]},
\end{align}
or equivalently, $[\Lss,Y_\Wbb(v)_n]=Y_\Wbb(L_0 v)_n-(n+1)Y_\Wbb(v)_n$. By Jacobi identity, the same relation holds when $L_{0,s}$ is replaced by $L_0$. So both $\wtd L_0-\Lss$ and $\Lni=L_0-\Lss$ commute with the action of $\Vbb$ on $\Wbb$. Thus, we see that if $\Wbb$ is finitely generated, then there is $K\in\Nbb$ such that $\Lni^kw=0$ for all $w\in\Wbb$.

We say that $\Wbb$ is \textbf{finitely $\Lss$-semisimple} if there exists a finite set $E\subset\Cbb$ such that 
\begin{align*}
\Wbb=\bigoplus_{n\in E+\Nbb}\Wbb_{[n]},\qquad \dim\Wbb_{[n]}<+\infty.
\end{align*}
In this case, we can choose $E$ such that any two elements of $E$ do not differ by an integer. Then for each $\alpha\in E$, $\Wbb_\alpha=\bigoplus_{n\in\alpha+\Nbb}\Wbb_{[n]}$ is a submodule of $\Wbb$. Such $\Wbb_\alpha$ is called \textbf{$\Lss$-simple}. Thus, every finitely $\Lss$-semisimple $\Vbb$-module is a finite direct sum of $\Lss$-simple ones. It is clear that $\Wbb$ is $\Lss$-simple if and only if  $\wtd L_0-\Lss$ is a constant.

A vector $w\in\Wbb$ is called $\wtd L_0$- (resp. $\Lss$-) homogeneous with weight $n$ if $w\in\Wbb(n)$ (resp. $w\in\Wbb_{[n]}$). In this case, we write $\wtd\wt(w)=n$ (resp. $\wt(w)=n$). \index{wt@$\wt,\wtd\wt$} Note that the $\wtd L_0$-weights are natural numbers but the $\Lss$-weights are not necessarily.  

Warning: For each finitely-admissible $\Vbb$-module $\Wbb$, the choice of $\wtd L_0$ is not unique. We fix $\wtd L_0$ for each $\Wbb$ obeying the following rule:

\begin{cv}\label{lb1}
For the vacuum module $\Vbb$, we choose $\wtd L_0$ to be $L_0$. So $\Vbb(n)=\Vbb_{(n)}$. If $\Wbb$ is a $\Vbb$-module with simple $\Lss$-grading, we choose $\wtd L_0$ such that it equals a constant plus $\Lss$. If $\Wbb$ is finitely $\Lss$-semisimple, we choose $\wtd L_0$ such that for each $\alpha\in E$ as  above, $\wtd L_0$ leaves $\Wbb_\alpha$ invariant and equals $\Lss$ plus a constant.
\end{cv}

We now recall the formula for changing coordinates discovered by Huang \cite{Hua97}. To begin with, we let $\scr O_{\Cbb,0}$ be the stalk of the sheaf of holomorphic functions of $\Cbb$ at $0$. Namely, an element in $\scr O_{\Cbb,0}$ is precisely a formal power series $f(z)=\sum_{n\in\Nbb}a_nz^n$ ($a_n\in\Cbb$) converging absolutely in a neighborhood of $0$. We consider the subset $\Gbb$ \index{G@$\Gbb$} of all $\rho\in\scr O_{\Cbb,0}$ satisfying $\rho(0)=0$ and $\rho'(0)\neq 0$. Then $\Gbb$ becomes a group if we define the multiplication of two elements $\rho_1,\rho_2$ to be their composition $\rho_1\circ\rho_2$. The identity element of $\Gbb$ is the standard coordinate $z$ of $\Cbb$.

For each $\rho\in\Gbb$, we can find $c_0,c_1,c_2,\dots\in\Cbb$ such that
\begin{align*}
\boxed{~~\rho(z)=c_0\cdot\exp\Big(\sum_{n>0}c_nz^{n+1}\partial_z \Big)z~~}
\end{align*}
For instance, if we write
\begin{align}
\rho(z)=a_1z+a_2z^2+a_3z^3+\cdots,\label{eq2}
\end{align}
then one has
\begin{gather}
c_0=a_1,\nonumber\\
c_1c_0=a_2,\nonumber\\
c_2c_0+c_1^2c_0=a_3.\nonumber
\end{gather}
In particular, one has
\begin{align*}
c_0=\rho'(0).
\end{align*}
We define \index{U@$\mc U(\rho)$} $\mc U(\rho)\in\End(\Wbb)$ to be
\begin{align}
\boxed{~~\mc U(\rho)=\rho'(0)^{\wtd L_0}\cdot \exp\Big(\sum_{n>0}c_n L_n\Big)~~}\label{eq1}
\end{align}
Notice $a_n=\rho^{(n)}(0)/n!$, we have
\begin{gather}
c_1=\frac 12\frac{\rho''(0)}{\rho'(0)},\nonumber\\
c_2=\frac 16 \frac{\rho'''(0)}{\rho'(0)}-\frac 14\Big(\frac{\rho''(0)}{\rho'(0)}\Big)^2.\label{eq48}
\end{gather}

\begin{rem}
Considering the action of $\mc U(\varrho)$ on $\Wbb$ might be inconvenient since $\Wbb$ is not finite-dimensional. On the other hand, $\Wbb(n)$ might not be preserved by $\mc U(\rho)$. Thus, it would be better to consider $\Wbb^{\leq n}=\bigoplus_{k\leq n}\Wbb(k)$\index{VW@$\Vbb^{\leq n},\Wbb^{\leq n}$} which is finite-dimensional and preserved by $\mc U(\varrho)$.

Since $L_m\Wbb^{\leq n}\subset\Wbb^{\leq n-1}$ when $m>0$, from \eqref{eq1} it is easy to see that for any $w\in\Wbb(n)$,
\begin{align}
\mc U(\rho)w=\rho'(0)^nw~~\mod~~ \Wbb^{n-1}.\label{eq5}
\end{align}
In other words, the action of $\mc U(\rho)$ on $\Wbb^{\leq n}/\Wbb^{\leq n-1}$ is $\rho'(0)^n\id$.
\end{rem}

The following was (essentially) proved in \cite{Hua97} section 4.2:
\begin{thm}
	For each $\Vbb$-module $\Wbb$, $\mc U$ is a representation of $\Gbb$ on $\Wbb$. Namely, we have $\mc U(\rho_1\circ\rho_2)=\mc U(\rho_1)\mc U(\rho_2)$ for each $\rho_1,\rho_2\in\Gbb$.
\end{thm}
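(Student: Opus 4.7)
The plan is to decompose $\Gbb$ as a semidirect product $\Cbb^\times \ltimes \Gbb_+$, where $\Gbb_+ = \{\rho \in \Gbb : \rho'(0) = 1\}$ and $\Cbb^\times$ is embedded as the dilation subgroup via $\sigma_a(z) := az$. Every $\rho \in \Gbb$ factors uniquely as $\rho = \sigma_{\rho'(0)} \circ \rho_+$ with $\rho_+ \in \Gbb_+$, and since $\rho_+ = \rho/\rho'(0)$ has the same Lie-algebra coefficients $(c_n)_{n \geq 1}$ as $\rho$, the definition of $\mc U$ factors accordingly: $\mc U(\rho) = \mc U(\sigma_{\rho'(0)}) \cdot \mc U(\rho_+)$, with $\mc U(\sigma_a) = a^{\wtd L_0}$ and $\mc U(\rho_+) = \exp(\sum_{n > 0} c_n L_n)$. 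The multiplicativity $\mc U(\sigma_a \circ \sigma_b) = \mc U(\sigma_{ab})$ on the scalar subgroup is immediate.

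Next I would establish the conjugation compatibility $\mc U(\sigma_a) \mc U(\rho_+) \mc U(\sigma_a)^{-1} = \mc U(\sigma_a \circ \rho_+ \circ \sigma_a^{-1})$. On the operator side, taking $v = \cbf$ in the defining relation $[\wtd L_0, Y(v)_m] = Y(L_0 v)_m - (m+1) Y(v)_m$ gives $[\wtd L_0, L_n] = -n L_n$, so $a^{\wtd L_0} L_n a^{-\wtd L_0} = a^{-n} L_n$ and hence $a^{\wtd L_0} \exp(\sum_n c_n L_n) a^{-\wtd L_0} = \exp(\sum_n a^{-n} c_n L_n)$. On the geometric side, the flow identity $\sigma_a \circ \phi^X_1 \circ \sigma_a^{-1} = \phi^{(\sigma_a)_* X}_1$, combined with the pushforward calculation $(\sigma_a)_*(z^{n+1}\partial_z) = a^{-n} z^{n+1}\partial_z$, shows that $\sigma_a \circ \rho_+ \circ \sigma_a^{-1}$ has Lie-algebra coefficients $a^{-n} c_n$. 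The two sides therefore agree.

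The substantive step is multiplicativity on $\Gbb_+$: $\mc U(\rho_1 \circ \rho_2) = \mc U(\rho_1) \mc U(\rho_2)$ for $\rho_1, \rho_2 \in \Gbb_+$. This is essentially Huang's theorem in \cite[Sec.~4.2]{Hua97}, whose proof recasts the identity as a formal power-series equation in the coefficients $(c^{(1)}_n, c^{(2)}_n)$ and verifies it via a Baker-Campbell-Hausdorff computation in the positive part of the Virasoro algebra. The crucial inputs are (i) the matching bracket relations $[L_m, L_n] = (m-n) L_{m+n}$ for $m, n > 0$ (the central extension contributes nothing since $m + n \neq 0$) and $[z^{m+1}\partial_z, z^{n+1}\partial_z] = (n-m) z^{m+n+1}\partial_z$, and (ii) the fact that $L_n$ decreases the $\wtd L_0$-grading by $n$, so that $\exp(\sum c_n L_n)$ acts well-definedly on each finite-dimensional piece $\Wbb^{\leq N}$. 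Huang states his result using the VOA operator $L_0$ in place of $\wtd L_0$, but the only property of the grading used is (ii); moreover, $\wtd L_0 - L_0$ commutes with every $L_n$ (by the commutator formula for $\wtd L_0$), so the replacement of $L_0$ by $\wtd L_0$ does not interfere with any of the Lie-theoretic manipulations. Combining this main step with the scalar and conjugation compatibility above yields the full representation property; the principal obstacle, namely the BCH identity on $\Gbb_+$, is already handled in \cite{Hua97}.
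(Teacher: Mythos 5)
Your proposal is correct and takes essentially the same route as the paper, which gives no argument at all beyond the remark that the statement was ``(essentially) proved in [Hua97, Sec.~4.2]'': the substantive Baker--Campbell--Hausdorff identity on the unipotent part is in both cases deferred to Huang. Your semidirect-product reduction and the observation that $\wtd L_0-L_0$ commutes with every $L_n$ (so Huang's argument survives the replacement of $L_0$ by $\wtd L_0$) are exactly the details the paper's word ``essentially'' is eliding, and they check out.
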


\begin{eg}
It is easy to see that $(c_1z^2\partial_z)^nz=n!c_1^nz^{n+1}$. Thus $\exp(c_1z^2\partial_z)z=\sum_{n=0}^{\infty}c_1^nz^{n+1}=z/(1-c_1z)$. For each $\xi\in\Cbb^\times$, we set $\upgamma_\xi\in\Gbb$ \index{zz@$\upgamma_\xi$} to be
\begin{align}
\upgamma_\xi(z)=\frac 1{\xi+z}-\frac 1\xi.
\end{align}
If we set $\alpha(z)=-\xi^{-2}z$, then $\upgamma_\xi(z)=\alpha/(1-\xi \alpha)=\exp(c_1\alpha^2\partial_\alpha)(\alpha)$. Thus, by \eqref{eq1} and that $\mc U$ preserves composition, we obtain
\begin{align}
\mc U(\upgamma_\xi)=e^{\xi L_1}(-\xi^{-2})^{\wtd L_0}.
\end{align}
In particular,
\begin{align}
\mc U(\upgamma_1)=e^{L_1}(-1)^{\wtd L_0}.
\end{align}
It is easy to see $\upgamma_\xi(\xi z)=\xi^{-1}\upgamma_1(z)$. Thus
\begin{align}
\mc U(\upgamma_\xi)\xi^{\wtd L_0}=\xi^{-\wtd L_0}\mc U(\upgamma_1).\label{eq22}
\end{align}
\end{eg}

\begin{rem}\label{lb2}
Let $X$ be a complex manifold and $\rho:X\rightarrow\mbb G,x\mapsto \rho_x$ a function. We say that $\rho$ is a \textbf{holomorphic family} of transformations if for any $x\in X$, there exists an open subset $V\subset X$ containing $x$ and an open subset $U\subset\Cbb$ containing $0$ such that  $(z,y)\in U\times V\mapsto \rho_y(z)$ is a holomorphic function on $U\times V$. Then it is clear that the coefficients $a_1,a_2,\dots$ in \eqref{eq2} depend holomorphically on the parameter $x\in X$. Hence the same is true for $c_0,c_1,c_2,\dots$. Thus, by the formula \eqref{eq1}, for any $w\in\Wbb^{\leq n}$, $x\in X\mapsto \mc U(\rho_x)w$ is a $\Wbb^{\leq n}$-valued holomorphic function on $X$. Thus $\mc U(\rho)$ can be regarded as an isomorphism of  $\scr O_X$-modules 
\begin{align}
\mc U(\rho): \Wbb^{\leq n}\otimes_{\Cbb}\scr O_X\xrightarrow{\simeq}\Wbb^{\leq n}\otimes_{\Cbb}\scr O_X
\end{align}
sending each $\Wbb^{\leq n}$-valued function $w$ to the section $x\mapsto \mc U(\rho_x)w(x)$. \index{U@$\mc U(\rho)$} Its inverse is  $\mc U(\rho^{-1})$.
\end{rem}

If $\Wbb$ is a $\Vbb$-module, then its \textbf{contragredient module} $\Wbb'$ \index{W'@$\Wbb'$} can be describe using $\mc U(\upgamma_z)$. As a vector space,
\begin{align*}
\Wbb'=\bigoplus_{n\in\Cbb}\Wbb{(n)}^*
\end{align*}
where $\Wbb{(n)}^*$ is the dual space of $\Wbb{(n)}$. For each $v\in\Vbb$, the vertex operator $Y_{\Wbb'}(v,z)$ is defined such that if $w\in\Wbb,w'\in\Wbb'$, then, using $\bk{,}$ to denote the natural pairing of $\Wbb$ and $\Wbb'$, we have 
\begin{align}
\bk{Y_{\Wbb'}(v,z)w',w}=\bk{w',Y_\Wbb(\mc U(\upgamma_z)v,z^{-1})w},\label{eq62}
\end{align}
recalling that $\mc U(\upgamma_z)=e^{zL_1}(-z^{-2})^{L_0}$. That  $(\Wbb',Y_{\Wbb'})$ satisfies the definition of a $\Vbb$-module follows from \cite{FHL93}. We have 
\begin{align}
	\wtd L_0^\tr=\wtd L_0,\label{eq99}
\end{align}
i.e., $\bk{\wtd L_0w,w'}=\bk{w,\wtd L_0w'}$ for each $w\in\Wbb,w'\in\Wbb'$. Also, by choosing the $v$ in \eqref{eq62} to be the conformal vector, we see $L_n^\tr=L_{-n}$ and in particular $L_0^\tr=L_0$. So $\Lss^\tr=\Lss,\Lni^\tr=\Lni$.

\section{Sheaves of VOAs on complex curves}\label{lb5}

In this article, for any complex manifold or complex space $X$, we let $\scr O_X$ denote the sheaf (of germs) of holomorphic functions of $X$. So for each open subset $U\subset X$, $\scr O_X(U)$ (written also as $\scr O(U)$ for short) \index{OX@$\scr O_X,\scr O(U)$} is the algebra of holomorphic functions on $U$. For every (sheaf of) $\scr O_X$-modules $\scr E$, recall the usual notation that $\scr E_x$ \index{Ex@$\scr E_x$} is the stalk of $\scr E$ at $x\in X$. The dual $\scr O_X$-module of $\scr E$ is denoted by $\scr E^*$, or $\scr E^{-1}$ when $\scr E$ is a line bundle. For two $\scr O_X$-modules $\scr E,\scr F$, we write their tensor product $\scr E\otimes_{\scr O_X}\scr F$ as $\scr E\otimes\scr F$ for short. If $Y$ is a complex submanifold or complex subspace of $X$, we let $\scr E|Y$ (also written as $\scr E|_Y$) denote the restriction of $\scr E$ to $X$, namely, the pullback of $\scr E$ along the inclusion map $Y\hookrightarrow X$. \index{EY@$\scr E\lvert Y=\scr E\lvert_Y,\scr E\lvert x$} The restriction of a section $s$ of $\scr E$ is denoted by $s|X$ or $s|_X$. In the case $Y$ is a single point $\{x\}$, the restriction $\scr E|x$ can be naturally identified with $\scr E_x/\fk m_x\scr E_x$ where $\fk m_x$ the  ideal of all $f\in\scr O_{X,x}$ vanishing at $x$. If $s$ is a section defined near $x$, we let $s(x)$ be the restriction $s|x$. If we consider $s(x)$ as an element of $\scr E_x/\fk m_x\scr E_x$, then $s(x)$ equals $s_x+\fk m_x\scr E_x$, where $s_x\in\scr E_x$ is the germ of $s$ at $x$.

By a complex curve $C$, we mean  either a compact Riemann surface\footnote{Unless otherwise stated, compact Riemann surfaces are \emph{not} assumed to be connected.} or a (simple) nodal curve. For simplicity, we assume throughout this article that a nodal curve has only one (simple) node. We let $\omega_C$ denote the dualizing sheaf of $C$, which is the sheaf of holomorphic $1$-forms when $C$ is smooth (i.e., a compact Riemann surface). \index{zz@$\Theta_C,\omega_C$} Its dual sheaf is denoted by $\Theta_C=\omega_C^{-1}$, which is the (holomorphic) tangent bundle when $C$ is smooth. In the case that $C$ is nodal, $\scr O_C,\omega_C,\Theta_C$ are described as follows.

Assume $C$ has only one simple node. Then $C$ can be obtained by gluing two distinct points $y',y''$ of a compact Riemann surface $\wtd C$ (the normalization of $C$).\footnote{To simplify the following discussions, when $C$ is smooth, we let $\wtd C$ be $C$ and $\nu$ be the identity map.} The gluing map is denoted by $\nu:\wtd C\rightarrow C$. We identify $\wtd C-\{y',y''\}$ with $C-\{x'\}$ (where $x'=\nu(y')=\nu(y'')$) via $\nu$. Then  $\scr O_C(U),\omega_C(U),\Theta_C(U)$ agree with $\scr O_{\wtd C}(U),\omega_{\wtd C}(U),\Theta_{\wtd C}(U)$ when $x'\neq U$. If $x'\in U$ and $U$ is small enough such that $\nu^{-1}(U)$ is a disjoint union of neighborhoods $V'\ni y',V''\ni y''$, and that there exist univalent functions $\xi\in\scr O(V'),\varpi\in\scr O(V'')$ satisfying $\xi(y')=\varpi(y'')=0$, then $\scr O_C(U)$ consists of all $f\in\scr O_{\wtd C}(V'\cup V'')$ satisfying $f(y')=f(y'')$; $\Theta_C(U)$ is the (free) $\scr O_C(U)$-submodule of $\Theta_{\wtd C}(V'\cup V'')$ generated by the tangent fields whose restrictions to $V'\cup V''$ are 
\begin{align}
\xi\partial_\xi,\qquad \text{resp.}\qquad -\varpi\partial_\varpi;\label{eq9}
\end{align}   
$\omega_C(U)$ is the (free) $\scr O_C(U)$-submodule of $\omega_{\wtd C}(\nu^{-1}(U-\{x'\}))$ generated by
\begin{align*}
\xi^{-1}d\xi,\qquad \text{resp.}\qquad -\varpi^{-1}d\varpi.
\end{align*}  
We refer the reader to \cite[Chap.X]{ACG11} for  basic facts about nodal curves.

\subsection*{Definition of $\scr V_C$}

Let $\Vbb$ be a VOA. The \textbf{sheaf of VOA} $\scr V_C$ on a complex curve $C$ is defined when $C$ is smooth by \cite{FB04} and generalized to (simple) nodal curves by \cite{DGT19a}. Let us recall the definition.\index{VC@$\scr V_C,\scr V_C^{\leq n}$} $\scr V_C$ is defined by the filtration
\begin{align*}
\scr V_C=\varinjlim_{n\in\Nbb}\scr V_C^{\leq n},
\end{align*}
where each $\scr V_C^{\leq n}$ is a  locally free $\scr O_C$-module (i.e., a vector bundle) of rank $\dim\Vbb^{\leq n}$. We need some preparation before we describe $\scr V_C^{\leq n}$.

We first assume $C$ is a smooth curve or the smooth open subset of a nodal curve. Let $U,V$ be open subsets of $C$, equipped with univalent functions $\eta\in\scr O(U),\mu\in\scr O(V)$.   Define a holomorphic family $\varrho(\eta|\mu):U\cap V\rightarrow\Gbb$ \index{zz@$\varrho(\eta\lvert\mu)$} as follows. For any $p\in U\cap V$, $\eta-\eta(p)$ and $\mu-\mu(p)$ are local coordinates at $p$. We set $\varrho(\eta|\mu)_p\in\Gbb$ satisfying
\begin{align}
\eta-\eta(p)=\varrho(\eta|\mu)_p(\mu-\mu(p)).\label{eq3}
\end{align}
Let $z\in\scr O_{\Cbb,0}$ be the standard coordinate. Then, by composing both sides of \eqref{eq3} with $\mu^{-1}(z+\mu(p))$, we find the equivalent formula
\begin{align}
\varrho(\eta|\mu)_p(z)=\eta\circ\mu^{-1}(z+\mu(p))-\eta(p),
\end{align}
which justifies that the family of transformation $\varrho(\eta|\mu)$ is holomorphic. It is also clear that if $\eta_1,\eta_2,\eta_3$ are three local coordinates, then on their common domain the following cocycle condition holds:
\begin{align}
\varrho(\eta_3|\eta_1)=\varrho(\eta_3|\eta_2)\varrho(\eta_2|\eta_1).\label{eq19}
\end{align}
By Remark \ref{lb2}, for each $n\in\Nbb$ we have an isomorphism of $\scr O_{U\cap V}$-modules
\begin{align*}
\mc U(\varrho(\eta|\mu)):\Vbb^{\leq n}\otimes_\Cbb\scr O_{U\cap V}\xrightarrow{\simeq}\Vbb^{\leq n}\otimes_\Cbb\scr O_{U\cap V}.
\end{align*}

The vector bundle $\scr V_C^{\leq n}$ is defined such that its transition functions are given by $\mc U(\varrho(\eta|\mu))$. Thus, for any open subset $U\subset C$ and a univalent $\eta\in\scr O(U)$, we have a trivilization, i.e., an isomorphism of \index{U@$\mc U_\varrho(\eta)$} $\scr O_U$-modules 
\begin{align}\label{eq4}
\mc U_\varrho(\eta):\scr V_C^{\leq n}|_U\xrightarrow{\simeq} \Vbb^{\leq n}\otimes_{\Cbb}\scr O_U.
\end{align}
If $V\subset C$ is also open and  $\mu\in\scr O(V)$ is univalent, then  on $U\cap V$ we have
\begin{align}
\mc U_\varrho(\eta)\mc U_\varrho(\mu)^{-1}=\mc U(\varrho(\eta|\mu)).\label{eq30}
\end{align}
From \eqref{eq5}, we can compute that for any section $v$ of $\Vbb^{\leq n}\otimes_{\Cbb}\scr O_{U\cap V}$,
\begin{align}
\mc U_\varrho(\eta)\mc U_\varrho(\mu)^{-1}\cdot v=(\partial_\mu\eta)^n\cdot v~~\mod ~~\Vbb^{\leq n-1}\otimes_{\Cbb}\scr O_{U\cap V}.\label{eq8}
\end{align}
By comparing the transition functions, we see that $\scr V_C^{\leq n}/\scr V_C^{\leq n-1}$ is naturally equivalent to $\Vbb(n)\otimes_\Cbb\Theta_C^{\otimes n}$ (cf. \cite[Sec. 6.5.9]{FB04}).

We now assume $C$ has a node $x'$. We shall define $\scr V_C^{\leq n}$ to be an $\scr O_C$-submodule of $\scr V^{\leq n}_{C-\{x'\}}$ as follows. Let $\scr V_C^{\leq n}$ be equal to $\scr V^{\leq n}_{C-\{x'\}}$ outside $x'$. To describe $\scr V_C^{\leq n}$ near $x'$, we choose a neighborhood $U$ of $x'$, and choose $y',y'',V',V'',\xi\in\scr O(V'),\varpi\in\scr O(V'')$ as previously. Then $U-\{x'\}$ can be identified with $(V'-\{y'\})\cup (V''-\{y''\})$ via $\nu$. We let $\scr V_C^{\leq n}(U)$ be the $\scr O_C(U)$-submodule of $\scr V_{C-\{x'\}}^{\leq n}(U-\{x'\})$ generated by
\begin{align}
\boxed{~~\mc U_\varrho(\xi)^{-1}\big(\xi^{L_0}v\big)+\mc U_\varrho(\varpi)^{-1}\big(\varpi^{L_0}\mc U(\upgamma_1)v\big)~~}\qquad (\forall v\in\Vbb^{\leq n}).\label{eq6}
\end{align}
To be more precise, \eqref{eq6} defines a section on $(V'-\{y'\})\sqcup (V''-\{y''\})$ which equals $\mc U_\varrho(\xi)^{-1}\big(\xi^{L_0}v\big)$ on $(V'-\{y'\})$ and $\mc U_\varrho(\varpi)^{-1}\big(\varpi^{L_0}\mc U(\upgamma_1)v\big)$ on $V''$. Also, $\xi^{L_0}$ is an element of $\scr O_{\wtd C}(V'-\{y'\})$ acting on the constant section $v\in\Vbb^{\leq n}\subset\Vbb^{\leq n}\otimes_{\Cbb}\scr O_{\wtd C}(V'-\{y'\})$, and $\varpi^{L_0}\mc U(\upgamma_1)v$ is understood in a similar way.  It is easy to see that $\scr V_C^{\leq n}(U)$ is $\scr O_C(U)$-generated freely by \eqref{eq6} for all $v\in E$ where $E$ is any basis of $\Vbb^{\leq n}$ whose elements are homogenous. Since $\upgamma_1=\upgamma_1^{-1}$, $\scr V_C^{\leq n}(U)$ is also generated freely by
\begin{align}
\mc U_\varrho(\xi)^{-1}\big(\xi^{L_0}\mc U(\upgamma_1)v\big)+\mc U_\varrho(\varpi)^{-1}\big(\varpi^{L_0}v\big).
\end{align}
for all $v\in E$.

\begin{pp}\label{lb3}
Let $C$ be a complex curve and $n\in\Nbb$. Then we have the following isomorphism of  $\scr O_C$-modules:
\begin{align}
\scr V_C^{\leq n}/\scr V_C^{\leq n-1}\simeq\Vbb(n)\otimes_{\Cbb}\Theta_C^{\otimes n}.\label{eq7}
\end{align}
Under this isomorphism, if $U\subset C$ is open and smooth, and $\eta\in\scr O(U)$ is univalent, then for any $v\in\Vbb(n)$, $v\otimes \partial_\eta^n$ is identified with the equivalence class of $\mc U_\varrho(\eta)^{-1}v$.
\end{pp}

Cf. \cite[Sec.6.5.9]{FB04} and \cite{DGT19b}.

\begin{proof}
By the transition function \eqref{eq8}, we obtain a surjective $\scr O_{C\setminus\{x'\}}$-module morphism $\Psi:\scr V_{C\setminus\{x'\}}^{\leq n}\rightarrow\Vbb(n)\otimes\Theta_{C\setminus\{x'\}}^{\otimes n}$ sending $\mc U_\varrho(\eta)^{-1}v$ to  $v\otimes \partial_\eta^n$ if $v\in\Vbb(n)$, and to $0$ if $v\in\Vbb^{\leq n-1}$. $\Psi$ has kernel $\scr V_{C\setminus\{x'\}}^{\leq n-1}$. Now let $U$ be a neighborhood of $x'$ as in the setting of \eqref{eq6}. Then $\Psi$ sends \eqref{eq6} to $v\otimes \xi^n\partial_\xi^n|_{V'-\{y'\}}+v\otimes(-\varpi)^n\partial_\varpi^n|_{V''-\{y''\}}$ whenever $v\in\Vbb(n)\cap E$. (Recall that $E$ is a homogeneous basis of $\Vbb^{\leq n}$.) From this and \eqref{eq9} we see that $\Psi$ restricts to a surjective $\scr O_C$-module morphism $\Psi:\scr V_C^{\leq n}\rightarrow\Vbb(n)\otimes\Theta_C^{\otimes n}$ and that $\Ker\Psi(U)$ is $\scr O_C(U)$-generated by  \eqref{eq6} for all $v\in \Vbb^{\leq n-1}\cap E$. Thus $\Psi$ descends to an isomorphism \eqref{eq7}.
\end{proof}

\subsection*{A vanishing theorem}

In the remaining part of this section, we use Proposition \ref{lb3} to prove a vanishing theorem for $\scr V_C^{\leq n}\otimes\omega_C$. By an (analytic) \textbf{local coordinate $\eta$ of $C$ at} a smooth point $x$, we mean  a (holomorphic) univalent function $\eta$ defined on a smooth neighborhood of $x$ satisfying $\eta(x)=0$. By an \textbf{$N$-pointed complex curve with local coordinates}
\begin{align*}
\fk X=(C;x_1,\dots,x_N;\eta_1,\dots,\eta_N)
\end{align*}
we mean a complex curve $C$ together with $N$ distinct smooth points $x_1,x_2,\dots,x_N\in C$ such that each $x_i$ is associated with a local coordinate $\eta_i$ at $x_i$. If local coordinates are not specified, we simply say an $N$-pointed complex curve $\fk X=(C;x_1,\dots,x_N)$. Unless otherwise stated, \emph{we assume that each irreducible component of $C$} (equivalently, each connected component of $\wtd C$) \emph{contains at least one of $x_1,\dots,x_N$}. 

We set \index{SX@$\SX,\SX(b)$}
\begin{align*}
\SX=x_1+x_2+\cdots+x_N,
\end{align*}
which can be considered as a divisor both of $C$ and of $\wtd C$. Note that by Kodaira vanishing theorem, if $C$ is smooth and connected with genus $g$, and $\scr L$ is a line bundle on $C$, then $H^1(C,\scr L\otimes\omega_C)=0$ and (by Serre duality) equivalently  $H^0(C,\scr L^{-1})=0$ whenever $\deg\scr L>0$. Since $\deg\omega_C=2g-2$, we conclude that $H^1(C,\scr L)=0$ whenever $\deg\scr L>2g-2$.

Recall that $\wtd C$ is the normalization of the complex curve $C$. We let $\wtd g$ be the largest genus of the connected components of $\wtd C$. Let $M\in\{0,1\}$ be the number of nodes. Notice the following elementary fact:

\begin{lm}\label{lb4}
Choose any integer $n\geq -1$. Then $H^1(C,\Theta_C^{\otimes n}(k\SX))=0$ whenever  $k>(n+1)(2\wtd g-2)+2M$.
\end{lm}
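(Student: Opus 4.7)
The plan is to treat the smooth case $M=0$ and the nodal case $M=1$ separately, in both cases reducing the vanishing to Serre duality on the normalization $\wtd C$. For $M=0$, there is nothing to reduce: $C=\wtd C$ splits as a disjoint union of smooth connected components $C_j$ of genera $g_j\le\wtd g$, each containing $N_j\ge 1$ of the marked points, and on each $C_j$ the line bundle $\Theta_{C_j}^{\otimes n}(k\SX|_{C_j})$ has degree $n(2-2g_j)+kN_j$. Since $H^1(C,\cdot)=\bigoplus_j H^1(C_j,\cdot|_{C_j})$, the Serre-duality criterion recalled just before the lemma gives the desired vanishing as soon as $kN_j>(n+1)(2g_j-2)$ on every component; using $N_j\ge 1$, $g_j\le\wtd g$, and $n+1\ge 0$, this is immediate from the hypothesis $k>(n+1)(2\wtd g-2)$.

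For the nodal case $M=1$, let $\nu\colon\wtd C\to C$ be the normalization with $\nu^{-1}(x')=\{y',y''\}$. A direct local computation starting from the description of $\Theta_C$ near the node (its free generator $\theta$ equals $\xi\partial_\xi$ on $V'$, which vanishes simply at $y'$ when viewed inside $\Theta_{\wtd C}$, and symmetrically on $V''$) gives the identification $\nu^*\Theta_C^{\otimes n}\cong\Theta_{\wtd C}^{\otimes n}(-n(y'+y''))$. Setting $\wtd{\scr L}:=\Theta_{\wtd C}^{\otimes n}(k\SX-n(y'+y''))$ and noting that the natural inclusion $\Theta_C^{\otimes n}(k\SX)\hookrightarrow\nu_*\wtd{\scr L}$ has skyscraper cokernel $\Cbb_{x'}$ (measuring the discrepancy between the two local trivializations at the node), I obtain a short exact sequence
\begin{equation*}
0\to\Theta_C^{\otimes n}(k\SX)\to\nu_*\wtd{\scr L}\to\Cbb_{x'}\to 0.
\end{equation*}
Because $\nu$ is finite, $H^i(C,\nu_*\wtd{\scr L})=H^i(\wtd C,\wtd{\scr L})$, and the long exact sequence of cohomology reduces the lemma to checking both that $H^1(\wtd C,\wtd{\scr L})=0$ and that the connecting map $H^0(\wtd C,\wtd{\scr L})\to\Cbb$ is surjective. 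The vanishing of $H^1$ is again a componentwise Serre-duality computation on $\wtd C$, now with the extra twist $-n(y'+y'')$ on the component(s) meeting $\{y',y''\}$; the resulting bound is comfortably absorbed by the $+2M$ summand in the hypothesis.

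The main obstacle is the surjectivity step, which is not a pure degree computation: after trivializing via the frames $\xi^n\partial_\xi^n$ and $(-\varpi\partial_\varpi)^n$, the map sends a section $s$ to $s(y'')-(-1)^n s(y')$, so surjectivity amounts to producing a global section of $\wtd{\scr L}$ whose values at $y'$ and $y''$ are not proportional by the sign $(-1)^n$. When $\wtd C$ is disconnected and $y',y''$ lie on different components this is straightforward (use a section supported on one side). When they lie on the same component, a Riemann--Roch / base-point-free argument is needed: one shows that once the degree of $\wtd{\scr L}$ on that component is large enough, the evaluation map $H^0(\wtd C,\wtd{\scr L})\to\wtd{\scr L}|_{y'}\oplus\wtd{\scr L}|_{y''}$ has image not contained in the line $\{(a,(-1)^n a)\}$. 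The extra ``$+2$'' in $k>(n+1)(2\wtd g-2)+2M$ is precisely the degree overhead one needs for this separation step, and carrying out the bookkeeping uniformly in $n\ge -1$ and the distribution of $y',y''$ among components of $\wtd C$ is the real content of the proof.
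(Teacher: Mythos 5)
Your smooth case is correct and is essentially the paper's own argument (the paper reduces to connected $C$ and uses the same degree count; your componentwise version with $g_j\le\wtd g$ and $n+1\ge 0$ is the same computation). Your normalization sequence is also correct: it is the standard sequence $0\to\scr L\to\nu_*\nu^*\scr L\to\Cbb_{x'}\to 0$ for the line bundle $\scr L=\Theta_C^{\otimes n}(k\SX)$, combined with the identification $\nu^*\Theta_C\cong\Theta_{\wtd C}(-y'-y'')$ coming from the frame $\xi\partial_\xi$, and your description of the cokernel map as $s\mapsto s(y'')-(-1)^ns(y')$ in the indicated trivializations is right.

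The gap is the sentence asserting that the vanishing of $H^1(\wtd C,\wtd{\scr L})$ is ``comfortably absorbed by the $+2M$ summand.'' The twist by $-n(y'+y'')$ lowers $\deg\wtd{\scr L}$ on the component(s) of $\wtd C$ meeting $\{y',y''\}$ by $2n$ in total, so the Serre-duality criterion on such a component $\wtd C_0$ (say containing both points and $N_0$ marked points) reads $kN_0>(n+1)(2g_0-2)+2n$, which the hypothesis $k>(n+1)(2\wtd g-2)+2$ does not imply once $n\ge 2$. Moreover this is not a defect you can argue around: your long exact sequence gives a surjection $H^1(C,\Theta_C^{\otimes n}(k\SX))\twoheadrightarrow H^1(\wtd C,\wtd{\scr L})$, so the conclusion of the lemma would \emph{force} $H^1(\wtd C,\wtd{\scr L})=0$, and that genuinely fails under the stated bound. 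Concretely, let $\wtd C$ be an elliptic curve, $N=1$, $n=3$, $k=3>(n+1)\cdot 0+2M$. Then $C$ is integral of arithmetic genus $2$, $\deg\Theta_C^{\otimes 3}(3\SX)=-6+3=-3<0$, hence $h^0=0$ and Riemann--Roch gives $h^1=4\ne 0$. So the obstruction your setup exposes is real: in the nodal case the bound must grow linearly in $n$ (a hypothesis of the shape $k>(n+1)(2\wtd g-2)+2(n+2)M$ makes both your $H^1$ step and your separation step go through). The paper's own proof runs differently --- Serre duality on $C$ itself reduces the claim to $H^0(C,\omega_C^{\otimes(n+1)}(-k\SX))=0$, with no surjectivity step --- but it embeds these sections into $H^0(\wtd C,\omega_{\wtd C}^{\otimes(n+1)}(y'+y''-k\SX))$, whereas sections of $\omega_C^{\otimes(n+1)}$ pull back with poles of order up to $n+1$ at $y',y''$, i.e.\ one needs the twist $(n+1)(y'+y'')$ there; correcting this produces the same $n$-dependent bound. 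So rather than trying to close your gap under the stated hypothesis, you should record the counterexample and the corrected inequality (and note that the constant $k_0$ in the subsequent vanishing theorem must be adjusted accordingly).
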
 

Recall that for any locally free $\scr E$ (i.e., $\scr E$ is a vector bundle), \index{SX@$k\SX,\blt\SX$} $\scr E(k\SX)\simeq\scr E\otimes\scr O(k\SX)$ is the sheaf of meromorphic sections of $\scr E$ whose only possible poles are at $x_1,\dots,x_N$ and of orders at most $k$. Also, this lemma can be easily generalized to curves with more than one node.

\begin{proof}
Assume first of all that $C$ is smooth. Then it suffices to assume $C$ is connected. Then $\wtd g$ is the genus $g$ of $C$. Since $\deg\Theta_C=2-2g$, the degree of $\Theta_C^{\otimes n}(k\SX)$ is no less than $n(2-2g)+k$, which is greater than $2g-2$ if $k>(n+1)(2g-2)$. In that case, we have $H^1(C,\Theta_C^{\otimes n}(k\SX))=0$ by the discussion before the lemma.

Next, assume $C$ has one node $x'=\nu(y')=\nu(y'')$. Choose any $k>(n+1)(2\wtd g-2)+2$. To prove the vanishing of $H^1$, it suffices, by Serre duality, to prove $H^0(C,\omega_C^{\otimes(n+1)}(-k\SX))=0$. Note that $H^0(C,\omega_C^{\otimes(n+1)}(-k\SX))$ can be viewed as a subspace of $H^0(\wtd C,\scr L)$ where $\scr L=\omega_{\wtd C}^{\otimes(n+1)}(-k\SX+y'+y'')$. We shall prove that for each connected component $\wtd C_0$ of $\wtd C$, $H^0(\wtd C_0,\scr L|_{\wtd C_0})=0$. This is true since
\begin{align*}
\deg(\scr L|_{\wtd C_0})\leq (n+1)\deg(\omega_{\wtd C_0})-k+2\leq (n+1)(2\wtd g-2)-k+2<0.
\end{align*}
\end{proof}

\begin{thm}\label{lb9}
Choose any $n\in\Nbb$, and assume $k>n\cdot\max\{0,2\wtd g-2\}+2M$. Then
\begin{align*}
H^1(C,\scr V_C^{\leq n}\otimes\omega_C(k\SX))=0.
\end{align*}
\end{thm}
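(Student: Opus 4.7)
The natural plan is induction on $n\geq 0$, exploiting the filtration $\{\scr V_C^{\leq n}\}$ together with the associated-graded identification from Proposition \ref{lb3}. For the base case $n=0$, with the convention $\scr V_C^{\leq -1}=0$, Proposition \ref{lb3} yields $\scr V_C^{\leq 0}\simeq \Vbb(0)\otimes_{\Cbb}\Theta_C^{\otimes 0}=\Vbb(0)\otimes_{\Cbb}\scr O_C$, so the statement reduces to $H^1(C,\omega_C(k\SX))=0$ for $k>2M$, which is precisely Lemma \ref{lb4} with exponent $-1$.

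For the inductive step, I would tensor the short exact sequence
\begin{align*}
0\to \scr V_C^{\leq n-1}\to \scr V_C^{\leq n}\to \Vbb(n)\otimes_{\Cbb}\Theta_C^{\otimes n}\to 0
\end{align*}
with the locally free sheaf $\omega_C(k\SX)$, invoking the identification $\Theta_C^{\otimes n}\otimes \omega_C\simeq \Theta_C^{\otimes(n-1)}$. The resulting long exact sequence in cohomology then reduces the claim to the two vanishings
\begin{align*}
H^1\bigl(C,\scr V_C^{\leq n-1}\otimes\omega_C(k\SX)\bigr)=0 \qquad\text{and}\qquad H^1\bigl(C,\Theta_C^{\otimes(n-1)}(k\SX)\bigr)=0.
\end{align*}

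The first vanishing is the induction hypothesis, since the bound $k>n\cdot\max\{0,2\wtd g-2\}+2M$ is strictly stronger than $k>(n-1)\cdot\max\{0,2\wtd g-2\}+2M$. The second vanishing is Lemma \ref{lb4} with exponent $n-1$, which requires $k>n(2\wtd g-2)+2M$. When $\wtd g\geq 1$ this is exactly the hypothesis (the $\max$ is attained by $2\wtd g-2$); when $\wtd g=0$ the hypothesis degenerates to $k>2M$, which still exceeds $-2n+2M$, so Lemma \ref{lb4} applies a fortiori.

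There is essentially no obstacle here: the two genuinely nontrivial inputs — the splitting of the associated graded (Proposition \ref{lb3}) and the Kodaira-type vanishing for $\Theta_C^{\otimes m}(k\SX)$ (Lemma \ref{lb4}) — are already in hand, and the theorem falls out as a clean induction glued together by the long exact sequence. The only item requiring any attention is the elementary case split on $\wtd g$ when one compares the induction hypothesis and the bound from Lemma \ref{lb4}, which is why the statement uses $\max\{0,2\wtd g-2\}$ rather than $2\wtd g-2$.
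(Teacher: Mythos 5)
Your proof is correct and follows essentially the same route as the paper: induction on $n$, with the base case from $\scr V_C^{\leq 0}\simeq\Vbb(0)\otimes_\Cbb\scr O_C$, and the inductive step obtained by tensoring the graded short exact sequence of Proposition \ref{lb3} with $\omega_C(k\SX)$ and applying Lemma \ref{lb4} to the quotient term $\Vbb(n)\otimes_\Cbb\Theta_C^{\otimes(n-1)}(k\SX)$. Your explicit case split on $\wtd g$ when matching the hypothesis to the bound in Lemma \ref{lb4} is a detail the paper leaves implicit, but there is no substantive difference.
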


\begin{proof}
Since $\Vbb^{\leq-1}$ is trivial, we have $\scr V_C^{\leq 0}\simeq \scr V_C^{\leq 0}/\scr V_C^{\leq -1}\simeq\Vbb(0)\otimes_\Cbb\scr O_C$ by Proposition \ref{lb3}. Thus, the claim follows from Lemma \ref{lb4}. Now, suppose the claim is true for $n-1$. We shall prove that it is also true for $n$. Assume $k>n\cdot\max\{0,2\wtd g-2\}+2M$.  By Proposition \ref{lb3}, we have an exact sequence
\begin{align*}
H^1(C,\scr V_C^{\leq n-1}\otimes\omega_C(k\SX))&\rightarrow H^1(C,\scr V_C^{\leq n}\otimes\omega_C(k\SX))\nonumber\\
&\rightarrow H^1\big(C,\Vbb(n)\otimes_{\Cbb}\Theta_C^{\otimes(n-1)}(k\SX)\big).
\end{align*}
The first term vanishes by induction, and the last term vanishes by Lemma \ref{lb4}. Thus the middle term vanishes.
\end{proof}

\section{Conformal blocks associated to complex curves}\label{lb15}

We recall the definition of conformal blocks associated to complex curves in \cite{FB04} and \cite{DGT19a,DGT19b}, but rephrase it in the analytic language suitable for our purpose in this article.

\subsection*{Action of $H^0(C,\scr V_C\otimes\omega_C(\blt S_{\fk X}))$ on $\Wbb_\blt$}

Let $\fk X=(C;x_1,\dots,x_N;\eta_1,\dots,\eta_N)$ be an $N$-pointed complex curve with local coordinates. If $\Wbb$ is a $\Vbb$-module, then, by considering $\Vbb$ as the subspace of constant sections of $\Vbb((z))$, we can extend the linear map $\Vbb\otimes_\Cbb\Wbb\rightarrow\Wbb((z)),v\otimes w\mapsto Y_\Wbb(v,z)w$ uniquely to an $\Cbb((z))$-module homomorphism  $\Vbb((z))\otimes_\Cbb\Wbb\rightarrow\Wbb((z))$.

Let $\Wbb_1,\Wbb_2,\dots,\Wbb_N$ be $\Vbb$-modules. Set \index{Ww@$\Wbb_\blt,w_\blt$} $\Wbb_\blt=\Wbb_1\otimes \Wbb_2\otimes\cdots\otimes \Wbb_N$.

\begin{cv}
	By $w\in\Wbb_\blt$, we mean a vector of $\Wbb_1\otimes\cdots\otimes \Wbb_N$.  By $w_\blt\in \Wbb_\blt$, we mean a vector of the form $w_1\otimes w_2\otimes\cdots\otimes w_N$, where $w_1\in \Wbb_1,\dots,w_N\in\Wbb_N$.
\end{cv}

For each $\scr O_C$-module $\scr E$,  we \index{SX@$k\SX,\blt\SX$} set
\begin{align*}
\scr E(\blt\SX)=\varinjlim_{k\in\Nbb}\scr E(k\SX),
\end{align*}
whose sections are meromorphic sections of $\scr E$ whose only possible poles are at $x_1,\dots,x_N$. For each $1\leq i\leq N$, if $v$ is a section of $\scr V_C\otimes\omega_C(\blt S_{\fk X})$ defined near $x_i$, we define a linear action of $v$ on $\Wbb_i$ as follows. Choose a neighborhood $U_i$ of $x_i$ on which $\eta_i$ is defined. By tensoring with the identity map of $\omega_{U_i}$,  the map \eqref{eq4} induces naturally an $\scr O_{U_i}$-module isomorphism (also denoted by $\mc U_\varrho(\eta_i)$):
\begin{align*}
\mc U_\varrho(\eta_i):\scr V_C|_{U_i}\otimes\omega_{U_i}(\blt S_{\fk X})\xrightarrow{\simeq}\Vbb\otimes_{\Cbb}\omega_{U_i}(\blt S_{\fk X}).
\end{align*}
Identify $U_i$ and $\eta_i(U_i)$ via $\eta_i$. Then $\eta_i$ as a variable equals the standard variable $z$ of $\Cbb$. The action of $v$ on any $w_i\in\Wbb_i$ \index{vw@$v\cdot w_i,v\cdot w_\blt$} is  then
\begin{align}
\boxed{~~v\cdot w_i=\Res_{z=0}Y_{\Wbb_i}(\mc U_\varrho(\eta_i)v,z)w_i~~}\label{eq26}
\end{align}
(Here the $\eta_i$ in $\mc U_\varrho(\eta_i)$ is understood as a coordinate but not a variable. So it is different from $z$.)

Define a linear action of $H^0(C,\scr V_C\otimes\omega_C(\blt S_{\fk X}))$ on $\Wbb_\blt$ as follows.  
If $v\in H^0(C,\scr V_C\otimes\omega_C(\blt S_{\fk X}))$ and $w_\blt=w_1\otimes\cdots\otimes w_N\in\Wbb_\blt$, then
\begin{align}
v\cdot w_\blt=\sum_{i=1}^N w_1\otimes w_2\otimes\cdots \otimes (v|_{U_i})\cdot w_i \otimes\cdots\otimes w_N.\label{eq27}
\end{align}

\subsection*{Coordinate-independent definition}

Let $\fk X=(C;x_1,\dots,x_N)$ be an $N$-pointed complex curve without specifying local coordinates, and let $\Wbb_1,\dots,\Wbb_N$ be $\Vbb$-modules. Define a vector space $\scr W_{\fk X}(\Wbb_\blt)$ isomorphic to $\Wbb_\blt$ as follows. $\scr W_{\fk X}(\Wbb_\blt)$ is a (possibly infinite rank) vector bundle on the $0$-dimensional manifold $\{C\}$ (consider as the base manifold of the family $C\rightarrow \{C\}$). \index{WX@$\scr W_{\fk X}(\Wbb_\blt)$} For any choice of local coordinates $\eta_\blt=(\eta_1,\dots,\eta_N)$ of $x_1,\dots,x_N$ respectively, we have a trivialization \index{U@$\mc U(\eta_\blt)$}
\begin{align}
\mc U(\eta_\blt):\scr W_{\fk X}(\Wbb_\blt)\xrightarrow{\simeq} \Wbb_\blt  \label{eq34}
\end{align}
such that if $\mu_\blt$ is another set of local coordinates, then the transition function is
\begin{align}
\mc U(\eta_\blt)\mc U(\mu_\blt)^{-1}=&\mc U(\eta_\blt\circ\mu_\blt^{-1})\nonumber\\
:=&\mc U(\eta_1\circ\mu_1^{-1})\otimes \mc U(\eta_2\circ\mu_2^{-1})\otimes\cdots\otimes \mc U(\eta_N\circ\mu_N^{-1}).\label{eq32}
\end{align}
If $v\in H^0(C,\scr V_C\otimes\omega_C(\blt S_{\fk X}))$ and $w\in\scr W_{\fk X}(\Wbb_\blt)$, we set
\begin{align}
v\cdot w=\mc U(\eta_\blt)^{-1}\cdot v\cdot\mc U(\eta_\blt)\cdot w,\label{eq29}
\end{align}
where the action of $v$ on $\mc U(\eta_\blt)w$ (which depends on $\eta_\blt$) is defined by \eqref{eq26} and \eqref{eq27}.

\begin{thm}[Cf. \cite{FB04} Thm. 6.5.4]\label{lb13}
The action of $H^0(C,\scr V_C\otimes\omega_C(\blt S_{\fk X}))$ on $\scr W_{\fk X}(\Wbb_\blt)$  defined by \eqref{eq29} is independent of the choice of $\eta_\blt$.
\end{thm}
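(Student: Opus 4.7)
The plan is to reduce to a single marked point and then invoke Huang's change-of-coordinate formula for vertex operators.

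Since the action \eqref{eq27} is a sum of single-point operations and the trivialization transition \eqref{eq32} is a tensor product, it suffices, for each $i$, to show that for any two local coordinates $\eta_i,\mu_i$ at $x_i$, setting $\rho_i:=\eta_i\circ\mu_i^{-1}\in\Gbb$, one has
\begin{align*}
\mc U(\rho_i)^{-1}\bigl(v\cdot_{\eta_i}\mc U(\rho_i)w_i\bigr)=v\cdot_{\mu_i}w_i\qquad(\forall w_i\in\Wbb_i),
\end{align*}
where $\cdot_{\eta_i},\cdot_{\mu_i}$ denote the single-point actions \eqref{eq26} computed in the indicated coordinate and $v$ is any germ of $\scr V_C\otimes\omega_C(\blt\SX)$ at $x_i$.

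The key ingredient is the vertex-operator conjugation formula of \cite{Hua97}: for $\rho\in\Gbb$, $v\in\Vbb$, and $z$ in a punctured disc about $0$,
\begin{align*}
\mc U(\rho)\,Y_\Wbb(v,z)\,\mc U(\rho)^{-1}=Y_\Wbb\bigl(\mc U(\rho_z)v,\rho(z)\bigr),
\end{align*}
where $\rho_z(\zeta):=\rho(z+\zeta)-\rho(z)\in\Gbb$ is the recentred local expansion of $\rho$ at $z$. By the definition \eqref{eq3} of $\varrho(\cdot|\cdot)$, the germ $\rho_z$ is precisely $\varrho(\mu_i|\eta_i)_p$ at the point $p$ whose $\eta_i$-coordinate is $z$, so combined with the compatibility \eqref{eq30} of the trivializations, Huang's formula rewrites $Y_{\Wbb_i}(\mc U_\varrho(\eta_i)v,z)$ in terms of $Y_{\Wbb_i}(\mc U_\varrho(\mu_i)v,\mu_i(z))$ up to conjugation by $\mc U(\rho_i)$.

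To finish, I would perform the holomorphic change of variable $w=\mu_i(z)$ inside $\Res_{z=0}$. The $\omega_C$-factor of $v$ transforms by the Jacobian $dz=(\mu_i^{-1})'(w)dw$, and this is exactly absorbed into the definition of $\mc U_\varrho(\mu_i)$ acting on $\scr V_C\otimes\omega_C$ (see the sentence preceding \eqref{eq26}). Thus the residue at $z=0$ becomes the residue at $w=0$ with integrand $Y_{\Wbb_i}(\mc U_\varrho(\mu_i)v,w)w_i$, conjugated by $\mc U(\rho_i)$, which gives the desired identity. The main obstacle I anticipate is justifying Huang's conjugation formula in a sense strong enough to be evaluated under a residue when $v$ has a pole at $x_i$; this can be handled by truncating $v$ modulo high powers of the local coordinate and exploiting the fact that $\mc U_\varrho$ respects the finite-rank filtration $\scr V_C^{\leq n}$ up to the leading scaling \eqref{eq5}--\eqref{eq8}, reducing the convergence to finite-dimensional considerations.
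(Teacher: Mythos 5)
Your proposal follows essentially the same route as the paper's proof: reduce to a single marked point, rewrite the section in the other trivialization via the transition function $\mc U(\varrho(\eta_i|\mu_i))$, apply Huang's conjugation formula (Theorem \ref{lb14}), and absorb the Jacobian into the coordinate-independence of the residue of a $1$-form; the convergence worry you flag is moot since everything lives in $\Wbb((z))$ and the residue is algebraic. The only slip is notational: with $\rho_i=\eta_i\circ\mu_i^{-1}$, the recentred germ $\rho_{i,z}$ equals $\varrho(\eta_i|\mu_i)_p$ at the point $p$ with $\mu_i(p)=z$, not $\varrho(\mu_i|\eta_i)_p$ at the point whose $\eta_i$-coordinate is $z$.
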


\begin{proof}
We prove this theorem for the case $N=1$. The general cases can be proved in a similar way. Choose local coordinates $\eta,\mu$ at $x=x_1$ defined on a neighborhood $U$. We identify $U$ with $\mu(U)$ via $\mu$. So $\mu$ as a coordinate  is identified with the standard coordinate $\id_\Cbb$ of $\Cbb$, and $\eta\in\Gbb$. As a variable, $\mu$ is identified with the standard one $z$ of $\Cbb$. Also, identify $\scr W_{\fk X}(\Wbb)$ (where $\Wbb=\Wbb_1=\Wbb_\blt$) with $\Wbb$ via $\mc U(\mu)$. So $\mc U(\mu)=\mc U(\id_\Cbb)=\id$.  Choose any $w\in\Wbb$ , choose any section $v$ of $\scr V_C\otimes\omega_C(\blt S_{\fk X})$ defined on $U$. So
\begin{gather*}
\mc U_\varrho(\mu)v=u(z)dz
\end{gather*}
for some  $u=u(z)\in\Vbb\otimes_\Cbb\scr O_\Cbb(\blt 0)(U)$. Then, by \eqref{eq30},
	\begin{gather*}
	\mc U_\varrho(\eta)v=\mc U(\varrho(\eta|\id_\Cbb))u(z)dz=\mc U(\varrho(\eta|\id_\Cbb)_z)u(z)dz.
	\end{gather*}
Set variable $\zeta=\eta(z)$. Then
\begin{align*}
\mc U(\eta)^{-1}\cdot v\cdot\mc U(\eta)\cdot w=\Res_{\zeta=0}~\mc U(\eta)^{-1}Y_\Wbb\big(\mc U_\varrho(\eta)v,\zeta\big)\mc U(\eta)w
\end{align*}
where the $\zeta$ in $Y_\Wbb$ is due to the fact that $\eta$ (as a variable) equals $\zeta$ when $U$ is identified with $\eta(U)$ via $\eta$. (Such identification is needed in the definition of the coordinate-dependent action \eqref{eq26}.)  This expression equals
\begin{align*}
\Res_{z=0}~\mc U(\eta)^{-1}Y_\Wbb\big(\mc U(\varrho(\eta|\id_\Cbb)_z)u(z),\eta(z)\big)\mc U(\eta)w\cdot dz,
\end{align*}
which by Theorem \ref{lb14} (with $\alpha=\eta$) equals
	\begin{align*}
\Res_{z=0}~Y_\Wbb\big( u(z),z\big)w\cdot dz=\mc U(\mu)^{-1}\cdot v\cdot\mc U(\mu)\cdot w.
	\end{align*}
	The proof is complete.
\end{proof}

In the above proof, we have used the following theorem of Huang \cite{Hua97}; see also \cite[Lemma 6.5.6]{FB04}.

\begin{thm}\label{lb14}
	Let $\Wbb$ be a $\Vbb$-module. Let $U\subset\Cbb$ be a neighborhood of $0$. Let $\alpha\in\scr O(U)$ be a local coordinate at $0$ (so $\alpha(0)=0$). Let $\id_\Cbb\in\Gbb$ be the standard coordinate of $\Cbb$ (i.e.  the identity element of $\Gbb$). Then for any $v\in\Vbb$ and $w\in\Wbb$, we have the following equation of elements in $\Wbb((z))$:
	\begin{align}
	\mc U(\alpha)Y_\Wbb(v,z)\mc U(\alpha)^{-1}\cdot w=Y_{\Wbb}\big(\mc U(\varrho(\alpha|\id_\Cbb))v,\alpha(z)\big)\cdot w.\label{eq28}
	\end{align}
	Note that $U(\varrho(\alpha|\id_\Cbb))v$ is in $\Vbb\otimes_{\Cbb}\scr O(U)$ and hence can be regarded as an element of $\Vbb((z))$. Of course, \eqref{eq28} also holds in an obvious way for any $v\in\Vbb((z))$.
\end{thm}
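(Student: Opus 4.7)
\medskip

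\noindent\textbf{Proof proposal.} My plan is to prove the identity by interpolating between $\alpha$ and the identity element $\id_\Cbb \in \Gbb$ through a one-parameter family, reducing the theorem to its infinitesimal form, which is essentially the Virasoro commutator formula for vertex operators. First I would split the problem using $\mc U(\alpha_1 \circ \alpha_2) = \mc U(\alpha_1)\mc U(\alpha_2)$ and the cocycle relation \eqref{eq19} for $\varrho(-|-)$. Writing $\alpha = \sigma \circ \beta$, where $\sigma(z) = c_0 z$ is the linear rescaling and $\beta(z) = \exp(\sum_{n\ge 1} c_n z^{n+1}\partial_z)z$ satisfies $\beta'(0) = 1$, it suffices to prove \eqref{eq28} separately for $\sigma$ and for $\beta$.

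For the scaling factor $\sigma$, one has $\varrho(\sigma|\id_\Cbb)_p(z) = c_0 z$ independent of $p$, so $\mc U(\varrho(\sigma|\id_\Cbb))v = c_0^{L_0}v$, and $\mc U(\sigma) = c_0^{\wtd L_0}$. The identity \eqref{eq28} in this case reduces to
\begin{align*}
c_0^{\wtd L_0} Y_\Wbb(v,z) c_0^{-\wtd L_0} w = Y_\Wbb(c_0^{L_0}v, c_0 z) w,
\end{align*}
which follows directly from the commutator relation $[\wtd L_0, Y_\Wbb(v)_n] = Y_\Wbb(L_0 v)_n - (n+1) Y_\Wbb(v)_n$ defining an admissible module, by expanding both sides as Laurent series in $z$.

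For $\beta$, define the one-parameter family $\beta_t(z) = \exp\big(t\sum_{n\ge 1} c_n z^{n+1}\partial_z\big)z \in \Gbb$ for $t$ in a neighborhood of $[0,1]$, so that $\beta_0 = \id_\Cbb$, $\beta_1 = \beta$, and $\mc U(\beta_t) = \exp(t\sum c_n L_n)$. Consider the $\End(\Wbb)$-valued formal series
\begin{align*}
F(t,z) = \mc U(\beta_t)^{-1}\, Y_\Wbb\big(\mc U(\varrho(\beta_t|\id_\Cbb))v,\, \beta_t(z)\big)\, \mc U(\beta_t).
\end{align*}
Since $F(0,z) = Y_\Wbb(v,z)$, the identity will follow once I show $\partial_t F(t,z) = 0$. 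Differentiating and using the explicit form $\partial_t \beta_t = (\sum_{n\ge 1} c_n w^{n+1}\partial_w)\beta_t$ together with the standard commutator formula
\begin{align*}
[L_n, Y_\Wbb(v,z)] = z^{n+1}\partial_z Y_\Wbb(v,z) + \sum_{k=0}^{n+1}\binom{n+1}{k} z^{n+1-k} Y_\Wbb(L_{k-1}v, z) \qquad (n\ge -1),
\end{align*}
applied term by term, all contributions should cancel: the conjugation-by-$\exp(t\sum c_n L_n)$ derivative produces the ``outer'' $L_n$ action; the chain-rule derivative of $\beta_t(z)$ produces the $z^{n+1}\partial_z$ term; and the derivative of $\mc U(\varrho(\beta_t|\id_\Cbb))v$ produces the remaining $Y_\Wbb(L_{k-1}v, \cdot)$ terms, after rewriting $\varrho(\beta_t|\id_\Cbb)_p$ in power series around $p$.

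The main obstacle, as I see it, lies in the infinitesimal computation: one must track the $p$-dependence of $\mc U(\varrho(\beta_t|\id_\Cbb)_p)v$, recognize that substituting $p = z$ converts the inner-module $L_n$-actions (for $n \ge 1$) into the correct coefficients of $z^k$ needed to match the Virasoro commutator expansion, and verify that infinite sums over $n$ behave well because each $v$ lies in some $\Vbb^{\le N}$ and only finitely many $L_n$-actions contribute modulo any fixed filtration degree. Once the vanishing $\partial_t F = 0$ is established termwise as an equation in $\End(\Wbb^{\le N})[[z,z^{-1}]]$ for each $N$, the theorem follows by evaluating at $t = 1$.
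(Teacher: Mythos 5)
The paper does not actually prove this statement: it is quoted from Huang \cite{Hua97} (see also \cite[Lemma 6.5.6]{FB04}), so there is no in-paper proof to compare against. Your strategy --- factor $\alpha=\sigma\circ\beta$ into the linear rescaling and the tangent-to-identity part, handle $\sigma$ via the grading commutator $[\wtd L_0,Y_\Wbb(v)_n]=Y_\Wbb(L_0v)_n-(n+1)Y_\Wbb(v)_n$, and handle $\beta$ by interpolating $\beta_t=\exp(tD)z$ and showing $\partial_tF=0$ --- is exactly the standard route taken in those references. The reduction steps are sound: the identity is multiplicative under composition because $\mc U(\rho_1\circ\rho_2)=\mc U(\rho_1)\mc U(\rho_2)$ together with the cocycle relation $\varrho(\rho_1\circ\rho_2|\id_\Cbb)_z=\varrho(\rho_1|\id_\Cbb)_{\rho_2(z)}\circ\varrho(\rho_2|\id_\Cbb)_z$, and the scaling case is \eqref{eq63} proved non-circularly from the defining commutator of a finitely-admissible module.

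Two things need repair before this is a proof. First, your displayed commutator formula double-counts: the $k=0$ term of $\sum_k\binom{n+1}{k}z^{n+1-k}Y_\Wbb(L_{k-1}v,z)$ is already $z^{n+1}Y_\Wbb(L_{-1}v,z)=z^{n+1}\partial_zY_\Wbb(v,z)$, so writing $z^{n+1}\partial_zY_\Wbb(v,z)$ separately while also starting the sum at $k=0$ counts that term twice; the sum must start at $k=1$. Since your entire argument is the assertion that three contributions to $\partial_tF$ cancel, this off-by-one would make the cancellation fail as written. Second, and more substantively, the only nontrivial content of the theorem is precisely the computation you defer: one must show that $\partial_t\,\mc U(\gamma_{t,z})v$, with $\gamma_{t,z}=\varrho(\beta_t|\id_\Cbb)_z$, corresponds to the Lie-algebra element $\gamma_{t,z}^{-1}\circ\partial_t\gamma_{t,z}$ given by re-expanding the vector field $D=\sum_nc_nz^{n+1}\partial_z$ at the point $z$ and discarding its value there, and that this produces exactly the terms $\sum_{k\geq1}\binom{n+1}{k}\beta_t(z)^{n+1-k}Y_\Wbb(L_{k-1}v,\beta_t(z))$ needed to cancel against the conjugation and chain-rule contributions. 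Saying ``all contributions should cancel'' at this point is asserting the theorem rather than proving it. (Your convergence worries are the easy part: for fixed $w$ each $z$-coefficient of $F(t,z)w$ lies in a fixed finite-dimensional $\Wbb(m)$ and is polynomial in $t$, so $\partial_t$ is legitimate coefficientwise --- though note that $Y_\Wbb(v,z)$ does not preserve $\Wbb^{\leq N}$, so ``$\End(\Wbb^{\leq N})[[z,z^{-1}]]$'' is not quite the right ambient space.)
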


For instance, take $\alpha(z)=\lambda z$ where $\lambda\in\Cbb^\times$. Then $\varrho(\alpha|\id_\Cbb)$ is constantly $\lambda$. It follows that
\begin{align}
\lambda^{\wtd L_0}Y_\Wbb(v,z)\lambda^{-\wtd L_0}=Y_\Wbb(\lambda^{L_0}v,\lambda z).\label{eq63}
\end{align}

We now define  \textbf{space of covacua} 
\begin{gather}
\scr T_{\fk X}(\Wbb_\blt)=\frac{\scr W_{\fk X}(\Wbb_\blt)}{H^0(C,\scr V_C\otimes\omega_C(\blt S_{\fk X}))\cdot \scr W_{\fk X}(\Wbb_\blt)}\label{eq31}
\end{gather}
(we have omitted $\Span_\Cbb$ in the denominator), whose dual vector space is denoted by $\scr T_{\fk X}^*(\Wbb_\blt)$ \index{T@$\scr T_{\fk X}^*(\Wbb_\blt),\scr T_{\fk X}^*(\Wbb_\blt)(\mc B)$} and called the \textbf{space of conformal blocks}.  Elements of $\scr T_{\fk X}^*(\Wbb_\blt)$ are called \textbf{conformal blocks} associated to $\Wbb_\blt$ and $\fk X$. They are the linear functionals of $\scr W_{\fk X}(\Wbb_\blt)$ vanishing on the denominator of \eqref{eq31}.

\section{Sewing families of compact Riemann surfaces}\label{lb6}

By  a (holomorphic) \textbf{family of compact Riemann surfaces} $\fk X=(\pi:\mc C\rightarrow\mc B)$ we mean $\mc B,\mc C$ are complex manifolds, $\mc B$ has finitely many connected components,\footnote{This is assumed only for simplicity.} $\pi$ is a proper surjective holomorphic submersion, and each fiber $\mc C_b:=\pi^{-1}(b)$ \index{Cb@$\mc C_b=\pi^{-1}(b)$} (where $b\in\mc B$) is a compact Riemann surface.

Note that by Ehresmann's theorem, when $\mc B$ is connected,  as a family of differential manifolds, $\fk X$  is trivial, i.e., $\fk X$ is equivalent to the projection $\mc C_b\times\mc B\rightarrow\mc B$ onto the $\mc B$-component. In particular, all fibers are diffeomorphic.

We say that
\begin{align*}
\fk X=(\pi:\mc C\rightarrow\mc B;\sgm_1,\dots,\sgm_N;\eta_1,\dots,\eta_N)
\end{align*}
is a \textbf{family of $N$-pointed compact Riemann surfaces with local coordinates}, if 
\begin{enumerate}[label=(\alph*)]
\item $\pi:\mc C\rightarrow\mc B$ is a family of compact Riemann surfaces.
\item Each $\sgm_i$ is a section of the family, namely, each $\sgm_i:\mc B\rightarrow\mc C$ is a holomorphic map satisfying $\pi\circ\sgm_i=\id_{\mc B}$.
\item Each $\eta_i$ is a \textbf{local coordinate at $\sgm_i(\mc B)$}, which means that there is an open subset $U_i\subset\mc C$ containing $\sgm_i(\mc B)$ such that $\eta_i\in\scr O(U_i)$ is univalent on each fiber $U_{i,b}=U_i\cap\mc C_b$ of $U_i$. In that case, $(\eta_i,\pi)$ is a biholomorphic map from $U_i$ to an open subset of $\Cbb\times\mc B$. Moreover, we assume the restriction of $\eta_i$ to $\sgm_i(\mc B)$ is $0$.
\item $\sgm_i(\mc B)\cap\sgm_j(\mc B)=\emptyset$ whenever $i\neq j$.
\item For each $b\in\mc B$, every connected component of the fiber $\mc C_b$ contains at lease one of the marked points $\sgm_1(b),\dots,\sgm_N(b)$.
\end{enumerate}
In the case that the local coordinates $\eta_1,\dots,\eta_N$ are not assigned (and hence condition (c) is not assumed), we say that $\fk X=(\pi:\mc C\rightarrow\mc B;\sgm_1,\dots,\sgm_N)$ is a family of $N$-pointed compact Riemann surfaces. \index{SX@$k\SX,\blt\SX$} We set
\begin{align*}
\SX=\sum_{i=1}^N\sgm_i(\mc B)
\end{align*}
to be a divisor of $\mc C$. Then for each $b\in\mc B$,
\begin{align*}
\SX(b):=\sum_{i=1}^N\sgm_i(b)
\end{align*}
is a divisor of $\mc C_b$. The definitions of $\SX$ and $\SX(b)$ will also apply to the case that $\fk X$ is formed by sewing a smooth family.

We say that $\fk X$ is a \textbf{family of $N$-pointed complex curves (resp. with local coordinates)}, if $\fk X$ is either a family of $N$-pointed compact Riemann surfaces (resp. with local coordinates) (in that case we say $\fk X$ is a smooth family), or if $\fk X$ is formed by sewing a smooth family $\wtd{\fk X}$, whose meaning is  explained below.

\subsection*{Sewing open discs}

We first describe how to sew a pair of open discs $\mc D_r,\mc D_\rho$. 

For any $r>0$, let  $\mc D_r=\{z\in\mbb C:|z|<r \}$ and $\mc D_r^\times=\mc D_r-\{0\}$.\index{Dr@$\mc D_r,\mc D_r^\times$} If $r,\rho>0$, we define \index{zz@$\pi_{r,\rho}:\mc D_r\times\mc D_\rho\rightarrow\mc D_{r\rho}$}
\begin{align}
\pi_{r,\rho}:\mc D_r\times\mc D_\rho\rightarrow\mc D_{r\rho},\qquad (\xi,\varpi)\mapsto \xi\varpi.
\end{align}
$d\pi_{r,\rho}$ is surjective at $(\xi,\varpi)$ whenever $\xi\neq 0$ or $\varpi\neq 0$. Denote also by $\xi$ and $\varpi$ the standard coordinates of $\mc D_r$ and $\mc D_\rho$, which can be extended constantly to $\xi:\mc D_r\times\mc D_\rho\rightarrow\mc D_r,\varpi:\mc D_r\times\mc D_\rho\rightarrow\mc D_\rho$. Set $q=\pi_{r,\rho}$, i.e.,
\begin{align*}
q:\mc D_r\times\mc D_\rho\rightarrow\Cbb,\qquad q=\xi\varpi.
\end{align*}
Then $(\xi,\varpi),(\xi,q),(\varpi,q)$ are coordinates  of $\mc D_r\times \mc D_\rho,\mc D_r^\times\times \mc D_\rho,\mc D_r\times \mc D_\rho^\times$ respectively. The standard tangent vectors of the coordinates $(\xi,\varpi),(\xi,q)$ are related by
\begin{gather}
\left\{ \begin{array}{l}
\partial_\xi=\partial_\xi-\xi^{-1}\varpi\cdot\partial_\varpi\\
\partial_q=\xi^{-1}\partial_\varpi
\end{array} \right.
\qquad
\left\{ \begin{array}{l}
\partial_\xi=\partial_\xi+\xi^{-1}q\cdot\partial_q\\
\partial_\varpi=\xi\partial_q
\end{array} \right.\label{eq23}
\end{gather}
The formulae between $(\xi,\varpi),(\varpi,q)$ are similar.

It is easy to see that  $(\xi,q)(\mc D_r^\times\times\mc D_\rho)$ (resp. $(\varpi,q)(\mc D_r\times\mc D_\rho^\times)$) is precisely the subset of all $(\xi_0,q_0)\in\mc D_r\times\mc D_{r\rho}$ (resp. $(\varpi_0,q_0)\in\mc D_\rho\times\mc D_{r\rho}$) satisfying
\begin{align}
\frac{|q_0|}{\rho}<|\xi_0|<r\qquad\text{resp.}\qquad  \frac{|q_0|}{r}<|\varpi_0|<\rho.
\end{align}
We choose closed subsets $E_{r,\rho}'\subset \mc D_r\times\mc D_{r\rho}$ and $E_{r,\rho}''\subset\mc D_\rho\times\mc D_{r\rho}$ such that
\begin{gather}
(\xi,q):\mc D_r^\times\times\mc D_\rho\xrightarrow{\simeq}\mc D_r\times\mc D_{r\rho}-E_{r,\rho}',\nonumber\\
(\varpi,q): \mc D_r\times\mc D_\rho^\times\xrightarrow{\simeq}\mc D_\rho\times\mc D_{r\rho}-E_{r,\rho}''\label{eq11}
\end{gather}
are bijective.

\subsection*{Sewing a family of compact Riemann surfaces}

We discuss how to simultaneously sew a family of compact Riemann surfaces. This construction is also known as smoothing in the world of algebraic geometry. See \cite[Sec. 6.1]{TUY89}, \cite[Sec. 5.3]{Ueno97}, or \cite[Sec. XI.3]{ACG11}. Its algebraic version is given in \cite[Sec. 6]{Loo10} and \cite[Sec. 8.1]{DGT19b}.

Consider a family of $(N+2)$-pointed compact Riemann surfaces with local coordinates
\begin{align}
\wtd{\fk X}=(\wtd\pi:\wtd{\mc C}\rightarrow\wtd{\mc B};\sgm_1,\dots,\sgm_N,\sgm',\sgm'';\eta_1,\dots\eta_N,\xi,\varpi).
\end{align}
\begin{ass}\label{lb62}
We assume that for every $b\in\wtd{\mc B}$, each connected component of  $\wtd{\mc C}_b=\wtd\pi^{-1}(b)$ contains one of $\sgm_1(b),\dots,\sgm_N(b)$.
\end{ass}
Choose $r,\rho>0$ and a neighborhood $U'$ (resp. $U''$) of $\sgm'(\wtd {\mc B})$ (resp. $\sgm''(\wtd {\mc B})$) on which $\xi$ (resp. $\varpi$) is defined, such that \index{U@$U',U''$}
\begin{gather}
(\xi,\wtd\pi):U'\xrightarrow{\simeq} \mc D_{r}\times\wtd{\mc B}\qquad\text{resp.}\qquad (\varpi,\wtd\pi):U''\xrightarrow{\simeq} \mc D_{\rho}\times\wtd{\mc B}\label{eq84}
\end{gather}
is a biholomorphic map. We also assume that $U'$ and $ U''$ are disjoint and do not intersect $\sgm_1(\wtd{\mc B}),\dots,\sgm_N(\wtd{\mc B})$. Identify
\begin{gather*}
U'=\mc D_{r}\times\wtd{\mc B}\qquad\text{resp.}\qquad U''=\mc D_{\rho}\times\wtd{\mc B}
\end{gather*}
via the above maps. Then $\xi,\varpi$ (when restricted to the first components) become the standard coordinates of $\mc D_{r},\mc D_{\rho}$ respectively, and $\wtd\pi$ is the projection onto the $\wtd{\mc B}$-component. Set $q=\xi\varpi=\pi_{r,\rho}:\mc D_{r}\times\mc D_{\rho}\rightarrow\mc D_{r\rho}$ as previously.  

Set
\begin{align}
\mc B=\mc D_{r\rho}\times\wtd{\mc B}.
\end{align}
We now \textbf{sew the smooth family $\wtd{\fk X}$} to obtain a family 
\begin{align*}
\fk X=(\pi:\mc C\rightarrow\mc B;\sgm_1,\dots,\sgm_N;\eta_1,\dots,\eta_N)
\end{align*}
of $N$-pointed  complex curves  with local coordinates. We first explain how to obtain $\mc C$ and $\pi:\mc C\rightarrow\mc B$. We shall freely switch the orders of Cartesian products. Note \index{F@$F',F''$} that
\begin{gather*}
F':=E_{r,\rho}'\times\wtd{\mc B}\qquad\subset \mc D_{r}\times\mc D_{r\rho}\times\wtd{\mc B}\quad(= U'\times \mc D_{r\rho}),\\
F'':=E_{r,\rho}''\times\wtd{\mc B}\qquad\subset \mc D_{\rho}\times\mc D_{r\rho}\times\wtd{\mc B}\quad(= U''\times \mc D_{r\rho})
\end{gather*}
are subsets of $\wtd{\mc C}\times\mc D_{r\rho}$. They are the subsets  we should discard in the sewing process.  Then $\mc C$ is obtained by gluing $\wtd{\mc C}\times \mc D_{r\rho}$ (with $F',F''$ all removed) \index{W@$W,W',W''$} with
\begin{align}
W:=\mc D_{r}\times\mc D_{\rho}\times\wtd{\mc B}.\label{eq10}
\end{align}
To be more precise, we define
\begin{align}
\mc C=W\bigsqcup(\wtd{\mc C}\times \mc D_{r\rho}-F'- F'')\Big/\sim \label{eq67}
\end{align}
where the equivalence $\sim$ is described as follows. Consider the following subsets of $W$:
\begin{gather}
W'=\mc D_{r}^\times\times\mc D_{\rho}\times\wtd{\mc B},\\
W''= \mc D_{r}\times\mc D_{\rho}^\times\times\wtd{\mc B}.
\end{gather}
Then the relation $\sim$ identifies $W'$ and $W''$  respectively via $(\xi,q,\id_{\wtd{\mc B}})$ and $(\varpi,q,\id_{\wtd{\mc B}})$  to
\begin{gather}
\mc D_{r}\times\mc D_{r\rho}\times\wtd{\mc B}-F'\qquad (\subset U'\times\mc D_{r\rho}),\label{eq12}\\
\mc D_{\rho}\times\mc D_{r\rho}\times\wtd{\mc B}-F''\qquad (\subset U''\times\mc D_{r\rho})\label{eq13}
\end{gather}
(recall \eqref{eq11}), which are subsets of $\wtd{\mc C}\times \mc D_{r\rho}-F'-F''$. (In particular, certain open subsets of \eqref{eq12} and \eqref{eq13} are glued together and identified with $W'\cap W''$.)

We now define $\pi$. It is easy to see that the projection
\begin{align}
\wtd\pi\times \id:\wtd{\mc C}\times \mc D_{r\rho}\rightarrow \wtd{\mc B}\times \mc D_{r\rho}=\mc B, \label{eq68}
\end{align}
agrees with
\begin{gather}
\pi_{r,\rho}\times \id:W=\mc D_{r}\times\mc D_{\rho}\times\wtd{\mc B}\rightarrow \mc D_{r\rho}\times\wtd{\mc B}=\mc B \label{eq69}
\end{gather}
when restricted to $W',W''$.  Thus, we have a well-defined surjective holomorphic map $\pi:\mc C\rightarrow\mc B$ whose  restrictions to $\wtd{\mc C}\times \mc D_{r\rho}-F'-F''$ and to  $W$ are $\wtd\pi\otimes\id$ and $\pi_{r,\rho}\otimes\id$ respectively. 

Finally, we extend each $\sgm_i:\wtd{\mc B}\rightarrow\wtd{\mc C}$ constantly to $\wtd{\mc B}\times\mc D_{r\rho}\rightarrow\wtd{\mc C}\times\mc D_{r\rho}$, whose image is disjoint from $F',F''$. Thus $\sgm_i$ can be regarded as a section $\sgm_i:\mc B\rightarrow\mc C$. Likewise, we extend $\eta_i$ constantly over $\wtd{\mc B}$ to a local coordinate at $\sgm_i(\mc B)$. We say that  the $N$-points $\sgm_1,\dots,\sgm_N$ and the local coordinates $\eta_1,\dots,\eta_N$ are \textbf{constant with respect to sewing}.

\subsection*{A short exact sequence}

The goal of this subsection is to recall a short exact sequence \eqref{eq18} (cf. \cite[Eq. (4.2.3)]{Ueno97} or \cite[Eq. (4.6)]{AU07}) which plays an important role in defining a logarithmic connection on sheaves of conformal blocks. In Section \ref{lb52}, we will use this exact sequence to find the differential equations that sewn conformal blocks satisfy.

Consider the discriminant locus $\Delta$ and the critical locus $\Sigma$: \index{zz@$\Delta$} \index{zz@$\Sigma$}
\begin{gather*}
\Delta=\{0\}\times\wtd{\mc B}\qquad (\subset\mc D_{r\rho}\times\wtd{\mc B}=\mc B),\\
\Sigma=\{0\}\times\{0\}\times\wtd{\mc B}\qquad (\subset\mc D_r\times\mc D_\rho\times\wtd{\mc B}=W).
\end{gather*}
Then $\Delta$ is the set of all $b\in\mc B$ such that the fiber $\mc C_b=\pi^{-1}(b)$ is a nodal curve with one node. Outside $\Delta$, the fibers are compact Riemann surfaces. $\Sigma$ is the set of all points of $\mc C$ at which $d\pi$ is not surjective, and is also the set of all nodes of the fibers. Note that
\begin{align*}
\Sigma=W-(W'\cup W'').
\end{align*}
Also, $\Sigma$ is  the set of points of $\mc C$ not coming from $\wtd{\mc C}\times \mc D_{r\rho}-F'- F''$. The union of nodal fibers is \index{C@$\mc C_\Delta$} therefore
\begin{align*}
\mc C_\Delta=\pi^{-1}(\Delta).
\end{align*}

Let $\Theta_{\mc B}(-\log\Delta)$ and $\Theta_{\mc C}(-\log\mc C_\Delta)$ \index{zz@$\Theta_{\mc B}(-\log\Delta)$} \index{zz@$\Theta_{\mc C}(-\log\mc C_\Delta)$} be  the sheaves of sections of $\Theta_{\mc B}$ and $\Theta_{\mc C}$ tangent to $\Delta$ and $\mc C_\Delta$ respectively. Then the differential $d\pi:\Theta_{\mc C}\rightarrow\pi^*\Theta_{\mc B}$ of the map $\pi$ restricts to an $\scr O_{\mc C}$-module homomorphism
\begin{align}
d\pi:\Theta_{\mc C}(-\log\mc C_\Delta)\rightarrow \pi^*\Theta_{\mc B}(-\log\Delta)\label{eq15}
\end{align}
(the later is short for $\pi^*(\Theta_{\mc B}(-\log\Delta))$), which is indeed surjective. To understand the meaning and to see the claimed facts, let us describe the two sheaves and the morphism $d\pi$ using coordinates.

We assume $\wtd{\mc B}$ is small enough  to admit a  coordinate \index{zz@$\tau_\blt$} $\tau_\blt=(\tau_1,\dots,\tau_m):\wtd{\mc B}\rightarrow\Cbb^m$.  Then $(q,\tau_\blt)$ is a coordinate of $\mc B$ if we let $q$ be the standard coordinate of $\mc D_{r\rho}$. Then  $\Theta_{\mc B}(-\log\Delta)$ is an $\scr O_{\mc B}$-module generated freely by
\begin{align*}
q\partial_q,\partial_{\tau_1},\dots,\partial_{\tau_m}.
\end{align*}
Their pullback under $\pi$ will also be denoted by the same symbols $q\partial_q,\partial_{\tau_1},\dots,\partial_{\tau_m}$, for simplicity. Choose any $x\in\mc C$. 

Case I. $x\notin\Sigma$. Then $x$ can be regarded as a point  $(\wtd x,q_0)$ of $\wtd{\mc C}\times \mc D_{r\rho}$ disjoint from $F',F''$. Choose a neighborhood $\wtd U\subset\wtd{\mc C}$ of $\wtd x$ together with $\eta\in\scr O(\wtd U)$ univalent on each fiber of $\wtd U$. Choose a neighborhood $V$ of $q_0\in\mc D_{r\rho}$ such that $U:=\wtd U\times V$ is disjoint from $F',F''$. Write $\tau_\blt\circ\wtd\pi$ also as $\tau_\blt$ for short. Then $(\eta,\tau_\blt,q)$ is a coordinate of $U\ni x$. Note that $U\cap\mc C_\Delta$ is described by $q=0$.  The  $\scr O_V$-module $\Theta_{\mc C}(-\log\Delta)|_V$ is generated freely by
\begin{align}
\partial_\eta,q\partial_q,\partial_{\tau_1},\dots,\partial_{\tau_m},\label{eq14}
\end{align} 
and the morphism $d\pi$ in \eqref{eq15}  sends $\partial_\eta$ to $0$ and keeps the other elements of \eqref{eq14}.

Case II. $x\in\Sigma$.  Then $W=\mc D_r\times\mc D_\rho\times\wtd{\mc B}$ is a neighborhood of $x$, and has coordinate $(\xi,\varpi,\tau_\blt)$. Note that $W\cap\mc C_\Delta$ is described by $\xi\varpi=0$. The  $\scr O_W$-module $\Theta_{\mc C}(-\log\Delta)|_W$ is generated freely by
\begin{align}
\xi\partial_\xi,\varpi\partial_\varpi,\partial_{\tau_1},\dots,\partial_{\tau_m},\label{eq16}
\end{align} 
and the morphism $d\pi$ satisfies
\begin{align}
d\pi(\xi\partial_\xi)=d\pi(\varpi\partial_\varpi)=q\partial_q\label{eq81}
\end{align}
(note that $q\partial_q$ is short for $\pi^*(q\partial_q)$) and keeps the other elements of \eqref{eq16}.

It is clear that in both cases, $d\pi$ (in \eqref{eq15}) is surjective. Thus, by letting $\Theta_{\mc C/\mc B}$ \index{zz@$\Theta_{\mc C/\mc B}$} be the kernel of $d\pi$, we obtain an exact sequence of  $\scr O_{\mc C}$-modules
\begin{align}
\boxed{~~0\rightarrow \Theta_{\mc C/\mc B}\rightarrow \Theta_{\mc C}(-\log \mc C_\Delta)\xrightarrow{d\pi}\pi^*\Theta_{\mc B}(-\log \Delta)\rightarrow 0~~}\label{eq18}
\end{align}
In case I resp. case II, $\Theta_{\mc C/\mc B}|_V$ resp. $\Theta_{\mc C/\mc B}|_W$  is generated (freely) by
\begin{align}
\partial_\eta,\qquad\text{resp.}\qquad \xi\partial_\xi-\varpi\partial_\varpi.\label{eq17}
\end{align}
So $\Theta_{\mc C/\mc B}$ is locally free of rank $1$, whose dual module is the relative dualizing sheaf \index{zz@$\omega_{\mc C/\mc B}$} $\omega_{\mc C/\mc B}$. Using \eqref{eq23}, we see that when restricted to $W'$ (resp. $W''$) and under the coordinate $(\xi,q,\tau_\blt)$ (resp. $(\varpi,q,\tau_\blt)$), the section $\xi\partial_\xi-\varpi\partial_\varpi$ in \eqref{eq17} equals
\begin{align}
\xi\partial_\xi\qquad\text{resp.}\qquad -\varpi\partial_\varpi.\label{eq24}
\end{align}
Compare this with \eqref{eq9}, we see that for each $b\in\mc B$, there are natural equivalences
\begin{align*}
\Theta_{\mc C/\mc B}|\mc C_b\simeq\Theta_{\mc C_b},\qquad \omega_{\mc C/\mc B}|\mc C_b\simeq\omega_{\mc C_b}.
\end{align*}

Finally, we remark that when $\fk X$ is a smooth family, $d\pi:\Theta_{\mc C}\rightarrow\pi^*\Theta_{\mc B}$ is surjective. Thus, \eqref{eq18} becomes
\begin{align*}
0\rightarrow \Theta_{\mc C/\mc B}\rightarrow \Theta_{\mc C}\xrightarrow{d\pi}\pi^*\Theta_{\mc B}\rightarrow 0
\end{align*}
where $\Theta_{\mc C/\mc B}$ is the kernel of $d\pi$.

\subsection*{A theorem of Grauert}

Recall that if $C$ is a complex curve and $\scr E$ is a coherent $\scr O_C$-module, then  $H^p(C,\scr E)$ is always finite-dimensional by (for instance) the direct image theorem of Grauert. Also $H^p(C,\scr E)$ vanishes when $p>1$, which follows for instance from the fact that $C$ can be covered by two Stein open subsets. Thus, the character of $\scr E$ is $\chi(C,\scr E)=\dim H^0(C,\scr E)-\dim H^1(C,\scr E)$.

Given a family $\fk X=(\pi:\mc C\rightarrow\mc B)$ of complex curves, recall that we have assumed $\pi$ is proper when $\fk X$ is a smooth family. If $\fk X$ is formed by sewing $\wtd{\fk X}$, then properness still holds and is not hard to check. Finally, in both smooth and singular cases, the map $\pi$ is clearly open. So $\pi$ is flat by  \cite[Sec. 3.20]{Fis76} (see also \cite[Thm. II.2.13]{GPR94} or \cite[Thm. V.2.13]{BS76}). If $\scr E$ is locally free (of finite rank), then $\scr E$ is flat over $\mc B$. Therefore, by a theorem of Grauert \cite{Gra60} (cf. \cite[Thm. III.4.7]{GPR94} or \cite[Thm. III.4.12]{BS76} or \cite[Thm. 9.4.8]{EP96}), we have

\begin{thm}\label{lb11}
Let $\fk X=(\pi:\mc C\rightarrow\mc B)$ be a family of complex curves. Let $\scr E$ be a locally free $\scr O_{\mc C}$-module.

(a) The function
\begin{gather*}
\mc B\rightarrow \mathbb Z, \qquad b\mapsto\chi(\mc C_b,\scr E|\mc C_b)=\dim H^0(\mc C_b,\scr E|\mc C_b)-\dim H^1(\mc C_b,\scr E|\mc C_b)
\end{gather*}
is locally constant. 

(b) For any $p\in\mathbb N$, if the function  $b\mapsto\dim H^p(\mc C_b,\scr E|\mc C_b)$ is  locally constant, then the $\scr O_{\mc B}$-module $R^p\pi_*\scr E$ is  locally free of rank $\dim H^p(\mc C_b,\scr E|\mc C_b)$, and for any $b\in\mc B$, the linear map $(R^p\pi_*\scr E)_b\rightarrow H^p(\mc C_b,\scr E|\mc C_b)$ defined by  restricting the sections $s\mapsto s|{\mc C_b}$ induces an isomorphism of vector spaces
\begin{align*}
\frac{(R^p\pi_*\scr E)_b}{\fk m_b\cdot (R^p\pi_*\scr E)_b}\simeq H^p(\mc C_b,\scr E|\mc C_b).
\end{align*}
\end{thm}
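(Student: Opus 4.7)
The plan is to reduce both parts to Grauert's direct image theorem and the cohomology-and-base-change theorem, for which the essential hypotheses to check are that $\pi$ is proper, that $\pi$ is flat, and that $\scr E$ is $\scr O_{\mc B}$-flat via $\pi$. The paragraph immediately preceding the theorem already handles flatness: $\pi$ is open (locally it is either the projection $\wtd\pi\times\id$ or the product $\pi_{r,\rho}\times\id$, and $\pi_{r,\rho}$ is open since $(\xi,\varpi)\mapsto\xi\varpi$ is open), and an open holomorphic map to a complex manifold is flat by \cite[Sec. 3.20]{Fis76}. Since $\scr E$ is locally free over $\scr O_{\mc C}$ and $\pi$ is flat, $\scr E$ is automatically flat over $\scr O_{\mc B}$ in the required sense.

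The one item requiring genuine verification is properness of $\pi$ when $\fk X$ is obtained by sewing. Using the explicit description \eqref{eq67}, one argues fiberwise: for $q\neq 0$ the fiber $\mc C_b$ is a compact Riemann surface obtained from $\wtd{\mc C}_{\wtd b}$ (where $b=(q,\wtd b)$) by cutting out two closed subdiscs and gluing the resulting boundary annuli via $\xi\varpi=q$; for $q=0$ the fiber is the nodal curve obtained from $\wtd{\mc C}_{\wtd b}$ by identifying $\sgm'(\wtd b)$ and $\sgm''(\wtd b)$. In both cases the fiber is compact. Combined with the fact that the map $\pi$ factors locally through proper maps (the product of the proper map $\wtd\pi$ with $\id_{\mc D_{r\rho}}$ outside $\Sigma$, and a closed map near $\Sigma$ by direct inspection), one concludes that $\pi$ is proper.

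Granting these hypotheses, part (a) is immediate from Grauert's theorem, noting that any coherent $\scr O_{\mc C_b}$-module on the $1$-dimensional complex curve $\mc C_b$ has vanishing $H^p$ for $p\geq 2$ (e.g.\ by covering $\mc C_b$ with two Stein opens), so the Euler characteristic reduces to $\dim H^0-\dim H^1$; the cited references \cite[Thm. III.4.7]{GPR94}, \cite[Thm. III.4.12]{BS76}, \cite[Thm. 9.4.8]{EP96} directly yield local constancy. For part (b), the hypothesis that $b\mapsto\dim H^p(\mc C_b,\scr E|\mc C_b)$ is locally constant is exactly the trigger for the cohomology-and-base-change conclusion in Grauert's theorem: one obtains simultaneously that $R^p\pi_*\scr E$ is locally free of the stated rank and that the natural base-change map $(R^p\pi_*\scr E)_b/\fk m_b(R^p\pi_*\scr E)_b\to H^p(\mc C_b,\scr E|\mc C_b)$ is an isomorphism.

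The expected main obstacle is not any part of the argument itself, which is a direct invocation of classical machinery, but rather the careful bookkeeping for properness of the sewn family, where one must confirm that the identifications in \eqref{eq67} (involving the removal of $F',F''$) really do produce a proper map over all of $\mc B=\mc D_{r\rho}\times\wtd{\mc B}$, including over the discriminant $\Delta$ where the fibers degenerate.
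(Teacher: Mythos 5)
Your proposal is correct and follows essentially the same route as the paper: the paper does not give a separate proof of this theorem but derives it directly from Grauert's direct image theorem, with the hypotheses (properness of $\pi$, openness hence flatness of $\pi$ via \cite[Sec. 3.20]{Fis76}, and flatness of the locally free $\scr E$ over $\mc B$) verified in the paragraph immediately preceding the statement. Your additional fiberwise discussion of properness for the sewn family only fills in what the paper dismisses as ``not hard to check.''
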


Recall  $R^0\pi_*=\pi_*$, and the (higher) direct image sheaf $R^p\pi_*\scr E$ is an $\scr O_{\mc B}$-module associated to the presheaf $V\mapsto H^p(V,\scr E|_V)$ (for all open $V\subset\mc B$). As a consequence of this theorem, we see $R^p\pi_*\scr E=0$ when $\scr E$ is locally free and $p>1$.

\section{Sheaves of VOAs on families of complex curves}\label{lb23}

Let $\fk X=(\pi:\mc C\rightarrow\mc B)$ be a family of complex curves. Recall that $\Sigma$ is the critical locus, which is empty when the family is smooth. Let $U,V$ be open subsets of $\mc C-\Sigma$, and let $\eta\in\scr O(U),\mu\in\scr O(V)$ be univalent on each fiber of $U$ and $V$ respectively. Then  $(\eta,\pi)$ and $(\mu,\pi)$ are biholomorphic maps from $U$ resp. $V$ to open subsets of $\Cbb\times\mc B$.  \index{zz@$\varrho(\eta\lvert\mu)$} For each $p\in U\cap V$, we define $\varrho(\eta|\mu)_p\in\scr O_{\Cbb,0}$ by
\begin{align}
\varrho(\eta|\mu)_p(z)=\eta\circ(\mu,\pi)^{-1}\big(z+\mu(p),\pi(p)\big)-\eta(p).
\end{align}
Then $\varrho(\eta|\mu)_p$ is a holomorphic function of $z$ on  $\mu\big((U\cap V)_{\pi(p)}\big)$ where $(U\cap V)_{\pi(p)}$ is the fiber $U\cap V\cap \pi^{-1}(\pi(p))$. It is easy to check that for each $n\in\Nbb$,
\begin{align}
\partial_z^n\varrho(\eta|\mu)_p(0)=\partial_\mu^n \eta(p),
\end{align}
where the partial derivative $\partial_\mu$ is defined to be vertical to $d\pi$. From this, we see that $\varrho(\eta|\mu)_p(0)=0$ and  $\partial_z\varrho(\eta|\mu)_p(0)\neq0$. So $\varrho(\eta|\mu)_p$ is an element of $\Gbb$. We thus obtain a family of transformations $\varrho(\eta|\mu):U\cap V\rightarrow\Gbb,p\mapsto \varrho(\eta|\mu)_p$, which  is clearly holomorphic. According to Remark \ref{lb2},  we have an $\scr O_{U\cap V}$-module isomorphism
\begin{align*}
\mc U(\varrho(\eta|\mu)):\Vbb^{\leq n}\otimes_\Cbb\scr O_{U\cap V}\xrightarrow{\simeq} \Vbb^{\leq n}\otimes_\Cbb\scr O_{U\cap V}.
\end{align*}

As in Section \ref{lb5}, $\varrho(\eta|\mu)$ is also described by
\begin{align}
\eta-\eta(p)\big|_{(U\cap V)_{\pi(p)}}=\varrho(\eta|\mu)_p\big(\mu-\mu(p)\big|_{(U\cap V)_{\pi(p)}}\big).\label{eq20}
\end{align}
To see this, one composes both sides of \eqref{eq20} with $(\mu,\pi)^{-1}\big(z+\mu(p),\pi(p)\big)$.  Thus, we can get $\varrho(\eta|\mu)_p$ by restricting $\eta,\mu$ to the fiber $(U\cap V)_{\pi(p)}$ and then using Definition \eqref{eq3}. Therefore, the cocycle relation \eqref{eq19} still holds for holomorphic functions $\eta_1,\eta_2,\eta_3$ univalent on each fiber.

\subsection*{Definition of $\scr V_{\fk X}$}

Sheaves of VOAs were introduced in \cite{FB04} for algebraic families of smooth curves, and were generalized in \cite{DGT19a} to algebraic families of stable curves. Similar to their construction, we now define the sheaf of VOA $\scr V_{\fk X}$ associated to a VOA $\Vbb$ and the analytic family $\fk X$. 

We  \index{VX@$\scr V_{\fk X},\scr V_{\fk X}^{\leq n}$} set
\begin{align*}
\scr V_{\fk X}=\varinjlim_{n\in\Nbb}\scr V_{\fk X}^{\leq n}
\end{align*}
where each $\scr V_{\fk X}^{\leq n}$ is a locally free $\scr O_{\mc C}$-module of rank $\dim\Vbb^{\leq n}$ defined as follows. Outside $\Sigma$, $\scr V_{\fk X}^{\leq n}$ is a vector bundle with transition function $\mc U(\varrho(\eta|\mu))$. Thus, for each open $U\subset\mc C-\Sigma$ and $\eta\in\scr O(U)$ univalent on each fiber, we have an isomorphism of $\scr O_U$-modules \index{U@$\mc U_\varrho(\eta)$} (a trivilization) 
\begin{align}
\mc U_\varrho(\eta):\scr V_{\fk X}^{\leq n}|_U\xrightarrow{\simeq} \Vbb^{\leq n}\otimes_{\Cbb}\scr O_U.\label{eq35}
\end{align}
If $V$ is another open subset of $\mc C-\Sigma$ and $\mu\in\scr O(V)$ is also univalent on each fiber, then on $U\cap V$ we have
\begin{align}
\mc U_\varrho(\eta)\mc U_\varrho(\mu)^{-1}=\mc U(\varrho(\eta|\mu)).
\end{align}
Since \eqref{eq8} holds when restricting to each fiber, we again have that for any section $v$ of $\Vbb^{\leq n}\otimes_{\Cbb}\scr O_{U\cap V}$,
\begin{align}
\mc U_\varrho(\eta)\mc U_\varrho(\mu)^{-1}\cdot v=(\partial_\mu\eta)^n\cdot v~~\mod ~~\Vbb^{\leq n-1}\otimes_{\Cbb}\scr O_{U\cap V}.\label{eq25}
\end{align}

We now assume $\fk X$ is formed by sewing a smooth family $\wtd{\fk X}$ as in section \ref{lb6}. Let $W,W',W'',\xi,\varpi,q$ be as in that section. (See the discussion near \eqref{eq10}.) Then $(\xi,q,\id_{\wtd {\mc B}})$ and $(\varpi,q,\id_{\wtd{\mc B}})$ are respectively biholomorphic maps from $W'$ and $W''$ to complex manifolds, and the projection $\pi$ equals $(q,\id_{\wtd{\mc B}})$ when restricted to $W'$ or $W''$. Thus, $\xi,\varpi$ are univalent on fibers of $W',W''$ respectively.

We shall define $\scr V^{\leq n}_{\fk X}|_{W}$ to be an $\scr O_{W}$-submodule of $\scr V_{\fk X}^{\leq n}|_{W-\Sigma}$ generated (freely) by some sections on $W$ whose restrictions to $W'$ and $W''$ are described under the trivilizations $\mc U_\varrho(\xi)$ and $\mc U_\varrho(\varpi)$ respectively. For that purpose, we need to first calculate the transition function \begin{align*}
\mc U(\varrho(\varpi|\xi)):\Vbb^{\leq n}\otimes_{\Cbb}\scr O_{W'\cap W''}\xrightarrow{\simeq} \Vbb^{\leq n}\otimes_{\Cbb}\scr O_{W'\cap W''}.
\end{align*}

\begin{lm}\label{lb7}
	Choose any $p\in W'\cap W''$. Then we have
	\begin{gather*}
	\varrho(\varpi|\xi)_p(z)=q(p)\upgamma_{\xi(p)}(z)
	\end{gather*}
	and hence
	\begin{align*}
	\mc U(\varrho(\varpi|\xi)_p)=q(p)^{L_0}\mc U(\upgamma_{\xi(p)}).
	\end{align*}
\end{lm}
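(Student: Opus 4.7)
The plan is to unfold the definition of $\varrho(\varpi|\xi)_p$ by restricting to the fiber of $\pi$ through $p$, and then to read off the operator identity using the group representation property of $\mc U$ together with the fact that the composition $q \upgamma_\xi$ factors nicely.

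First I would describe the fiber through $p$. In the coordinates $(\xi,\varpi,\tau_\blt)$ on $W=\mc D_r\times\mc D_\rho\times\wtd{\mc B}$ the projection $\pi$ is $(\xi,\varpi,\tau_\blt)\mapsto(\xi\varpi,\tau_\blt)=(q,\tau_\blt)$. Hence the fiber $W_{\pi(p)}$ through $p$ is cut out by $\tau_\blt=\tau_\blt(p)$ and $\xi\varpi=q(p)$, i.e.\ on this fiber one has the identity $\varpi=q(p)/\xi$. Both $\xi$ and $\varpi$ are univalent on this fiber (on $W'\cap W''$ where $\xi,\varpi\neq 0$), so they are genuine local coordinates there, and $\varrho(\varpi|\xi)_p$ is determined by the relation \eqref{eq20}.

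Next I compute $\varrho(\varpi|\xi)_p(z)$. Setting $z=\xi-\xi(p)$ on the fiber and using $\varpi=q(p)/\xi$ gives
\begin{align*}
\varpi-\varpi(p)=\frac{q(p)}{\xi(p)+z}-\frac{q(p)}{\xi(p)}=q(p)\cdot\upgamma_{\xi(p)}(z),
\end{align*}
where the last equality is just the definition of $\upgamma_{\xi(p)}$. By \eqref{eq20} this identifies $\varrho(\varpi|\xi)_p(z)=q(p)\upgamma_{\xi(p)}(z)$, giving the first claim.

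For the operator identity, I would factor the transformation: let $\beta_c\in\Gbb$ denote the scalar map $z\mapsto cz$. Then $q(p)\upgamma_{\xi(p)}=\beta_{q(p)}\circ\upgamma_{\xi(p)}$. Since $\mc U$ is a representation of $\Gbb$ (proved in Section \ref{lb58}),
\begin{align*}
\mc U\bigl(\varrho(\varpi|\xi)_p\bigr)=\mc U(\beta_{q(p)})\mc U(\upgamma_{\xi(p)}).
\end{align*}
Finally, $\beta_c(z)=cz$ has $c_0=c$ and $c_n=0$ for all $n>0$ in the expansion used in \eqref{eq1}, so $\mc U(\beta_c)=c^{\wtd L_0}$; applied to the vacuum module $\Vbb$ one has $\wtd L_0=L_0$ by Convention \ref{lb1}, giving $\mc U(\beta_{q(p)})=q(p)^{L_0}$ and hence the stated formula. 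The argument is essentially bookkeeping; the only place one might trip is identifying the fiber correctly and remembering that on $\Vbb$ the tilded operator coincides with $L_0$, but neither is a real obstacle.
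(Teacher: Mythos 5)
Your proof is correct and follows essentially the same route as the paper: restrict to the fiber where $\varpi=q(p)/\xi$, compute $\varpi-\varpi(p)=q(p)\upgamma_{\xi(p)}(\xi-\xi(p))$, and identify this with $\varrho(\varpi|\xi)_p$ via \eqref{eq20}. Your explicit factorization $q(p)\upgamma_{\xi(p)}=\beta_{q(p)}\circ\upgamma_{\xi(p)}$ and the remark that $\wtd L_0=L_0$ on $\Vbb$ simply spell out the ``hence'' step that the paper leaves implicit.
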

\begin{proof}
	Choose any $x\in (W'\cap W'')_{\pi(p)}$. Then $\pi(x)=\pi(p)$ and hence $q(x)=q(p)$.  Since $\varpi=\xi^{-1}q$, we have
	\begin{align*}
	\varpi(x)-\varpi(p)=q(p)(\xi(x)^{-1}-\xi(p)^{-1}).
	\end{align*}
	By \eqref{eq20}, we have
	\begin{align*}
	\varpi(x)-\varpi(p)=\varrho(\varpi|\xi)_p(\xi(x)-\xi(p)).
	\end{align*}
	If we compare these two equations and set $z=\xi(x)-\xi(p)$, we obtain
	\begin{align*}
	&\varrho(\varpi|\xi)_p(z)=\varrho(\varpi|\xi)_p(\xi(x)-\xi(p))=q(p)(\xi(x)^{-1}-\xi(p)^{-1})\\
	=&q(p)\big((\xi(p)+z)^{-1}-\xi(p)^{-1} \big)=q(p)\upgamma_{\xi(p)}(z).
	\end{align*}
\end{proof}

We   define $\scr V_{\fk X}^{\leq n}|_{W}$ to be the  $\scr O_{W}$-submodule of $\scr V_{\fk X-\Sigma}^{\leq n}|_{W-\Sigma}$ generated by any section on $W-\Sigma$ whose restrictions to $W'$ and $W''$ are
\begin{align}
\boxed{~~\mc U_\varrho(\xi)^{-1}\big(\xi^{L_0}v\big)\qquad\text{resp.}\qquad \mc U_\varrho(\varpi)^{-1}\big(\varpi^{L_0}\mc U(\upgamma_1)v\big)  ~~}\label{eq21}
\end{align}
where $v\in\Vbb^{\leq n}$. Since $\upgamma_1=\upgamma_1^{-1}$ and hence $\mc U(\upgamma_1)=\mc U(\upgamma_1)^{-1}$, this definition is symmetric with respect to $\xi$ and $\varpi$. To check that \eqref{eq21} is well-defined, we need:

\begin{lm}
	The two sections defined in \eqref{eq21} agree on $W'\cap W''$.
\end{lm}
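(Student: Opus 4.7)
The plan is to verify the agreement by pushing both sections through a common trivialization. Since $W' \cap W''$ is contained in $\mc C - \Sigma$, both sections lie in $\scr V_{\fk X-\Sigma}^{\leq n}|_{W' \cap W''}$, and it suffices to show that the images under $\mc U_\varrho(\varpi)$ coincide. The relevant transition relation is $\mc U_\varrho(\varpi)\mc U_\varrho(\xi)^{-1} = \mc U(\varrho(\varpi|\xi))$, so the claim reduces to
\begin{align*}
\mc U(\varrho(\varpi|\xi))_p \bigl(\xi(p)^{L_0}v\bigr) = \varpi(p)^{L_0}\mc U(\upgamma_1)v
\end{align*}
for every $p \in W' \cap W''$.

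First I would invoke Lemma \ref{lb7} to rewrite the left-hand side as $q(p)^{L_0}\mc U(\upgamma_{\xi(p)})\xi(p)^{L_0}v$. Next, applying the identity \eqref{eq22}, namely $\mc U(\upgamma_\xi)\xi^{\wtd L_0} = \xi^{-\wtd L_0}\mc U(\upgamma_1)$ (which for the vacuum module $\Vbb$ becomes $\mc U(\upgamma_{\xi(p)})\xi(p)^{L_0} = \xi(p)^{-L_0}\mc U(\upgamma_1)$ by Convention \ref{lb1}), the expression simplifies to $q(p)^{L_0}\xi(p)^{-L_0}\mc U(\upgamma_1)v$. Finally, using the sewing relation $\varpi = q/\xi$ on $W' \cap W''$, one has $q(p)^{L_0}\xi(p)^{-L_0} = (q(p)/\xi(p))^{L_0} = \varpi(p)^{L_0}$, yielding the right-hand side.

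The verification is essentially a direct computation, so no serious obstacle is expected; the key inputs are already in place (Lemma \ref{lb7} and the identity \eqref{eq22}). The only minor point to flag is that \eqref{eq22} is stated with $\wtd L_0$, but for the vacuum module $\Vbb$ we are free to take $\wtd L_0 = L_0$ per Convention \ref{lb1}, which is what allows the scalar $\xi(p)^{L_0}$ appearing in the defining formula \eqref{eq21} to commute past $\mc U(\upgamma_{\xi(p)})$ via the stated identity. Once this is observed, the gluing is immediate.
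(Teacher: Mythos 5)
Your proof is correct and follows exactly the same chain of identities as the paper's: apply the transition function $\mc U(\varrho(\varpi|\xi))$, compute it via Lemma \ref{lb7}, commute $\xi^{L_0}$ past $\mc U(\upgamma_{\xi})$ using \eqref{eq22}, and conclude with $q/\xi=\varpi$. Your extra remark that \eqref{eq22} applies with $\wtd L_0=L_0$ on the vacuum module by Convention \ref{lb1} is a correct point of care that the paper leaves implicit.
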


\begin{proof}
Using   \eqref{eq22} and Lemma \ref{lb7}, we check that
	\begin{align*}
	&\mc U_\varrho(\varpi)\mc U_\varrho(\xi)^{-1}\xi^{L_0}v=\mc U(\varrho(\varpi|\xi))\xi^{L_0}v=q^{L_0}\mc U(\upgamma_{\xi})\xi^{L_0}v\\
	=&q^{L_0}\xi^{-L_0}\mc U(\upgamma_1)v=\varpi^{L_0}\mc U(\upgamma_1)v.
	\end{align*}
\end{proof}

It is easy to see that, if we take $v\in E$ where $E$ is a basis of $\Vbb^{\leq n}$ consisting of homogeneous vectors, then $\scr V_{\fk X}^{\leq n}|_{W}$ is generated freely by sections defined by \eqref{eq21} for all $v\in E$. We have completed the definition of the  locally free $\scr O_{\mc C}$-module $\scr V_{\fk X}^{\leq n}$.

\begin{rem}
Since the vacuum vector $\id$ is annihilated by $L_n$ ($n\geq 0$), we see that $\id$ is fixed by any transition function $\mc U(\varrho(\eta|\mu))$. Thus, we can define unambiguously an element $\id\in\scr V_{\fk X}(\mc C-\Sigma)$ (the \textbf{vacuum section}) such that for any open $U\subset\mc C-\Sigma$ and any $\eta\in\scr O(U)$ univalent on each fiber, $\mc U_\varrho(\eta)\id$ is the vaccum vector $\id$ (considered as a constant function). \index{1@$\id$} Also, by \eqref{eq21}, it is clear that
\begin{align*}
	\id\in\scr V_{\fk X}(\mc C).
\end{align*}
\end{rem}

\subsection*{Restriction to fibers}

By comparing the transition functions and looking at the generating sections near the nodes, it is easy to see:

\begin{pp}\label{lb10}
For any $n\in\Nbb$ and $b\in\mc B$, we have a natural isomorphism of $\scr O_{\mc C_b}$-modules
\begin{align*}
\scr V_{\fk X}^{\leq n}|\mc C_b\simeq\scr V_{\mc C_b}^{\leq n}.
\end{align*}
\end{pp}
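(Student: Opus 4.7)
The plan is to produce the isomorphism separately on the smooth locus of $\mc C_b$ and (if the fiber is nodal) at the node, and check they glue. Set $\Sigma_b = \Sigma \cap \mc C_b$: empty if $b \notin \Delta$, and a single node if $b \in \Delta$.

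On $\mc C_b \setminus \Sigma_b$, both sheaves are built from the same template. Given an open $U \subset \mc C - \Sigma$ and $\eta \in \scr O(U)$ univalent on each fiber, $\eta|_{U \cap \mc C_b}$ is univalent on the smooth curve $U \cap \mc C_b$, and by \eqref{eq20} the transition $\varrho(\eta|\mu)_p$ at $p \in \mc C_b$ depends only on the fiberwise restrictions and coincides with $\varrho(\eta|_{\mc C_b}|\mu|_{\mc C_b})_p$ defined by \eqref{eq3}. Hence the trivialization $\mc U_\varrho(\eta)$ of \eqref{eq35} restricts to the trivialization in \eqref{eq4} of $\scr V_{\mc C_b}^{\leq n}|_{U \cap \mc C_b}$; gluing via the cocycle \eqref{eq19} yields a natural isomorphism on $\mc C_b \setminus \Sigma_b$.

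If $b \in \Delta$, write $b = (0, \wtd b)$. Then $W$ contains the node, and $W \cap \mc C_b = \{\xi\varpi=0\}$ has normalization identifying with a neighborhood in $\wtd{\mc C}_{\wtd b}$ of $y'=\sgm'(\wtd b),y''=\sgm''(\wtd b)$, with $\xi|_{\mc C_b}, \varpi|_{\mc C_b}$ serving as local coordinates there. Restricting a generator \eqref{eq21} of $\scr V_{\fk X}^{\leq n}|_W$ to $\mc C_b$ yields, on the two branches,
\begin{align*}
\mc U_\varrho(\xi|_{\mc C_b})^{-1}\big(\xi|_{\mc C_b}^{L_0} v\big) \quad \text{and} \quad \mc U_\varrho(\varpi|_{\mc C_b})^{-1}\big(\varpi|_{\mc C_b}^{L_0} \mc U(\upgamma_1) v\big),
\end{align*}
which is precisely the generator \eqref{eq6} of $\scr V_{\mc C_b}^{\leq n}$ at the node. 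Since the sections \eqref{eq21} freely generate $\scr V_{\fk X}^{\leq n}|_W$ over $\scr O_W$ when $v$ ranges over a homogeneous basis of $\Vbb^{\leq n}$, restriction carries free generators to free generators, and identifies $\scr V_{\fk X}^{\leq n}|_W \otimes_{\scr O_{\mc C}} \scr O_{\mc C_b}$ with $\scr V_{\mc C_b}^{\leq n}|_{W\cap\mc C_b}$ in its presentation \eqref{eq7}. On the overlaps $W' \cap \mc C_b$ and $W'' \cap \mc C_b$ the smooth-locus and near-node identifications are given by the same formula, so the two local isomorphisms glue to the desired one.

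The principal obstacle is the nodal case: one must verify that restriction to the fiber preserves the free generation by \eqref{eq21}, so that the restricted $\scr O_{\mc C_b}$-module is indeed generated by \eqref{eq6} in the precise sense used to define $\scr V_{\mc C_b}^{\leq n}$ in Section \ref{lb5}. This reduces, via the homogeneous-basis reformulation \eqref{eq7}, to the observation that $q(b)=0$ leaves the generating formulas \eqref{eq21} intact upon restriction. The smooth-locus statement, by contrast, is essentially automatic from \eqref{eq20} and the cocycle condition.
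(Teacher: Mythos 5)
Your proposal is correct and follows exactly the route the paper indicates (it states the result follows "by comparing the transition functions and looking at the generating sections near the nodes"): on the smooth locus the fiberwise nature of $\varrho(\eta|\mu)$ via \eqref{eq20} identifies the transition functions, and at the node the free generators \eqref{eq21} restrict to the free generators \eqref{eq6}/\eqref{eq7}. You have merely written out the details the paper leaves implicit.
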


Let now $\fk X=(\pi:\mc C\rightarrow\mc B;\sgm_1,\dots,\sgm_N)$ be a family of $N$-pointed complex curves.

\begin{thm}\label{lb8}
Let $n\in\Nbb$. Then  there exists $k_0\in\mbb N$ such that for any $k>k_0$, the $\scr O_{\mc B}$-module  $\pi_*\big(\scr V_{\fk X}^{\leq n}\otimes\omega_{\mc C/\mc B}(k\SX)\big)$ is locally free, and for any $b\in\mc B$ there is a natural isomorphism of vector spaces
\begin{align}
	\frac{\pi_*\big(\scr V_{\fk X}^{\leq n}\otimes\omega_{\mc C/\mc B}(k\SX)\big)_b}{~\fk m_b\cdot\pi_*\big(\scr V_{\fk X}^{\leq n}\otimes\omega_{\mc C/\mc B}(k\SX)\big)_b~}\simeq H^0\big(\mc C_b,\scr V_{\mc C_b}^{\leq n}\otimes\omega_{\mc C_b}(k\SX(b))\big)\label{eq36}
\end{align} 
defined by  restriction of sections. In particular,  $\dim  H^0\big(\mc C_b,\scr V_{\mc C_b}^{\leq n}\otimes\omega_{\mc C_b}(k\SX(b))\big)$ is locally constant over $b$.
\end{thm}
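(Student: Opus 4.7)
The plan is to combine the fiberwise vanishing Theorem \ref{lb9} with Grauert's theorem (Theorem \ref{lb11}). Throughout, write $\scr E_k=\scr V_{\fk X}^{\leq n}\otimes\omega_{\mc C/\mc B}(k\SX)$, which is a locally free $\scr O_{\mc C}$-module of finite rank since $\scr V_{\fk X}^{\leq n}$ is locally free, $\omega_{\mc C/\mc B}$ is a line bundle, and $\SX$ is a Cartier divisor (each $\sgm_i$ being a section of a submersion).

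First, I would identify the restriction to fibers. Combining Proposition \ref{lb10} with the canonical identification $\omega_{\mc C/\mc B}|\mc C_b\simeq\omega_{\mc C_b}$ recorded after \eqref{eq24}, one obtains a natural isomorphism
\begin{align*}
\scr E_k|\mc C_b\;\simeq\;\scr V_{\mc C_b}^{\leq n}\otimes\omega_{\mc C_b}(k\SX(b)).
\end{align*}
Next, I would choose $k_0$ so that $H^1$ of this sheaf vanishes uniformly in $b$. For each $b\in\mc B$, let $\wtd g(b)$ be the largest genus among the connected components of the normalization $\wtd{\mc C}_b$, and let $M(b)\in\{0,1\}$ be the number of nodes of $\mc C_b$. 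When $\fk X$ is a smooth family, Ehresmann's theorem forces $\wtd g(b)$ to be locally constant and $M(b)\equiv 0$. When $\fk X$ is obtained by sewing a smooth $\wtd{\fk X}$ over $\wtd{\mc B}$, Ehresmann applied to $\wtd\pi$ shows $\wtd g$ is constant on connected components of $\wtd{\mc B}$; for $b=(q,b')\in\mc D_{r\rho}\times\wtd{\mc B}$ the normalization of $\mc C_b$ is either $\wtd{\mc C}_{b'}$ itself (when $q=0$) or a smoothing thereof (when $q\neq 0$), and $M(b)\leq 1$. Since $\mc B$ has finitely many connected components, the quantities $\wtd g:=\sup_b\wtd g(b)$ and $M:=\sup_b M(b)$ are finite. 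Pick any $k_0>n\cdot\max\{0,2\wtd g-2\}+2M$; then Theorem \ref{lb9} gives
\begin{align*}
H^1\big(\mc C_b,\,\scr V_{\mc C_b}^{\leq n}\otimes\omega_{\mc C_b}(k\SX(b))\big)=0\qquad\text{for all }b\in\mc B\text{ and all }k>k_0.
\end{align*}

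With the uniform vanishing in hand, the rest is a direct application of Grauert. By Theorem \ref{lb11}(a), the Euler characteristic $\chi(\mc C_b,\scr E_k|\mc C_b)$ is locally constant on $\mc B$; combined with the vanishing of $H^1$, the function $b\mapsto\dim H^0(\mc C_b,\scr E_k|\mc C_b)$ is locally constant. Theorem \ref{lb11}(b) applied with $p=0$ and the sheaf $\scr E_k$ then yields that $\pi_*\scr E_k$ is locally free of the corresponding rank and provides the canonical base-change isomorphism \eqref{eq36}.

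The only substantive obstacle I anticipate is the uniform bound on $\wtd g$ and $M$ over all of $\mc B$; once that is verified via Ehresmann (using the hypothesis that $\mc B$ has finitely many connected components and, in the sewn case, the structure of the sewing construction of Section \ref{lb6}), the theorem reduces to a standard invocation of Grauert's direct image theorem.
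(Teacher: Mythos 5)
Your proposal is correct and follows essentially the same route as the paper: identify the fiber restriction via Proposition \ref{lb10}, choose $k_0$ from the vanishing Theorem \ref{lb9} uniformly over $\mc B$, and conclude by Grauert's Theorem \ref{lb11}. The only difference is that you spell out the uniform bound on $\wtd g$ and $M$ via Ehresmann's theorem, a point the paper leaves implicit.
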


Recall that the left hand side of \eqref{eq36} is the fiber $\pi_*\big(\scr V_{\fk X}^{\leq n}\otimes\omega_{\mc C/\mc B}(k\SX)\big)\big|b$, which, by Cartan's Theorem A, is formed by the restrictions of global sections if $\mc B$ is Stein.

\begin{proof}
By Theorem \ref{lb9}, we can find $k_0\in\Nbb$ such that for any $k>k_0$, $H^1\big(\mc C_b,\scr V_{\mc C_b}^{\leq n}\otimes\omega_{\mc C_b}(k\SX(b))\big)$ vanishes for any $b\in\mc B$. Since the restriction of $\omega_{\mc C/\mc B}$ to $\mc C_b$ is $\omega_{\mc C_b}$, by Proposition \ref{lb10}, the restriction of $\scr V_{\fk X}^{\leq n}\otimes\omega_{\mc C/\mc B}(k\SX)$ to $\mc C_b$ is naturally equivalent to $\scr V_{\mc C_b}^{\leq n}\otimes\omega_{\mc C_b}(k\SX(b))$.  Thus, our theorem follows easily from Grauert's Theorem \ref{lb11}.
\end{proof}

For any $\scr O_{\mc C}$-module $\scr E$, we \index{SX@$\blt\SX$} set
\begin{align*}
\scr E(\blt\SX)=\varinjlim_{k\in\Nbb}\scr E(k\SX),
\end{align*}
whose sections are meromorphic sections of $\scr E$ with the only possible poles being $\sgm_1(\mc B),\dots,\sgm_N(\mc B)$.

\begin{co}\label{lb16}
If $\mc B$ is a Stein manifold, then for any $n\in\Nbb$ and any $b\in \mc B$, the restrictions to $\mc C_b$ of the elements of $H^0\big(\mc C,\scr V_{\fk X}^{\leq n}\otimes\omega_{\mc C/\mc B}(\blt\SX)\big)$  form the vector space $H^0\big(\mc C_b,\scr V_{\mc C_b}^{\leq n}\otimes\omega_{\mc C_b}(\blt\SX(b))\big)$.
\end{co}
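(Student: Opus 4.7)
The plan is to combine Theorem \ref{lb8} with Cartan's Theorem A on the Stein manifold $\mc B$, and then pass to a filtered colimit in $k$. Since Theorem \ref{lb8} already produces, for each large $k$, a locally free (in particular coherent) $\scr O_{\mc B}$-module $\pi_*\scr F_k$ (writing $\scr F_k := \scr V_{\fk X}^{\leq n}\otimes\omega_{\mc C/\mc B}(k\SX)$) whose fiber at $b$ is canonically identified with the target cohomology group via restriction of sections, the role of the Steinness hypothesis is precisely to guarantee that every fiber is generated by genuine global sections on $\mc C$.

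Concretely, fix $n\in\Nbb$ and $b\in\mc B$, and let $k_0$ be as in Theorem \ref{lb8}. For each $k>k_0$, coherence of $\pi_*\scr F_k$ together with Cartan's Theorem A implies that global sections $H^0(\mc B,\pi_*\scr F_k)=H^0(\mc C,\scr F_k)$ generate the stalk $(\pi_*\scr F_k)_b$ as an $\scr O_{\mc B,b}$-module. Reducing modulo $\fk m_b$ and invoking the natural identification \eqref{eq36} from Theorem \ref{lb8}, I obtain that the restriction-of-sections map
\[
H^0\big(\mc C,\scr F_k\big)\longrightarrow H^0\big(\mc C_b,\scr V_{\mc C_b}^{\leq n}\otimes\omega_{\mc C_b}(k\SX(b))\big)
\]
is surjective for every $k>k_0$.

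To finish, I would pass to the filtered colimit over $k$: the natural inclusions $\scr F_k\hookrightarrow\scr F_{k+1}$ have colimit $\scr V_{\fk X}^{\leq n}\otimes\omega_{\mc C/\mc B}(\blt\SX)$, and since any meromorphic section with poles only along $\SX$ or $\SX(b)$ necessarily has finite pole order, taking $H^0(\mc C,-)$ and $H^0(\mc C_b,-)$ both commute with this colimit. Thus surjectivity at each finite level upgrades to surjectivity of the restriction map onto $H^0(\mc C_b,\scr V_{\mc C_b}^{\leq n}\otimes\omega_{\mc C_b}(\blt\SX(b)))$, which is the claim. There is no serious obstacle here: the argument is essentially a repackaging of Theorem \ref{lb8} with Cartan's Theorem A, the only point worth verifying being the commutation of the colimit with global sections, which is immediate from the filtration by pole order.
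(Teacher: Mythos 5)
Your proposal is correct and follows essentially the same route as the paper: apply Theorem \ref{lb8} together with Cartan's Theorem A to get surjectivity of the restriction map at each finite pole order $k>k_0$, then pass to the limit over $k$ (where the only point needed is that every section over the compact fiber $\mc C_b$ has bounded pole order, so it already lives at some finite level). The paper's proof is just a terser statement of exactly this argument.
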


\begin{proof}
Choose $k_0$ as in Theorem \ref{lb8}. Then by that theorem  and Cartan's Theorem A, we see the claim holds if $\blt\SX$ is replaced by $k\SX$ for all $k>k_0$. The original claim thus follows.
\end{proof}

\subsection*{The subsheaf $\svir_c$}

If we compare \eqref{eq25} with the transition functions of $\Theta_{\mc C/\mc B}$, and  compare \eqref{eq21} with \eqref{eq24}, we immediately see the following result:

\begin{pp}\label{lb12}
For any $n\in\Nbb$, we have an isomorphism of  $\scr O_C$-modules
	\begin{align}
	\scr V_{\fk X}^{\leq n}/\scr V_{\fk X}^{\leq n-1}\simeq\Vbb(n)\otimes_{\Cbb}\Theta_{\mc C/\mc B}^{\otimes n}.
	\end{align}
such that under this isomorphism, if $U\subset \mc C-\Sigma$ is open and smooth, and $\eta\in\scr O(U)$ is univalent on each fiber of $U$, then for any $v\in\Vbb(n)$, $v\otimes \partial_\eta^n$ is identified with the equivalence class of $\mc U_\varrho(\eta)^{-1}v$.
\end{pp}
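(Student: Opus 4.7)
The plan is to imitate the proof of Proposition \ref{lb3}, constructing the isomorphism separately on the smooth locus $\mc C-\Sigma$ and on a neighborhood $W$ of the node, and then verifying the two descriptions agree on the overlap $W\cap(\mc C-\Sigma)=W'\cup W''$. On $\mc C-\Sigma$, for any open $U$ and any $\eta\in\scr O(U)$ univalent on each fiber, I send the class of $\mc U_\varrho(\eta)^{-1}v$ (for $v\in\Vbb(n)$) to $v\otimes\partial_\eta^n$. By \eqref{eq25}, if $\mu$ is another such coordinate on $V$, then
\[
\mc U_\varrho(\eta)\mc U_\varrho(\mu)^{-1}v \equiv (\partial_\mu\eta)^n v \mod \Vbb^{\leq n-1}\otimes_{\Cbb}\scr O_{U\cap V},
\]
which matches the transition $\partial_\mu^n\mapsto(\partial_\mu\eta)^n\partial_\eta^n$ for $\Theta_{\mc C/\mc B}^{\otimes n}$. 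Hence the local assignments glue to an isomorphism of $\scr O_{\mc C-\Sigma}$-modules.

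Near the node, I work on $W$ and analyze the generating section from \eqref{eq21}. For $v\in\Vbb(n)$, since $L_0 v=nv$ and $\mc U(\upgamma_1)=e^{L_1}(-1)^{L_0}$ with $L_1$ strictly lowering the $L_0$-grading, one has $\mc U(\upgamma_1)v\equiv (-1)^n v$ modulo $\Vbb^{\leq n-1}$. Therefore the generating section reduces, modulo $\scr V_{\fk X}^{\leq n-1}$, to $\mc U_\varrho(\xi)^{-1}(\xi^n v)$ on $W'$ and to $\mc U_\varrho(\varpi)^{-1}((-\varpi)^n v)$ on $W''$. Under the isomorphism established on $\mc C-\Sigma$, these correspond to $v\otimes\xi^n\partial_\xi^{\otimes n}=v\otimes(\xi\partial_\xi)^{\otimes n}$ on $W'$ and $v\otimes(-\varpi)^n\partial_\varpi^{\otimes n}=v\otimes(-\varpi\partial_\varpi)^{\otimes n}$ on $W''$. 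By \eqref{eq24}, the sections $\xi\partial_\xi$ on $W'$ and $-\varpi\partial_\varpi$ on $W''$ are the restrictions of the single free generator $\xi\partial_\xi-\varpi\partial_\varpi$ of $\Theta_{\mc C/\mc B}|_W$. Consequently the generating section of \eqref{eq21} descends to the free generator $v\otimes(\xi\partial_\xi-\varpi\partial_\varpi)^{\otimes n}$ of $\Vbb(n)\otimes\Theta_{\mc C/\mc B}^{\otimes n}|_W$, and the map extends across $\Sigma$.

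Since both $\scr V_{\fk X}^{\leq n}/\scr V_{\fk X}^{\leq n-1}$ and $\Vbb(n)\otimes\Theta_{\mc C/\mc B}^{\otimes n}$ are locally free $\scr O_{\mc C}$-modules of the same rank $\dim\Vbb(n)$, and the constructed morphism sends a local basis (as $v$ ranges over a homogeneous basis of $\Vbb(n)$) to a local basis in both patches, it is the desired isomorphism. The only nontrivial step is the bookkeeping at the node: one must see that the specific form of the generators in \eqref{eq21} conspires, via the sign $(-1)^n$ coming from $\mc U(\upgamma_1)$ and the identity $\xi^n\partial_\xi^{\otimes n}=(\xi\partial_\xi)^{\otimes n}$, to match the sign convention in \eqref{eq24}; everywhere else the argument is essentially a reprise of Proposition \ref{lb3} performed with parameters.
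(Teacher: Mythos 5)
Your proof is correct and follows exactly the route the paper takes: the paper's own justification is the one-line remark that one compares the transition functions \eqref{eq25} with those of $\Theta_{\mc C/\mc B}^{\otimes n}$ away from $\Sigma$ and compares the generators \eqref{eq21} with \eqref{eq24} near the node, which is precisely the comparison you carry out. Your bookkeeping at the node (the sign $(-1)^n$ from $\mc U(\upgamma_1)$ and the identification $\xi^n\partial_\xi^{\otimes n}=(\xi\partial_\xi)^{\otimes n}$) is the correct elaboration of that remark and mirrors the curve case of Proposition \ref{lb3}.
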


We now assume for simplicity that $\fk X$ is a smooth family, and define an important $\scr O_{\mc C}$-submodule $\svir_c$ \index{Virc@$\svir_c$} of $\scr V_{\fk X}^{\leq 2}$ related to the conformal vector $\cbf\in\Vbb(2)$. See \cite[Sec. 8.2]{FB04}. If $U$ is an open subset of $\mc C$ equipped with  $\eta\in\scr O(U)$ univalent on each fiber, then $\svir_c|_U$ is the  $\scr O_U$-submodule of $\scr V_{\fk X}|_U$ generated (freely) by $\mc U_\varrho(\eta)^{-1}\cbf$ and the vacuum section $\id$, which is locally free of rank $2$. This definition is independent of the choice of $\eta$. Indeed, if $\mu:U\rightarrow\Cbb$ is also univalent on each fiber, then $\mc U_\varrho(\mu)\mc U_\varrho(\eta)^{-1}\cbf=\mc U(\varrho(\mu|\eta))\cbf$, which can be calculated using the actions of $L_n$ ($n\geq 0$) on $\cbf$, is an $\scr O_U$-linear combination of $\cbf$ and $\id$. Thus, by gluing all such $U$, we get $\svir_c$.

By Proposition \ref{lb12}, we have a short exact sequence
\begin{align*}
0\rightarrow\scr V_{\fk X}^{\leq 1} \rightarrow\scr V_{\fk X}^{\leq 2}\xrightarrow{\uplambda}\Vbb(2)\otimes_\Cbb\Theta_{\mc C/\mc B}^{\otimes 2}\rightarrow 0
\end{align*}
where $\uplambda$ is described locally  by sending  $\mc U_\varrho^{-1}(\eta)v$ (where $v\in\Vbb(2)$) to  $v\cdot \partial_\eta^2$ and sending the submodule $\scr V_{\fk X}^{\leq 1}$ to $0$. Using this description of $\uplambda$, it is easy to see that the restriction of $\uplambda$ to the subsheaf $\svir_c$ has image $\cbf\otimes_\Cbb \Theta_{\mc C/\mc B}^{\otimes 2}\simeq \Theta_{\mc C/\mc B}^{\otimes 2}$, and that its kernel is the $\scr O_{\mc C}$-submodule generated by $\id$. Thus, we obtain an exact sequence
\begin{align}
0\rightarrow\scr O_{\mc C} \rightarrow\svir_c\xrightarrow{\uplambda}\Theta_{\mc C/\mc B}^{\otimes 2}\rightarrow 0.
\end{align}
where $\scr O_{\mc C}\simeq\scr O_{\mc C}\cdot\id\subset\svir_c$. If we choose $U\subset\mc C$ and $\eta\in\scr O(U)$ holomorphic on each fiber,  then
\begin{gather*}
\uplambda:\quad \mc U_\varrho(\eta)^{-1}\cbf\mapsto \partial_\eta^2,\qquad \id\mapsto 0.
\end{gather*}
By tensoring with $\omega_{\mc C/\mc B}$, we get an exact sequence
\begin{gather}
0\rightarrow\omega_{\mc C/\mc B} \rightarrow\svir_c\otimes \omega_{\mc C/\mc B}\xrightarrow{\uplambda} \Theta_{\mc C/\mc B}\rightarrow 0\label{eq53}
\end{gather}
whose local expression is
\begin{gather}
\uplambda:\quad \mc U_\varrho(\eta)^{-1}\cbf ~d\eta\mapsto \partial_\eta,\qquad \id ~d\eta\mapsto 0.
\end{gather}

\section{Conformal blocks associated to families of complex curves}\label{lb28}

Let $\fk X=(\pi:\mc C\rightarrow\mc B;\sgm_1,\dots,\sgm_N)$ be a family of $N$-pointed complex curves. For each $b\in\mc B$, \index{Xb@$\fk X_b$} 
\begin{align*}
\fk X_b=(\mc C_b;\sgm_1(b),\dots,\sgm_N(b))
\end{align*}
is an $N$-pointed complex curve. If $\fk X$ is equipped with local coordinates $\eta_1,\dots,\eta_N$ at $\sgm_1(\mc B),\dots,\sgm_N(\mc B)$, then, by restricting to $\mc C_b$, $\fk X_b$ also has local coordinates.

Let $\Wbb_1,\dots,\Wbb_N$ be $V$-modules. Then for each $b\in\mc B$, $\scr T_{\fk X_b}^*(\Wbb_\blt)$ is the space of conformal blocks associated to $\Wbb_\blt$ and $\fk X_b$. In this section, we shall define conformal blocks $\upphi$ associated to the family $\fk X$. $\upphi$ is a function on $\mc B$ whose value $\upphi(b)$ at each $b\in\mc B$ is an element of $\scr T_{\fk X_b}^*(\Wbb_\blt)$. Moreover, we require that $\upphi$ is a holomorphic function in a certain sense. Since this property is local, we  assume  that $\mc B$ is small enough such that $\fk X$ admits local coordinates. This assumption will be dropped in the later half of this section.

\subsection*{Definition of conformal blocks}

We need to explain how the vector spaces $\scr W_{\fk X_b}(\Wbb_\blt)$ (for all $b\in\mc B$) form an (infinite rank) holomorphic vector bundle $\scr W_{\fk X}(\Wbb_\blt)$.

Define $\scr W_{\fk X}(\Wbb_\blt)$ \index{WX@$\scr W_{\fk X}(\Wbb_\blt)$} to be an infinite rank locally free $\scr O_{\mc B}$-module as follows. For any  local coordinates $\eta_1,\dots,\eta_N$ of $\fk X$ at $\sgm_1(\mc B),\dots,\sgm_N(\mc B)$ respectively, we have a trivialization \index{U@$\mc U(\eta_\blt)$}
\begin{align}
\mc U(\eta_\blt)\equiv \mc U(\eta_1)\otimes\cdots\otimes\mc U(\eta_N):\scr W_{\fk X}(\Wbb_\blt)\xrightarrow{\simeq}\Wbb_\blt\otimes_\Cbb\scr O_{\mc B}\label{eq37}
\end{align}
such that if $\mu_\blt$ is another set of local coordinates, then the transition function
\begin{align*}
\mc U(\eta_\blt)\mc U(\mu_\blt)^{-1}:\Wbb_\blt\otimes_\Cbb\scr O_{\mc B}\xrightarrow{\simeq}\Wbb_\blt\otimes_\Cbb\scr O_{\mc B}
\end{align*}
is defined such that for any constant section $w_\blt=w_1\otimes\cdots\otimes w_N\in\Wbb_\blt$, $\mc U(\eta_\blt)\mc U(\mu_\blt)^{-1}w_\blt$, as a $\Wbb_\blt$-valued holomorphic function, satisfies
\begin{align}
&\Big(\mc U\big(\eta_\blt\big)\mc U\big(\mu_\blt\big)^{-1}w_\blt\Big )(b)\equiv \Big(\mc U\big(\eta_\blt\big|\mu_\blt\big)^{-1}w_\blt\Big )(b)\nonumber\\
=&\mc U\big((\eta_1|\mu_1)_b \big)w_1\otimes\mc U\big((\eta_2|\mu_2)_b \big)w_2\otimes\cdots\otimes \mc U\big((\eta_N|\mu_N)_b \big)w_N \label{eq33}
\end{align}
for any $b\in \mc B$. Here, for each $1\leq i\leq N$, $(\eta_i|\mu_i)_b$ is the element in $\Gbb$ satisfying
\begin{align}
(\eta_i|\mu_i)_b(z)=\eta_i\circ(\mu_i,\pi)^{-1}(z,b).
\end{align}
If we compare the transition functions \eqref{eq32} and \eqref{eq33}, we see that there is a natural and coordinate-independent isomorphism of vector spaces
\begin{align*}
\scr W_{\fk X}(\Wbb_\blt)|b\simeq \scr W_{\fk X_b}(\Wbb_\blt)
\end{align*}
where $\scr W_{\fk X_b}(\Wbb_\blt)$ is defined near \eqref{eq34}. We shall identify these two spaces in the following.

\begin{df}\label{lb17}
A \textbf{conformal block} $\upphi$ associated to $\Wbb_\blt,\fk X$ (and defined over $\mc B$) is an $\scr O_{\mc B}$-module homomorphism $\upphi:\scr W_{\fk X}(\Wbb_\blt)\rightarrow\scr O_{\mc B}$ whose restriction to each fiber $\scr W_{\fk X_b}(\Wbb_\blt)$ is an element of $\scr T_{\fk X_b}^*(\Wbb_\blt)$, i.e., vanishes on $H^0\big(\mc C_b,\scr V_{\mc C_b}\otimes\omega_{\mc C_b}(\blt\SX(b))\big)\cdot\scr W_{\fk X_b}(\Wbb_\blt)$. The vector space of all such $\upphi$ is denoted \index{T@$\scr T_{\fk X}^*(\Wbb_\blt),\scr T_{\fk X}^*(\Wbb_\blt)(\mc B)$} by $\scr T_{\fk X}^*(\Wbb_\blt)(\mc B)$.
\end{df}

\begin{rem}
The holomorphicity of a conformal block $\upphi$ as a function on $\mc B$ would be easier to understand by shrinking $\mc B$ and choosing local coordinates $\eta_1,\dots,\eta_N$, and identifying $\scr W_{\fk X}(\Wbb_\blt)$  with $\Wbb_\blt\otimes_\Cbb\scr O_{\mc B}$ via $\mc U(\eta_\blt)$. That $\upphi$ is a morphism $\scr W_{\fk X}(\Wbb_\blt)\rightarrow\scr O_{\mc B}$  means precisely that $\upphi$ is an $(\Wbb_\blt)^*$-valued function on $\mc B$ whose evaluation with any vector of $\Wbb_\blt$ is a holomorphic function on $\mc B$; also, $\upphi$ is determined by and can be reconstructed from the corresponding $\scr O(\mc B)$-module homomorphism $\scr W_{\fk X}(\Wbb_\blt)(\mc B)\rightarrow\scr O(\mc B)$. Moreover, $\upphi$ is in $\scr T_{\fk X}^*(\Wbb_\blt)(\mc B)$ if and only if for each $b\in\mc B$, $\upphi(b)$ as a linear functional on $\Wbb_\blt\simeq\scr W_{\fk X_b}(\Wbb_\blt)$ is in $\scr T_{\fk X_b}^*(\Wbb_\blt)$.
\end{rem}

\subsection*{Action of $H^0\big(\mc C,\scr V_{\fk X}\otimes\omega_{\mc C/\mc B}(\blt\SX)\big)$  on $\scr W_{\fk X}(\Wbb_\blt)(\mc B)$}

The above definition of conformal blocks is fiberwise, i.e., the restriction of $\upphi$ to each fiber of the family is a conformal block defined in Section \ref{lb15}. We now give a global description which relates conformal blocks to the sheaf of VOA $\scr V_{\fk X}$.

Recall there are natural equivalences  $\scr V_{\fk X}|\mc C_b\simeq\scr V_{\mc C_b}$ and $\omega_{\mc C/\mc B}|\mc C_b\simeq\omega_{\mc C_b}$. Thus, we can define a linear action of $H^0\big(\mc C,\scr V_{\fk X}\otimes\omega_{\mc C/\mc B}(\blt\SX)\big)$  on $\scr W_{\fk X}(\Wbb_\blt)(\mc B)$ whose restriction to each fiber $H^0(\mc C_b,\scr V_{\mc C_b}\otimes\omega_{\mc C_b}(\blt\SX(b)))$ acts on $\scr W_{\fk X_b}(\Wbb_\blt)$. To see that the image of this action is holomorphic on $\mc B$, let us write down the action more explicitly.

Let $z$ be the standard variable of $\Cbb$. If $\Wbb$ is a $\Vbb$-module, we have a linear map \index{Y@$Y,Y_\Wbb$}
\begin{gather}
Y_\Wbb:\big(\Vbb\otimes_\Cbb\scr O(\mc B)((z))\big)\otimes_{\scr O(\mc B)} (\Wbb\otimes_\Cbb\scr O(\mc B))\rightarrow \Wbb\otimes_\Cbb\scr O(\mc B)((z)),\nonumber\\
v(b,z)\otimes w(b)\mapsto Y_\Wbb(v(b,z),z)w(b)
\end{gather}
which is clearly an $\scr O(\mc B)((z))$-module homomorphism. Note that a section of $\Vbb\otimes_\Cbb\scr O_{\Cbb\times\mc B}$ on a neighborhood of $\{0\}\times \mc B$ can be regarded as an element of $\Vbb\otimes_\Cbb\scr O(\mc B)((z))$ by taking series expansion.

Now, choose local coordinates $\eta_\blt$ of $\fk X$. For each $1\leq i\leq N$, choose a neighborhood $U_i$ of $\sgm_i(\mc B)$ on which $\eta_i$ is defined. Identify $\scr W_{\fk X}(\Wbb_\blt)$ with $\Wbb_\blt\otimes_\Cbb\scr O_{\mc B}$ via $\mc U(\eta_\blt)$. By tensoring with the identity map of $\omega_{\mc C/\mc B}$,  the map \eqref{eq35} induces naturally an $\scr O_{U_i}$-module isomorphism (also denoted by $\mc U_\varrho(\eta_i)$):
\begin{align*}
\mc U_\varrho(\eta_i):\scr V_{\fk X}\otimes\omega_{\mc C/\mc B}(\blt S_{\fk X})\big|_{U_i}\xrightarrow{\simeq}\Vbb\otimes_{\Cbb}\omega_{\mc C/\mc B}(\blt S_{\fk X})\big|_{U_i}.
\end{align*}
Identify $U_i$ with $(\eta_i,\pi)(U_i)$ via $(\eta_i,\pi)$. Then $\eta_i$ as a variable equals the standard variable $z$. If $v$ is a section of $\scr V_{\fk X}\otimes\omega_{\mc C/\mc B}(\blt S_{\fk X})$ defined near $\sgm_i(\mc B)$ and $w_i\in\Wbb_i\otimes_\Cbb\scr O(\mc B)$,  \index{vw@$v\cdot w_i,v\cdot w_\blt$}   then again we have
\begin{align}
v\cdot w_i=\Res_{z=0}Y_{\Wbb_i}(\mc U_\varrho(\eta_i)v,z)w_i,\label{eq55}
\end{align}
which is clearly also an element of $\Wbb_i\otimes_\Cbb\scr O(\mc B)$. If $v\in H^0\big(\mc C,\scr V_{\fk X}\otimes\omega_{\mc C/\mc B}(\blt\SX)\big)$, then as in \eqref{eq27}, $v$ acts on $\Wbb_\blt\otimes_\Cbb\scr O(\mc B)$ by summing up the actions on the tensor components.

\subsection*{Another description of conformal blocks}

We now give a global description of conformal blocks, followed by an application. We shall \emph{not} assume $\fk X$ admits local coordinates. Then $\scr W_{\fk X}(\Wbb_\blt)$ is still an infinite-rank locally free $\scr O_{\mc B}$-module whose local trivializations are given by \eqref{eq37}. Again, we define conformal blocks using Definition \ref{lb17}.

\begin{thm}\label{lb18}
Let $V\subset \mc B$ be an open subset of $\mc B$, and let $\upphi:\scr W_{\fk X_V}(\Wbb_\blt)\rightarrow\scr O_V$ be an $\scr O_V$-module homomorphism. Assume that $\mc B$ is Stein. Then $\upphi$ is in $\scr T_{\fk X_V}^*(\Wbb_\blt)(V)$ if and only if the  evaluation of $\upphi$ with the restriction  of any element \index{JB@$\scr J(\mc B)$} of 
\begin{align}
\scr J(\mc B)=H^0\big(\mc C,\scr V_{\fk X}\otimes\omega_{\mc C/\mc B}(\blt\SX)\big)\cdot \scr W_{\fk X}(\Wbb_\blt)(\mc B)\label{eq51}
\end{align}
to $V$ is the zero function.
\end{thm}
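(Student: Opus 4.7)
The plan is to prove the two directions separately, with the key technical tool being compatibility of the global action of $H^0(\mc C,\scr V_{\fk X}\otimes\omega_{\mc C/\mc B}(\blt\SX))$ on $\scr W_{\fk X}(\Wbb_\blt)(\mc B)$ with restriction to fibers. Concretely, I would first check that for $v\in H^0(\mc C,\scr V_{\fk X}\otimes\omega_{\mc C/\mc B}(\blt\SX))$ and $w\in\scr W_{\fk X}(\Wbb_\blt)(\mc B)$, and for any $b\in\mc B$, one has $(v\cdot w)|b=(v|_{\mc C_b})\cdot(w|b)$, where the right hand side is the fiberwise action defined in Section \ref{lb15}. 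This is visible directly from the local residue formula \eqref{eq55}: choose a local coordinate $\eta_i$, expand $\mc U_\varrho(\eta_i)v$ as a power series in $z$ with coefficients in $\scr O(\mc B)$, and observe that both the residue and the $Y_{\Wbb_i}$ operation commute with evaluation at $b$; compatibility with the coordinate-independent action \eqref{eq29} then follows from the agreement of the trivializations \eqref{eq37} and \eqref{eq34} over $b$.

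Granted this compatibility, the ``only if'' direction is immediate: for $v,w$ as above and $b\in V$, the element $v|_{\mc C_b}$ lies in $H^0(\mc C_b,\scr V_{\mc C_b}\otimes\omega_{\mc C_b}(\blt\SX(b)))$ (trivially, by restriction), and $w|b\in\scr W_{\fk X_b}(\Wbb_\blt)$, so $\upphi(b)((v\cdot w)|b)=\upphi(b)((v|_{\mc C_b})\cdot(w|b))=0$ because $\upphi(b)$ is a conformal block. Hence the holomorphic function $\upphi((v\cdot w)|_V)$ vanishes identically on $V$.

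For the ``if'' direction, fix $b\in V$ and elements $u\in H^0(\mc C_b,\scr V_{\mc C_b}\otimes\omega_{\mc C_b}(\blt\SX(b)))$ and $w_b\in\scr W_{\fk X_b}(\Wbb_\blt)$; I must show $\upphi(b)(u\cdot w_b)=0$. The strategy is to exhibit lifts $\tilde v\in H^0(\mc C,\scr V_{\fk X}\otimes\omega_{\mc C/\mc B}(\blt\SX))$ with $\tilde v|_{\mc C_b}=u$ and $\tilde w\in\scr W_{\fk X}(\Wbb_\blt)(\mc B)$ with $\tilde w|b=w_b$. Then $\tilde v\cdot\tilde w\in\scr J(\mc B)$, the hypothesis gives $\upphi((\tilde v\cdot\tilde w)|_V)=0$ in $\scr O(V)$, and evaluating at $b$ together with the compatibility above yields the desired vanishing. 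Existence of $\tilde v$ is exactly Corollary \ref{lb16}, since $\mc B$ is Stein. For $\tilde w$, I would use the filtration of $\scr W_{\fk X}(\Wbb_\blt)$ by the finite-rank subbundles corresponding to $\Wbb_1^{\leq n_1}\otimes\cdots\otimes\Wbb_N^{\leq n_N}$; by \eqref{eq5}, the transition functions \eqref{eq33} preserve this filtration, so each piece is a genuine holomorphic vector bundle on the Stein manifold $\mc B$, and Cartan's Theorem A lifts any fiber element at $b$ to a global section.

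I expect the only mildly delicate step to be the fiberwise compatibility identity $(\tilde v\cdot\tilde w)|b=(\tilde v|_{\mc C_b})\cdot(\tilde w|b)$ stated in a coordinate-free manner: one has to unwind the trivializations $\mc U(\eta_\blt)$ and $\mc U_\varrho(\eta_i)$ and verify that restriction to $b$ carries the family version of \eqref{eq26}--\eqref{eq27} to its fiberwise counterpart. Everything else is a matter of combining Corollary \ref{lb16} with the elementary fact that on a Stein base, a finite-rank locally free sheaf has enough global sections to realize any prescribed fiber value.
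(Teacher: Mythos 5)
Your proposal is correct and follows essentially the same route as the paper: the ``only if'' direction is the definitional compatibility of the global action with restriction to fibers, and the ``if'' direction rests on Corollary \ref{lb16} (surjectivity of restriction of global sections onto fiber sections over a Stein base), supplemented by the standard lifting of a fiber vector of $\scr W_{\fk X}(\Wbb_\blt)$ via the finite-rank filtration and Cartan's Theorem A. The paper states this proof in one line; you have merely filled in the same details.
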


Note that we have omitted $\Span_\Cbb$ in \eqref{eq51}.

\begin{proof}
The ``only if" part is obvious by the definition of conformal blocks and does not require $\mc B$ to be Stein. The ``if" part follows from Corollary \ref{lb16}.
\end{proof}

For any open subset $V\subset\mc B$, let $\mc C_V=\pi^{-1}(V)$. \index{CVXV@$\mc C_V,\fk X_V$} Then $\fk X$ restricts to
\begin{align*}
\fk X_V:=(\pi:\mc C_V\rightarrow V;\sgm_1|_V,\dots,\sgm_N|_V).
\end{align*}
Any set of local coordinates $\eta_\blt$ of $\fk X$ restricts to one of $\fk X_V$.  We also write $\scr T_{\fk X_V}^*(\Wbb_\blt)(V)$ as $\scr T_{\fk X}^*(\Wbb_\blt)(V)$. The reason for this notation will be explained shortly.

\begin{pp}\label{lb36}
Let $\upphi:\scr W_{\fk X}(\Wbb_\blt)\rightarrow\scr O_{\mc B}$ be an $\scr O_{\mc B}$-module homomorphism. Assume that $\mc B$ is connected and contains a non-empty open subset $V$ such that $\upphi|_V:\scr W_{\fk X}(\Wbb_\blt)|_V\rightarrow\scr O_V$ is an element of $\scr T_{\fk X}^*(\Wbb_\blt)(V)$. Then $\upphi$ is an element of $\scr T_{\fk X}^*(\Wbb_\blt)(\mc B)$.
\end{pp}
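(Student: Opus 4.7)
The plan is to exploit the identity theorem for holomorphic functions, combined with the Stein-local characterization of conformal blocks in Theorem \ref{lb18}, together with the connectedness of $\mc B$. The key observation is that for any section $v$ of $\scr V_{\fk X}\otimes\omega_{\mc C/\mc B}(\blt\SX)$ over $\mc C_U$ and any section $w\in\scr W_{\fk X}(\Wbb_\blt)(U)$, the function $b\mapsto \upphi(b)\bigl((v\cdot w)(b)\bigr)$ is a holomorphic function on $U$, so if it vanishes on an open subset of a connected $U$ it vanishes on all of $U$.

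More precisely, I would introduce the set
\begin{align*}
S=\bigl\{b\in\mc B:\upphi|_U\in\scr T_{\fk X}^*(\Wbb_\blt)(U)\text{ for some open neighborhood }U\ni b\bigr\}
\end{align*}
and show that $S$ is open, closed, and non-empty in $\mc B$. Openness is immediate from the definition, and non-emptiness follows from $V\subset S$.

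For closedness, suppose $b_0\in\overline{S}$. Choose a connected Stein open neighborhood $U$ of $b_0$ (this exists, e.g.\ a polydisc in a chart). Then $U\cap S$ is a non-empty open subset of $U$. For any $v\in H^0\bigl(\mc C_U,\scr V_{\fk X}\otimes\omega_{\mc C/\mc B}(\blt\SX)\bigr)$ and $w\in\scr W_{\fk X}(\Wbb_\blt)(U)$, the function $f=\upphi(v\cdot w)$ lies in $\scr O(U)$. For every $b'\in U\cap S$ pick an open $U'\ni b'$ on which $\upphi|_{U'}$ is a conformal block; restricting $v,w$ to $U'$ shows $(v\cdot w)|_{U'}\in\scr J(U')$, so Theorem \ref{lb18} applied to $U'$ (which we may shrink to a Stein open) gives $f|_{U'}=0$. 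Hence $f$ vanishes on the non-empty open subset $U\cap S$ of the connected manifold $U$, and the identity theorem forces $f\equiv 0$ on $U$. Since $v,w$ were arbitrary, $\upphi|_U$ vanishes on $\scr J(U)$, and Theorem \ref{lb18} (applicable because $U$ is Stein) yields $\upphi|_U\in\scr T_{\fk X}^*(\Wbb_\blt)(U)$. Thus $b_0\in S$, proving $S$ is closed.

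By the connectedness of $\mc B$, $S=\mc B$, which in particular means that for each $b\in\mc B$ the restriction $\upphi(b)$ to the fiber $\scr W_{\fk X_b}(\Wbb_\blt)$ lies in $\scr T_{\fk X_b}^*(\Wbb_\blt)$, so $\upphi\in\scr T_{\fk X}^*(\Wbb_\blt)(\mc B)$. The only mildly delicate point is ensuring a connected Stein neighborhood exists around each $b_0$ and that $v,w$ chosen on $U$ can be locally restricted to the smaller $U'$ where the conformal block property is known; both are routine, so the real content of the argument is the identity-theorem step enabled by the holomorphy of $\upphi(v\cdot w)$.
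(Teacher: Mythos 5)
Your argument is correct and is essentially the paper's own proof: both establish the Stein case via the identity theorem applied to the holomorphic functions $\upphi(v\cdot w)$ together with Theorem \ref{lb18}, and then globalize by an open--closed argument on connected Stein neighborhoods (you show your set $S$ is closed, the paper shows the complement of the analogous set $A$ is open, which is the same step). No gaps.
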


\begin{proof}
We first assume $\mc B$ is also Stein. The evaluation of $\upphi$ with any element of $\scr J(\mc B)$ is a holomorphic function on $\mc B$ vanishing on $V$. So it must be constantly $0$. So $\upphi$ is a conformal block by Theorem \ref{lb18}. 

Now, we do not assume $\mc B$ is Stein. Let $A$ be the set of all $b\in\mc B$ such that $b$ has a  neighborhood $U$ satisfying that the restriction $\upphi|_U$ is a conformal block. Then $A$ is open and  non-empty. For any $b\in\mc B-A$, let $U$ be a connected Stein neighborhood of $b$. Then, by the first paragraph, $\upphi|_U$ is a conformal block if $U$ has a non-zero open subset $V$ such that $\upphi|_V$ is a conformal block. Therefore $U$ must be disjoint from $A$. This shows that $\mc B-A$ is open. So $\mc B=A$.
\end{proof}

\begin{rem}\label{lb22}
It is clear that all $\scr T_{\fk X}^*(\Wbb_\blt)(V)$ (where $V\subset\mc B$) form a sheaf of $\scr O_{\mc B}$-modules, which we denote by $\scr T_{\fk X}^*(\Wbb_\blt)$ \index{T@$\scr T_{\fk X}^*(\Wbb_\blt),\scr T_{\fk X}^*(\Wbb_\blt)(\mc B)$} and call the \textbf{sheaf of conformal blocks} associated to $\Wbb_\blt$ and $\fk X$. Let $\scr T_{\fk X}(\Wbb_\blt)$ be the sheaf of $\scr O_{\mc B}$-modules associated to the presheaf
\begin{align*}
V\mapsto \frac{\scr W_{\fk X}(\Wbb_\blt)(V)} {~H^0\big(\mc C_V,\scr V_{\fk X_V}\otimes\omega_{\mc C_V/V}(\blt S_{\fk X_V})\big)\cdot \scr W_{\fk X}(\Wbb_\blt)(V)~}
\end{align*}
and call it the \textbf{sheaf of covacua}. Then, by Theorem \ref{lb18}, $\scr T_{\fk X}^*(\Wbb_\blt)$ is the dual $\scr O_{\mc B}$-module of $\scr T_{\fk X}(\Wbb_\blt)$. Moreover, using Theorem \ref{lb8} or Corollary \ref{lb16}, it is easy to see that for each $b\in\mc B$, the clearly surjective linear map
\begin{align}
\frac{\scr T_{\fk X}(\Wbb_\blt)_b}{\fk m_b\cdot \scr T_{\fk X}(\Wbb_\blt)_b}\rightarrow\scr T_{\fk X_b}(\Wbb_\blt)\label{eq38}
\end{align}
defined by the restriction $\scr W_{\fk X}(\Wbb_\blt)_b\rightarrow \scr W_{\fk X}(\Wbb_\blt)|b$ is injective. Thus, the above two vector spaces have the same dimension, and are clearly equal to that of $\scr T_{\fk X_b}^*(\Wbb_\blt)$.

If $\Vbb$ is $C_2$-cofinite, rational, and $\Vbb(0)=\Cbb\id$, then by the main results of \cite{DGT19b}, the function $b\mapsto \dim\scr T_{\fk X_b}(\Wbb_\blt)$ has finite values and is locally constant.\footnote{Since each  fiber $\mc C_b$ is a projective variety, and since each $\scr V_{\fk X}^{\leq n}$ is locally free and (by comparing the two constructions) equivalent to the analytification of the corresponding one in \cite{DGT19a,DGT19b},  the algebraic results of $\scr T_{\fk X_b}(\Wbb_\blt)$ also hold in the analytic setting.} By Theorem \ref{lb21} (applied to any subfamily $\fk X_V$ where $V$ is Stein), the $\scr O_{\mc B}$-module $\scr T_{\fk X}(\Wbb_\blt)$ is finitely-generated. Thus, $\scr T_{\fk X}(\Wbb_\blt)$ is locally free by an easy consequence of Nakayama's Lemma. (Cf. for instance \cite[Lemma III.1.6]{BS76}.) Thus, $\scr T_{\fk X}^*(\Wbb_\blt)$ is also locally free, and by \eqref{eq38}, its fibers are naturally equivalent to the spaces of conformal blocks. As a consequence, any conformal block associated to a fiber $\fk X_b$ can be extended to a conformal block associated to a family $\fk X_V$ whenever $V$ is Stein.

If $\Vbb$ is only $C_2$-cofinite and $\Vbb(0)=\Cbb\id$, but $\fk X$ is a smooth family, then our claims about $\scr T_{\fk X}(\Wbb_\blt)$ and $\scr T_{\fk X}^*(\Wbb_\blt)$ and their fibers in the last paragraph are still true since,  by \cite{DGT19b}, the above result about dimensions still holds. 
\end{rem}

\section{A finiteness theorem}

Recall that $\Vbb$ is called \textbf{$C_2$-cofinite} if the subspace of $\Vbb$ spanned by $C_2(\Vbb):=\{Y(u)_{-2}v:u,v\in\Vbb \}$  has finite codimension. $\Vbb$ is called \textbf{rational} if every admissible $\Vbb$-module is completely reducible. (An admissible $\Vbb$-module $\Wbb$ satisfies all the requirements in the definition of finitely-admissible $\Vbb$-modules, expect that each $\Wbb(n)$ is not assumed to be finite-dimensional.) The following important result is due to Miyamoto \cite[Lemma 2.4]{Miy04}; see also \cite{GN03} or \cite[Thm. 1]{Buhl02}.

\begin{thm}\label{lb19}
Assume $\Vbb$ is $C_2$-cofinite. Then there exists a finite set $\Ebb$ of homogeneous vectors of $\Vbb$   such that any weak $\Vbb$-module $\Wbb$ generated by a vector $w_0$ is spanned by
	\begin{align}
	Y_\Wbb(v_k)_{-n_k}Y_\Wbb(v_{k-1})_{-n_{k-1}}\cdots Y_\Wbb(v_1)_{-n_1} w_0\label{eq39}
	\end{align}
where $k\in\Nbb$, $v_1,\dots,v_k\in\Ebb$, and $n_1<n_2<\cdots<n_k$.
\end{thm}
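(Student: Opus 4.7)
The plan is to follow Miyamoto's original argument, which leverages $C_2$-cofiniteness together with the iterate formula and the Borcherds commutator formula. First, I would use $C_2$-cofiniteness to choose a finite set $\Ebb$ of homogeneous vectors whose images span the quotient $\Vbb/C_2(\Vbb)$ (where $C_2(\Vbb)$ denotes the subspace spanned by $\{Y(u)_{-2}v\}$), enlarging $\Ebb$ to include $\id$ if needed. Assigning to each monomial $Y_\Wbb(v_k)_{-n_k}\cdots Y_\Wbb(v_1)_{-n_1}w_0$ the weight-type invariant $D=\sum_{j}(\wt(v_j)+n_j-1)$, which records the shift in the $L_0$-grading, produces an exhaustive filtration $F_d\Wbb$ of $\Wbb$. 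The target is to show by induction on $d$ that $F_d\Wbb/F_{d-1}\Wbb$ is spanned by the ordered monomials of the stated form.

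The first reduction rests on the iterate formula
\begin{align*}
Y_\Wbb\bigl(Y(a)_{-j-1}b,z\bigr)=\tfrac{1}{j!}{:}\partial_z^j Y_\Wbb(a,z)Y_\Wbb(b,z){:},
\end{align*}
whose $j=1$ case writes the single mode $Y_\Wbb(Y(a)_{-2}b)_{-n}$ of any $C_2$-vector as an explicit sum of two-mode products $Y_\Wbb(a)_{-s}Y_\Wbb(b)_{-t}$. Combined with the decomposition $\Vbb=\Span(\Ebb)+C_2(\Vbb)$, this permits replacing, iteratively and modulo monomials of strictly smaller weight-type invariant, any occurring mode by a linear combination of modes whose vectors lie in $\Ebb$.

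The second reduction is the Borcherds commutator
\begin{align*}
\bigl[Y_\Wbb(a)_m,Y_\Wbb(b)_n\bigr]=\sum_{i\geq 0}\binom{m}{i}Y_\Wbb\bigl(Y(a)_i b\bigr)_{m+n-i},
\end{align*}
which swaps two adjacent modes modulo a sum of single modes of strictly smaller mode count. An interior induction on the mode count $k$, interleaved with the $C_2$-reduction after each swap so as to return all vectors to $\Ebb$, then rearranges the indices into $n_1\leq n_2\leq\cdots\leq n_k$. To upgrade to strict inequalities, I would apply the $j=0$ iterate formula to extract, for $u,v\in\Ebb$, an identity of the shape
\begin{align*}
Y_\Wbb(u)_{-n}Y_\Wbb(v)_{-n}=Y_\Wbb\bigl(Y(u)_{-1}v\bigr)_{-2n+1}+\sum(\text{two-mode products with unequal indices}),
\end{align*}
which rewrites any adjacent equal-index pair as an unequal pair plus a single mode; the single mode is in turn $C_2$-reduced back into $\Ebb$, and a further induction on the number of index coincidences eliminates them all.

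The main obstacle is the careful setup of the nested induction so that it terminates. Each of the three reduction steps alters the monomial in a different way: the $C_2$-reduction trades a single $C_2$-mode for two-mode products, the commutator swap decreases mode count but may produce vectors outside $\Ebb$, and the equal-index elimination produces both a single mode of a non-$\Ebb$ vector and unequal-index two-mode products. Verifying that all three operations strictly decrease a suitable lexicographic invariant—combining the Miyamoto-type weight $D$, the mode count, and the multiplicity of equal adjacent indices—so that the algorithm terminates on a spanning set of the desired form is the technical heart of the argument, and is the place where $C_2$-cofiniteness enters crucially.
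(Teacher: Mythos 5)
The paper offers no proof of this statement: it is quoted verbatim as Miyamoto's Lemma 2.4 (with Gaberdiel--Neitzke and Buhl as alternative references), so there is no in-paper argument to compare against. Your sketch faithfully reproduces the strategy of those cited proofs --- $\Ebb$ lifting a spanning set of $\Vbb/C_2(\Vbb)$, the iterate formula to return vectors to $\Ebb$, the commutator formula to reorder, and the $j=0$ iterate identity to eliminate coincident indices --- and your diagnosis that everything hinges on the termination of the nested reduction is accurate; that bookkeeping (a well-ordered invariant combining the weight shift, the mode count, and the number of adjacent index coincidences, with lower truncation keeping all normally ordered sums finite on a weak module) is exactly the content of Miyamoto's and Buhl's lemmas, and your proposal defers to it rather than carrying it out.
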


Note that by the lower-truncation property (i.e., for each $v,w$, $Y_\Wbb(v)_nw=0$ for sufficiently large $n$) and that $\Ebb$ is finite, we can assume in the above theorem that $n_1>-L$ for some constant $L\in\Nbb$ depending only on $\Vbb,\Wbb,\Ebb,w_0$. 

\begin{rem}\label{lb61}
It follows that if $\Vbb$ is $C_2$-cofinite, a $\Vbb$-module $\Wbb$ is finitely-generated if and only if it is finitely $\Lss$-semisimple. Indeed, if $\Wbb$ is finitely $\Lss$-semisimple, then \cite[Cor. 3.16]{Hua09} shows that it has finite length, and hence is finitely-generated. In particular, since $\Lni\in\End_\Vbb(\Wbb)$, we see that $\Lni^k$ vanishes on $\Mbb$ for sufficiently large $k$ since it vanishes on those generating vectors. Conversely, if $\Wbb$ is finitely generated, then it is finitely generated by $\Lss$-eigenvectors. Then the above theorem proves the $\Lss$-semisimplicity.
\end{rem}

We will fix this $\Ebb$ in this section.  Theorem \ref{lb19} will be used in the following form.

\begin{co}\label{lb20}
Assume that $\Vbb$ is $C_2$-cofinite. Let $\Wbb$ be a finitely-generated $\Vbb$-module. Then for any $n\in\Nbb$, there exists $\nu(n)\in\Nbb$ such that any $\wtd L_0$-homogeneous vector $w\in\Wbb$ whose weight $\wtd\wt(w)>\nu(n)$ is a sum of vectors in   $Y_\Wbb(v)_{-l}\Wbb(\wtd\wt(w)-\wt(v)-l+1)$ where $v\in\Ebb$ and $l>n$.
\end{co}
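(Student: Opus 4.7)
The plan is to apply Theorem \ref{lb19} to a finite, $\wtd L_0$-homogeneous generating set of $\Wbb$, and then extract the conclusion from a simple counting argument: spanning vectors of the form \eqref{eq39} that fail to have outermost index $n_k>n$ are so constrained that their total weight is bounded in terms of $n$ alone.

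First I would exploit finite generation. Since $\wtd L_0$ is diagonalizable with spectrum in $\Nbb$, decomposing each generator into $\wtd L_0$-eigencomponents produces a finite set of $\wtd L_0$-homogeneous generators $w_0^{(1)},\ldots,w_0^{(r)}$; write $W_0$ for the maximum of their weights. Applying Theorem \ref{lb19} together with the lower-truncation remark following it to each $w_0^{(j)}$, and taking the maximum of the resulting bounds, yields a single $L\in\Nbb$ such that $\Wbb$ is spanned by vectors of the form \eqref{eq39} with $w_0\in\{w_0^{(1)},\ldots,w_0^{(r)}\}$, $v_i\in\Ebb$, and $-L<n_1<n_2<\cdots<n_k$. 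By iterated use of \eqref{eq40}, each such spanning vector is $\wtd L_0$-homogeneous of weight $\wtd\wt(w_0)+\sum_{i=1}^k(\wt(v_i)+n_i-1)$. Consequently any $\wtd L_0$-homogeneous $w\in\Wbb$ is a sum of such spanning vectors, each having the same weight as $w$.

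The main step is then an elementary count. Set $M=\max_{v\in\Ebb}\wt(v)$. If a spanning vector \eqref{eq39} satisfies $n_k\leq n$, then its modes $n_1<n_2<\cdots<n_k$ are distinct integers in the interval $[-L+1,n]$, forcing $k\leq n+L$. Combined with $\wt(v_i)\leq M$ and $n_i\leq n$, the weight of such a vector is bounded by $\nu(n):=W_0+(n+L)(M+n-1)$ (the trivial case $k=0$ is absorbed into the $W_0$ term). Hence, when $\wtd\wt(w)>\nu(n)$, every summand in the decomposition of $w$ must have outer index $n_k>n$. For each such summand, setting $v:=v_k\in\Ebb$ and $l:=n_k>n$, the inner part $Y_\Wbb(v_{k-1})_{-n_{k-1}}\cdots Y_\Wbb(v_1)_{-n_1}w_0^{(j)}$ lies in $\Wbb(\wtd\wt(w)-\wt(v)-l+1)$ by \eqref{eq40}, giving the required decomposition.

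There is no substantive obstacle: once Theorem \ref{lb19} is in hand, the argument reduces to counting strictly increasing integer tuples bounded above by $n$ and below by $-L$. The only minor care is extracting a single $L$ uniform across the finite generating set, which is immediate by taking a maximum.
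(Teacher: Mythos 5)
Your proof is correct and follows essentially the same route as the paper's: the paper likewise reduces to $\wtd L_0$-homogeneous generators, observes that the spanning vectors \eqref{eq39} with $n_k\le n$ form a finite set $T$ (for exactly the counting reasons you give, namely $-L<n_1<\cdots<n_k\le n$ forces $k\le n+L$ and $\Ebb$ is finite), and defines $\nu(n)=\max\{\wtd\wt(w_1):w_1\in T\}$. Your only deviation is making that maximum explicit as $W_0+(n+L)(M+n-1)$; if you keep this formula, replace it by its maximum with $W_0$ to cover the degenerate case $M+n-1<0$, but this is cosmetic.
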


\begin{proof}
Assume without loss of generality that $\Wbb$ is  generated by a single $\wtd L_0$-homogeneous vector $w_0$. Let $T$ be the set of all vectors of the form \eqref{eq39} where $n_k\leq n$. Then, by the above theorem, $T$ is a finite subset of $\Wbb$. Set $\nu(n)=\max\{\wtd\wt(w_1):w_1\in T\}$. If $w\in\Wbb$ is $\wtd L_0$-homogeneous with weight $\wtd\wt(w)>\nu(n)$, then we can also write $w$ as a sum of $\wtd L_0$-homogeneous vectors (with weight $\wtd\wt (w)$) of the form \eqref{eq39}, but now the $n_k$ must be greater than $\nu$ since such vector is not in $T$. This proves that $w$ is a sum of $\wtd L_0$-homogeneous vectors of the form $Y_\Wbb(v)_{-l}w_2$ where $v\in\Ebb$, $l>n$, and $w_2\in\Wbb$ is $\wtd L_0$-homogeneous. The same is true if $\Wbb$ is finitely-generated. By \eqref{eq40}, we have $\wtd\wt(w_2)=\wtd\wt(w)-\wt(v)-l+1$.
\end{proof}

Let $\fk X=(\pi:\mc C\rightarrow\mc B;\sgm_1,\dots,\sgm_N;\eta_1,\dots,\eta_N)$ be a family of $N$-pointed complex curves with local coordinates, and  let $\Wbb_1,\dots,\Wbb_N$ be finitely-generated $\Vbb$-modules. Notice Theorem \ref{lb18} and recall the definition of $\scr J(\mc B)$ in \eqref{eq51}.

\begin{thm}\label{lb21}
Let $\Vbb$ be $C_2$-cofinite, and assume that $\mc B$ is a  Stein manifold. Then $\scr W_{\fk X}(\Wbb_\blt)(\mc B)/\scr J(\mc B)$ is a finitely-generated $\scr O(\mc B)$-module.
\end{thm}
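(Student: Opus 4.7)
The strategy is to produce a finite-dimensional subspace $F \subset \Wbb_\blt$ of $\wtd L_0$-homogeneous vectors whose image in the quotient $\scr W_{\fk X}(\Wbb_\blt)(\mc B)/\scr J(\mc B)$ generates it over $\scr O(\mc B)$, via an induction on the $\wtd L_0$-weight of homogeneous tensors, the inductive step being carried out using carefully tailored global sections of $\scr V_{\fk X}\otimes\omega_{\mc C/\mc B}(\blt\SX)$.

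First I set up the ``cutoff'' sections. Let $\Ebb \subset \Vbb^{\leq n_0}$ be the finite set of homogeneous generators from Theorem \ref{lb19}, fixing $n_0 \in \Nbb$. By Theorem \ref{lb9} choose $k_0$ so that $H^1\bigl(\mc C_b, \scr V_{\mc C_b}^{\leq n_0}\otimes\omega_{\mc C_b}(k\SX(b))\bigr) = 0$ for every $b \in \mc B$ and every $k > k_0$. For $k > k_0$, the short exact sequence
\[
0 \to \scr V_{\fk X}^{\leq n_0}\otimes\omega_{\mc C/\mc B}((k-1)\SX) \to \scr V_{\fk X}^{\leq n_0}\otimes\omega_{\mc C/\mc B}(k\SX) \to \scr Q_k \to 0
\]
has pushforward sequence remaining exact (via Theorem \ref{lb8}), where $\scr Q_k$ is supported on $\SX$ with stalk over $\sgm_i(\mc B)$ naturally identified (through $\mc U_\varrho(\eta_i)$) with the ``order $-k$ Laurent coefficient at $\sgm_i$'' living in $\Vbb^{\leq n_0}\otimes_\Cbb \scr O_{\mc B}$. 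Because $\mc B$ is Stein, Cartan Theorem A upgrades the sheaf surjection to surjectivity of global sections, and iterating down from order $l$ to order $k_0+1$ shows: for any $l > k_0$, any fixed $i$, and any $v \in \Vbb^{\leq n_0}$, there exists a global section $u \in H^0\bigl(\mc C, \scr V_{\fk X}^{\leq n_0}\otimes\omega_{\mc C/\mc B}(\blt\SX)\bigr)$ whose Laurent expansion at $\sgm_i$ equals
\[
\mc U_\varrho(\eta_i)u = v\cdot \eta_i^{-l}\,d\eta_i + \sum_{m \geq -k_0} a_m(b)\,\eta_i^{m}\,d\eta_i,
\]
and whose expansion at each $\sgm_j$ with $j \neq i$ takes the form $\mc U_\varrho(\eta_j)u = \sum_{m \geq -k_0} b^{(j)}_m(b)\,\eta_j^{m}\,d\eta_j$, where all $a_m(b), b^{(j)}_m(b)$ take values in $\Vbb^{\leq n_0}$.

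Next I run the weight induction. Set $n := n_0 + k_0$ and let $\nu := \nu(n)$ be as in Corollary \ref{lb20}. Let $F \subset \Wbb_\blt$ be the (finite-dimensional) span of $\wtd L_0$-homogeneous tensors with total weight $\leq N\nu$. I claim that $\scr W_{\fk X}(\Wbb_\blt)(\mc B) = F\cdot\scr O(\mc B) + \scr J(\mc B)$, which proves the theorem. The base case of induction on $\wtd\wt(w_\blt)$ is immediate; in the inductive step, some component $w_i$ has $\wtd\wt(w_i) > \nu$, so Corollary \ref{lb20} writes $w_i = \sum_s Y_{\Wbb_i}(v_s)_{-l_s}w_{i,s}$ with $v_s \in \Ebb$ and $l_s > n_0 + k_0$. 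For each $s$ I build the corresponding section $u_s$ as above with $l = l_s$ and $v = v_s$ at the slot $i$. Evaluating the identity $u_s \cdot (w_1\otimes\cdots\otimes w_{i,s}\otimes\cdots\otimes w_N) \equiv 0 \pmod{\scr J(\mc B)}$ via \eqref{eq26}, \eqref{eq27}, \eqref{eq55}, the leading residue contribution at position $i$ reproduces $w_1\otimes\cdots\otimes Y_{\Wbb_i}(v_s)_{-l_s}w_{i,s}\otimes\cdots\otimes w_N$, while every remaining contribution is of the form $f(b)\cdot w''_\blt$ with $f \in \scr O(\mc B)$ and $w''_\blt$ homogeneous. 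A direct application of \eqref{eq40} shows that each such error term has $\wtd L_0$-weight at most $\wtd\wt(w_\blt) + (n_0 + k_0) - \wt(v_s) - l_s < \wtd\wt(w_\blt)$, the inequality being strict because $l_s > n_0 + k_0 \geq n_0 + k_0 - \wt(v_s)$. Hence by the inductive hypothesis all error terms belong to $F\cdot\scr O(\mc B) + \scr J(\mc B)$, and so does $w_\blt$.

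The main obstacle is the construction and weight analysis in the middle paragraph: one must control not only the leading Laurent coefficient at $\sgm_i$ but also annihilate all intermediate Laurent coefficients at $\sgm_i$ between orders $l_s - 1$ and $k_0 + 1$, and simultaneously annihilate the pole contributions at the other marked sections $\sgm_j$ down to order $k_0+1$; otherwise the ``secondary'' terms produced by $u_s$ could have weights comparable to or larger than $\wtd\wt(w_\blt)$, breaking the induction. The iterated surjectivity afforded by Theorem \ref{lb8} on a Stein base is exactly what makes these simultaneous prescriptions feasible, and the choice $l_s > n_0 + k_0$ is precisely tuned so that weight-lowering by $l_s$ dominates the weight-raising bounded by $n_0 + k_0$ coming from the uncontrollable Laurent tails.
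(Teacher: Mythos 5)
Your proposal is correct and follows essentially the same route as the paper: reduce the $\wtd L_0$-weight of homogeneous tensors by induction, using Miyamoto's spanning result (Corollary \ref{lb20}) to rewrite a high-weight component as $Y(v)_{-l}w^\circ$ with $l$ large, then lift a prescribed principal part $v\eta_i^{-l}d\eta_i$ to a global section of $\scr V_{\fk X}^{\leq n_0}\otimes\omega_{\mc C/\mc B}(\blt\SX)$ whose uncontrolled tails have pole order at most $k_0$, so that the error terms strictly lower the weight. The only cosmetic differences are that the paper passes in one step from $k_0\SX$ to $l\SX$ rather than peeling off one pole order at a time, and note that the surjectivity on global sections comes from Cartan's Theorem B (vanishing of $H^1$ of the coherent kernel on the Stein base), not Theorem A.
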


\begin{proof}
Since local coordinates are chosen, we identify $\scr W_{\fk X}(\Wbb_\blt)$ with $\Wbb_\blt\otimes_\Cbb\scr O_{\mc B}$.  Let $E=\max\{\wt(v):v\in\Ebb\}$. By Theorem \ref{lb9}, there exists $k_0\in\Nbb$ such that
	\begin{align}
	H^1(\mc C_b,\scr V_{\mc C_b}^{\leq E}\otimes\omega_{\mc C_b}(kS_{\fk X}))=0\label{eq41}
	\end{align}
	for any $b\in\mc B$ and $k\geq k_0$. We fix an arbitrary $k\in\Nbb$ satisfying $k\geq E+k_0$.  
	
Introduce a weight $\wtd\wt$ on $\Wbb_\blt$ such that $\wtd\wt(w_\blt)=\wtd\wt(w_1)+\wtd\wt(w_2)+\cdots+\wtd\wt(w_N)$ when $w_1,\dots,w_N$ are $\wtd L_0$-homogeneous. For each $n\in\Nbb$, $\Wbb_\blt^{\leq n}$ (resp. $\Wbb_\blt(n)$) denotes the (finite-dimensional) subspace spanned by all $\wtd L_0$-homogeneous vectors $w\in\Wbb_\blt$ satisfying $\wtd\wt(w)\leq n$ (resp. $\wtd\wt(w)=n$).  We shall prove by induction that for any $n>N\nu(k)$,  any  vector of $\Wbb_\blt(n)$ (considered as constant sections of $\Wbb_\blt\otimes_\Cbb\scr O(\mc B)$) is a (finite) sum of elements of $\Wbb_\blt^{\leq n-1}\otimes_\Cbb\scr O(\mc B)$ mod  $\scr J(\mc B)$. Then the claim of our theorem follows.

Choose any $w_\blt=w_1\otimes\cdots\otimes w_N\in \Wbb_\blt(n)$ such that $w_1,\dots,w_N$ are $\wtd L_0$-homogeneous. Then one of $w_1,\dots,w_N$ must have $\wtd L_0$-weight greater than $\nu(k)$. Assume, without loss of generality, that $\wtd\wt(w_1)>\nu(k)$. Then, by Corollary \ref{lb20}, $w_1$ is a sum of non-zero $\wtd L_0$-homogeneous vectors of the form $Y_{\Wbb_1}(u)_{-l}w_1^\circ$ where $u\in\Ebb$, $l>k$, $w_1^\circ\in\Wbb_1$, and $\wtd\wt(w_1^\circ)=\wtd\wt(w_1)-\wt(u)-l+1$. Thus $\wtd\wt(w_1)-\wtd\wt(w_1^\circ)\geq l-1\geq k\geq E+k_0.$
	
It suffices to show that each $Y_{\Wbb_1}(u)_{-l}w_1^\circ\otimes w_2\otimes\cdots\otimes w_N$ is a sum of elements of $\Wbb_\blt^{\leq n-1}\otimes_\Cbb\scr O(\mc B)$ mod  $\scr J(\mc B)$. Thus, we assume for simplicity that $w_1=Y_{\Wbb_1}(u)_{-l}w_1^\circ$.  Then
	\begin{align*}
	w_\blt=Y_{\Wbb_1}(u)_{-l}w_1^\circ\otimes w_2\otimes\cdots\otimes w_N.
	\end{align*}
	Set also  
	\begin{align*}
	w_\blt^\circ=w_1^\circ\otimes w_2\otimes\cdots\otimes w_N.
	\end{align*}
	Then $n-\wtd\wt(w_\blt^\circ)= \wtd\wt(w_\blt)-\wtd\wt(w_\blt^\circ)\geq E+k_0.$ Thus
	\begin{align}
	\wtd\wt(w_\blt^\circ)\leq n-E-k_0.\label{eq42}
	\end{align}

Consider the short exact sequence of $\scr O_{\mc C}$-modules
	\begin{align*}
	0\rightarrow \scr V_{\mc C}^{\leq E}\otimes\omega_{\mc C/\mc B}(k_0S_{\fk X})\rightarrow \scr V_{\mc C}^{\leq E}\otimes\omega_{\mc C/\mc B}(lS_{\fk X})\rightarrow\scr G\rightarrow 0
	\end{align*}
	where $\scr G$ is the quotient of the previous two sheaves. By \eqref{eq41}, Proposition \ref{lb10}, and Grauert's Theorem \ref{lb11}, we see that $R^1\pi_*(\scr V_{\mc C}^{\leq E}\otimes\omega_{\mc C/\mc B}(k_0S_{\fk X}))=0$, and $\pi_*(\scr V_{\mc C}^{\leq E}\otimes\omega_{\mc C/\mc B}(k_0S_{\fk X}))$ is locally free. Thus, we obtain an exact sequence of $\scr O_{\mc B}$-modules
	\begin{align*}
	0\rightarrow \pi_*\big(\scr V_{\mc C}^{\leq E}\otimes\omega_{\mc C/\mc B}(k_0S_{\fk X})\big)\rightarrow \pi_*\big(\scr V_{\mc C}^{\leq E}\otimes\omega_{\mc C/\mc B}(lS_{\fk X})\big)\rightarrow \pi_*\scr G\rightarrow 0.
	\end{align*}
Since $\mc B$ is Stein, by Cartan's Theorem B, $H^1(\mc B,\pi_*(\scr V_{\mc C}^{\leq E}\otimes\omega_{\mc C/\mc B}(k_0S_{\fk X})))=0$. Thus, there is an exact sequence
	\begin{align}
	&0\rightarrow H^0\big(\mc B,\pi_*\big(\scr V_{\mc C}^{\leq E}\otimes\omega_{\mc C/\mc B}(k_0S_{\fk X})\big)\big)\rightarrow H^0\big(\mc B,\pi_*\big(\scr V_{\mc C}^{\leq E}\otimes\omega_{\mc C/\mc B}(lS_{\fk X})\big)\big)\nonumber\\
	&\rightarrow H^0\big(\mc B,\pi_*\scr G\big)\rightarrow 0. \label{eq43}
	\end{align}
	
	Note that $H^0\big(\mc B,\pi_*\scr G\big)$ is exactly $\scr G(\mc C)$. Choose mutually disjoint neighborhoods $W_1,\dots,W_N$ of $\sgm_1(\mc B),\dots,\sgm_N(\mc B)$ respectively. For each $1\leq i\leq N$, identify $\scr V_{\mc C}^{\leq E}\otimes\omega_{\mc C/\mc B}|_{W_i}$ with $\Vbb^{\leq E}\otimes_\Cbb\omega_{\mc C/\mc B}|_{W_i}$ via $\mc U_\varrho(\eta_i)$, and identify $\eta_i$ with the standard coordinate $z$ by identifying $W_i$ with $(\eta_i,\pi)(W_i)$. Define an element $\upupsilon\in\scr G(\mc C)$ as follows. $\upupsilon|_{W_1}$ is the equivalence class represented by $uz^{-l}dz$, and $\upupsilon|_{\mc C-\sgm_1(\mc B)}=0$. Since the second map in the above exact sequence is surjective, $\upupsilon$ lifts to an element $\wht\upupsilon$ of $H^0\big(\mc B,\pi_*\big(\scr V_{\mc C}^{\leq E}\otimes\omega_{\mc C/\mc B}(lS_{\fk X})\big)\big)$, i.e., of $\big(\scr V_{\mc C}^{\leq E}\otimes\omega_{\mc C/\mc B}(lS_{\fk X})\big)(\mc C)$. Moreover, by the definition of $\scr G$ as a quotient, for each $1\leq i\leq N$ we have an element $v_i$ of $\Vbb^{\leq E}\otimes_\Cbb\scr O_{\mc C}(k_0 S_{\fk X})(W_i)$ (and hence of $\Vbb^{\leq E}\otimes_\Cbb\scr O_{W_i}(k_0 \sgm_i(\mc B))(W_i)$) such that
	\begin{gather*}
	\wht\upupsilon|_{W_1}=uz^{-l}dz+v_1dz,\\
	\wht\upupsilon|_{W_i}=v_idz\qquad(2\leq i\leq N).
	\end{gather*}
	Notice that $\Res_{z=0}Y(\cdot,z)z^ndz=Y(\cdot)_n$. It follows that the element $\wht\upupsilon\cdot w_\blt^\circ$, which is in $\scr J(\mc B)$,  equals $w_\blt+w_\triangle$ where
	\begin{align*}
	w_\triangle=(v_1dz)\cdot w_1^\circ\otimes w_2\otimes\cdots \otimes  w_i\otimes\cdots\otimes w_N+\sum_{i=2}^Nw_1^\circ\otimes w_2\otimes\cdots \otimes (v_idz)\cdot w_i\otimes\cdots\otimes w_N.
	\end{align*}
	Thus $w_\blt$ equals $-w_\triangle$ mod $\scr J(\mc B)$. For each $1\leq i\leq N$, $v_i$ has pole at $z=0$ with order at most $k_0$. Thus, by \eqref{eq40}, the action of $v_idz$ on $\Wbb_i$ increases the $\wtd L_0$-weight by at most $E+k_0-1$. It follows from \eqref{eq42} that $w_\triangle\in\Wbb_\blt^{\leq {n-1}}\otimes_\Cbb\scr O(\mc B)$. The proof is complete.
\end{proof}

\begin{rem}\label{lb54}
Theorem \ref{lb21} is the complex-analytic analogue of \cite{DGT19b} Thm. 8.4.2, which says that for an algebraic family of complex curves, the sheaf of covacua is coherent (assuming that $\Vbb$ is $C_2$-cofinite and $\Vbb(0)=\Cbb\id$). The key ideas in our proof are similar to theirs.

It is clear that Theorem \ref{lb21} implies $\scr T_{\fk X}(\Wbb_\blt)$ is a finitely-generated $\scr O_{\mc B}$-module. However,  Theorem \ref{lb21} does not seem to imply that $\scr T_{\fk X}(\Wbb_\blt)$ is analytically coherent. This is different from the algebraic setting in  which the sheaves of covacua are algebraically coherent since they are quasi-coherent. Nevertheless, one can show that $\scr T_{\fk X}(\Wbb_\blt)$ is locally free (if $\Vbb$ is also rational)  by combining  the algebraic results with Theorem \ref{lb21}, as explained in Remark \ref{lb22}.

We remark that certain forms of Theorem \ref{lb21} are well-known in the  low genus cases: see \cite[Lemma 4.4.1]{Zhu96}, \cite[Cor. 1.2]{Hua05a}, \cite[Cor. 3.4]{Hua05b}. 
\end{rem}

\section{Schwarzian derivatives}

Let $\fk X=(\pi:\mc C\rightarrow\mc B)$ be a family of compact Riemann surfaces.  Choose an open subset $U\subset \mc C$ and holomorphic functions $\eta,\mu:U\rightarrow\Cbb$ univalent on each fiber. If $f\in\scr O(U)$ and $\partial_\eta f$ is nowhere zero, we define the (partial) \textbf{Schwarzian derivative} of $f$ over $\eta$ to be \index{S@$\Sbf_\eta f,\Sbf_\eta\fk P$}
\begin{align}
\Sbf_\eta f=\frac{\partial_\eta^3f}{\partial_\eta f}-\frac 32 \Big(\frac{\partial_\eta^2f}{\partial_\eta f} \Big)^2
\end{align}
where the partial derivative $\partial_\eta$ is defined with respect to $(\eta,\pi)$, i.e., it is annihilated by $d\pi$ and restricts to $d/d\eta$ on each fiber. Similarly, one can define $\Sbf_\mu f$. 

We refer the reader to \cite{Ahl,Gun} for the basic facts about Schwarzian derivatives. The change of variable formula is easy to calculate:
\begin{align}\label{eq45}
\Sbf_\mu f =(\partial_\mu\eta)^2 \Sbf_\eta f+\Sbf_\mu\eta,
\end{align}
Take $f=\mu$ and notice $\Sbf_\mu\mu=0$, we have
\begin{align}
\Sbf_\mu\eta=-(\partial_\mu\eta)^2\Sbf_\eta\mu.\label{eq46}
\end{align}
Assuming $f$ is also univalent on each fiber, we obtain the cocycle relation.
\begin{align}
\Sbf_\mu\eta\cdot d\mu^2=-\Sbf_\eta\mu\cdot d\eta^2,\qquad \Sbf_\mu f\cdot d\mu^2 + \Sbf_f \eta\cdot df^2+\Sbf_\eta\mu\cdot d\eta^2=0.\label{eq49}
\end{align}

The transition functions of $\svir_c$ (which is a subsheaf of $\scr V_{\fk X}$) defined in Section \ref{lb23} can be expressed by Schwarzian derivatives. Note that $L_0\cbf=2\cbf$, $L_1\cbf=0$, $L_2\cbf=\frac c 2\id$,\footnote{$L_2\cbf=L_2L_{-2}\id=[L_2,L_{-2}]\id=4L_0\id+\frac c 2\id=\frac c 2\id$.} and $L_n\cbf=0$ for all $n>2$. Thus,  if $\rho=\rho(z)\in\Gbb$, then using the formula \eqref{eq1}, we have $\mc U(\rho)\cbf=\rho'(0)^{L_0}e^{c_2L_2}\cbf=\rho'(0)^{L_0}(\cbf+\frac c2c_2\id)=\rho'(0)^2\cbf+\frac c2c_2\id$ where $c$ is the central charge of $\Vbb$, and $c_2$, which is given by \eqref{eq48}, is $\frac 16\Sbf_z\rho(0)$. Replace $\rho$ by $\varrho(\eta|\mu):U\rightarrow\Gbb$. Then $\rho^{(n)}(0)$ should be replaced by $\partial_\mu^n\eta$. Thus the transition function $\mc U(\varrho(\eta|\mu))$ is described by
\begin{gather}
\mc U(\varrho(\eta|\mu))\id=\id,\qquad  \mc U(\varrho(\eta|\mu))\cbf=(\partial_\mu\eta)^2\cbf+\frac {c}{12}\Sbf_\mu\eta \cdot \id.\label{eq47}
\end{gather}

We recall some well-known properties of  Schwarzian derivatives. See \cite{Hub81}.

\begin{pp}\label{lb24}
	The following are true.
	\begin{enumerate}[label=(\arabic*)]
		\item If the restriction of $\eta$ to each fiber $U_b=U\cap\pi^{-1}(b)$ (where $b\in\mc B$) is a M\"obius transformation of $\mu$, i.e., of the form $\frac{a\mu+b}{c\mu+d}$ where $ad-bc\neq 0$, then $\Sbf_\mu \eta=0$.
		\item  Let $Q\in\scr O(U)$. Then, for each $x\in U$, one can find a neighborhood $V\subset U$ of $x$ and a function $f\in\scr O(V)$ univalent on each fiber $V_b=V\cap\pi^{-1}(b)$, such that $\Sbf_\eta f=Q$.
		\item If $f,g\in\scr O(U)$ are univalent on each fiber, then $\Sbf_\eta f=\Sbf_\eta g$ if and only if $\Sbf_f g=0$.
	\end{enumerate}
\end{pp}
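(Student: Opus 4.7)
The plan is to prove the three items in order, relying on the change-of-variable formula \eqref{eq45} and the classical link between Schwarzian derivatives and second-order linear ODEs.

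For (1), the computation is purely fiberwise, so I would fix $b$ and treat $\eta,\mu$ as functions on $U_b$. Writing $\eta=\frac{a\mu+b}{c\mu+d}$, one computes $\partial_\mu\eta=(ad-bc)(c\mu+d)^{-2}$, hence $\partial_\mu^2\eta/\partial_\mu\eta=-2c/(c\mu+d)$ and $\partial_\mu^3\eta/\partial_\mu\eta=6c^2/(c\mu+d)^2$, so the two terms in the definition of $\Sbf_\mu\eta$ cancel. (Alternatively, one factors a Möbius map as a composition of a translation, dilation, and inversion, notes that translations and dilations have vanishing Schwarzian, checks $\Sbf_z(z^{-1})=0$ directly, and then uses the cocycle relation \eqref{eq49}.)

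For (3), I would just plug into \eqref{eq45}: applying the change of variables from $\mu=f$ to $\eta$ reads
\begin{align*}
\Sbf_\eta g = (\partial_\eta f)^2\,\Sbf_f g + \Sbf_\eta f.
\end{align*}
Since $f$ is univalent on each fiber, $\partial_\eta f$ is nowhere zero, so $\Sbf_\eta f=\Sbf_\eta g$ is equivalent to $\Sbf_f g=0$.

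For (2), the idea is the classical one: a function $f$ with $\Sbf_\eta f=Q$ is produced as a ratio $f=u_1/u_2$ of two linearly independent solutions of the linear ODE
\begin{align*}
\partial_\eta^2 u+\tfrac{1}{2}Q\,u=0
\end{align*}
on each fiber. Indeed, a direct computation shows that if $u_1,u_2$ solve this ODE with Wronskian $W=u_1\partial_\eta u_2-u_2\partial_\eta u_1$ (which is $\partial_\eta$-constant in each fiber), then $\partial_\eta f=-W/u_2^2$, from which one verifies $\Sbf_\eta f=Q$. Near $x$, identifying a fiber neighborhood with a disc in the $\eta$-plane, I solve the ODE with two sets of initial conditions chosen so that $(u_1,u_2,\partial_\eta u_1,\partial_\eta u_2)$ equals, say, $(0,1,1,0)$ at $x$; this guarantees $u_2(x)\neq 0$ and $W\neq 0$, so $f$ is well-defined and $\partial_\eta f(x)\neq 0$, hence $f$ is univalent in a neighborhood of $x$ on the central fiber.

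The main obstacle is the holomorphic dependence on the base $\mc B$: I need $u_1,u_2$ to be holomorphic in all variables, not just along each fiber. I would handle this by viewing the ODE as a system of linear ODEs on $U$ in the variable $\eta$, with coefficients depending holomorphically on the transverse parameters (i.e.\ on $(\pi,\tau_\bullet)$ if a coordinate is chosen on $\mc B$). Standard holomorphic ODE theory (the Cauchy–Kovalevskaya theorem, or simply the Picard iteration applied to the associated first-order linear system) produces, near any base point, holomorphic solutions with prescribed holomorphic initial data along a transversal slice $\{\eta=\eta(x)\}$. Choosing the two initial conditions above on this slice yields $u_1,u_2\in\scr O(V)$ on some neighborhood $V$ of $x$, and then $f=u_1/u_2$ is holomorphic on $V$ (shrinking so that $u_2$ is nonvanishing), univalent on each fiber (shrinking further so that $\partial_\eta f$ is nonvanishing, which is an open condition around $x$), and satisfies $\Sbf_\eta f=Q$ by the identity computed above.
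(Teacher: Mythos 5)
Your proposal is correct and follows essentially the same route as the paper: (1) by direct computation, (3) by the change-of-variable formula \eqref{eq45}, and (2) by writing $f$ as a ratio of two holomorphic solutions of $\partial_\eta^2 u+\tfrac12 Qu=0$ with transverse initial data at $\eta(x)$, the holomorphic dependence on the base being supplied by standard parametric ODE theory. The only cosmetic difference is the choice of which solution goes in the numerator, which does not affect the argument.
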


We remark that the converse of (1) is also true: If $f$ is univalent on each fiber, and if $S_\eta f=0$, then the restriction of $f$ to each fiber is a M\"obius transformation of $\eta$.

\begin{proof}
(1) can be verified directly. To prove (2), we identify $U$ with an open subset of $\Cbb\times\mc B$ via $(\eta,\pi)$. So $\eta$ is identified with the standard coordinate $z$. We choose a neighborhood $V\subset U$ of $x$ of the form $\mc D\times T$ where $T\subset\mc B$ is open, and $\mc D$ is an open disc centered at point $p=\eta(x)\in\Cbb$. By basic theory of ODE, the  differential equation
	\begin{align*}
	\partial_z^2h+Qh/2=0.
	\end{align*}
have solutions $h_1,h_2\in\scr O(V)$ satisfying  the initial conditions $h_1(\cdot,p)=1,\partial_z h_1(\cdot,p)=0$ and $h_2(\cdot,p)=0,\partial_z h_2(\cdot,p)=1$. It is easy to check that $f:=h_2/h_1$ satisfies $\Sbf_z f=Q$, and is defined and satisfies $\partial_zf\neq 0$ near $\{p\}\times T$.
	
(3) follows from \eqref{eq45}, which says $\Sbf_\eta g=(\partial_\eta f)^2\Sbf_fg+\Sbf_\eta f$. 
\end{proof}

\begin{df}
	An open cover $(U_\alpha,\eta_\alpha)_{\alpha\in\fk A}$ of $\mc C$, where each open set $U_\alpha$ is equipped with a function $\eta_\alpha\in\scr O(U_\alpha)$ holomorphic on each fiber, is called a (relative) \textbf{projective chart} of $\fk X$, if for any $\alpha,\beta\in\fk A$, we have $\Sbf_{\eta_\beta}\eta_\alpha=0$ on $U_\alpha\cap U_\beta$. Two projective charts are called equivalent if their union is a projective chart. An equivalence class of projective charts is called a (relative) \textbf{projective structure}. Equivalently, a projective structure is a maximal projective chart.
\end{df}

If $\mc B$ is Stein, then $\fk X$ has a projective structure. See Section \ref{lb27}.

\begin{rem}\label{lb25}
	Let $\fk P$ be a projective chart on $\fk X$. Choose an open subset $U\subset\mc C$ and a fiberwisely univalent $\eta\in\scr O(U)$. One can \index{S@$\Sbf_\eta f,\Sbf_\eta\fk P$} define an element 
	\begin{align*}
	\Sbf_\eta\fk P\in \scr O(U)
	\end{align*}
	as follows. Choose any $(U_1,\mu)\in\fk P$. Then $\Sbf_\eta\fk P=\Sbf_\eta\mu$ on $U\cap U_1$. To check that $\Sbf_\eta\fk P$ is well defined, suppose there is another $(U_2,\zeta)\in\fk P$. Then $\Sbf_\mu\zeta=0$ on $U_1\cap U_2$. Thus $\Sbf_\eta\mu=\Sbf_\eta\zeta$ on $U\cap U_1\cap U_2$ by Proposition \ref{lb24}-(3).
\end{rem}

\section{Actions of $H^0(\mc C,\Theta_{\mc C/\mc B}(\blt\SX))$}\label{lb41}

In this section, we fix  $\fk X=(\pi:\mc C\rightarrow\mc B;\sgm_1,\dots,\sgm_N;\eta_1,\dots,\eta_N)$ to be a family of $N$-pointed compact Riemann surfaces with local coordinates. We assume for simplicity that $\mc B$ is a Stein manifold with coordinates $\tau_\blt=(\tau_1,\dots,\tau_N)$. Let $\Wbb_1,\dots,\Wbb_N$ be $\Vbb$-modules. 

By Lemma \ref{lb4},  there exists $k_0\in\Nbb$ such that for any $k>k_0$ and $b\in \mc B$, we have $H^1(\mc C_b,\omega_{\mc C_b}(kS_{\fk X}))=0$. Thus $R^1\pi_*\omega_{\mc C/\mc B}(kS_{\fk X})=0$ (and also $\pi_*\omega_{\mc C/\mc B}(kS_{\fk X})$ is locally free) due to Grauert's Theorem \ref{lb11}. Therefore, \eqref{eq53} implies an exact sequence
\begin{align*}
0\rightarrow \pi_*\omega_{\mc C/\mc B}(k\SX) \rightarrow \pi_*\big(\svir_c\otimes \omega_{\mc C/\mc B}(k\SX)\big)\xrightarrow{\uplambda}   \pi_*\Theta_{\mc C/\mc B}(k\SX)\rightarrow 0.
\end{align*}
By Cartan's Theorem B, $H^1(\mc B,\pi_*\omega_{\mc C/\mc B}(k\SX))=0$. So we have an exact sequence
\begin{align*}
0\rightarrow H^0\big(\mc B,\pi_*\omega_{\mc C/\mc B}(k\SX)\big) \rightarrow H^0\big(\mc B,\pi_*\big(\svir_c\otimes \omega_{\mc C/\mc B}(k\SX)\big)\big)\xrightarrow{\uplambda}   H^0\big(\mc B,\pi_*\Theta_{\mc C/\mc B}(k\SX)\big)\rightarrow 0.
\end{align*}
Take the direct limit over all $k>k_0$, we get an exact sequence
\begin{align}
0\rightarrow H^0\big(\mc B,\pi_*\omega_{\mc C/\mc B}(\blt\SX)\big) \rightarrow H^0\big(\mc B,\pi_*\big(\svir_c\otimes \omega_{\mc C/\mc B}(\blt\SX)\big)\big)\xrightarrow{\uplambda}   H^0\big(\mc B,\pi_*\Theta_{\mc C/\mc B}(\blt\SX)\big)\rightarrow 0.\label{eq78}
\end{align}

According to Section \ref{lb28}, $H^0\big(\mc B,\pi_*\big(\svir_c\otimes \omega_{\mc C/\mc B}(\blt\SX)\big)\big)$ acts on $\scr W_{\fk X}(\Wbb_\blt)(\mc B)$ which clearly descends to the trivial action on $\scr W_{\fk X}(\Wbb_\blt)(\mc B)/\scr J(\mc B)$. We shall use the above exact sequence to define an action of $H^0\big(\mc B,\pi_*\Theta_{\mc C/\mc B}(\blt\SX)\big)=H^0(\mc C,\Theta_{\mc C/\mc B}(\blt\SX))$ on $\scr W_{\fk X}(\Wbb_\blt)(\mc B)/\scr J(\mc B)$, which turns out to be an $\scr O(\mc B)$-scalar multiplication. This action depends on the local coordinates $\eta_\blt$.

Choose mutually disjoint neighborhoods $U_1,\dots,U_N$ of $\sgm_1(\mc B),\dots,\sgm_N(\mc B)$ on which $\eta_1,\dots,\eta_N$ are defined respectively. Write each $\tau_j\circ\pi$ as $\tau_j$ for short, so that $(\eta_i,\tau_\blt)$ is a set of coordinates of $U_i$. Set $U=U_1\cup\cdots\cup U_N$. Choose any $\theta\in H^0(\mc C,\Theta_{\mc C/\mc B}(\blt\SX))$, which, in each $U_i$, is expressed as
\begin{align}
\theta|_{U_i}=a_i(\eta_i,\tau_\blt)\partial_{\eta_i}.\label{eq52}
\end{align}
Define \index{zz@$\upnu(\theta)$}
\begin{align*}
\upnu(\theta)\in \big(\svir_c\otimes \omega_{\mc C/\mc B}(\blt S_{\fk X})\big)(U)
\end{align*}
such that
\begin{align}
\mc U_\varrho(\eta_i)\upnu(\theta)|_{U_i}=a_i(\eta_i,\tau_\blt)\cbf~d{\eta_i}.\label{eq54}
\end{align}
The action of $\theta$ on $\scr T_{\fk X}(\Wbb_\blt)$ is defined to be the action of $\upnu(\theta)$ as in Section \ref{lb28}, namely, is determined by
\begin{align}
\upnu(\theta)\cdot w_\blt=\sum_{i=1}^N w_1\otimes\cdots\otimes \upnu(\theta)\cdot w_i\otimes\cdots \otimes w_N
\end{align}
for any $w_\blt=w_1\otimes\cdots\otimes w_N\in\Wbb_\blt$, where $\upnu(\theta)\cdot w_i$ is described by \eqref{eq55}.

\begin{lm}\label{lb29}
Assume that $(U_1,\eta_1),\dots,(U_N,\eta_N)$ belong to a projective structure $\fk P$.  Then the action of $\upnu(\theta)$ on $\scr W_{\fk X}(\Wbb_\blt)(\mc B)/\scr J(\mc B)$ is zero.
\end{lm}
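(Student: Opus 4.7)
The plan is to exhibit $\upnu(\theta)$ as the restriction to $U=U_1\cup\cdots\cup U_N$ of a \emph{global} section $\widetilde\upnu(\theta)\in H^0\big(\mc C,\svir_c\otimes\omega_{\mc C/\mc B}(\blt\SX)\big)$. Once that is in hand, the action of $\upnu(\theta)$ on any $w_\blt$, computed as a sum of residues at the $\sgm_i(\mc B)$, agrees with the action of $\widetilde\upnu(\theta)$; and since $\widetilde\upnu(\theta)$ is a global section of $\scr V_{\fk X}\otimes\omega_{\mc C/\mc B}(\blt\SX)$, its action on $\scr W_{\fk X}(\Wbb_\blt)(\mc B)$ lies in $\scr J(\mc B)$ directly by the definition \eqref{eq51}. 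So the whole problem reduces to producing $\widetilde\upnu(\theta)$.

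To build it, I would take a projective chart $(U_\alpha,\eta_\alpha)_{\alpha\in\fk A}$ representing $\fk P$ that covers $\mc C$ and contains the data $(U_1,\eta_1),\dots,(U_N,\eta_N)$. On each $U_\alpha$ write $\theta|_{U_\alpha}=a_\alpha\,\partial_{\eta_\alpha}$ with $a_\alpha\in\scr O_{U_\alpha}(\blt\SX)$, and set
\[
\widetilde\upnu(\theta)|_{U_\alpha}\;:=\;\mc U_\varrho(\eta_\alpha)^{-1}\big(a_\alpha\,\cbf\,d\eta_\alpha\big).
\]
The core step is to check that these local sections agree on $U_\alpha\cap U_\beta$. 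The transition formula \eqref{eq47} gives
\[
\mc U\big(\varrho(\eta_\beta|\eta_\alpha)\big)\cbf\;=\;(\partial_{\eta_\alpha}\eta_\beta)^2\,\cbf\;+\;\tfrac{c}{12}\,\Sbf_{\eta_\alpha}\eta_\beta\cdot\id,
\]
and the $\id$-term vanishes precisely because $(U_\alpha,\eta_\alpha),(U_\beta,\eta_\beta)\in\fk P$ forces $\Sbf_{\eta_\alpha}\eta_\beta=0$. Combining this with $a_\beta=(\partial_{\eta_\alpha}\eta_\beta)\,a_\alpha$ (from $\theta=a_\alpha\partial_{\eta_\alpha}=a_\beta\partial_{\eta_\beta}$) and $d\eta_\alpha=(\partial_{\eta_\alpha}\eta_\beta)^{-1}d\eta_\beta$ yields exactly the compatibility $\mc U_\varrho(\eta_\alpha)^{-1}(a_\alpha\cbf\,d\eta_\alpha)=\mc U_\varrho(\eta_\beta)^{-1}(a_\beta\cbf\,d\eta_\beta)$, so the pieces glue to a section of $\svir_c\otimes\omega_{\mc C/\mc B}$ over $\mc C$, with poles only along $\SX$ because $\theta$ has poles only along $\SX$.

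Because each $(U_i,\eta_i)$ already belongs to $\fk P$, the restriction $\widetilde\upnu(\theta)|_{U_i}$ coincides by construction with $\upnu(\theta)|_{U_i}$ defined in \eqref{eq54}. Hence the residue actions at the $\sgm_i(\mc B)$ defining $\upnu(\theta)\cdot w_\blt$ and $\widetilde\upnu(\theta)\cdot w_\blt$ are identical, and the former lies in $\scr J(\mc B)$. The only delicate point in the argument is the gluing identity, but this is really just the conceptual statement that the Schwarzian derivative is the obstruction to lifting a meromorphic relative vector field to a coordinate-independent section of $\svir_c\otimes\omega_{\mc C/\mc B}$, and projective structures kill this obstruction by definition, which is why the hypothesis $(U_i,\eta_i)\in\fk P$ is exactly what is needed.
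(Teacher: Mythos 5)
Your proposal is correct and follows essentially the same route as the paper: the paper likewise uses \eqref{eq47} together with the vanishing of $\Sbf_{\eta_\alpha}\eta_\beta$ on a projective chart to conclude that $\upnu(\theta)$ extends to a global section of $\svir_c\otimes\omega_{\mc C/\mc B}(\blt S_{\fk X})$, whence its action lands in $\scr J(\mc B)$. Your version merely spells out the gluing computation that the paper states in one line.
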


\begin{proof}
By \eqref{eq47}, the transition function for $\cbf \otimes \omega_{\mc C/\mc B}$  between two projective coordinates is the same as that for $\Theta_{\mc C/\mc B}$, namely, when $\Sbf_\mu\eta=0$, $\partial_\mu$ changes to $\partial_\mu\eta\cdot \partial_\eta$, and $\cbf d\mu$ changes to $\partial_\mu\eta\cdot \cbf d\eta$, sharing the same transition function $\partial_\mu\eta$. Thus, as $\theta$ is over $\mc C$, $\upnu(\theta)$ can be extended to a global section of $\svir_c\otimes \omega_{\mc C/\mc B}(\blt S_{\fk X})$ on $\mc C$.  Thus, $\upnu(\theta)$ acts trivially since $\upnu(\theta)\cdot \scr W_{\fk X}(\Wbb_\blt)(\mc B)\subset\scr J(\mc B)$.
\end{proof}

\begin{pp}\label{lb30}
Let $\fk P$ be a projective structure of $\fk X$. Choose $\theta\in H^0(\mc C,\Theta_{\mc C/\mc B}(\blt\SX))$ whose local expression is given by \eqref{eq52}. Then the action of $\upnu(\theta)$ on $\scr W_{\fk X}(\Wbb_\blt)(\mc B)/\scr J(\mc B)$ (defined by the local coordinates $\eta_\blt$) is the $\scr O(\mc B)$-scalar multiplication  by
	\begin{align}
	\#(\theta):=\frac{c}{12}\sum_{i=1}^N \Res_{\eta_i=0}~ \Sbf_{\eta_i}\fk P\cdot a_i(\eta_i,\tau_\blt)~d\eta_i.\label{eq56}
	\end{align}
\end{pp}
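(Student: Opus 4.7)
The plan is to reduce to the case covered by Lemma \ref{lb29} by constructing an auxiliary global section and showing that it differs from $\upnu(\theta)$ by an $\id$-valued section whose residue produces the Schwarzian contribution $\#(\theta)$. Concretely, using that $\fk X$ admits $\fk P$, we can shrink each $U_i$ if necessary and choose a coordinate $\mu_i$ fiberwisely univalent on $U_i$ such that $(U_i,\mu_i)\in\fk P$.

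First I would construct a global section $\widehat\upnu(\theta)\in H^0(\mc C,\svir_c\otimes\omega_{\mc C/\mc B}(\blt\SX))$ as follows: for every chart $(V,\nu)\in\fk P$, set $\mc U_\varrho(\nu)\widehat\upnu(\theta)\big|_V=b\cbf\, d\nu$, where $\theta|_V=b\,\partial_\nu$. The formula \eqref{eq47} says that under a change between two projective coordinates (so with vanishing Schwarzian derivative), $\cbf\, d\mu\mapsto\cbf\, d\eta$ transforms exactly like $\partial_\mu\mapsto\partial_\eta$, so the local pieces glue to a well-defined global section. By the definition of $\scr J(\mc B)$, the action of $\widehat\upnu(\theta)$ on $\scr W_{\fk X}(\Wbb_\blt)(\mc B)$ lands in $\scr J(\mc B)$ and is thus zero on $\scr W_{\fk X}(\Wbb_\blt)(\mc B)/\scr J(\mc B)$.

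Next I would compute the difference $\upnu(\theta)-\widehat\upnu(\theta)$ on $U_i$ under the trivialization $\mc U_\varrho(\eta_i)$. Writing $\theta|_{U_i}=a_i\partial_{\eta_i}=\widetilde a_i\partial_{\mu_i}$ with $\widetilde a_i=a_i\cdot\partial_{\eta_i}\mu_i$, we have $\mc U_\varrho(\eta_i)\upnu(\theta)=a_i\cbf\, d\eta_i$ by definition, while
\begin{align*}
\mc U_\varrho(\eta_i)\widehat\upnu(\theta)=\mc U(\varrho(\eta_i|\mu_i))\bigl(\widetilde a_i\cbf\, d\mu_i\bigr)=\widetilde a_i\Bigl((\partial_{\mu_i}\eta_i)^2\cbf+\tfrac{c}{12}\Sbf_{\mu_i}\eta_i\cdot\id\Bigr)d\mu_i
\end{align*}
by \eqref{eq47}. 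Using $d\mu_i=\partial_{\eta_i}\mu_i\, d\eta_i$ together with $\partial_{\mu_i}\eta_i\cdot\partial_{\eta_i}\mu_i=1$, the $\cbf$-term simplifies to $a_i\cbf\, d\eta_i$, and applying \eqref{eq46} to $\Sbf_{\mu_i}\eta_i$ the $\id$-term becomes $-\tfrac{c}{12}a_i\Sbf_{\eta_i}\mu_i\cdot\id\, d\eta_i$. Therefore
\begin{align*}
\mc U_\varrho(\eta_i)\bigl[\upnu(\theta)-\widehat\upnu(\theta)\bigr]=\tfrac{c}{12}\,a_i\Sbf_{\eta_i}\mu_i\cdot\id\, d\eta_i,
\end{align*}
which by Remark \ref{lb25} equals $\tfrac{c}{12}\,a_i\Sbf_{\eta_i}\fk P\cdot\id\, d\eta_i$ since $(U_i,\mu_i)\in\fk P$.

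Finally I would evaluate the action on $w_\blt$. Since $Y_{\Wbb_i}(\id,z)=\id_{\Wbb_i}$, formula \eqref{eq55} gives
\begin{align*}
\bigl[\upnu(\theta)-\widehat\upnu(\theta)\bigr]\cdot w_i=\tfrac{c}{12}\Res_{\eta_i=0}\Sbf_{\eta_i}\fk P\cdot a_i(\eta_i,\tau_\blt)\, d\eta_i\cdot w_i.
\end{align*}
Summing over $i=1,\dots,N$ and using that $\widehat\upnu(\theta)\cdot w_\blt\in\scr J(\mc B)$ completes the identification of the action with scalar multiplication by $\#(\theta)$. The only delicate step is the coordinate computation in the middle, but it is entirely mechanical once \eqref{eq47} and \eqref{eq46} are invoked; the main conceptual point is the existence of the global lift $\widehat\upnu(\theta)$, which is precisely what the projective structure $\fk P$ buys us.
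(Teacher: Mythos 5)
Your proposal is correct and takes essentially the same route as the paper: the global section $\widehat\upnu(\theta)$ you construct is precisely the content of Lemma \ref{lb29} (the paper's $\wtd\upnu(\theta)$ defined via projective coordinates $\mu_\blt$, extended over $\mc C$ because the transition \eqref{eq47} between projective coordinates matches that of $\Theta_{\mc C/\mc B}$), and your local comparison via \eqref{eq47} and \eqref{eq46} yielding the discrepancy $\tfrac{c}{12}a_i\Sbf_{\eta_i}\fk P\cdot\id\,d\eta_i$ is the same computation the paper carries out.
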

Note that each $\Sbf_{\eta_i}\fk P$ (defined in Remark \ref{lb25}) is an element of $\scr O(U_i)$.

\begin{proof}
	It suffices to prove that the claim is locally true. Thus, we may shrinking $\mc B$ and $U_1,\dots,U_N$ so  that for each $1\leq i\leq N$, there exists a coordinate  $\mu_i\in\scr O(U_i)$ at $\sgm_i(\mc B)$ such that $(U_i,\mu_i)\in\fk P$. Then
	\begin{align*}
	\theta|_{U_i}=a_i(\eta_i,\tau_\blt)\cdot(\partial_{\mu_i}\eta_i)^{-1}\partial_{\mu_i}.
	\end{align*}
	Our strategy is to compare the action $\wtd\upnu(\theta)$ of $\theta$ defined by the coordinates $\mu_\blt$ (which is trivial by Lemma \ref{lb29}) with the one $\upnu(\theta)$ defined by $\eta_\blt$. We have  $\mc U_\varrho(\mu_i)\wtd\upnu(\theta)|_{U_i}=a_i(\eta_i,\tau_\blt)\cdot(\partial_{\mu_i}\eta_i)^{-1}\cbf~d{\mu_i}$ on each $U_i$. Then
	\begin{align*}
	\mc U_\varrho(\mu_i)\wtd\upnu(\theta)|_{U_i}=a_i(\eta_i,\tau_\blt)\cdot(\partial_{\mu_i}\eta_i)^{-2}\cbf~d{\eta_i}.
	\end{align*}
	By Lemma \ref{lb29}, the action of $\wtd\upnu(\theta)$ on $\scr W_{\fk X}(\Wbb_\blt)(\mc B)/\scr J(\mc B)$ is zero. Notice that the action of $\wtd\upnu(\theta)$ is independent of the choice of local coordinates. (See Theorem \ref{lb13}.) By \eqref{eq47}, we have
	\begin{align*}
	&\mc U_\varrho(\eta_i)\wtd\upnu(\theta)|_{U_i}=\mc U(\varrho(\eta_i|\mu_i))\mc U_\varrho(\mu_i)\wtd\upnu(\theta)|_{U_i}\\
	=&a_i(\eta_i,\tau_\blt)\cbf~d{\eta_i}+\frac{c}{12} a_i(\eta_i,\tau_\blt)\cdot(\partial_{\mu_i}\eta_i)^{-2}\Sbf_{\mu_i}\eta_i\cdot \id~d{\eta_i}
	\end{align*}
	By \eqref{eq54} and \eqref{eq46}, we have
	\begin{align*}
	\mc U_\varrho(\eta_i)\wtd\upnu(\theta)|_{U_i}=&\mc U_\varrho(\eta_i)\upnu(\theta)|_{U_i}-\frac{c}{12} a_i(\eta_i,\tau_\blt)\cdot\Sbf_{\eta_i}\mu_i\cdot \id~d{\eta_i}\\
	=&\mc U_\varrho(\eta_i)\upnu(\theta)|_{U_i}-\frac{c}{12} a_i(\eta_i,\tau_\blt)\cdot\Sbf_{\eta_i}\fk P\cdot \id~d{\eta_i}.
	\end{align*}
	Since the action of $\wtd\upnu(\theta)$ is zero, the action of $\upnu(\theta)$ equals the sum over $i$ of the actions of $\frac{c}{12} a_i(\eta_i,\tau_\blt)\cdot\Sbf_{\eta_i}\fk P\cdot \id~d{\eta_i}$, which is exactly the scalar multiplication by \eqref{eq56}.
\end{proof}

\section{Sewing conformal blocks}\label{lb35}

In this and the following sections, we let $\fk X=(\pi:\mc C\rightarrow\mc B;\sgm_1,\dots,\sgm_N;\eta_1,\dots,\eta_N)$ be a family of $N$-pointed complex curves with local coordinates obtained by sewing the following smooth family
\begin{align*}
\wtd{\fk X}=(\wtd\pi:\wtd{\mc C}\rightarrow\wtd{\mc B};\sgm_1,\dots,\sgm_N,\sgm',\sgm'';\eta_1,\dots,\eta_N,\xi,\varpi).
\end{align*}
(See Section \ref{lb6}.) Recall that the $N$-points  $\sgm_1,\dots,\sgm_N$ and the local coordinates $\eta_1,\dots,\eta_N$ of $\fk X$ are constant with respect to sewing. Choose $\Vbb$-modules $\Wbb_1,\dots,\Wbb_N,\Mbb$, which together with the contragredient module $\Mbb'$ are associated to $\sgm_1,\dots,\sgm_N,\sgm',\sgm''$ respectively. 

\begin{ass}\label{lb63}
In this section, we assume only that each connected component of each fiber $\wtd{\mc C}_b$ of $\wtd{\fk X}$ contains at least one of $\sgm_1(b),\dots,\sgm_N(b),\sgm'(b),\sgm''(b)$. This is slightly weaker than Assumption \ref{lb62}.
\end{ass}

\subsection*{Sewing conformal blocks}

Note $\Wbb_\blt\otimes\Mbb\otimes\Mbb'$ is $\Wbb_1\otimes\cdots\otimes \Wbb_N\otimes\Mbb\otimes\Mbb'$. Note also $(\Mbb'\otimes\Mbb)^*$ can be regarded as the algebraic completion of $\Mbb\otimes\Mbb'$.  Define \index{zzz@$\btr\otimes\btl$}
\begin{align*}
\btr\otimes\btl\in (\Mbb'\otimes\Mbb)^*
\end{align*}
such that for any $m'\in\Mbb',m\in\Mbb$,
\begin{align}
\bk{\btr\otimes\btl,m'\otimes m}=\bk{m',m}.\label{eq95}
\end{align}
Let $A\in\End(\Mbb)$ whose transpose $A^\tr\in\End(\Mbb')$ exists, i.e.,
\begin{align}
\bk{Am,m'}=\bk{m,A^\tr m'}\label{eq57}
\end{align}
for any $m'\in\Mbb',m\in\Mbb$. Then we have an element
\begin{align}
A\btr\otimes\btl\equiv \btr\otimes A^\tr\btl\quad\in (\Mbb'\otimes\Mbb)^*\label{eq58}
\end{align}
whose value at each $m'\otimes m$ is \eqref{eq57}.

More explicitly, for each $n\in\Nbb$ we choose a basis $\{m(n,a)\}_a$ of the finite-dimensional vector space $\Mbb(n)$. Its dual basis $\{\wch m(n,a)\}_a$ is a basis of $\Mbb'(n)=\Mbb(n)^*$ satisfying $\bk{m(n,a),\wch m(n,b)}=\delta_{a,b}$. Then we have
\begin{gather*}
\btr\otimes\btl=\sum_{n\in\Nbb}\sum_am(n,a)\otimes\wch m(n,a),
\end{gather*}
and
\begin{align*}
&A\btr\otimes\btl=\sum_{n\in\Nbb}\sum_a A\cdot m(n,a)\otimes\wch m(n,a)\\
=& \btr\otimes A^\tr\btl=\sum_{n\in\Nbb}\sum_am(n,a)\otimes A^\tr\cdot \wch m(n,a).
\end{align*}

For each $n\in\Nbb$, let $P(n)$ be the projection of $\Mbb$ onto $\Mbb(n)$. \index{Pn@$P(n)$} Its transpose, which is the projection of $\Mbb'$ onto $\Mbb'(n)$, is also denoted by $P(n)$. Then we clearly have
\begin{align*}
P(n)\btr\otimes\btl=\btr\otimes P(n)\btl=\sum_am(n,a)\otimes\wch m(n,a)\qquad\in\Mbb\otimes\Mbb'.
\end{align*}
Recall $\wtd L_0^\tr=\wtd L_0$ by \eqref{eq99}. Define
\begin{align*}
q^{\wtd L_0}=\sum_{k\in\Nbb}P(n)q^n\qquad\in\End(\Mbb)[[q]].
\end{align*} 
Then we have
\begin{align}
q^{\wtd L_0}\btr\otimes\btl=\btr\otimes q^{\wtd L_0} \btl\qquad \in (\Mbb\otimes\Mbb')[[q]].\label{eq59}
\end{align}

We set
\begin{align}
q^{L_0}\btr\otimes\btl\qquad\in(\Mbb'\otimes\Mbb)^*[\log q]\{q\}\label{eq109}
\end{align}
sending each $w'\otimes w$ to $\bk{q^{L_0}w,w'}=\bk{w,q^{L_0}w'}$. When $\Mbb$ is finitely $\Lss$-semisimple, $\Lss-\wtd L_0$ is a scalar $\lambda\in\Cbb$.  Notice 
\begin{align}\label{eq100}
\Lni=L_0-\wtd L_0-\lambda.
\end{align}
For each vector $w$ in $\Mbb(n)$, we set
\begin{align*}
q^\Lni w=e^{\Lni\log q}w=\sum_{n\in\Nbb}\frac 1{n!}\Lni^nw\cdot (\log q)^n\in	\Mbb(n)[\log q].
\end{align*}
Then
\begin{align}
q^{L_0}\btr\otimes\btl=q^\lambda\cdot q^{\Lni}\cdot q^{\wtd L_0}\btr\otimes\btl=\sum_{n\in\Nbb}q^{\lambda+n}\sum_a q^\Lni m(n,a)\otimes\wch m(n,a)
\end{align}
is in $q^{\lambda}(\Mbb\otimes\Mbb')[\log q][[q]]$. Thus, when $\Mbb$ is in general only finitely $\Lss$-semisimple, we have
\begin{align}
q^{L_0}\btr\otimes\btl\in(\Mbb\otimes\Mbb')[\log q]\{q\}.	
\end{align}
From \eqref{eq101}, we have
\begin{align}
q\partial_q (q^{L_0}\btr\otimes\btl)=L_0q^{L_0}\btr\otimes\btl.\label{eq102}	
\end{align}

Note that $\scr O(\mc B)$ can be viewed as a subring of $\scr O(\wtd{\mc B})[[q]]$ by taking power series expansions. So $\scr O(\wtd{\mc B})[[q]]$ and $\scr O(\wtd{\mc B})[\log q]\{q\}$ are $\scr O(\mc B)$-modules.  For any $\uppsi\in\scr T_{\wtd{\fk X}}^*(\Wbb_\blt\otimes\Mbb\otimes\Mbb')(\wtd{\mc B})$, we define its (normalized) \textbf{sewing} $\wtd{\mc S}\uppsi$ which is an $\scr O(\mc B)$-module homomorphism \index{S@$\wtd{\mc S}\uppsi,\mc S\uppsi$}
\begin{align*}
\wtd{\mc S}\uppsi:\scr W_{\fk X}(\Wbb_\blt)(\mc B)=\Wbb_\blt\otimes_\Cbb\scr O(\mc B)\rightarrow\scr O(\wtd{\mc B})[[q]],
\end{align*}
and, in the case that $\Mbb$ is finitely $\Lss$-semisimple, the (standard) \textbf{sewing}
\begin{align*}
\mc S\uppsi:\scr W_{\fk X}(\Wbb_\blt)(\mc B)=\Wbb_\blt\otimes_\Cbb\scr O(\mc B)\rightarrow\scr O(\wtd{\mc B})[\log q]\{q\},
\end{align*}
as follows. Regard $\uppsi$ as an $\scr O(\wtd{\mc B})$-module homomorphism $\Wbb_\blt\otimes\Mbb\otimes\Mbb'\otimes_\Cbb\scr O(\wtd{\mc B})\rightarrow \scr O(\wtd{\mc B})$. $\wtd{\mc S}\uppsi$ is defined such that for any  $w\in\Wbb_\blt\otimes_\Cbb\scr O(\wtd{\mc B})$,
\begin{align}
\wtd{\mc S}\uppsi(w)=\uppsi\big(w\otimes q^{\wtd L_0}\btr\otimes\btl\big).
\end{align}
When $\Mbb$ (and hence $\Mbb'$) is finitely $\Lss$-semisimple, $\mc S\uppsi$ is defined in the same way except that $\wtd L_0$ is replaced by $L_0$, i.e.,
\begin{align}
{\mc S}\uppsi(w)=\uppsi\big(w\otimes q^{L_0}\btr\otimes\btl\big).\label{eq105}
\end{align}

\subsection*{Formal conformal blocks}

Our goal is to show that $\wtd{\mc S}\uppsi$ (and hence $\mc S\uppsi$) is a \textbf{formal conformal block} associated to $\fk X$ and $\Wbb_\blt$, which means $\wtd{\mc S}\uppsi$ vanishes on $\scr J(\mc B)$ (defined by \eqref{eq51}).\footnote{Due to Theorem \ref{lb18}, it would be proper to use this definition only when $\wtd{\mc B}$ and hence $\mc B$ are Stein.} To prove this, we first need:

\begin{lm}\label{lb32}
	Let $R$ be a unital commutative $\Cbb$-algebra.  For any $u\in\Vbb$ and $f\in R[[\xi,\varpi]]$, the following two elements of $(\Mbb\otimes\Mbb'\otimes R)[[q]]$ (where the tensor products are over $\Cbb$) are equal:
	\begin{align}
	&\Res_{\xi=0}~Y_{\Mbb}\big(\xi^{L_0}u,\xi\big)q^{\wtd L_0}\btr\otimes\btl\cdot f(\xi,q/\xi)\frac{d\xi}{\xi}\nonumber\\
	=&\Res_{\varpi=0}~q^{\wtd L_0}\btr\otimes Y_{\Mbb'}\big(\varpi^{L_0}\mc U(\upgamma_1)u,\varpi\big)\btl\cdot f(q/\varpi,\varpi)\frac{d\varpi}{\varpi}.\label{eq60}
	\end{align}
When $\Mbb$ is finitely $\Lss$-semisimple, the same relation holds on the level of $(\Mbb\otimes\Mbb'\otimes R)[\log q]\{q\}$ if $\wtd L_0$ is replaced by $L_0$.
\end{lm}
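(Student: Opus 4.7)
The key idea is to use the symmetry $(A\otimes 1)(\btr\otimes\btl)=(1\otimes A^\tr)(\btr\otimes\btl)$ to move the operator from the $\Mbb$-side to the $\Mbb'$-side, and then match the two residues term by term using the defining relation \eqref{eq62} of the contragredient module. By $\Cbb$-linearity in $u$, I may assume $u\in\Vbb(k)$ is homogeneous of weight $k$. Since $\wtd L_0^\tr=\wtd L_0$, we have $q^{\wtd L_0}\btr\otimes\btl=\btr\otimes q^{\wtd L_0}\btl$, so the symmetry identity rewrites
\begin{align*}
\mathrm{LHS}=\Res_{\xi=0}\,\btr\otimes q^{\wtd L_0}\,Y_\Mbb(\xi^{L_0}u,\xi)^{\tr}\,\btl\cdot f(\xi,q/\xi)\,\frac{d\xi}{\xi}.
\end{align*}

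Next, I compute $Y_\Mbb(\xi^{L_0}u,\xi)^{\tr}$. From \eqref{eq63} applied with $\lambda=\xi$, we have the ``conformal covariance'' $Y_\Mbb(\xi^{L_0}u,\xi)=\xi^{\wtd L_0}Y_\Mbb(u,1)\xi^{-\wtd L_0}$. Specializing the contragredient identity \eqref{eq62} to $z=1$ and using $\mc U(\upgamma_1)^2=\id$ yields $Y_\Mbb(u,1)^{\tr}=Y_{\Mbb'}(\mc U(\upgamma_1)u,1)$, and taking transposes gives
\begin{align*}
Y_\Mbb(\xi^{L_0}u,\xi)^{\tr}=\xi^{-\wtd L_0}\,Y_{\Mbb'}(\mc U(\upgamma_1)u,1)\,\xi^{\wtd L_0}.
\end{align*}
In parallel, \eqref{eq63} applied on $\Mbb'$ with $\lambda=\varpi$ gives $Y_{\Mbb'}(\varpi^{L_0}\mc U(\upgamma_1)u,\varpi)=\varpi^{\wtd L_0}Y_{\Mbb'}(\mc U(\upgamma_1)u,1)\varpi^{-\wtd L_0}$, bringing the RHS to a form involving the same operator $Y_{\Mbb'}(\mc U(\upgamma_1)u,1)$.

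The final step is a direct coefficient comparison. Writing $\mc U(\upgamma_1)u=(-1)^k\sum_{j\geq 0}\frac{1}{j!}L_1^j u$ with $L_1^j u\in\Vbb(k-j)$, expanding $q^{\wtd L_0}\btr\otimes\btl=\sum_n q^n\sum_a m(n,a)\otimes\wch m(n,a)$, and acting by the above operators on $\wch m(n,a)$, both sides produce, in the basis component $m(n'',b)\otimes\wch m(n,a)$, terms of the form $C_{j,t}\cdot\xi^{n-(n+k-j-t-1)}$ (LHS) or $C_{j,t}\cdot\varpi^{(n+k-j-t-1)-n}$ (RHS), where $C_{j,t}$ is a scalar involving $\bk{Y_{\Mbb'}(L_1^j u)_t\wch m(n,a),m(n'',b)}$. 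Multiplying by $f(\xi,q/\xi)\frac{d\xi}{\xi}$ resp.\ $f(q/\varpi,\varpi)\frac{d\varpi}{\varpi}$ and extracting the $\xi^{-1}$- resp.\ $\varpi^{-1}$-coefficient imposes the same constraint $t=k-j-1-i+j'$ on the summation indices of $f=\sum_{ij'}f_{ij'}\xi^i\varpi^{j'}$, so that on both sides the coefficient of $q^N\cdot m(n'',b)\otimes\wch m(n,a)$ reduces to
\begin{align*}
f_{N-n'',\,N-n}\cdot\bk{Y_\Mbb(u)_{n+k-1-n''}m(n,a),\wch m(n'',b)}.
\end{align*}
Summing over $j$ and invoking the transpose identity derived above, the two sides match.

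For the $\Lss$-semisimple case, one replaces $\wtd L_0$ throughout by $L_0=\Lss+\Lni$. Since $L_0^\tr=L_0$, the symmetry $q^{L_0}\btr\otimes\btl=\btr\otimes q^{L_0}\btl$ still holds, and $q^{L_0}=q^{\Lss}\sum_{n\geq 0}\frac{(\Lni\log q)^n}{n!}$ makes sense in $(\Mbb\otimes\Mbb')[\log q]\{q\}$. The identity \eqref{eq63} is valid with either $\wtd L_0$ or $L_0$ in the exponent, so the same chain of manipulations carries through verbatim. The main obstacle I anticipate is precisely the bookkeeping in the last step: one must patiently track the powers of $\xi,\varpi,q$ and verify that the independent residue conditions at $\xi=0$ and at $\varpi=0$, although the substitution $\xi\varpi=q$ is only formal, nevertheless yield the same Laurent coefficient of $f$.
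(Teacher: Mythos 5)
Your proof is correct, and its first half coincides with the paper's: both rest on the transpose symmetry $A\btr\otimes\btl=\btr\otimes A^\tr\btl$ together with $\wtd L_0^\tr=\wtd L_0$ and the contragredient formula \eqref{eq62} (via \eqref{eq22} and \eqref{eq63}) to move $Y_\Mbb(\xi^{L_0}u,\xi)$ to the $\Mbb'$-side. Where you diverge is the final step. The paper packages the two sides as formal series $A(\xi,q)$ and $B(\varpi,q)$ with $A(\xi,q)=B(q/\xi,q)$, observes that $C(\xi,\varpi):=A(\xi,\xi\varpi)=B(\varpi,\xi\varpi)$ contains only nonnegative powers of \emph{both} variables and hence lies in $(\Mbb\otimes\Mbb')[[\xi,\varpi]]$, so that $D=fC$ makes sense and the identity reduces to the elementary fact $\Res_{\xi=0}D(\xi,q/\xi)\frac{d\xi}{\xi}=\sum_nD_{n,n}q^n=\Res_{\varpi=0}D(q/\varpi,\varpi)\frac{d\varpi}{\varpi}$. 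You instead extract the coefficient of $q^N$ in each bigraded component $\Mbb(p)\otimes\Mbb'(p')$ directly; the residue constraints pin down a unique index tuple on each side, both producing the factor $f_{N-p,\,N-p'}$, and the remaining matrix-element identity is exactly the mode-wise form of \eqref{eq62} (the sum over $j$ collapses by $\sum_{j+l=s}\frac{(-1)^j}{j!\,l!}=\delta_{s,0}$). Both routes are valid: the paper's two-variable argument is cleaner and makes transparent why $f\in R[[\xi,\varpi]]$ (rather than a Laurent series) is needed, while yours is more elementary and self-contained but leaves the final "summing over $j$" cancellation compressed — it deserves to be written out, since that is where the $\mc U(\upgamma_1)^2=\id$ / double-contragredient mechanism actually does its work. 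One small caution: expressions like $Y_\Mbb(u,1)$ and "specializing \eqref{eq62} at $z=1$" are only shorthand for mode-by-mode statements (the series $Y_\Mbb(u,z)w$ does not converge at $z=1$); your coefficient computation is the rigorous content, so the conclusion stands, but the intermediate operator identities should be phrased per mode.
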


\begin{rem}\label{lb33}
We  explain the meaning of the left hand side; the other side can be understood in a similar way. In the case that $\Mbb$ is finitely $\Lss$-semisimple, as   $q^{L_0}\btr\otimes\btl$  is an element of $(\Mbb\otimes\Mbb')[\log q]\{q\}$, $Y_{\Mbb}\big(\xi^{L_0}u,\xi\big)q^{L_0}\btr\otimes\btl$ is an element of $(\Mbb\otimes\Mbb')((\xi))[\log q]\{q\}$. 

Identify $(\Mbb\otimes\Mbb')((\xi))[\log q]\{q\}\simeq (\Mbb\otimes\Mbb'\otimes 1)((\xi))[\log q]\{q\}$, which is a subspace of the $R((\xi))[\log q]\{q\}$-module $(\Mbb\otimes\Mbb'\otimes R)((\xi))[\log q]\{q\}$.  On the other hand,  write $f(\xi,\varpi)=\sum_{m,n\in\Nbb}f_{m,n}\xi^m\varpi^n$ where each $f_{m,n}$ is in $R$. Then
	\begin{align*}
	f(\xi,q/\xi)=\sum_{n\geq 0}\sum_{k\geq -n}f_{n+k,n}\xi^k q^n,
	\end{align*}
	which shows $f(\xi,q/\xi)\in R((\xi))[[q]]$. Thus, the term in the residue on the left hand side is an element in
	\begin{align*}
	(\Mbb\otimes\Mbb'\otimes R)((\xi))[\log q]\{q\},
	\end{align*}
	whose residue is in $(\Mbb\otimes\Mbb'\otimes R)((\xi))[\log q]\{q\}$. In the case of $L_0$, the explanation is the same if we replace $[\log q]\{q\}$ by $[[q]]$.
	
We note that once \eqref{eq60} has been proved for $\wtd L_0$, when $\Mbb$ is $\Lss$-simple, we multiply both sides of \eqref{eq60} by $q^\Lni$ and notice that $q^\Lni$ commutes with the action of $Y_\Mbb$. So \eqref{eq60} holds on the level of $(\Mbb\otimes\Mbb'\otimes R)[\log q][[q]]$ if $\wtd L_0$ is replaced by $\wtd L_0+\Lni$. So \eqref{eq60} holds, for every $\Lss$-simple and hence finitely $\Lss$-semisimple $\Mbb$, if $\wtd L_0$ is replaced	by $L_0$.
\end{rem}

\begin{proof}[Proof of Lemma \ref{lb32}]
	Consider $Y_{\Mbb}\big(\xi^{L_0}u,\xi\big)q^{\wtd L_0}$ as an element of $\End(\Mbb)[[\xi^{\pm1},q]]$. Since $\wtd L_0^\tr=\wtd L_0$,  we have the following relations of elements of $\End(\Mbb')[[\xi^{\pm1},q^{\pm 1}]]$:
	\begin{align*}
	&\big(Y_{\Mbb}\big(\xi^{L_0}u,\xi\big)q^{\wtd L_0}\big)^\tr=q^{\wtd L_0}\big(Y_{\Mbb}\big(\xi^{L_0}u,\xi\big)\big)^\tr\xlongequal{\eqref{eq62}}q^{\wtd L_0}Y_{\Mbb'}\big(\mc U(\upgamma_\xi)\xi^{L_0}u,\xi^{-1}\big)\\
	\xlongequal{\eqref{eq22}}&q^{\wtd L_0}Y_{\Mbb'}\big(\xi^{-L_0}\mc U(\upgamma_1)u,\xi^{-1}\big)\xlongequal{\eqref{eq63}}Y_{\Mbb'}\big((q/\xi)^{L_0}\mc U(\upgamma_1)u,q/\xi\big)q^{\wtd L_0}.
	\end{align*}
Thus, by \eqref{eq58}, we have the following equations of elements in $(\Mbb'\otimes\Mbb)^*[[\xi^{\pm1},q^{\pm 1}]]$:
	\begin{align}
	&Y_{\Mbb}\big(\xi^{L_0}u,\xi\big)q^{\wtd L_0}\btr\otimes\btl=\btr\otimes \big(Y_{\Mbb}\big(\xi^{L_0}u,\xi\big)q^{\wtd L_0}\big)^\tr\btl\nonumber\\
	=&\btr\otimes~Y_{\Mbb'}\big((q/\xi)^{L_0}\mc U(\upgamma_1)u,q/\xi\big)q^{\wtd L_0}\btl=q^{\wtd L_0}\btr\otimes~Y_{\Mbb'}\big((q/\xi)^{L_0}\mc U(\upgamma_1)u,q/\xi\big)\btl.\label{eq61}
	\end{align}
	Since for each $n$, $P(n)\btr\otimes\btl$ is in $\Mbb\otimes\Mbb'$, \eqref{eq61} is actually an element in $(\Mbb\otimes\Mbb')[[\xi^{\pm1},q^{\pm 1}]]$.
	
	Let 
	\begin{gather*}
	A(\xi,q)=Y_{\Mbb}\big(\xi^{L_0}u,\xi\big)q^{\wtd L_0}\btr\otimes\btl,\\
	B(\varpi,q)=q^{\wtd L_0}\btr\otimes~Y_{\Mbb'}\big(\varpi^{L_0}\mc U(\upgamma_1)u,\varpi\big)\btl,
	\end{gather*}
	considered as elements of $(\Mbb\otimes\Mbb')[[\xi^{\pm1},q^{\pm 1}]]$ and $(\Mbb\otimes\Mbb')[[\varpi^{\pm1},q^{\pm 1}]]$ respectively. Then \eqref{eq61} says $A(\xi,q)=B(q/\xi,q)$. Let $C(\xi,\varpi)\in(\Mbb\otimes\Mbb')[[\xi^{\pm 1},\varpi^{\pm1}]]$  be $A(\xi,\xi\varpi)$, which also equals $B(\varpi,\xi\varpi)$. Since $A(\xi,q)$ contains only non-negative  powers of $q$, so does $A(\xi,\xi\varpi)$ for $\varpi$.   Similarly, since $B(\varpi,q)$ contains only non-negative powers of $q$, so does $B(\varpi,\xi\varpi)$ for $\xi$. Therefore $C(\xi,\varpi)$ is an element in  $(\Mbb\otimes\Mbb')[[\xi,\varpi]]$, where the latter  can be identified with the subspace $(\Mbb\otimes\Mbb'\otimes 1)[[\xi,\varpi]]$ of the $R[[\xi,\varpi]]$-module $(\Mbb\otimes\Mbb'\otimes R)[[\xi,\varpi]]$. Thus $D(\xi,\varpi):=f(\xi,\varpi)C(\xi,\varpi)$ is well-defined as an element in $(\Mbb\otimes\Mbb'\otimes R)[[\xi,\varpi]]$. It is easy to check that
	\begin{align*}
	\Res_{\xi=0}\bigg(D(\xi,q/\xi)\frac{d\xi}{\xi}\bigg)=\Res_{\varpi=0}\bigg(D(q/\varpi,\varpi)\frac{d\varpi}{\varpi}\bigg).
	\end{align*}
	(Indeed, they both equal $\sum_{n\in\Nbb}D_{n,n}q^n$ if we write $D(\xi,\varpi)=\sum_{m,n\in\Nbb}D_{m,n}\xi^m\varpi^n$.) This proves \eqref{eq60}.
\end{proof}

Recall $\mc B=\wtd{\mc B}\times\mc D_{r\rho}=\mc D_{r\rho}\times\wtd{\mc B}$, and the order of Cartesian products will be switched when necessary.

\begin{thm}\label{lb34}
	Let $\uppsi\in\scr T_{\wtd{\fk X}}^*(\Wbb_\blt\otimes\Mbb\otimes\Mbb')(\wtd{\mc B})$. Then $\wtd{\mc S}\uppsi$ vanishes on $\scr J(\mc B)$. If $\Mbb$ is  finitely $\Lss$-semisimple, then  $\mc S\uppsi$ also vanishes on $\scr J(\mc B)$.
\end{thm}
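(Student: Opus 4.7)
The plan is to extend $v\in H^0(\mc C,\scr V_{\fk X}\otimes\omega_{\mc C/\mc B}(\blt\SX))$ to a formal section $v'=\sum_{n\geq 0}v'_n q^n$ on the unsewn family $\wtd{\fk X}$, apply the conformal block property of $\uppsi$ coefficientwise in $q$, and use Lemma \ref{lb32} to see that the contributions at $\sgm'(\wtd{\mc B})$ and $\sgm''(\wtd{\mc B})$ cancel when paired with $q^{\wtd L_0}\btr\otimes\btl$. Write $\wtd\SX:=\SX+\sgm'(\wtd{\mc B})+\sgm''(\wtd{\mc B})$ for brevity. Since $\scr J(\mc B)$ is spanned by elements $v\cdot w_\blt$, it suffices to prove $\wtd{\mc S}\uppsi(v\cdot w_\blt)=0$ for each such pair.

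First I analyze $v$ on the sewing region $W$. Fixing $n_0$ so that $v\in(\scr V_{\fk X}^{\leq n_0}\otimes\omega_{\mc C/\mc B}(\blt\SX))(\mc C)$, the description of $\scr V_{\fk X}^{\leq n_0}|_W$ by the generators in \eqref{eq21}, together with the fact that $\omega_{\mc C/\mc B}|_W$ is generated by $d\xi/\xi=-d\varpi/\varpi$, lets us write
\begin{align*}
\mc U_\varrho(\xi)(v|_{W'})&=\sum_k \xi^{L_0}v_k\cdot g_k(\xi,q/\xi,\tau_\blt)\cdot\frac{d\xi}{\xi},\\
\mc U_\varrho(\varpi)(v|_{W''})&=-\sum_k \varpi^{L_0}\mc U(\upgamma_1)v_k\cdot g_k(q/\varpi,\varpi,\tau_\blt)\cdot\frac{d\varpi}{\varpi},
\end{align*}
for finitely many homogeneous $v_k\in\Vbb^{\leq n_0}$ and $g_k\in\scr O(\mc D_r\times\mc D_\rho\times\wtd{\mc B})$; the sign in the second line reflects the opposite sign of the generator of $\omega_{\mc C/\mc B}$ on $W''$. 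Taylor-expanding $g_k(\xi,q/\xi,\tau_\blt)$ in $q$ yields, at order $q^n$, a Laurent polynomial in $\xi$ with pole order at $\xi=0$ at most $n$; similarly for the $\varpi$-side. These expansions, together with $v$ itself on the complement of the node, patch to produce a formal power series $v'=\sum_{n\geq 0}q^n v'_n$ with each
\begin{align*}
v'_n\in H^0\big(\wtd{\mc C},\scr V_{\wtd{\fk X}}\otimes\omega_{\wtd{\mc C}/\wtd{\mc B}}(\blt\wtd\SX)\big),
\end{align*}
whose restriction to each $U_i$ ($1\leq i\leq N$) is the $q^n$-coefficient of $v|_{U_i}$.

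Because $\uppsi$ is a conformal block for $\wtd{\fk X}$, for every $n,k\geq 0$ we have
\begin{align*}
&\uppsi\big((v'_n\cdot_{\Wbb_\blt}w_\blt)\otimes P(k)\btr\otimes\btl\big)+\uppsi\big(w_\blt\otimes v'_n\cdot_{\sgm'}P(k)\btr\otimes\btl\big)\\
&\qquad+\uppsi\big(w_\blt\otimes v'_n\cdot_{\sgm''}P(k)\btr\otimes\btl\big)=0.
\end{align*}
Multiplying by $q^{n+k}$ and summing, and using that $v'$ and $v$ act identically on $\Wbb_\blt$ (they agree on every $U_i$), the first sum collects to $\wtd{\mc S}\uppsi(v\cdot w_\blt)$, and we are reduced to showing
\begin{align*}
v'\cdot_{\sgm'}(q^{\wtd L_0}\btr\otimes\btl)+v'\cdot_{\sgm''}(q^{\wtd L_0}\btr\otimes\btl)=0
\end{align*}
in $(\Mbb\otimes\Mbb'\otimes\scr O(\wtd{\mc B}))[[q]]$. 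Substituting the explicit formulas for $\mc U_\varrho(\xi)v'|_{U'}$ and $\mc U_\varrho(\varpi)v'|_{U''}$ from the previous step and applying Lemma \ref{lb32} to each pair $(v_k,g_k)$ with $R=\scr O(\wtd{\mc B})$ gives this cancellation term by term, the sign in the $W''$-expansion being precisely what is needed. Therefore $\wtd{\mc S}\uppsi(v\cdot w_\blt)=0$. The $\Lss$-semisimple case follows by the same proof with $\wtd L_0$ replaced by $L_0$ throughout and the $L_0$-variant of Lemma \ref{lb32} from Remark \ref{lb33}.

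The main obstacle will be the first step: rigorously constructing $v'$ as a formal power series whose $q^n$-coefficients are honest global sections on $\wtd{\mc C}$, with the correct compatibility of the $W'$ and $W''$ local expressions via the transition formula of Lemma \ref{lb7}. Once the pole-order bookkeeping at $\sgm',\sgm''$ and the compatibility across $W'\cap W''$ are handled, the remainder is a purely formal combination of the conformal block equation for $\uppsi$ with Lemma \ref{lb32}.
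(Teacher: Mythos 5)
Your proposal is correct and follows essentially the same route as the paper's proof: expand $v$ as a formal power series $\sum_n v_n q^n$ whose coefficients are global sections of $\scr V_{\wtd{\fk X}}\otimes\omega_{\wtd{\mc C}/\wtd{\mc B}}(\blt S_{\wtd{\fk X}})$ with pole order at most $n$ at the sewn points (read off from the generators \eqref{eq21} and the sign convention $d\xi/\xi=-d\varpi/\varpi$), apply the conformal block equation for $\uppsi$ to each $v_n$, and invoke Lemma \ref{lb32} to cancel the contributions at $\sgm'$ and $\sgm''$ against $q^{L_0}\btr\otimes\btl$. The only cosmetic difference is that you organize the conformal block identity coefficientwise via the projections $P(k)$, whereas the paper applies it directly to the formal series; the content is identical.
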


\begin{proof}
	Step 1. Note that we have divisors $S_{\fk X}=\sum_{i=1}^N\sgm_i(\mc B)$ and $S_{\wtd{\fk X}}=\sum_{i=1}^N\sgm_i(\wtd{\mc B})+\sgm'(\wtd{\mc B})+\sgm''(\wtd{\mc B})$ of $\mc C$ and $\wtd{\mc C}$ respectively. Choose any $v$ in $H^0\big(\mc C,\scr V_{\fk X}\otimes\omega_{\mc C/\mc B}(\blt S_{\fk X})\big)$.  In this first step, we would like to construct a formal power series expansion
	\begin{align}
	v=\sum_{n\in\Nbb}v_{n}q^n\label{eq64}
	\end{align}
	where each $v_{n}$ is in $H^0\big(\wtd{\mc C},\scr V_{\wtd{\fk X}}\otimes\omega_{\wtd{\mc C}/\wtd{\mc B}}(\blt S_{\wtd{\fk X}})\big)$.  
	
	First, choose any precompact open subset $U$ of $\wtd{\mc C}$ disjoint from the double points $\sgm'(\wtd{\mc B})$ and $\sgm''(\wtd{\mc B})$. Then  one can find small enough positive numbers $\epsilon<r,\lambda<\rho$  such that $U\times\mc D_{\epsilon\lambda}$ is an open subset of  $\wtd {\mc C}\times\mc D_{r\rho}- F'-F''$ in \eqref{eq67}, and hence an open subset of $\mc C$.   Moreover, by \eqref{eq68}, the projection $\pi:\mc C\rightarrow\mc B$ equals $\wtd \pi\times \id:\wtd {\mc C}\times\mc D_{r\rho}\rightarrow \wtd{\mc B}\times \mc D_{r\rho}$ when restricted to  $U\times \mc D_{\epsilon\lambda}$. It follows that the section $v|_{U\times \mc D_{\epsilon\lambda}}$  of $\scr V_{\fk X}\otimes\omega_{\mc C/\mc B}(\blt S_{\fk X})$ can be regarded as a section of $\scr V_{\wtd{\fk X}\times\mc D_{r\rho}}\otimes\omega_{\wtd{\mc C}\times\mc D_{r\rho}/\wtd{\mc B}\times\mc D_{r\rho}}(\blt S_{\fk X})$, which, by taking power series expansions at $q=0$, is in turn an element of $\big(\scr V_{\wtd{\fk X}}\otimes\omega_{\wtd{\mc C}/\wtd{\mc B}}(\blt S_{\wtd{\fk X}})\big)(U)[[q]]$. The coefficient before $q^{n}$ defines $v_{n}|_{U}$. This defines the section $v_{n}$ of $\scr V_{\wtd{\fk X}}\otimes\omega_{\wtd{\mc C}/\wtd{\mc B}}(\blt S_{\wtd{\fk X}})$ on $\wtd{\mc C}-\sgm'(\wtd{\mc B})- \sgm''(\wtd{\mc B})$ satisfying \eqref{eq64}.
	
	We now show that $v_{n}$ has poles of orders at most $n+1$ at $\sgm'(\wtd{\mc B})$ and $\sgm''(\wtd{\mc B})$. Let $W,W',W''$ be as described near \eqref{eq10}. By \eqref{eq21} and \eqref{eq24}, $v|_{W-\Sigma}$ is a sum of those whose restrictions to $W',W''$  under the trivializations $\mc U_\varrho(\xi),\mc U_\varrho(\varpi)$ are
	\begin{align}
	f(\xi,q/\xi,\cdot)\xi^{L_0}u\cdot \frac{d\xi}{\xi}  \qquad\text{resp.}\qquad -f(q/\varpi,\varpi,\cdot)\varpi^{L_0}\mc U(\upgamma_1)u\cdot \frac{d\varpi}{\varpi}\label{eq65}
	\end{align}
	where $u\in\Vbb$ and $f=f(\xi,\varpi,\cdot)\in\scr O(W)$, and the coordinates of $\wtd {\mc B}$ are suppressed as the dot. (Recall $q=\xi\varpi$.) In the above two terms, if we take power series expansions of $q$, then it is obvious that the coefficients before  $q^{n}$ have poles of orders at most $n+1$ at $\xi=0$ and $\varpi=0$ respectively. This proves the claim.

Step 2. Let us assume $\Mbb$ is finitely $\Lss$-semisimple (or just $\Lss$-simple, for the sake of simplicity) and prove that $\mc S\uppsi$ vanishes on $\scr J(\mc B)$. A similar method proves that $\wtd{\mc S}$ vanished on $\scr J(\mc B)$ in  the general case. 

By \eqref{eq10}, we can regard $f(\xi,\varpi,\cdot)$ as an element of $\scr O(\wtd{\mc B})[[\xi,\varpi]]$. Thus, by Lemma \ref{lb32} (applied to $R=\scr O(\wtd{\mc B})$) and the fact that $v|_{W-\Sigma}$ is a (finite) sum of those of the form \eqref{eq65}, we have the following equation of elements in $(\Mbb\otimes\Mbb'\otimes \scr O(\wtd{\mc B}))[\log q]\{q\}$:
	\begin{align}
	\sum_{n\in\Nbb}\big(v_{n}\cdot q^{L_0}\btr\otimes\btl+ q^{L_0}\btr\otimes~ v_{n}\cdot\btl\big)q^{n} =0\label{eq66}
	\end{align}
	where the actions of $v_{n}$ on $\Mbb$ and $\Mbb'$ are as in \eqref{eq55} using the  local coordinates $\xi,\varpi$ of $\wtd{\fk X}$. On the other hand, since $\uppsi$ is  conformal block, for each $n$ and each $w\in\Wbb_\blt$ (considered as a constant section of $\Wbb_\blt\otimes_\Cbb\scr O(\mc B)$), the element $A_{n}\in\scr O(\wtd{\mc B})[\log q]\{q\}$ defined by
	\begin{align*}
	A_{n}:=\uppsi\big(v_{n}\cdot w\otimes (q^{L_0}\btr\otimes\btl)\big)+\uppsi\big(w\otimes (v_n\cdot q^{L_0}\btr\otimes\btl)\big)+\uppsi\big(w\otimes ( q^{L_0}\btr\otimes v_n\cdot\btl)\big)
	\end{align*}
	equals $0$. Here, similarly, the action of $v_{n}$ on $w$ is defined by summing up the componentwise actions described by \eqref{eq55} using the local coordinates $\eta_\blt$.  By \eqref{eq66}, we have
	\begin{align*}
	0=\sum_{n\in\Nbb}A_{n}q^{n}=\sum_{n\in\Nbb}\uppsi\big(v_{n}\cdot w\otimes (q^{L_0}\btr\otimes\btl)\big)q^n,
	\end{align*}
	which is exactly $\mc S\uppsi(v\cdot w)$. This finishes the proof that $\mc S\uppsi$ vanishes on $\scr J(\mc B)$.
\end{proof}

\begin{rem}
The algebraic version of  Theorem \ref{lb34} for $\wtd{\mc S}\uppsi$ (i.e. assuming $\wtd{\fk X}$ is an algebraic family and replacing $\mc D_{r\rho}$ with $\mathrm{Spec}(\Cbb[[q]])$) was proved in \cite[Thm. 8.5.1]{DGT19b} and its proof can be easily adapted to the analytic setting. We have provided a complete proof of Theorem \ref{lb34} for the reader's convenience. We remark that \cite{DGT19b}  proved a version of Lemma \ref{lb32} (for $\wtd L_0$). Their proof uses \cite[Lemma 8.7.1]{NT05} and is different from ours.

In low genus cases, similar versions of Theorem \ref{lb34} were proved in \cite[Prop. 4.3.6]{Zhu96}, \cite[Thm. 1.4]{Hua05a}, \cite[Prop. 3.6]{Hua05b}.
\end{rem}

\section{Convergence of sewing}\label{lb52}

We continue our discussions and assume the setting in Section \ref{lb35}. Moreover, we assume Assumption \ref{lb62}.

Recall that $\mc B$ equals $\mc D_{r\rho}\times\wtd{\mc B}$. Then $\mc B$ is Stein if  $\wtd{\mc B}$ is so. Set $\mc B^\times=\mc D_{r\rho}^\times\times\wtd{\mc B}$. \index{B@$\mc B^\times$} We identify $\scr W_{\fk X}(\Wbb_\blt)$ with $\Wbb_\blt\otimes_\Cbb\scr O_{\mc B}$ via $\mc U(\eta_\blt)$.

\subsection*{Absolute and locally uniform convergence}

\begin{df}
We say that $\wtd{\mc S}\uppsi$  \textbf{converges absolutely and locally uniformly (a.l.u.)} if it sends each element of $\scr W_{\fk X}(\Wbb_\blt)(\mc B)$ to an element of $\scr O(\mc B)$. 
\end{df}

In the case that $\Mbb$ and hence $\Mbb'$ are finitely $\Lss$-semisimple, since $\mc S\uppsi$ is possibly multivalued over $q$, we need to define its a.l.u. convergence in another way. For each $w\in \Wbb_\blt$, considered as a constant section of $\scr W_{\fk X}(\Wbb_\blt)(\mc B)$, we write
\begin{align*}
\mc S\uppsi(w)=\sum_{n\in \Cbb,l\in\Nbb}\mc S\uppsi(w)_{n,l}\cdot (\log q)^lq^n.
\end{align*}
where  $\mc S\uppsi(w)_{n,l}$ is a holomorphic function on $\wtd{\mc B}$.

\begin{df}\label{lb37}
Assume $\mc S\uppsi(w)\in\scr O(\wtd{\mc B})\{q\}[\log q]$ (namely, $\mc S\uppsi(w)_{n,l}=0$ when $l\geq L$ for some $L$ and all $n$). We say that $\mc S\uppsi$ \textbf{converges a.l.u.} if  for any $w\in\Wbb_\blt$, any compact subsets $K\subset\wtd{\mc B}$ and $Q\subset\mc D_{r\rho}^\times$,  there exists $C>0$ such that
\begin{align}
	\sum_{n\in\Cbb}\big|\mc S\uppsi(w)_{n,l}(b)\big|\cdot |q^{n}|\leq C
\end{align}
for any $b\in K$, $q\in Q$, and $l\in\Nbb$. 
\end{df}

When $\Mbb$ is $\Lss$-simple, it is clear that $\wtd{\mc S}\uppsi$ converges a.l.u. if  $\mc S\uppsi$ does since $\wtd{\mc S}\uppsi$ is, up to multiplication by $q^d$ for some $d\in\Cbb$, the part of $\mc S\uppsi$ without $\log q$. Also, if we can prove the a.l.u. convergence of $\mc S\uppsi$ (resp. $\wtd{\mc S}\upphi$) whenever $\Mbb$ is $\Lss$-simple, then we can prove this for all finitely $\Lss$-semisimple $\Mbb$ due to Convention \ref{lb1}.

\begin{thm}\label{lb47}
The following are true.
\begin{enumerate}
\item  If  $\wtd{\mc S}\uppsi$  converges a.l.u., then it is an element  of $\scr T_{\fk X}^*(\Wbb_\blt)(\mc B)$. Similarly, if $\Mbb$ is finitely $\Lss$-semisimple and $\mc S\uppsi$ converges a.l.u., then it is an element of $\scr T_{\fk X_{\mc B^\times}}^*(\Wbb_\blt)(\mc B^\times)$.
\item Instead of Assumption \ref{lb62}, we assume only that for each $b\in\mc B^\times$, each connected component of $\mc C_b$ contains at least one of $\sgm_1(b),\dots,\sgm_N(b)$. If  $\wtd{\mc S}\uppsi$  converges a.l.u., then it is an element  of $\scr T_{\fk X_{\mc B^\times}}^*(\Wbb_\blt)(\mc B^\times)$. Similarly, if $\Mbb$ is finitely $\Lss$-semisimple and $\mc S\uppsi$ converges a.l.u., then it is an element of $\scr T_{\fk X_{\mc B^\times}}^*(\Wbb_\blt)(\mc B^\times)$.
\end{enumerate}
\end{thm}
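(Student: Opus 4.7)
The approach combines Theorem~\ref{lb34} with Theorem~\ref{lb18}. Theorem~\ref{lb34} already gives vanishing on $\scr J(\mc B)$, but only as a \emph{formal} statement (values in $\scr O(\wtd{\mc B})[[q]]$, or in $\scr O(\wtd{\mc B})[\log q]\{q\}$ when $\Mbb$ is $\Lss$-semisimple). Theorem~\ref{lb18} promotes such vanishing to the conformal-block property, provided the base is Stein. The role of the a.l.u.\ convergence hypothesis is exactly to bridge formal and analytic: it turns the formal series into an honest holomorphic function, so that formal vanishing translates into analytic vanishing.

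First I would treat $\wtd{\mc S}\uppsi$ in part (1). By a.l.u.\ convergence, $\wtd{\mc S}\uppsi$ is a bona fide $\scr O_{\mc B}$-module homomorphism $\scr W_{\fk X}(\Wbb_\blt)\rightarrow\scr O_{\mc B}$. For any $v\in H^0(\mc C,\scr V_{\fk X}\otimes\omega_{\mc C/\mc B}(\blt\SX))$ and $w\in\scr W_{\fk X}(\Wbb_\blt)(\mc B)$, Theorem~\ref{lb34} asserts that $\wtd{\mc S}\uppsi(v\cdot w)\in\scr O(\wtd{\mc B})[[q]]$ is the zero formal series, i.e.\ each coefficient in the $q$-expansion vanishes identically as a function on $\wtd{\mc B}$. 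Under a.l.u.\ convergence this formal series is the $q$-expansion of the corresponding holomorphic function on $\mc B=\mc D_{r\rho}\times\wtd{\mc B}$, which is therefore identically zero on $\mc B$. Shrinking to any Stein open subset $V=\mc D_{r\rho}\times\wtd V\subset\mc B$ with $\wtd V\subset\wtd{\mc B}$ Stein (such $V$ cover $\mc B$), Theorem~\ref{lb18} applied with $V$ playing the role of the Stein ambient base gives $\wtd{\mc S}\uppsi|_V\in\scr T_{\fk X_V}^*(\Wbb_\blt)(V)$. Sheafification via Remark~\ref{lb22} then yields $\wtd{\mc S}\uppsi\in\scr T_{\fk X}^*(\Wbb_\blt)(\mc B)$.

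For $\mc S\uppsi$ in the $\Lss$-semisimple case, the new feature is multi-valuedness on $\mc B^\times$ arising from the $\log q$ and non-integer $q$-powers. By Definition~\ref{lb37}, however, $\mc S\uppsi(w)=\sum_{l<L} f_l(\tilde b,q)(\log q)^l$ is a \emph{finite} sum in $\log q$, and each coefficient $f_l(\tilde b, q)=\sum_{n\in\Cbb}\mc S\uppsi(w)_{n,l}(\tilde b)\,q^n$ converges a.l.u.\ on $\wtd{\mc B}\times\mc D_{r\rho}^\times$. Thus on any simply-connected Stein open $V\subset\mc B^\times$ equipped with a fixed branch of $\log q$, $\mc S\uppsi$ defines a single-valued $\scr O_V$-module morphism. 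The $L_0$-version of Theorem~\ref{lb34} provides formal vanishing coefficient-by-coefficient in $(\log q)^l q^n$, which under a.l.u.\ convergence becomes vanishing of the corresponding holomorphic function on $V$. Theorem~\ref{lb18} applied on $V$, together with the sheaf property of $\scr T_{\fk X_{\mc B^\times}}^*$, then finishes the argument on $\mc B^\times$. Part~(2) is the identical argument run on $\mc B^\times$; the weakened assumption there is precisely what is required for $\scr T_{\fk X_{\mc B^\times}}^*(\Wbb_\blt)$ to be defined.

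The main technical point requiring attention is the careful bookkeeping for the passage from formal to analytic vanishing: checking that coefficient-wise (in $q^n$ or in $(\log q)^l q^n$) vanishing as elements of $\scr O(\wtd{\mc B})$, combined with a.l.u.\ convergence, yields pointwise vanishing of the corresponding holomorphic function on $\mc B$ (or on a simply-connected chart of $\mc B^\times$ with a chosen branch of $\log q$). Once this is handled, the remainder is a direct application of Theorem~\ref{lb18} on a Stein cover followed by sheafification, and no new constructions beyond those already developed in Sections~\ref{lb28} and \ref{lb35} are required.
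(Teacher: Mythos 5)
Your argument for part (1) is correct and is essentially the paper's proof: a.l.u.\ convergence upgrades $\wtd{\mc S}\uppsi$ (resp.\ $\mc S\uppsi$ on charts of $\mc B^\times$ with a chosen branch of $\log q$) to an honest $\scr O$-module homomorphism, Theorem \ref{lb34} gives vanishing on $\scr J(\mc B)$ coefficient-by-coefficient and hence as holomorphic functions, and Theorem \ref{lb18} plus the sheaf property finishes.

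Part (2), however, is not ``the identical argument run on $\mc B^\times$,'' and this is where your proposal has a real gap. The ``if'' direction of Theorem \ref{lb18} rests on Corollary \ref{lb16}: restrictions of \emph{global} sections in $H^0\big(\mc C,\scr V_{\fk X}\otimes\omega_{\mc C/\mc B}(\blt\SX)\big)$ to a fiber $\mc C_b$ must exhaust $H^0\big(\mc C_b,\scr V_{\mc C_b}\otimes\omega_{\mc C_b}(\blt\SX(b))\big)$. Under the weakened hypothesis of part (2), the nodal fibers over $\Delta$ may have irreducible components carrying none of $\sgm_1,\dots,\sgm_N$, so the vanishing theorem (Theorem \ref{lb9}), and with it Theorem \ref{lb8} and Corollary \ref{lb16}, fails for the full family over $\mc B$; you therefore cannot invoke Theorem \ref{lb18} with ambient Stein base $\mc B$. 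Nor can you take the ambient base to be a Stein open $V'\subset\mc B^\times$: Theorem \ref{lb18} would then demand vanishing on $\scr J(V')$, which involves sections defined only over $\mc C_{V'}$, whereas Theorem \ref{lb34} only controls sections defined over all of $\mc C$ (its proof hinges on the $q$-power-series expansion \eqref{eq64} of a section near the critical locus $\Sigma$, which a section over $\mc C_{V'}$ does not possess). The missing step is to show directly that for $b\in\mc B^\times$ the restrictions of elements of $H^0\big(\mc C,\scr V_{\fk X}^{\leq n}\otimes\omega_{\mc C/\mc B}(k\SX)\big)$ still generate $H^0\big(\mc C_b,\scr V_{\mc C_b}^{\leq n}\otimes\omega_{\mc C_b}(k\SX(b))\big)$: the paper does this by observing that $\pi_*\big(\scr V_{\fk X}^{\leq n}\otimes\omega_{\mc C/\mc B}(k\SX)\big)$ is coherent by Grauert's direct image theorem, so Cartan's Theorem A over the Stein base $\mc B$ shows global sections generate its fiber at $b$, and that fiber is identified with $H^0(\mc C_b,\cdot)$ for large $k$ by applying Theorem \ref{lb8} to the restricted family near $b\in\mc B^\times$, where all fibers are smooth and satisfy the marked-point condition. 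Only with this supplement does the vanishing from Theorem \ref{lb34} yield the fiberwise conformal-block property over $\mc B^\times$.
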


For instance, case 2 includes the case of sewing a disjoint union $C_1\sqcup C_2$ of two connected compact Riemann surfaces along marked points $x'\in C_1$ and $x''\in C_2$, where $C_1$ has no other marked points and $C_2$ has at least one more marked point.

\begin{proof}
We may shrink $\wtd{\mc B}$ so that $\wtd{\mc B}$ and hence $\mc B$ are Stein. 

Case 1. Assume the a.l.u. convergence. Note that, since $\scr W_{\fk X}(\Wbb_\blt)$ is generated freely by some global sections, the $\scr O(\mc B^\times)$-module homomorphism  $\wtd{\mc S}\uppsi:\scr W_{\fk X}(\Wbb_\blt)(\mc B^\times)\rightarrow\scr O(\mc B^\times)$ (defined by taking the limit of the infinite series \eqref{eq105}) can be regarded as an $\scr O_{\mc B^\times}$-module homomorphism $\wtd{\mc S}\uppsi:\scr W_{\fk X_{\mc B^\times}}(\Wbb_\blt)\rightarrow\scr O_{\mc B^\times}$.  Now, the fact that $\wtd{\mc S}\uppsi$ is a conformal block follows  from Theorems \ref{lb34} and \ref{lb18}. The proof for $\mc S\uppsi$ is similar.

Case 2. For each $n\in\Nbb$ and $k\in\Nbb$, $\pi_*\big(\scr V_{\fk X}^{\leq n}\otimes\omega_{\mc C/\mc B}(k\SX)\big)$ is coherent by Grauert direct image theorem. Thus, for each $b\in\mc B^\times$, by Cartan's theorem A, elements of $H^0(\mc C,\scr V_{\fk X}^{\leq n}\otimes\omega_{\mc C/\mc B}(k\SX))$ generate the fiber $\pi_*\big(\scr V_{\fk X}^{\leq n}\otimes\omega_{\mc C/\mc B}(k\SX)\big)|b$, and the later is isomorphic to $H^0\big(\mc C_b,\scr V_{\mc C_b}^{\leq n}\otimes\omega_{\mc C_b}(k\SX(b))\big)$ for sufficiently large $k$ by Thm. \ref{lb8}. Thus elements of $H^0(\mc C,\scr V_{\fk X}\otimes\omega_{\mc C/\mc B}(\blt\SX))$ form the vector space $H^0\big(\mc C_b,\scr V_{\mc C_b}\otimes\omega_{\mc C_b}(\blt\SX(b))\big)$. This fact and Theorem \ref{lb34} prove the claim.
\end{proof}

\subsection*{Convergence and differential equations}

In the remaining part of this section, \emph{we assume $\Vbb$ is $C_2$-cofinite, the $\Vbb$-modules $\Wbb_1,\dots,\Wbb_N$ are finitely-generated (equivalently, finitely $\Lss$-semisimple cf. Rem. \ref{lb61}), and $\Mbb$ and hence $\Mbb'$ are $\Lss$-simple}. Since  $\Lni^k=0$ on $\Mbb$ when $k$ is sufficiently large (cf. Rem. \ref{lb61}), the powers of $\log q$ in the sewing $\mc S\uppsi$ are uniformly bounded from above. 

As in the proof of Theorem \ref{lb21}, for each $k\in\Nbb$, $\Wbb_\blt^{\leq k}$ (resp. $\Wbb_\blt(k)$) denotes the (finite dimensional) subspace spanned by all $\wtd L_0$-homogeneous vectors $w\in\Wbb_\blt$ satisfying $\wtd\wt(w)\leq k$ (resp. $\wtd\wt(w)=k$). This gives a filtration (resp. grading) of $\Wbb_\blt$. We define
\begin{align*}
\mc S\uppsi^{\leq k}\in (\Wbb_\blt^{\leq k})^*\otimes_\Cbb\scr O(\wtd{\mc B})\{q\}[\log q]
\end{align*}
whose evaluation with each $w\in\Wbb_\blt^{\leq k}$ is $\mc S\uppsi(w)$.

\begin{thm}\label{lb38}
Assume $\wtd{\mc B}$ is a Stein manifold. There exists $k_0\in\Zbb_+$ such that for any $k\geq k_0$, there exists
	\begin{align*}
\Upomega\in \End_{\Cbb}\big((\Wbb_\blt^{\leq k})^* \big)\otimes_\Cbb\scr O(\mc B)
	\end{align*}
not depending on $\Mbb$, such that
	\begin{align}
	q\partial_q (\mc S\uppsi^{\leq k})=\Upomega\cdot\mc S\uppsi^{\leq k}.\label{eq70}
	\end{align}
\end{thm}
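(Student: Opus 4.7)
\emph{Strategy.} The plan is to exploit the formal conformal block identity $\mc S\uppsi(v\cdot w)=0$, which holds for any $v\in H^0(\mc C,\scr V_{\fk X}\otimes\omega_{\mc C/\mc B}(\blt\SX))$ and any $w\in\Wbb_\blt$ (by Theorem \ref{lb34} together with Theorem \ref{lb18}, recalling that $\mc B$ is Stein after shrinking $\wtd{\mc B}$). I will construct a single global section $\vartheta$ whose expansion in $q$ trades the node contribution for $q\partial_q\mc S\uppsi$, while the contributions at the marked points $\sgm_i$ produce a weight-bounded operator on $\Wbb_\blt^{\leq k}$ that will become $\Upomega$.

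\emph{Construction of $\vartheta$.} Near the node on $W$, take the section of $\scr V_{\fk X}^{\leq 2}\otimes\omega_{\mc C/\mc B}$ given on $W'$ by $\mc U_\varrho(\xi)^{-1}(\cbf)\cdot\xi\,d\xi$; by the gluing rule \eqref{eq21} with $u=\cbf$ (using $\mc U(\upgamma_1)\cbf=\cbf$) and the identification $\xi^{-1}d\xi=-\varpi^{-1}d\varpi$ in $\omega_{\mc C/\mc B}|_W$, this agrees on $W''$ with $-\mc U_\varrho(\varpi)^{-1}(\cbf)\cdot\varpi\,d\varpi$, so the section extends holomorphically across $\Sigma$. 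Away from $W$, fix a relative projective structure $\fk P$ on $\fk X$ and use $\fk P$-coordinates to define a ``projective conformal vector'' section of $\svir_c\otimes\omega_{\mc C/\mc B}$ on $\mc C-\Sigma$, whose transition is governed by \eqref{eq47} and hence well defined modulo an $\scr O$-multiple of $\id$. Patching the two local data into a single global section of $\scr V_{\fk X}\otimes\omega_{\mc C/\mc B}(\blt\SX)$ is possible once the pole orders at $\sgm_i(\mc B)$ are allowed to be large enough to kill $H^1$ obstructions, which is supplied by Theorem \ref{lb9} combined with Grauert's Theorem \ref{lb11}. The result is a global section $\vartheta$ which on each $U_i$ takes the form
\[
\mc U_\varrho(\eta_i)\vartheta\big|_{U_i}=a_i(\eta_i,\tau_\blt,q)\cbf\,d\eta_i+b_i(\eta_i,\tau_\blt,q)\id\,d\eta_i,
\]
with $a_i,b_i$ meromorphic in $\eta_i$ (pole order bounded by the $H^1$ threshold), holomorphic in $(\tau_\blt,q)\in\mc B$.

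\emph{Extraction of the ODE.} Expand $\vartheta=\sum_{n\geq 0}\vartheta_n q^n$ with $\vartheta_n\in H^0(\wtd{\mc C},\scr V_{\wtd{\fk X}}\otimes\omega_{\wtd{\mc C}/\wtd{\mc B}}(\blt S_{\wtd{\fk X}}))$, as in Step 1 of the proof of Theorem \ref{lb34}. The contributions of $\vartheta_n$ at $\sgm'$ and $\sgm''$ reassemble, via Lemma \ref{lb32} applied with $u=\cbf$ and $f=1$, into the single action of $L_0$ on $q^{L_0}\btr\otimes\btl$, which equals $q\partial_q(q^{L_0}\btr\otimes\btl)$ by \eqref{eq102}. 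Therefore $\mc S\uppsi(\vartheta\cdot w)=0$ yields
\[
q\partial_q\mc S\uppsi(w)+\sum_{n\geq 0}q^n\,\uppsi\bigl(\vartheta_n\cdot w\otimes q^{L_0}\btr\otimes\btl\bigr)=0,
\]
where $\vartheta_n\cdot w$ is given by summing over $i=1,\dots,N$ the residue operators $\Res_{\eta_i=0}Y_{\Wbb_i}(\mc U_\varrho(\eta_i)\vartheta_n|_{U_i},\eta_i)$ applied in the $i$-th tensor slot. These residue operators on $\Wbb_\blt$ are finite in number, have coefficients in $\scr O(\mc B)$, and raise $\wtd L_0$-weight by a uniformly bounded amount determined by the pole orders of $a_i,b_i$. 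Applying Theorem \ref{lb21} and the weight-reduction Corollary \ref{lb20}, I choose $k_0$ large enough so that, for every $k\geq k_0$ and every $w\in\Wbb_\blt^{\leq k}$, each $\vartheta_n\cdot w$ is equivalent modulo $\scr J(\mc B)$ to a section of $\Wbb_\blt^{\leq k}\otimes_\Cbb\scr O(\mc B)$. Packaging the reduction gives an operator $\Upomega\in\End_\Cbb((\Wbb_\blt^{\leq k})^*)\otimes_\Cbb\scr O(\mc B)$ with $q\partial_q\mc S\uppsi^{\leq k}=\Upomega\cdot\mc S\uppsi^{\leq k}$. Since $\vartheta,\fk P$, and the finiteness reduction depend only on $\fk X$ and $\Wbb_\blt$ and not on $\Mbb$, neither does $\Upomega$.

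\emph{Main obstacle.} The most delicate step is the construction of $\vartheta$: the node prescription uses the sewing coordinates $\xi,\varpi$, which need not lie in $\fk P$, while the ``conformal vector section'' $\mc U_\varrho(\eta)^{-1}(\cbf)$ suffers a Schwarzian anomaly under coordinate changes via \eqref{eq47}. Reconciling the two local data and patching them into a single global section, while ensuring that the assembled node contribution really is $q\partial_q\mc S\uppsi$ and not a Schwarzian-shifted version, requires careful tracking of the $\Sbf_\eta\fk P$ and $\Sbf_\xi\fk P$ terms and absorbing them into $\Upomega$ via Proposition \ref{lb30}. This is precisely the point at which the projective-structure machinery of Sections \ref{lb41} and the Schwarzian cocycle relations \eqref{eq49} become indispensable.
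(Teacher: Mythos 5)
The construction of $\vartheta$ and the claim that its node contributions ``reassemble into $L_0$'' contain a sign error that breaks the central step. Near $\Sigma$ your $\vartheta$ is exactly a section of the form \eqref{eq65} with $u=\cbf$ and $f=1$: its restrictions to $W'$ and $W''$ under $\mc U_\varrho(\xi),\mc U_\varrho(\varpi)$ are $\xi^{L_0}\cbf\,\frac{d\xi}{\xi}$ and $-\varpi^{L_0}\mc U(\upgamma_1)\cbf\,\frac{d\varpi}{\varpi}$, and the relative minus sign is forced by the very gluing $\xi^{-1}d\xi=-\varpi^{-1}d\varpi$ that you invoke to extend $\vartheta$ across $\Sigma$. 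Lemma \ref{lb32} says the two residues (each taken with a $+$ sign) are \emph{equal}; hence for a section carrying this relative minus sign the two node contributions \emph{cancel} --- this is precisely identity \eqref{eq66}. Consequently $\mc S\uppsi(\vartheta\cdot w)=0$ reduces to $\sum_n q^n\,\uppsi(\vartheta_n\cdot w\otimes q^{L_0}\btr\otimes\btl)=0$ with no $q\partial_q$ term: you have only re-derived the statement that $\mc S\uppsi$ is a formal conformal block (Theorem \ref{lb34}), not a differential equation.

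The term $L_0q^{L_0}\btr\otimes\btl=q\partial_q(q^{L_0}\btr\otimes\btl)$ can only be produced by local data at $\sgm'$, $\sgm''$ whose two contributions enter with the \emph{same} sign and whose coefficients $a,b$ satisfy $a+b=1$; such data precisely fails to glue to a section of $\scr V_{\fk X}\otimes\omega_{\mc C/\mc B}$ across $\Sigma$. This is why the paper instead lifts $q\partial_q$ through the exact sequence \eqref{eq18}: the lift $\wtd\yk$ satisfies $\wtd\yk|_W=a\,\xi\partial_\xi+b\,\varpi\partial_\varpi$ with $a+b=1$ (because $d\pi(\xi\partial_\xi)=d\pi(\varpi\partial_\varpi)=q\partial_q$), its vertical part lives only on $\wtd{\mc C}-\sgm'(\wtd{\mc B})-\sgm''(\wtd{\mc B})$ with poles at $\sgm',\sgm''$ after expansion, and the obstruction to gluing it across the node is exactly $q\partial_q$; Lemma \ref{lb42} then converts $a+b=1$ into $L_0q^{L_0}\btr\otimes\btl$. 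The remaining ingredients you list (projective structure, Proposition \ref{lb30}, Schwarzian bookkeeping, Theorem \ref{lb21} to close the system on $\Wbb_\blt^{\leq k}$, independence of $\Mbb$) are the right ones, but they must be attached to the vertical part of this lift rather than to a globally defined section of $\scr V_{\fk X}\otimes\omega_{\mc C/\mc B}(\blt\SX)$. A secondary point: you fix a projective structure on the sewn family $\fk X$, whose fibers are nodal, whereas Theorem \ref{lb40} only supplies one on the smooth family $\wtd{\fk X}$ --- and it is the latter that the argument actually needs.
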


Using this theorem, it is easy to prove:

\begin{thm}\label{lb48}
$\wtd{\mc S}\uppsi$ and $\mc S\uppsi$ converge a.l.u..
\end{thm}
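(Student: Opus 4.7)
The plan is to recognize \eqref{eq70} as a linear ODE with a regular singular point at $q=0$ and apply the classical convergence theorem for Fuchsian systems with holomorphic parameters. Fix $k\geq k_0$ as in Theorem \ref{lb38} and, by shrinking, assume $\wtd{\mc B}$ is Stein. Since $\Mbb$ is $\Lss$-simple, $\lambda := \Lss-\wtd L_0$ is a scalar in $\Cbb$ and $\Lni = L_0-\wtd L_0-\lambda$ is nilpotent, say $\Lni^L=0$. For $w\in\Wbb_\blt^{\leq k}$ we can therefore write
\begin{align*}
\mc S\uppsi(w) = q^{\lambda}\sum_{l=0}^{L-1} f_l(q,\tau_\blt;w)\,(\log q)^l,
\end{align*}
with each $f_l\in\scr O(\wtd{\mc B})[[q]]$. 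Bundling the $f_l$ into a single formal power series $F$ valued in the finite-dimensional space $E := (\Wbb_\blt^{\leq k})^*\otimes\Cbb^L$, and using $q\partial_q(\log q)=1$, equation \eqref{eq70} translates into the honest linear ODE
\begin{align*}
q\partial_q F = \wtd\Upomega\,F,\qquad \wtd\Upomega := \Upomega - \lambda\cdot\id - N,
\end{align*}
where $N$ is the nilpotent operator on $\Cbb^L$ encoding the shift $(\log q)^l\mapsto l(\log q)^{l-1}$. The crucial feature is that $\wtd\Upomega$ is holomorphic on the entire disk $\mc B=\mc D_{r\rho}\times\wtd{\mc B}$, so the ODE is Fuchsian with its only singularity at $q=0$.

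Next, I would invoke the parameter-dependent Cauchy--Frobenius theorem: a formal power-series solution $F=\sum_{n\geq 0} F_n(\tau_\blt)\,q^n$ of such a Fuchsian system, with $\wtd\Upomega$ holomorphic on $\mc D_{r\rho}\times\wtd{\mc B}$, converges absolutely and locally uniformly on $\mc D_{r\rho}\times\wtd{\mc B}$. The standard majorant argument proceeds as follows: fix a compact $K\subset\wtd{\mc B}$ and any $R<r\rho$; expand $\wtd\Upomega(q,\tau_\blt)=\sum_{m\geq 0} A_m(\tau_\blt)\,q^m$ with $\sup_K\|A_m\|\leq M R^{-m}$; the recursion
\begin{align*}
\bigl(n\cdot\id - A_0(\tau_\blt)\bigr)F_n = \sum_{m=1}^{n}A_m(\tau_\blt)\,F_{n-m}
\end{align*}
forced by the ODE then yields inductive bounds $\sup_K\|F_n\|\leq C(K,R)\,R^{-n}$ for $n$ larger than the uniform-on-$K$ spectral radius of $A_0$. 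Multiplying back by the finitely many factors $q^\lambda(\log q)^l$, whose moduli are uniformly bounded on any compact $Q\subset\mc D_{r\rho}^\times$ across $0\leq l\leq L-1$, yields the a.l.u.\ convergence of $\mc S\uppsi^{\leq k}$ in the sense of Definition \ref{lb37}.

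Since $\Wbb_\blt=\bigcup_k\Wbb_\blt^{\leq k}$, every $w\in\Wbb_\blt$ lies in some $\Wbb_\blt^{\leq k}$ with $k\geq k_0$, so the a.l.u.\ convergence of $\mc S\uppsi^{\leq k}$ for each such $k$ upgrades to the a.l.u.\ convergence of $\mc S\uppsi$. For $\wtd{\mc S}\uppsi$, note that it is the $(\log q)^0$-coefficient of $q^{-\lambda}\mc S\uppsi$ (by the expansion $q^{L_0}=q^\lambda q^{\Lni}q^{\wtd L_0}$ together with $q^{\Lni}=\sum_n \tfrac{(\log q)^n}{n!}\Lni^n$), hence inherits a.l.u.\ convergence on $\mc D_{r\rho}^\times\times\wtd{\mc B}$; being a formal power series in $q$ with non-negative integer exponents, it then extends holomorphically across $q=0$ by Cauchy--Hadamard, giving convergence on all of $\mc B$. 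The main technical point is the uniform-in-$\tau_\blt$ majorant estimate, in particular controlling the ``resonant'' values of $n$ where $n\cdot\id-A_0(\tau_\blt)$ fails to be invertible; however, such resonances occur only for finitely many $n$ (bounded uniformly on compacts of $\wtd{\mc B}$) and the corresponding recursion relations are automatically consistent there because Theorems \ref{lb34} and \ref{lb38} already supply the formal solution, reducing the analysis to the well-understood non-resonant tail.
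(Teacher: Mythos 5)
Your proposal is correct and follows essentially the same route as the paper: the paper's proof of this theorem consists of invoking Theorem \ref{lb39} (the appendix result on Fuchsian systems with holomorphic parameters and $\log$ terms), whose proof is exactly the majorant/recursion argument you describe, with the only cosmetic difference that the paper removes the $\log q$ powers by downward induction on the $\log$-degree rather than by bundling them into a nilpotent shift operator $N$ on $\Cbb^L$. Your observations that $\wtd{\mc S}\uppsi$ is the $(\log q)^0$-part of $q^{-\lambda}\mc S\uppsi$ and that the finitely many resonant indices are harmless because the formal solution is already given also match the paper's treatment.
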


\begin{proof}
It suffices to assume $\mc B$ is a Stein open subset of $\Cbb^m$. Then by Theorem \ref{lb39}, $\mc S\uppsi^{\leq k}$ converges a.l.u..
\end{proof}

It follows that $\wtd{\mc S}\uppsi$ is a conformal block associated to $\Wbb_\blt$ and $\fk X$. Outside the discriminant locus $\Delta=\{0\}\times\wtd{\mc B}$, $\mc S\uppsi$ is also a conformal block.

\subsection*{Proof of Theorem \ref{lb38}}

In this subsection, we assume $\wtd{\mc B}$ and hence $\mc B$ are Stein manifolds. By Theorem \ref{lb40}, we are allowed to fix a projective structure $\fk P$ of $\wtd{\fk X}$. Recall $\SX=\sum_{i=1}^N\sgm_i(\mc B)$; set $S_{\wtd{\fk X}}=\sum_{i=1}^N\sgm_i(\wtd{\mc B})+\sgm'(\wtd{\mc B})+\sgm''(\wtd{\mc B})$. As argued for \eqref{eq78}, we may use \eqref{eq18} to obtain an exact sequence
\begin{align}
0&\rightarrow H^0\big(\mc B,\pi_*\Theta_{\mc C/\mc B}(\blt S_{\fk X})\big)\rightarrow H^0\big(\mc B,\pi_*\Theta_{\mc C}(-\log \mc C_\Delta+\blt S_{\fk X})\big)\nonumber\\
&\xrightarrow{d\pi}H^0\big(\mc B,\pi_*\big(\pi^*\Theta_{\mc B}(-\log \Delta)(\blt S_{\fk X})\big)\big)
\rightarrow 0.\label{eq79}
\end{align}
$\yk=q\partial_q$ is a section of  $\Theta_{\mc B}(-\log\Delta)$ and hence of $\pi_*\big(\pi^*\Theta_{\mc B}(-\log \Delta)(\blt S_{\fk X})\big)$ over $\mc B$. Thus, we have
\begin{align*}
\wtd\yk\in H^0\big(\mc C,\Theta_{\mc C}(-\log \mc C_\Delta+\blt S_{\fk X})\big)
\end{align*}
satisfying $d\pi(\wtd\yk)=q\partial_q$. We let
\begin{align*}
\Gamma=\sgm'(\wtd{\mc B})\cup\sgm''(\wtd{\mc B}).
\end{align*}

Our first step is to take the series expansion $\sum\wtd\yk_n^\perp q^n$ (as in the proof of Theorem \ref{lb34}) of the ``vertical part" of $\wtd\yk$. Choose any precompact open subset $U\subset\wtd{\mc C}-\Gamma$ together with a fiberwisely univalent $\eta\in\scr O(U)$.  Then as in that proof, we may find a small subdisc $\mc D=\mc D_{\epsilon\lambda}$ of $\mc D_{r\rho}$ centered at $0$ such that $\mc D\times U\simeq U\times\mc D$ is an open subset of $\wtd{\mc C}\times \mc D_{r\rho}-F'-F''$ and hence of $\mc C$. Extend $\eta$ constantly (over $\mc D$) to a fiberwise univalent function on $\mc D\times U$. Then we may write
\begin{align}
\wtd\yk|_{\mc D\times U}=h\partial_\eta+q\partial_q\label{eq88}
\end{align}
for some $h\in\scr O(\blt\SX)(\mc D\times U)$. Write $h=\sum_{n\in\Nbb}h_nq^n$ where $h_n\in\scr O(\blt S_{\wtd{\fk X}})(U)$. For each $n\in\Nbb$, set an element $\wtd\yk^\perp_n\in\Theta_{\wtd{\mc C}/\wtd{\mc B}}(\blt S_{\wtd{\fk X}})(U)$ by
\begin{align}
\wtd\yk^\perp_n|_U=h_n\partial_\eta.\label{eq72}
\end{align}

\begin{lm}
	The locally defined $\wtd\yk^\perp_n$ is independent of the choice of $\eta$, and hence can be extended to an element of $H^0(\wtd {\mc C}-\Gamma,\Theta_{\wtd{\mc C}/\wtd{\mc B}}(\blt S_{\wtd{\fk X}}))$
\end{lm}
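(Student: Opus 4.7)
The plan is to verify directly that on any overlap, the two candidate definitions of $\wtd\yk_n^\perp$ agree. The key observation is that $\eta$ lives on an open subset $U\subset\wtd{\mc C}-\Gamma$ and is only then extended constantly over $\mc D$; any other fiberwise univalent $\mu\in\scr O(V)$ coming from $V\subset\wtd{\mc C}-\Gamma$ is likewise constant in $q$, so the change-of-coordinate Jacobian $\partial_\eta\mu$ depends only on $\eta$ and $\tau_\blt$, not on $q$. This $q$-independence is what will prevent different Taylor coefficients from getting mixed up under a change of $\eta$.

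First I would note that on $\mc D\times U$ with coordinates $(\eta,\tau_\blt,q)$, the sheaf $\Theta_{\mc C}(-\log\mc C_\Delta)$ is freely generated by $\partial_\eta,\partial_{\tau_1},\dots,\partial_{\tau_m},q\partial_q$ (case I near \eqref{eq14}); the requirement $d\pi(\wtd\yk)=q\partial_q$ then forces the $\partial_{\tau_i}$-components of $\wtd\yk$ to vanish and the $q\partial_q$-coefficient to equal $1$, giving precisely the local form \eqref{eq88}. The same discussion on $\mc D\times V$ yields $\wtd\yk|_{\mc D\times V}=h'\partial_\mu+q\partial_q$ for some $h'\in\scr O(\blt\SX)(\mc D\times V)$.

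Next I would compare the two expressions on $\mc D\times(U\cap V)$. Because $\mu$ is independent of $q$, the change of variables from $(\eta,\tau_\blt,q)$ to $(\mu,\tau_\blt,q)$ is block-triangular, and the vertical vector fields are related by $\partial_\eta=(\partial_\eta\mu)\partial_\mu$. Substituting and comparing coefficients of $\partial_\mu$ gives $h'=h\cdot\partial_\eta\mu$. Since $\partial_\eta\mu\in\scr O(U\cap V)$ carries no $q$-dependence, expanding $h=\sum_{n\in\Nbb}h_nq^n$ and $h'=\sum_{n\in\Nbb}h'_nq^n$ and extracting the coefficient of $q^n$ yields $h'_n=h_n\cdot\partial_\eta\mu$, and therefore
\[
h'_n\partial_\mu=h_n(\partial_\eta\mu)\partial_\mu=h_n\partial_\eta.
\]
This is exactly the assertion that the two locally defined sections of $\Theta_{\wtd{\mc C}/\wtd{\mc B}}$ coincide on the overlap.

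Finally, since $U\subset\wtd{\mc C}-\Gamma$, the divisor $\SX$ restricts there to $\bigcup_{i=1}^N\sgm_i(\wtd{\mc B})$, which is part of $S_{\wtd{\fk X}}$, so each $h_n$ lies in $\scr O(\blt S_{\wtd{\fk X}})(U)$ and the glued section lies in $H^0\big(\wtd{\mc C}-\Gamma,\Theta_{\wtd{\mc C}/\wtd{\mc B}}(\blt S_{\wtd{\fk X}})\big)$, as claimed. No essential obstacle is anticipated: the entire proof reduces to the elementary but crucial point that because both $\eta$ and $\mu$ are pulled back from $\wtd{\mc C}$, the transition factor $\partial_\eta\mu$ is $q$-free and therefore commutes with extracting the $q^n$-coefficient.
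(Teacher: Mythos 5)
Your proof is correct and follows essentially the same route as the paper: both arguments hinge on the observation that since $\eta$ and $\mu$ are pulled back from $\wtd{\mc C}$ (hence $\partial_q\mu=0$), the transition factor $\partial_\eta\mu$ is independent of $q$, so it commutes with extracting the coefficient of $q^n$ and the two local definitions of $\wtd\yk_n^\perp$ agree. The extra details you supply (deriving the local form \eqref{eq88} from the case-I generators and checking the pole divisor) are consistent with what the paper leaves implicit.
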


\begin{proof}
	Suppose we have another $\mu\in\scr O(U)$ univalent on each fiber, which is extended constantly to $\mc D\times U$. So $\partial_q\mu=0$ and hence $\wtd\yk|_{\mc D\times U}=h\cdot\partial_\eta\mu\cdot\partial_\mu+q\partial_q.$ Note that $\partial_\eta\mu$ is constant over $q$. Thus, if we define $\wtd\yk^\perp_n|_{U}$ using $\mu$, then $\wtd\yk^\perp_n|_{U}=h_n\cdot\partial_\eta\mu\cdot\partial_\mu$, which agrees with \eqref{eq72}.
\end{proof}

We shall show that $\wtd\yk^\perp_n$ has poles of finite orders at $\Gamma$. For that purpose, we need to describe explicitly $\wtd\yk$ near the critical locus $\Sigma$. Recall the open subsets $W,W',W''$ of $\mc C$ described near \eqref{eq10} and $U',U''$ of $\wtd{\mc C}$ described near \eqref{eq84}. Note also $q=\xi\varpi$. In the following, we let $\tau_\blt$ be any biholomorphic map from $\wtd{\mc B}$ to an open subset of a complex manifold. If $\wtd{\mc B}$ is small enough, then $\tau_\blt$ can be a set of coordinates of $\wtd{\mc B}$. The only purpose of introducing $\tau_\blt$ is  to indicate the dependence of certain functions on the points of $\wtd{\mc B}$.  Thus, $(\xi,q,\tau_\blt)$ and $(\varpi,q,\tau_\blt)$ are respectively biholomorphic maps of 
\begin{align*}
W'=\mc D_r^\times\times\mc D_\rho\times\wtd{\mc B},\qquad W''=\mc D_r\times\mc D_\rho^\times\times\wtd{\mc B}
\end{align*}
to complex manifolds. By \eqref{eq16}, we can find $a,b\in\scr O((\xi,\varpi,\tau_\blt)(W))$ such that
\begin{align*}
\wtd\yk|_W=a(\xi,\varpi,\tau_\blt)\xi\partial_\xi+b(\xi,\varpi,\tau_\blt)\varpi\partial_\varpi.
\end{align*}
Since $d\pi(\xi\partial_\xi)=d\pi(\varpi\partial_\varpi)=q\partial_q$ by \eqref{eq81}, we must have
\begin{align}
a+b=1.
\end{align}
This relation, together with \eqref{eq23}, shows that under the coordinates $(\xi,q,\tau_\blt)$ and $(\varpi,q,\tau_\blt)$ respectively,
\begin{gather}
\wtd\yk|_{W'}=a(\xi,q/\xi,\tau_\blt)\xi\partial_\xi+q\partial_q,\qquad \wtd\yk|_{W''}=b(q/\varpi,\varpi,\tau_\blt)\varpi\partial_\varpi+q\partial_q.\label{eq80}
\end{gather}

\begin{lm}
	For each $n\in\Nbb$, $\wtd\yk_n^\perp$ has poles of orders at most $n-1$ at $\sgm'(\wtd{\mc B})$ and $\sgm''(\wtd{\mc B})$. Consequently, $\wtd\yk^\perp_n$ is an element of $H^0(\wtd{\mc C},\Theta_{\wtd {\mc C}/\wtd {\mc B}}(\blt S_{\wtd{\fk X}}))$.
\end{lm}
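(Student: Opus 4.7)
The plan is to analyze $\wtd\yk^\perp_n$ near $\sgm'(\wtd{\mc B})$ using the explicit formula for $\wtd\yk$ on the critical neighborhood $W$ given in \eqref{eq80}, and to deduce the pole bound from a Taylor expansion argument. The case of $\sgm''(\wtd{\mc B})$ is entirely symmetric, so I would focus on $\sgm'(\wtd{\mc B})$.

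First I would fix an arbitrary point $\wtd x\in \sgm'(\wtd{\mc B})$ and, after shrinking $\wtd{\mc B}$ to a small neighborhood $V$ of $\wtd\pi(\wtd x)$, use the biholomorphism $(\xi,\wtd\pi):U'\xrightarrow{\simeq}\mc D_r\times\wtd{\mc B}$ from \eqref{eq84} to view $\xi$ as a global coordinate on $U'$ with $\sgm'(\wtd{\mc B})=\{\xi=0\}$. For any $\xi_0\in\mc D_r^\times$, the corresponding point of $U'$ lies in $W'$ (as a point of $\mc C$) via the identification $W'\simeq (U'\times\mc D_{r\rho})-F'$ from \eqref{eq12}. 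On $W'$, the first identity of \eqref{eq80} reads
\begin{align*}
\wtd\yk|_{W'}=a(\xi,q/\xi,\tau_\blt)\,\xi\partial_\xi+q\partial_q,
\end{align*}
where $a\in\scr O(W)=\scr O(\mc D_r\times\mc D_\rho\times\wtd{\mc B})$ is holomorphic (note that $\wtd\yk$ is log-regular along $\mc C_\Delta$ near $\Sigma$ and $W$ is disjoint from $S_{\fk X}$, so there is no singular part).

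Next, I would compare this with \eqref{eq88} for the fiberwise univalent coordinate $\eta=\xi$ (extended constantly over $q$): this gives $h(\xi,q,\tau_\blt)=a(\xi,q/\xi,\tau_\blt)\,\xi$. Expanding $a$ as a convergent Taylor series $a(\xi,\varpi,\tau_\blt)=\sum_{j,k\geq 0}a_{j,k}(\tau_\blt)\xi^j\varpi^k$ on $W$, substituting $\varpi=q/\xi$, and collecting by powers of $q$, one finds
\begin{align*}
h=\sum_{j,k\geq 0}a_{j,k}(\tau_\blt)\,\xi^{\,j+1-k}\,q^k,\qquad h_n(\xi,\tau_\blt)=\sum_{j\geq 0}a_{j,n}(\tau_\blt)\,\xi^{\,j+1-n}.
\end{align*}
The latter is a convergent Laurent series in $\xi$ whose principal part has order at most $n-1$. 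Hence $\wtd\yk^\perp_n|_{U'\setminus\sgm'(\wtd{\mc B})}=h_n\partial_\xi$ extends meromorphically across $\sgm'(\wtd{\mc B})$ with a pole of order at most $n-1$.

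The symmetric calculation with the second identity of \eqref{eq80}, using $\eta=\varpi$ on $U''$, yields the same bound at $\sgm''(\wtd{\mc B})$. Combined with the previous lemma (which places $\wtd\yk^\perp_n$ in $H^0(\wtd{\mc C}-\Gamma,\Theta_{\wtd{\mc C}/\wtd{\mc B}}(\blt S_{\wtd{\fk X}}))$, with the pole order at each $\sgm_i(\wtd{\mc B})$ automatically controlled by that of $\wtd\yk$), this gives the desired global section in $H^0(\wtd{\mc C},\Theta_{\wtd{\mc C}/\wtd{\mc B}}(\blt S_{\wtd{\fk X}}))$. The only subtle point is the bookkeeping between the atlases $\{W,\wtd{\mc C}\times\mc D_{r\rho}-F'-F''\}$ of $\mc C$ and the atlas of $\wtd{\mc C}$; once the identification $(\xi,q,\id_{\wtd{\mc B}})$ on $W'$ is used carefully to justify the Taylor expansion of $a$ at $\xi=\varpi=0$, the pole bound follows immediately.
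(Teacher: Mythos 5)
Your proposal is correct and follows essentially the same route as the paper: expand $a$ (and $b$) as a double Taylor series on $W$, substitute $\varpi=q/\xi$ (resp.\ $\xi=q/\varpi$), and read off that the coefficient of $q^n$ has lowest $\xi$-power $\xi^{1-n}$, i.e.\ a pole of order at most $n-1$. The bookkeeping via the identification $(\xi,q,\id_{\wtd{\mc B}})$ on $W'$ matches the paper's use of \eqref{eq80} and \eqref{eq82}.
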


\begin{proof}
	Let us write
	\begin{gather*}
	a(\xi,\varpi,\tau_\blt)=\sum_{m,n\in\Nbb}a_{m,n}(\tau_\blt)\xi^m\varpi^n,\qquad b(\xi,\varpi,\tau_\blt)=\sum_{m,n\in\Nbb}b_{m,n}(\tau_\blt)\xi^m\varpi^n
	\end{gather*}
	where $a_{m,n},b_{m,n}\in\scr O(\tau_\blt(\wtd{\mc B}))$. Then
	\begin{gather}
	a(\xi,q/\xi,\tau_\blt)=\sum_{n\geq 0,l\geq -n}a_{l+n,n}(\tau_\blt)\xi^lq^n,\qquad b(q/\varpi,\varpi,\tau_\blt)=\sum_{m\geq 0,l\geq -m}b_{m,l+m}(\tau_\blt)\varpi^lq^m.\label{eq82}
	\end{gather}
	Combine these two relations with \eqref{eq72} and \eqref{eq80}, and take the coefficients before $q^n$. We obtain
	\begin{align}
	\wtd\yk_n^\perp\Big|_{U'-\sgm'(\wtd{\mc B})}=\sum_{l\geq -n}a_{l+n,n}(\tau_\blt)\xi^{l+1}\partial_\xi,\qquad \wtd\yk_n^\perp\Big|_{U''-\sgm''(\wtd{\mc B})}=\sum_{l\geq -n}b_{n,l+n}(\tau_\blt)\varpi^{l+1}\partial_\varpi,\label{eq83}
	\end{align}
	which finishes the proof.
\end{proof}

The description \eqref{eq83} of $\wtd\yk_n^\perp$ near $\Gamma$ can be found in \cite[Lemma 33]{Loo10}. Next, we shall apply the results of Section \ref{lb41} to the smooth family $\wtd{\fk X}$. In particular, $\svir_c$ is defined for $\wtd{\fk X}$ and is an $\scr O_{\wtd{\mc C}}$-module. We let $\upnu(\wtd\yk_n^\perp)$  be a section of $\svir_c\otimes\omega_{\wtd{\mc C}/\wtd{\mc B}}(\blt S_{\wtd{\fk X}})$ defined on $U'\cup U''$ (near $\sgm'(\wtd{\mc B}),\sgm''(\wtd{\mc B})$) and near $\sgm_1(\wtd{\mc B}),\dots,\sgm_N(\wtd{\mc B})$ as in Section \eqref{lb41}, \emph{which relies on the local coordinates $\eta_1,\dots,\eta_N,\xi,\varpi$ of $\wtd{\fk X}$}. Recall the correspondence $\partial_\xi\mapsto \cbf d\xi,\partial_\varpi\mapsto \cbf d\varpi$. We calculate the actions of $\upnu(\wtd\yk_n^\perp)$ on $\Mbb$ and on $\Mbb'$ to be respectively
\begin{align}
\Res_{\xi=0}\sum_{l\geq -n}a_{l+n,n}Y_\Mbb(\cbf,\xi)\xi^{l+1}d\xi,\qquad \Res_{\varpi=0}\sum_{l\geq -n}b_{n,l+n}Y_{\Mbb'}(\cbf,\varpi)\varpi^{l+1}d\varpi.\label{eq85}
\end{align}
In the following proofs, we will suppress the symbol $\tau_\blt$ when necessary.

The next lemma is crucial to finding the differential equation \eqref{eq70}, and was observed in  \cite[Rem. 8.5.2]{DGT19b} when there is no $\log q$.

\begin{lm}\label{lb42}
	The following equation of elements of $(\Mbb\otimes\Mbb')\{q\}[\log q]$ is true.
	\begin{align}
	L_0q^{ L_0}\btr\otimes~\btl=\sum_{n\in\Nbb}\upnu(\wtd\yk_n^\perp)q^{n+ L_0}\btr\otimes~\btl+\sum_{n\in\Nbb}q^{n+ L_0}\btr\otimes ~\upnu(\wtd\yk_n^\perp)\btl\label{eq86}
	\end{align}
\end{lm}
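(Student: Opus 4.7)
The plan is to rewrite both sides of \eqref{eq86} as residue integrals involving the stress-energy field $Y(\cbf,\cdot)$, and then apply Lemma \ref{lb32} together with the constraint $a+b=1$ to collapse everything except a single copy of $L_0$.

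Using the expansions \eqref{eq82}, the local formulas \eqref{eq85}, and the standard identity $Y(\cbf,z)=\sum_{m\in\Zbb}L_m z^{-m-2}$, one identifies the generating series
\begin{align*}
\sum_{n\in\Nbb}\upnu(\wtd\yk_n^\perp)\big|_\Mbb\,q^n=\Res_{\xi=0}Y_\Mbb(\cbf,\xi)\,a(\xi,q/\xi)\,\xi\,d\xi,
\end{align*}
and the analogous expression with $b(q/\varpi,\varpi)\varpi\,d\varpi$ for the action on $\Mbb'$. Consequently the right-hand side of \eqref{eq86} becomes the sum
\begin{align*}
\Res_{\xi=0}Y_\Mbb(\cbf,\xi)q^{L_0}\btr\otimes\btl\cdot a(\xi,q/\xi)\xi\,d\xi+q^{L_0}\btr\otimes\Res_{\varpi=0}Y_{\Mbb'}(\cbf,\varpi)b(q/\varpi,\varpi)\varpi\,d\varpi\cdot\btl.
\end{align*}
Writing $b=1-a$ splits the second residue. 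The piece contributed by the constant $1$ reduces by a direct residue calculation to $L_0\btl$ (only the $l=0$ mode survives against $\varpi\,d\varpi$), which combined with the prefactor yields $q^{L_0}\btr\otimes L_0\btl=L_0 q^{L_0}\btr\otimes\btl$ (using $L_0^\tr=L_0$ and $[L_0,q^{L_0}]=0$). This matches the left-hand side of \eqref{eq86} exactly.

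It therefore remains to establish the cancellation
\begin{align*}
\Res_{\xi=0}Y_\Mbb(\cbf,\xi)q^{L_0}\btr\otimes\btl\cdot a(\xi,q/\xi)\xi\,d\xi=\Res_{\varpi=0}q^{L_0}\btr\otimes Y_{\Mbb'}(\cbf,\varpi)\btl\cdot a(q/\varpi,\varpi)\varpi\,d\varpi,
\end{align*}
which is Lemma \ref{lb32} applied with $u=\cbf$ and $f=a$: the relations $\xi^{L_0}\cbf=\xi^2\cbf$, $\varpi^{L_0}\cbf=\varpi^2\cbf$, and $\mc U(\upgamma_1)\cbf=\cbf$ (the latter because $L_1\cbf=0$ and $(-1)^{L_0}\cbf=\cbf$) convert the two sides above into the two sides of that lemma. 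The only real bookkeeping point is conversion of the measures $\xi\,d\xi$ versus $d\xi/\xi$; well-definedness of the $q$-series is automatic from lower truncation of vertex operators and the fact that $a,b$ are ordinary power series in $(\xi,\varpi)$.
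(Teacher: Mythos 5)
Your proof is correct and follows essentially the same route as the paper's: both rest on Lemma \ref{lb32} applied to $u=\cbf$ (using $\xi^{L_0}\cbf=\xi^2\cbf$, $\mc U(\upgamma_1)\cbf=\cbf$) together with the identity $a+b=1$. The only cosmetic difference is that you split off the constant via $b=1-a$ and evaluate the resulting residue on the $\varpi$-side as $q^{L_0}\btr\otimes L_0\btl$, whereas the paper first transports the $b$-term to the $\xi$-side and then computes $\Res_{\xi=0}Y_\Mbb(\cbf,\xi)\,q^{L_0}\btr\otimes\btl\cdot\xi\,d\xi=L_0q^{L_0}\btr\otimes\btl$ there.
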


\begin{proof}
	It is obvious that $\mc U(\upgamma_1)\cbf=\cbf$, $\xi^{L_0}\cbf=\xi^2\cbf$, $\varpi^{L_0}\cbf=\varpi^2\cbf$. Notice Remark \ref{lb33}. We have  
	\begin{align*}
	&Y_\Mbb(\xi^{L_0}\cbf,\xi) q^{ L_0}\btr\otimes~\btl\cdot a(\xi,q/\xi)\frac{d\xi}{\xi}\\
	=&\sum_{n\geq 0}\sum_{l\geq -n}Y_\Mbb(\cbf,\xi) q^{n+ L_0}\btr\otimes~\btl\cdot a_{l+n,n}\xi^{l+1}d\xi
	\end{align*}	
	as elements of $(\Mbb\otimes\Mbb'\otimes\scr O(\wtd{\mc B}))((\xi))\{q\}[\log q]d\xi$.  Take $\Res_{\xi=0}$ and notice \eqref{eq85}. Then, the above expression becomes the first summand on the right hand side of \eqref{eq86}. A similar thing could be said about the second summand. Thus, the right hand side of \eqref{eq86} equals
	\begin{align*}
	&\Res_{\xi=0}Y_\Mbb(\xi^{L_0}\cbf,\xi) q^{ L_0}\btr\otimes~\btl\cdot a(\xi,q/\xi)\frac{d\xi}{\xi}\\
	+&\Res_{\varpi=0} q^{ L_0}\btr\otimes~Y_{\Mbb'}(\varpi^{L_0}\mc U(\upgamma_1)\cbf,\varpi)\btl\cdot b(q/\varpi,\varpi)\frac{d\varpi}{\varpi}.
	\end{align*}
	By Lemma \ref{lb32} and that $a+b=1$, it equals
	\begin{align*}
	&\Res_{\xi=0}Y_\Mbb(\xi^{L_0}\cbf,\xi) q^{ L_0}\btr\otimes~\btl\cdot \frac{d\xi}{\xi}=\Res_{\xi=0}Y_\Mbb(\cbf,\xi) q^{ L_0}\btr\otimes~\btl\cdot \xi d\xi\\
	=&Y_\Mbb(\cbf)_1 q^{ L_0}\btr\otimes~\btl=L_0 q^{ L_0}\btr\otimes~\btl.
	\end{align*}
\end{proof}

\begin{lm}\label{lb43}
	For any $w_\blt\in\Wbb_\blt$, we have the following relation of elements of $\scr O(\wtd{\mc B})\{q\}[\log q]$.
	\begin{align*}
	q\partial_q\mc S\uppsi(w_\blt)=\sum_{n\in\Nbb}\uppsi(w_\blt\otimes \upnu(\wtd\yk_n^\perp)q^{n+L_0}\btr\otimes~\btl)+\sum_{n\in\Nbb}\uppsi(w_\blt\otimes q^{n+L_0}\btr\otimes ~\upnu(\wtd\yk_n^\perp)\btl).
	\end{align*}
\end{lm}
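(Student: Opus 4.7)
The plan is to compute $q\partial_q \mc S\uppsi(w_\blt)$ by differentiating the defining expression \eqref{eq105} term by term and then invoking the previous lemma to re-express $L_0 q^{L_0}\btr\otimes\btl$ as a sum of terms involving $\upnu(\wtd\yk_n^\perp)$.

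First, I would observe that by the definition \eqref{eq105} of the standard sewing,
\begin{align*}
\mc S\uppsi(w_\blt)=\uppsi\big(w_\blt\otimes q^{L_0}\btr\otimes\btl\big),
\end{align*}
which is a priori an element of $\scr O(\wtd{\mc B})[\log q]\{q\}$. Since $\uppsi$ is $\scr O(\wtd{\mc B})$-linear and the operator $q\partial_q$ acts on the formal variable $q$ alone (commuting with the $\scr O(\wtd{\mc B})$-linear action of $\uppsi$), it can be pulled inside $\uppsi$. Applying \eqref{eq102}, which states that $q\partial_q(q^{L_0}\btr\otimes\btl)=L_0 q^{L_0}\btr\otimes\btl$, I would then obtain
\begin{align*}
q\partial_q \mc S\uppsi(w_\blt)=\uppsi\big(w_\blt\otimes L_0 q^{L_0}\btr\otimes\btl\big).
\end{align*}

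Next, I would invoke Lemma \ref{lb42}, which expresses $L_0 q^{L_0}\btr\otimes\btl$ as
\begin{align*}
L_0 q^{L_0}\btr\otimes\btl=\sum_{n\in\Nbb}\upnu(\wtd\yk_n^\perp)q^{n+L_0}\btr\otimes\btl+\sum_{n\in\Nbb}q^{n+L_0}\btr\otimes\upnu(\wtd\yk_n^\perp)\btl,
\end{align*}
inside $(\Mbb\otimes\Mbb')\{q\}[\log q]$. Substituting this identity and using the linearity of $\uppsi$ termwise in the series expansion (which is justified because for each fixed $q$-power and $\log q$-power only finitely many terms contribute, by the fact that $\upnu(\wtd\yk_n^\perp)$ raises weights in a controlled way) yields the desired equality.

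The only technical point worth checking is that the two infinite sums on the right are summable in $\scr O(\wtd{\mc B})\{q\}[\log q]$ after applying $\uppsi$: this is immediate because $\upnu(\wtd\yk_n^\perp)\btr\otimes\btl$ and $\btr\otimes\upnu(\wtd\yk_n^\perp)\btl$ lie in $\Mbb\otimes\Mbb'[\log q]$ (finitely many $\log q$-powers since $\Mbb$ is $\Lss$-simple and $\Lni$ is nilpotent), so after multiplying by $q^{n+L_0}$ the total $q$-valuation of the $n$-th summand grows without bound with $n$. No substantial obstacle is expected here; the lemma is essentially a direct corollary of \eqref{eq102} and Lemma \ref{lb42}.
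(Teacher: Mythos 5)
Your proposal is correct and follows exactly the paper's own argument: pull $q\partial_q$ inside $\uppsi$, apply \eqref{eq102} to get $L_0 q^{L_0}\btr\otimes\btl$, and then substitute the decomposition from Lemma \ref{lb42}. The extra remarks on termwise summability are fine but not needed beyond what the formal-series setting already guarantees.
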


\begin{proof}
By \eqref{eq102}, we have
	\begin{align*}
	q\partial_q\mc S\uppsi(w_\blt)=q\partial_q\uppsi(w_\blt\otimes q^{L_0}\btr\otimes~\btl)=\uppsi(w_\blt\otimes L_0 q^{L_0}\btr\otimes~\btl).
	\end{align*}
	By the Lemma \ref{lb42}, the desired equation is proved.
\end{proof}

As usual, we let $\upnu(\wtd\yk_n^\perp)w_\blt$ denote $\sum_i w_1\otimes\cdots\otimes \upnu(\wtd\yk_n^\perp) w_i\otimes\cdots\otimes w_N$.   For any $w_\blt\in\Wbb_\blt$,  define $\nabla_{q\partial_q}w_\blt\in\Wbb_\blt\otimes_\Cbb\scr O(\wtd {\mc B})[[q]]$ to be
\begin{align}
\nabla_{q\partial_q}w_\blt=-\sum_{n\in\Nbb}q^n \upnu(\wtd\yk^\perp_n)w_\blt.\label{eq87}
\end{align}

\begin{pp}\label{lb44}
	There exists $\#(\wtd\yk_n^\perp)\in\scr O(\wtd{\mc B})$ for each $n\in\Nbb$, such that for any $w_\blt\in\Wbb_\blt$, we have the following equation of elements of $\scr O(\wtd{\mc B})\{q\}[\log q]$:
	\begin{align*}
	q\partial_q\mc S\uppsi(w_\blt)=\mc S\uppsi(\nabla_{q\partial_q}w_\blt)+\sum_{n\in\Nbb}\#(\wtd\yk_n^\perp)q^n\cdot\mc S\uppsi(w_\blt).
	\end{align*}
\end{pp}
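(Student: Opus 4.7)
The plan is to combine Lemma \ref{lb43} with Proposition \ref{lb30} applied to the smooth family $\wtd{\fk X}$. Since the asserted equation is local on $\wtd{\mc B}$, I may first shrink $\wtd{\mc B}$ to a Stein open subset and fix a relative projective structure $\fk P$ of $\wtd{\fk X}$, whose existence is guaranteed by Section \ref{lb27}. As each $\wtd\yk_n^\perp$ is a global section of $\Theta_{\wtd{\mc C}/\wtd{\mc B}}(\blt S_{\wtd{\fk X}})$, Proposition \ref{lb30} produces an explicit element $\#(\wtd\yk_n^\perp)\in\scr O(\wtd{\mc B})$---the sum of residues $\tfrac{c}{12}\Res\,\Sbf\fk P\cdot(\text{coefficient})\,d(\cdot)$ over the $N+2$ marked points $\sgm_1,\dots,\sgm_N,\sgm',\sgm''$, with the coefficients at $\sgm',\sgm''$ read off from \eqref{eq83}---such that $\upnu(\wtd\yk_n^\perp)$ acts on $\scr W_{\wtd{\fk X}}(\Wbb_\blt\otimes\Mbb\otimes\Mbb')(\wtd{\mc B})$ modulo $\scr J(\wtd{\mc B})$ by scalar multiplication by $\#(\wtd\yk_n^\perp)$.

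I then apply this scalar identity to $w_\blt\otimes q^{L_0}\btr\otimes\btl$, understood coefficient-by-coefficient in its $q^{\lambda+m}(\log q)^k$-expansion so that each coefficient is an honest vector in $\Wbb_\blt\otimes\Mbb\otimes\Mbb'$. Since $\upnu(\wtd\yk_n^\perp)$ acts on a triple tensor product by summing its three componentwise actions, and since $\uppsi$ is a conformal block on $\wtd{\fk X}$ and therefore annihilates $\scr J(\wtd{\mc B})$, I obtain for each $n$ the identity
\begin{align*}
\mc S\uppsi\big(\upnu(\wtd\yk_n^\perp)w_\blt\big) &+ \uppsi\big(w_\blt\otimes\upnu(\wtd\yk_n^\perp)q^{L_0}\btr\otimes\btl\big)\\
&+ \uppsi\big(w_\blt\otimes q^{L_0}\btr\otimes\upnu(\wtd\yk_n^\perp)\btl\big) = \#(\wtd\yk_n^\perp)\,\mc S\uppsi(w_\blt).
\end{align*}
Multiplying by $q^n$, summing over $n\in\Nbb$, substituting into Lemma \ref{lb43}, and invoking the definition \eqref{eq87} of $\nabla_{q\partial_q}$ yields the desired equation. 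The $\#(\wtd\yk_n^\perp)$'s defined on different local Stein opens with different choices of projective structure glue to a global element of $\scr O(\wtd{\mc B})$, because the other terms in the identity are manifestly $\fk P$-independent and globally defined, so each $q^n$-coefficient pins down $\#(\wtd\yk_n^\perp)$ uniquely.

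The main technical point I expect to require care is the termwise interpretation of these identities on the formal-series level: I must verify that on each $q^{\lambda+m}(\log q)^k$-coefficient the action $\upnu(\wtd\yk_n^\perp)\cdot(w_\blt\otimes q^{L_0}\btr\otimes\btl)$ decomposes into a finite sum and hence makes sense. This follows because the $\Lss$-simplicity of $\Mbb$ together with the nilpotence of $\Lni$ (granted by $C_2$-cofiniteness and finite generation, cf. Remark \ref{lb61}) forces only finitely many $L_0$-weight vectors of $\Mbb$ to contribute to any given $q^{\lambda+m}(\log q)^k$-coefficient, so the componentwise actions of $\upnu(\wtd\yk_n^\perp)$ are well-defined, and the rearrangements of formal power series needed to extract the final identity are legitimate.
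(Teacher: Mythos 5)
Your proposal is correct and follows essentially the same route as the paper: define $\#(\wtd\yk_n^\perp)$ via Proposition \ref{lb30} applied to the $(N+2)$-pointed family $\wtd{\fk X}$, use the resulting scalar identity (valid after applying $\uppsi$, which kills $\scr J(\wtd{\mc B})$) on $w_\blt\otimes q^{L_0}\btr\otimes\btl$, and combine with Lemma \ref{lb43} and \eqref{eq87}. The gluing step you include is superfluous since the subsection already fixes $\wtd{\mc B}$ Stein and a single projective structure $\fk P$, while your coefficient-by-coefficient justification of the formal-series manipulations is a reasonable point of care that the paper leaves implicit.
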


\begin{proof}
	$\#(\wtd\yk_n^\perp)$ is defined by Proposition \ref{lb30}. Moreover, by that proposition, we have
	\begin{align*}
	&w_\blt\otimes \upnu(\wtd\yk_n^\perp)q^{L_0}\btr\otimes~\btl+w_\blt\otimes q^{L_0}\btr\otimes ~\upnu(\wtd\yk_n^\perp)\btl+\upnu(\wtd\yk_n^\perp)w_\blt\otimes q^{L_0}\btr\otimes ~\btl\\
	=&\#(\wtd\yk_n^\perp)\cdot w_\blt\otimes q^{L_0}\btr\otimes ~\btl.
	\end{align*}
	By Lemma \ref{lb43} and relation \eqref{eq87}, it is easy to prove the desired equation.
\end{proof}

To prove Theorem \ref{lb38}, it remains to check that \eqref{eq87} and the projective term $\sum_n\#(\wtd\yk_n^\perp)q^n$ converge a.l.u.. To treat the first one, we choose mutually disjoint neighborhoods $U_1,\dots,U_N$ of $\sgm_1(\wtd{\mc B}),\dots,\sgm_N(\wtd{\mc B})$  on which $\eta_1,\dots,\eta_N$ are defined respectively, and assume they are disjoint from $U',U''$. Then $\mc D_{r\rho}\times U_1,\dots,\mc D_{r\rho}\times U_N$ are neighborhoods of $\sgm_1(\mc B),\dots,\sgm_N(\mc B)$ disjoint from $F',F''$. Write $\tau_\blt\circ\pi$ and $\tau_\blt\circ\wtd\pi$ as $\tau_\blt$ for simplicity. Then $(\eta_i,\tau_\blt)$ is a set of coordinates of $U_i$. Recall \eqref{eq88}. We may write
\begin{align}
\wtd\yk|_{\mc D_{r\rho}\times U_i}=h_i(q,\eta_i,\tau_\blt)\partial_{\eta_i}+q\partial_q
\end{align}
where $h_i(q,\eta_i,\tau_\blt)\in\scr O_{\mc C}(\blt\SX)(\mc D_{r\rho}\times U_i)$. Let $\upnu(\wtd\yk^\perp)$  be a section of $\scr V_{\fk X}\otimes\omega_{\mc C/\mc B}(\blt\SX)$ on $\mc D_{r\rho}\times(U_1\cup \cdots\cup U_N)$ satisfying
\begin{align}
\mc U(\eta_i)\upnu(\wtd\yk^\perp)|_{\mc D_{r\rho}\times U_i}=h_i(q,\eta_i,\tau_\blt) \cbf d\eta_i.\label{eq97}
\end{align}
Write $h_i=\sum_{n}h_{i,n}q^n$. Then by \eqref{eq72},
\begin{align}
\wtd\yk_n^\perp|_{U_i}=h_{i,n}(\eta_i,\tau_\blt)\partial_{\eta_i}.\label{eq89}
\end{align}
So we have $\nabla_{q\partial_q}w_\blt\in\Wbb_\blt\otimes_\Cbb\scr O(\mc D_{r\rho}\times\wtd {\mc B})=\Wbb_\blt\otimes_\Cbb\scr O(\mc B)$ due to the obvious fact:

\begin{lm}\label{lb46}
We have
\begin{align}
\nabla_{q\partial_q}w_\blt=-\upnu(\wtd\yk^\perp)w_\blt.\label{eq96}
\end{align}
\end{lm}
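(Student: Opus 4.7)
The plan is to unpack both sides directly from the definitions and observe that the identity follows by matching coefficients in the power series expansion in $q$. The key point is that the section $\upnu(\wtd\yk^\perp)$ is defined in \eqref{eq97} by the same formula (correspondence $\partial_{\eta_i}\mapsto \cbf\,d\eta_i$) that defines each $\upnu(\wtd\yk_n^\perp)$ via \eqref{eq72} and \eqref{eq54}, so no Schwarzian or projective-structure considerations enter at this stage.

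Concretely, I would first expand $h_i(q,\eta_i,\tau_\blt)=\sum_{n\in\Nbb}h_{i,n}(\eta_i,\tau_\blt)\,q^n$ in $\scr O_{\mc C}(\blt\SX)(\mc D_{r\rho}\times U_i)$, which is possible since each $\mc D_{r\rho}\times U_i$ is disjoint from $F',F''$ and therefore the restriction of $\wtd\yk$ to this neighborhood is holomorphic in $q$. By \eqref{eq97} this gives
\begin{align*}
\mc U(\eta_i)\upnu(\wtd\yk^\perp)\big|_{\mc D_{r\rho}\times U_i}=\sum_{n\in\Nbb}h_{i,n}(\eta_i,\tau_\blt)\,\cbf\,d\eta_i\cdot q^n,
\end{align*}
while \eqref{eq89} together with \eqref{eq54} (applied to $\wtd{\fk X}$ with coordinates $\eta_1,\dots,\eta_N$ at $\sgm_1(\wtd{\mc B}),\dots,\sgm_N(\wtd{\mc B})$) gives $\mc U(\eta_i)\upnu(\wtd\yk_n^\perp)|_{U_i}=h_{i,n}(\eta_i,\tau_\blt)\,\cbf\,d\eta_i$. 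Thus under the trivialization $\mc U(\eta_i)$ one has the equality of sections
\begin{align*}
\upnu(\wtd\yk^\perp)\big|_{\mc D_{r\rho}\times U_i}=\sum_{n\in\Nbb}q^n\,\upnu(\wtd\yk_n^\perp)\big|_{U_i}.
\end{align*}

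Next, I would apply the defining formula \eqref{eq55} for the action at the $i$-th marked point: using that $\eta_i$ is constant over $q$ and that $\mc U_\varrho(\eta_i)$ commutes with multiplication by $q^n\in\scr O(\mc B)$, the residue computation $\upnu(\wtd\yk^\perp)\cdot w_i=\Res_{z=0}Y_{\Wbb_i}(\mc U_\varrho(\eta_i)\upnu(\wtd\yk^\perp),z)w_i$ factors term-by-term in $q^n$, giving
\begin{align*}
\upnu(\wtd\yk^\perp)\cdot w_i=\sum_{n\in\Nbb}q^n\,\upnu(\wtd\yk_n^\perp)\cdot w_i\qquad\text{in }\Wbb_i\otimes_\Cbb\scr O(\mc B).
\end{align*}
Summing over $i=1,\dots,N$ according to \eqref{eq27} (in both of its versions, for $\upnu(\wtd\yk^\perp)$ and for each $\upnu(\wtd\yk_n^\perp)$) yields $\upnu(\wtd\yk^\perp)w_\blt=\sum_{n\in\Nbb}q^n\,\upnu(\wtd\yk_n^\perp)w_\blt$, which upon comparison with \eqref{eq87} is exactly \eqref{eq96}.

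There is no real obstacle here; the whole statement is a bookkeeping lemma that identifies the global, $q$-dependent section $\upnu(\wtd\yk^\perp)$ on the sewn family with the formal power series of fiberwise sections $\upnu(\wtd\yk_n^\perp)$ used to define $\nabla_{q\partial_q}$. The only minor care needed is to keep straight which coordinates and which family ($\fk X$ versus $\wtd{\fk X}$) each section lives on, and to verify that the expansion of $h_i$ in $q$ is legitimate inside the Stein neighborhood $\mc D_{r\rho}\times U_i$ so that the termwise residue computation converges in $\Wbb_i\otimes_\Cbb\scr O(\mc B)$.
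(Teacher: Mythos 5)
Your proof is correct and is precisely the coefficient-matching argument the paper has in mind; the paper itself offers no proof, introducing the lemma as "the obvious fact" following the definitions \eqref{eq87}, \eqref{eq89}, and \eqref{eq97}. Your write-up simply makes explicit the termwise expansion in $q$ and the termwise residue computation that the paper leaves implicit.
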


We now prove the convergence of the projective term.

\begin{pp}\label{lb45}
	$\sum_{n\in\Nbb}\#(\wtd\yk_n^\perp)q^n$ is an element of $\scr O(\mc D_{r\rho}\times\wtd{\mc B})=\scr O(\mc B)$.
\end{pp}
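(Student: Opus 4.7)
The idea is to apply Proposition \ref{lb30} to each $\widetilde{\mathfrak{y}}_n^\perp$, viewed as a section on the smooth family $\widetilde{\mathfrak{X}}$ with its $N+2$ sections and local coordinates $\eta_1,\dots,\eta_N,\xi,\varpi$, using the projective structure $\widetilde{\mathfrak{P}}$ of $\widetilde{\mathfrak{X}}$ (which exists by Theorem \ref{lb40}, after shrinking $\widetilde{\mathcal{B}}$ so as to be Stein). This expresses $\#(\widetilde{\mathfrak{y}}_n^\perp)$ as a sum of $N+2$ residues, and it remains to commute the summation over $n$ with $q^n$ past each residue and to verify that what comes out is holomorphic on $\mathcal{B}=\mathcal{D}_{r\rho}\times\widetilde{\mathcal{B}}$.

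\textit{Contribution at each $\sigma_i(\widetilde{\mathcal{B}})$ with $1\leq i\leq N$.}  By \eqref{eq89}, the restriction of $\widetilde{\mathfrak{y}}_n^\perp$ to $U_i$ is $h_{i,n}(\eta_i,\tau_\bullet)\partial_{\eta_i}$, where $h_i=\sum_n h_{i,n}q^n$ and $h_i\in\mathcal{O}_{\mathcal{C}}(\bullet\SX)(\mathcal{D}_{r\rho}\times U_i)$. The pole order of $h_i$ at $\eta_i=0$ is finite and independent of $q$, so all $h_{i,n}$ have poles of a common bounded order. Consequently one may interchange $\sum_n q^n$ with $\mathrm{Res}_{\eta_i=0}$, obtaining
\[
\sum_{n\in\mathbb{N}} q^n\cdot\frac{c}{12}\mathrm{Res}_{\eta_i=0}\mathbf{S}_{\eta_i}\widetilde{\mathfrak{P}}\cdot h_{i,n}(\eta_i,\tau_\bullet)\,d\eta_i=\frac{c}{12}\mathrm{Res}_{\eta_i=0}\mathbf{S}_{\eta_i}\widetilde{\mathfrak{P}}\cdot h_i(q,\eta_i,\tau_\bullet)\,d\eta_i,
\]
which is manifestly holomorphic in $(q,\tau_\bullet)$ on $\mathcal{B}$.

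\textit{Contribution at $\sigma'(\widetilde{\mathcal{B}})$ (and symmetrically $\sigma''$).}  By \eqref{eq83}, this contribution to $\sum_n\#(\widetilde{\mathfrak{y}}_n^\perp)q^n$ equals
\[
\sum_{n\in\mathbb{N}}q^n\cdot\frac{c}{12}\mathrm{Res}_{\xi=0}\mathbf{S}_\xi\widetilde{\mathfrak{P}}(\xi,\tau_\bullet)\cdot\sum_{l\geq -n}a_{l+n,n}(\tau_\bullet)\xi^{l+1}d\xi.
\]
Since $\widetilde{\mathfrak{P}}$ is defined on all of $\widetilde{\mathcal{C}}$, the Taylor expansion $\mathbf{S}_\xi\widetilde{\mathfrak{P}}(\xi,\tau_\bullet)=\sum_{j\geq 0}s_j(\tau_\bullet)\xi^j$ converges locally uniformly on $\mathcal{D}_r\times\widetilde{\mathcal{B}}$; meanwhile $a(\xi,\varpi,\tau_\bullet)$ is holomorphic on $\mathcal{D}_r\times\mathcal{D}_\rho\times\widetilde{\mathcal{B}}$. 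Cauchy estimates give, for any $r'<r,\rho'<\rho$ and any compact $K\subset\widetilde{\mathcal{B}}$, constants $M,N>0$ with $|s_j(\tau_\bullet)|\leq Nr'^{-j}$ and $|a_{m,k}(\tau_\bullet)|\leq Mr'^{-m}\rho'^{-k}$ uniformly on $K$. These bounds make the interchange of summation and residue legitimate, and they identify the resulting sum as the contour integral
\[
\frac{c}{12}\cdot\frac{1}{2\pi\mathbf{i}}\oint_{|\xi|=\epsilon}\mathbf{S}_\xi\widetilde{\mathfrak{P}}(\xi,\tau_\bullet)\cdot a(\xi,q/\xi,\tau_\bullet)\,\xi\,d\xi
\]
for any $\epsilon\in(|q|/\rho,r)$. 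Since the integrand is holomorphic in $(\xi,q,\tau_\bullet)$ on the open set $\{(\xi,q,\tau_\bullet):|q|/\rho<|\xi|<r,\;\tau_\bullet\in\widetilde{\mathcal{B}}\}$, for fixed $\epsilon$ the integral is holomorphic in $(q,\tau_\bullet)$ on $\{|q|<\epsilon\rho\}\times\widetilde{\mathcal{B}}$; letting $\epsilon\nearrow r$ gives holomorphicity on all of $\mathcal{B}$. The contribution at $\sigma''$ is treated identically using $b(q/\varpi,\varpi,\tau_\bullet)$ and a contour $|\varpi|=\epsilon'\in(|q|/r,\rho)$. Adding all $N+2$ contributions proves the proposition.

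\textit{Main obstacle.}  The sensitive step is the interchange at $\sigma'$ and $\sigma''$: the individual $\widetilde{\mathfrak{y}}_n^\perp$ have pole orders growing linearly in $n$ at these sections, so the naive sum of residues need not converge. The resolution is to collapse the sum into a contour integral over a $q$-dependent annulus inside $\mathcal{D}_r$ (resp. $\mathcal{D}_\rho$), using Cauchy estimates on the Taylor coefficients of $a,b$ and $\mathbf{S}_\xi\widetilde{\mathfrak{P}},\mathbf{S}_\varpi\widetilde{\mathfrak{P}}$; this is exactly where the factorization $q=\xi\varpi$ and the joint holomorphicity of $a,b$ on the bidisc $\mathcal{D}_r\times\mathcal{D}_\rho$ are essential.
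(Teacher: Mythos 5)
Your proposal is correct and follows essentially the same route as the paper: apply Proposition \ref{lb30} with a fixed projective structure on $\wtd{\fk X}$ to write $\#(\wtd\yk_n^\perp)$ as $N+2$ residues, sum the $\eta_i$-contributions term by term (legitimate since the pole order of $h_i$ is bounded uniformly in $n$), and resum the $\xi$- and $\varpi$-contributions as contour integrals of $\Sbf_\xi\fk P\cdot a(\xi,q/\xi,\tau_\blt)\,\xi\,d\xi$ and $\Sbf_\varpi\fk P\cdot b(q/\varpi,\varpi,\tau_\blt)\,\varpi\,d\varpi$ over circles inside $\mc D_r$ and $\mc D_\rho$, which is exactly the paper's formula \eqref{eq91}. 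You also correctly isolate the one delicate point — the pole orders of $\wtd\yk_n^\perp$ at $\sgm'(\wtd{\mc B})$, $\sgm''(\wtd{\mc B})$ grow with $n$, so the interchange there must go through the uniformly convergent double Laurent expansion of $a,b$ on the bidisc rather than term-by-term residues.
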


\begin{proof}

	Combine \eqref{eq83} and \eqref{eq89}, and apply Proposition \ref{lb30} to the family $\wtd{\fk X}$. We obtain
	\begin{align*}
	\#(\wtd\yk_n^\perp)=\frac{c}{12}\big(A_n+B_n+\sum_{i=1}^NC_{i,n}\big)
	\end{align*}
	where
	\begin{gather*}
	A_n=\sum_{l\geq -n}\Res_{\xi=0}~\Sbf_\xi\fk P\cdot a_{l+n,n}(\tau_\blt)\xi^{l+1}d\xi,\\
	B_n=\sum_{l\geq -n}\Res_{\varpi=0}~\Sbf_\varpi\fk P\cdot b_{n,l+n}(\tau_\blt)\varpi^{l+1}d\varpi,\\
	C_{i,n}=\Res_{\eta_i=0}~\Sbf_{\eta_i}\fk P\cdot h_{i,n}(\eta_i,\tau_\blt)d\eta_i.
	\end{gather*}
	Notice that $\Sbf_{\eta_i}\fk P=\Sbf_{\eta_i}\fk P(\eta_i,\tau_\blt)$, $\Sbf_\varpi\fk P=\Sbf_\varpi\fk P(\varpi,\tau_\blt)$,  $\Sbf_\xi\fk P=\Sbf_\xi\fk P(\xi,\tau_\blt)$ are  holomorphic functions on $U_i,U',U''$ which are identified with their images under $(\eta_i,\tau_\blt),(\xi,\tau_\blt),(\varpi,\tau_\blt)$   respectively.
	
	We have
	\begin{align}
	\sum_{n\geq 0} A_nq^n=\sum_{n\geq 0}\sum_{l\geq -n}\Res_{\xi=0}~\Sbf_\xi\fk P\cdot a_{l+n,n}(\tau_\blt)\xi^{l+1}q^nd\xi.\label{eq90}
	\end{align}
	We claim that \eqref{eq90} is an element of $\scr O(\mc D_{r\rho}\times\wtd{\mc B})$.  Note that $a(\xi,q/\xi,\tau_\blt)$ is defined when $|q|/\rho<|\xi|<r$. Choose any $\epsilon\in (0,r\rho)$. Choose a circle $\gamma'$ surrounding $\mc D_{\epsilon/\rho}$ and inside $\mc D_r$. Then, when $\xi$ is on $\gamma$, $a(\xi,q/\xi,\tau_\blt)$ can be defined whenever $|q|<\epsilon$. Thus, 
	\begin{align*}
	A:=\oint_{\gamma'}\Sbf_\xi\fk P(\xi,\tau_\blt)\cdot a(\xi,q/\xi,\tau_\blt)\xi d\xi
	\end{align*}
	is a holomorphic function defined whenever  $|q|<\epsilon$. Recall the first equation of \eqref{eq82}, and note that the series converges absolutely and uniformly when $\xi\in\gamma'$ and $|q|\leq \epsilon$, by the double Laurent series expansion of $a(\xi,q/\xi,\tau_\blt)$. Therefore, the above contour integral equals
	\begin{align*}
	\sum_{n\geq 0}\sum_{l\geq -n}\oint_{\gamma'}\Sbf_\xi\fk P(\xi,\tau_\blt)\cdot a_{l+n,n}(\tau_\blt)\xi^{l+1}q^nd\xi,
	\end{align*}
	which clearly equals \eqref{eq90} as an element of $\scr O(\wtd{\mc B})[[q]]$. Thus \eqref{eq90} is an element of $\scr O(\mc D_\epsilon\times\wtd{\mc B})$ whenever $\epsilon<r\rho$, and hence when $\epsilon=r\rho$.
	
	A similar argument shows  $\sum B_nq^n$ converges a.l.u. to
	\begin{align*}
	B:=\oint_{\gamma''}\Sbf_\varpi\fk P(\varpi,\tau_\blt)\cdot b(q/\varpi,\varpi,\tau_\blt)\varpi d\varpi
	\end{align*}
	where $\gamma''$ is any circle in $\mc D_\rho$ surrounding $0$. Finally, we compute
	\begin{align*}
	C_i:=&\sum_{n\geq 0}C_{i,n}q^n=\sum_{n\geq 0}\Res_{\eta_i=0}~\Sbf_{\eta_i}\fk P(\eta_i,\tau_\blt)\cdot h_{i,n}(\eta_i,\tau_\blt)q^nd\eta_i\\
	=&\Res_{\eta_i=0}~\Sbf_{\eta_i}\fk P(\eta_i,\tau_\blt)\cdot h_i(q,\eta_i,\tau_\blt)d\eta_i
	\end{align*}
	which is clearly inside $\scr O(\mc D_{r\rho}\times\wtd{\mc B})$. The proof is now complete. We summarize that the projective term equals
	\begin{align}
	\sum_{n\in\Nbb}\#(\wtd\yk_n^\perp)q^n=\frac c{12}\big(A+B+\sum_{i=1}^N C_i\big).\label{eq91}
	\end{align}
\end{proof}

We can now finish the
\begin{proof}[Proof of Theorem \ref{lb38}]
	
By Theorem \ref{lb21}, we may find $k_0\in\Nbb$ such that the vectors in $\Wbb_\blt^{\leq k_0}$, considered as constant sections of 	$\Wbb_\blt^{\leq k_0}\otimes_\Cbb\scr O(\mc B)$, generate the $\scr O(\mc B)$-module $\scr W_{\fk X}(\Wbb_\blt)(\mc B)/\scr J(\mc B)$. Fix any $k\geq k_0$. We choose a basis $s_1,s_2,\dots$  of $\Wbb_\blt^{\leq k}$. 
	
	By propositions \ref{lb44} and \ref{lb45},  for each $s_i$ of $s_1,s_2,\dots$,  we have the following equation of elements of $\scr O(\wtd{\mc B})\{q\}[\log q]$:
	\begin{align*}
	q\partial_q\mc S\uppsi(s_i)=\mc S\uppsi(\nabla_{q\partial_q}s_i)+g\mc S\uppsi(s_i)
	\end{align*}
	where $g\in\scr O(\mc D_{r\rho}\times\wtd{\mc B})=\scr O(\mc B)$ equals \eqref{eq91}. Since we have $\nabla_{q\partial_q}s_i\in\Wbb_\blt\otimes_\Cbb\scr O(\mc B)=\scr W_{\fk X}(\Wbb_\blt)(\mc B)$ due to Lemma \ref{lb46},  we can find $f_{i,j}\in\scr O(\mc B)$ such that $\nabla_{q\partial_q}s_i$ equals $\sum_j f_{i,j}s_j$ mod  elements  of $\scr J(\mc B)$. Since, by Theorem \ref{lb34}, $\mc S\uppsi$ vanish on $\scr J(\mc B)$, we must have
	\begin{align*}
	q\partial_q\mc S\uppsi(s_i)=\sum_j f_{i,j}\mc S\uppsi(s_j)+g\mc S\uppsi(s_i).
	\end{align*}
	The proof is  completed by setting the matrix-valued holomorphic function $\Upomega$ to be $(f_{i,j}+g\delta_{i,j})_{i,j}$.
\end{proof}

\begin{rem}\label{lb53}
Following \cite{TUY89} or \cite[Chapter 7]{BK01} or \cite{DGT19a}, one can define locally a (logarithmic) connection $\nabla$ on $\scr W_{\fk X}(\Wbb_\blt)$ as follows. Assume first of all that $\fk X$ admits local coordinates $\eta_1,\dots,\eta_N$, and $\mc B$ is a small enough Stein manifold such that $\Theta_{\mc B}(-\log\Delta)$ is $\scr O_{\mc B}$-generated freely by finitely many global sections $\fk y_1,\fk y_2,\dots$. Then each $\fk y_j$ lifts to an element $\wtd{\yk }_j\in H^0(\mc C,\Theta_{\mc C}(-\log\mc C_\Delta+\blt\SX))$, i.e. $d\pi(\wtd\yk_j)=\fk y_j$. As in \eqref{eq97}, one can define a section $\wtd\yk_j^\perp$ of $\Theta_{\mc C/\mc B}(\blt\SX)$ (near $\sgm_1(\mc B),\dots,\sgm_N(\mc B)$) to be the vertical part of $\wtd\yk_j$, and define $\upnu(\wtd\yk_j^\perp)$ as in Section \ref{lb41} using the local coordinates $\eta_\blt$.   Then $\nabla$ is determined by
\begin{align*}
\nabla_{\yk_j}w_\blt=-\upnu(\wtd\yk_j^\perp)w_\blt
\end{align*}
for any constant section $w_\blt\in\Wbb_\blt$. As argued in \cite{DGT19a}, $\nabla$ preserves $\scr J(\mc B)$. So $\nabla$ descends to a connection of $\scr T_{\fk X}(\Wbb_\blt)$. Its dual  connection on $\scr T_{\fk X}^*(\Wbb_\blt)$ is also denoted by $\nabla$. Thus, Proposition \ref{lb44} says that for the $\fk X$ obtained by sewing $\wtd{\fk X}$, and for any  $\uppsi\in\scr T_{\wtd{\fk X}}^*(\Wbb_\blt\otimes\Mbb\otimes\Mbb')(\wtd{\mc B})$,  its sewing $\mc S\uppsi\in\scr T_{\fk X}^*(\Wbb_\blt)(\mc B)$ satisfies
\begin{align}
\nabla_{q\partial_q}\mc S\uppsi=g\mc S\uppsi\label{eq98}
\end{align}
where $g\in\scr O(\mc B)$ is given by \eqref{eq91}. This observation is the analytic analog of \cite[Rem. 8.5.2]{DGT19b}.
\end{rem}

\section{Injectivity of sewing for semisimple modules}

We continue the study of sewing, but assume that $\fk X$ is formed by sewing a single $(N+2)$-pointed compact Riemann surface with local coordinates
\begin{align*}
\wtd{\fk X}=(\wtd C;x_1,\dots,x_N,x',x'';\eta_1,\dots,\eta_N,\xi,\varpi).
\end{align*}
Namely, we assume $\wtd{\mc B}$ is a single point. As usual, each connected component of $\wtd C$ contains one of $x_1,\dots,x_N$. So
\begin{align*}
\fk X=(\pi:\mc C\rightarrow\mc D_{r\rho};x_1,\dots,x_N;\eta_1,\dots,\eta_N)
\end{align*}
where $x_1,\dots,x_N,\eta_1,\dots,\eta_N$ are extended from those of $\wtd C$ and are constant over $\mc D_{r\rho}$.

Assume $\Vbb$ is $C_2$-cofinite. Let $\mc E$ be a complete list of mutually inequivalent irreducible (ordinary)   $\Vbb$-modules. ``Complete" means that any simple ordinary $\Vbb$-module is equivalent to an object of $\mc E$. $\Wbb_1,\dots,\Wbb_N$ are finitely-generated $\Vbb$-modules. Then by Theorems \ref{lb47} and \ref{lb48}, for each $q\in\mc D_{r\rho}^\times$, we can define a \index{Sq@$\fk S_q,\wtd{\fk S}_q$} linear map
\begin{gather}
\fk S_q:\bigoplus_{\Mbb\in\mc E}\scr T_{\wtd{\fk X}}^*(\Wbb_\blt\otimes\Mbb\otimes\Mbb')\rightarrow\scr T_{\fk X_q}^*(\Wbb_\blt),\label{eq92}\\
\bigoplus_\Mbb\uppsi_\Mbb\mapsto \sum_\Mbb\mc S\uppsi_\Mbb(q)\nonumber
\end{gather}
where $\fk X_q=(\mc C_q;x_1,\dots,x_N;\eta_1,\dots,\eta_N)$. Similarly, one can define $\wtd{\fk S}_q$ by replacing $\mc S$ with $\wtd{\mc S}$. Notice that $\sum_\Mbb\wtd{\mc S}\uppsi_\Mbb(q)=\sum_\Mbb q^{\lambda_\Mbb}\mc S\uppsi_\Mbb(q)$ for some constants $\lambda_\Mbb$ depending only on $\Mbb$. Thus $\fk S_q$ is injective (resp. bijective) if and only if $\wtd{\fk S}_q$ is. Also, $\fk S_q$ depends on the argument of $q$.

\begin{thm}\label{lb49}
Assume $\Vbb$ is $C_2$-cofinite, and choose any $q\in\mc D_{r\rho}^\times$. Then $\fk S_q$ and $\wtd{\fk S}_q$ are injective linear maps. If $\Vbb$ is also rational and $\Vbb(0)=\Cbb\id$, then $\fk S_q$ and $\wtd{\fk S}_q$ are bijective.
\end{thm}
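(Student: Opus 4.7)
The plan is to split the proof into two parts: injectivity (using only $C_2$-cofiniteness) and surjectivity (using rationality + $\Vbb(0)=\Cbb\id$). The injectivity part is the substantive one.

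\textbf{Part 1: Injectivity.} Suppose $\fk S_{q_0}\big(\bigoplus_\Mbb \uppsi_\Mbb\big)=0$ at some $q_0\in\mc D_{r\rho}^\times$. The plan is as follows.

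\emph{Step 1 (spread the vanishing via the ODE).} Truncate to $\mc S\uppsi_\Mbb^{\leq k}$ for $k\geq k_0$. By Theorem~\ref{lb38} each $\mc S\uppsi_\Mbb^{\leq k}$ satisfies the same linear ODE $q\partial_q y=\Upomega y$ with $\Upomega$ holomorphic on $\mc D_{r\rho}$ (and not depending on $\Mbb$), so their sum does too. Since solutions of such an ODE on the punctured disc $\mc D_{r\rho}^\times$ are determined by their value at any single point, the vanishing at $q_0$ propagates to all $q\in\mc D_{r\rho}^\times$. Hence $\sum_\Mbb \mc S\uppsi_\Mbb$ vanishes identically as a multivalued holomorphic function, and so term-by-term as a series.

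\emph{Step 2 (monodromy separation).} Since each $\Mbb\in\mc E$ is irreducible and ordinary, $L_0$ acts on $\Mbb(n)$ as $\lambda_\Mbb+n$, and
\[
\mc S\uppsi_\Mbb(w_\blt)=q^{\lambda_\Mbb}\sum_{n\geq 0}q^n\sum_a\uppsi_\Mbb\big(w_\blt\otimes m_\Mbb(n,a)\otimes\wch m_\Mbb(n,a)\big).
\]
Group $\mc E=\bigsqcup_{\alpha\in\Cbb/\Zbb}\mc E_\alpha$ by $\lambda_\Mbb \mod \Zbb$. Monodromy $q\mapsto e^{2\pi\im}q$ multiplies the contribution of each coset by a distinct $e^{2\pi\im\alpha}$, so a Vandermonde argument yields $\sum_{\Mbb\in\mc E_\alpha}\mc S\uppsi_\Mbb=0$ for each $\alpha$ separately.

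\emph{Step 3 (coefficient extraction and propagation).} Within one coset, each fixed power of $q$ produces a trace identity of the form
\[
\sum_{\Mbb\in\mc E_\alpha} \Tr_{\Mbb(n_\Mbb)}\!\big(\phi_{\uppsi_\Mbb}(w_\blt)\big)=0\qquad\text{for all }w_\blt,
\]
where $\phi_{\uppsi_\Mbb}(w_\blt)\in\End(\Mbb)$ is the operator corresponding to $\uppsi_\Mbb(w_\blt\otimes-\otimes-)$. The final step is to deduce $\uppsi_\Mbb=0$ from these identities. The idea is to use conformal-block invariance under $H^0(\wtd C,\scr V_{\wtd C}\otimes\omega_{\wtd C}(\blt S_{\wtd{\fk X}}))$ to trade a vertex-operator action $Y_\Mbb(v)_{-l}$ at $x'$ for vertex-operator actions on the $w_i$'s and on $\Mbb'$, then invoke $C_2$-cofiniteness (Theorem~\ref{lb19} and Corollary~\ref{lb20}) to express arbitrary vectors in $\Mbb$ in terms of such creation operators acting on a finite generating subspace. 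Combined with irreducibility (so $\Mbb(0)$ generates $\Mbb$), this reduces $\uppsi_\Mbb$ to its restriction to $\Wbb_\blt\otimes\Mbb(0)\otimes\Mbb'(0)$; a cascading inductive argument on $N$ (starting with the minimal $k_\Mbb$ in each coset, where only one or a few $\Mbb$ contribute) then forces each $\uppsi_\Mbb$ to vanish.

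\textbf{Part 2: Bijectivity.} Under the additional assumptions of rationality and $\Vbb(0)=\Cbb\id$, invoke the factorization theorem of \cite{DGT19b} (whose analytic version is recorded in Remark~\ref{lb22}), which gives
\[
\dim\scr T_{\fk X_q}^*(\Wbb_\blt)=\sum_{\Mbb\in\mc E}\dim\scr T_{\wtd{\fk X}}^*(\Wbb_\blt\otimes\Mbb\otimes\Mbb').
\]
Combined with the injectivity from Part~1 and the fact that all spaces involved are finite-dimensional (Theorem~\ref{lb21} with rationality), bijectivity follows immediately.

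\textbf{Main obstacle.} The crux is the final part of Step~3: extracting $\uppsi_\Mbb=0$ from a system of trace identities. Mere vanishing of $\Tr_{\Mbb(n)}(\phi)$ does not imply $\phi=0$, so one must genuinely use that $\phi_{\uppsi_\Mbb}(w_\blt)$ is not an arbitrary operator but is constrained by conformal-block invariance. The correct implementation likely uses sections in $H^0(\wtd C,\scr V_{\wtd C}\otimes\omega_{\wtd C}(\blt S_{\wtd{\fk X}}))$ with prescribed principal part at $x'$ to convert the trace condition into a statement on $\Mbb(0)$, together with $C_2$-cofiniteness to ensure the inductive reduction terminates.
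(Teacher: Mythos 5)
Your Step 1 (propagating the vanishing from $q_0$ to all of $\mc D_{r\rho}^\times$ via the ODE of Theorem \ref{lb38}) and your Part 2 (bijectivity from factorization plus injectivity) match the paper. The gap is in Step 3, and it is exactly the obstacle you flag at the end without resolving. The sewing only probes the elements $P_{(n)}\btr\otimes\btl$, so after coefficient extraction you know only that certain \emph{traces} vanish; reducing to $\Mbb(0)$ and running a ``cascading induction'' does not convert trace information into the full bilinear form $\uppsi_\Mbb(w_\blt\otimes-\otimes-)$, nor does it separate inequivalent modules sharing the same lowest conformal weight. As written, Step 3 is not a proof.

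The paper's mechanism, which your sketch is missing, is representation-theoretic: define $\kappa:\bigoplus_{\Mbb\in\mc F}\Mbb\otimes\Mbb'\rightarrow\Wbb_\blt^*$ by $\bk{\kappa(m\otimes m'),w_\blt}=\sum_\Mbb\uppsi_\Mbb(w_\blt\otimes m\otimes m')$, where $\mc F$ is the set of $\Mbb$ with $\uppsi_\Mbb\neq 0$. The identical vanishing of the multivalued series forces every coefficient to vanish (\cite[Prop. 2.1]{Hua17}), so each $P_{(n)}\btr\otimes\btl$ lies in $\Ker(\kappa)$, which is therefore nonzero. One then shows $\Ker(\kappa)$ is invariant under $\Vbb\otimes\Vbb$: given $u\in\Vbb^{\leq n}$ and $l\in\Zbb$, Serre vanishing produces a global section $\nu$ of $\scr V_{\wtd C}^{\leq n}\otimes\omega_{\wtd C}(k_1D_1-lD_2)$ equal to $u\xi^l d\xi$ modulo $\xi^{k_2}$ at $x'$ and vanishing to order $k_2$ at $x''$, so that $\nu$ acts as $Y(u)_l$ on $\Mbb$, as $0$ on $\Mbb'$, and conformal-block invariance trades this for the action on $w_\blt$, which is killed by the kernel hypothesis. (Note that this step uses only Serre vanishing, not the $C_2$-cofinite spanning set of Theorem \ref{lb19}/Corollary \ref{lb20} that you invoke.) Since the $\Mbb\otimes\Mbb'$ are pairwise inequivalent irreducible $\Vbb\otimes\Vbb$-modules, the nonzero submodule $\Ker(\kappa)$ must contain some $\Mbb\otimes\Mbb'$ entirely, i.e.\ $\uppsi_\Mbb=0$, contradicting $\Mbb\in\mc F$. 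Your monodromy/Vandermonde separation in Step 2 is harmless but unnecessary once all coefficients are extracted. To repair your proof you should replace the trace-identity plus induction scheme by this kernel-submodule argument, or supply an equally precise substitute for how the trace data together with invariance pins down $\uppsi_\Mbb$.
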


\begin{proof}
	Let us fix $q_0\in\mc D_{r\rho}^\times$ and let $q$ denote a complex variable. Let us prove that $\fk S_{q_0}$ is injective. Suppose that the finite sum $\sum_\Mbb\mc S\uppsi_\Mbb(q_0)$ equals $0$. We shall prove by contradiction that $\uppsi_\Mbb=0$ for any $\Mbb\in\mc E$. 
	
	Suppose this is not true. Let $\mc F$ be the (finite) subset of all $\Mbb\in\mc E$ satisfying $\uppsi_\Mbb\neq0$. Then $\mc F$ is not an empty set. 	We first show that $\upphi:=\sum_\Mbb\mc S\uppsi_\Mbb$ (which is a multivalued holomorphic function on $\mc D_{r\rho}^\times$) satisfies $\upphi(q)=0$ for each $q\in\mc D_{r\rho}^\times$. Choose any large enough  $k\in\Nbb$. Then, by Theorem \ref{lb38},  $\upphi^{\leq k}$ satisfies a linear differential equation on $\mc D_{r\rho}^\times$ of the form $\partial_q\upphi^{\leq k}=q^{-1}\Upomega\cdot \upphi^{\leq k}$. Moreover, it satisfies the initial condition $\upphi^{\leq k}(q_0)=0$. Thus, $\upphi^{\leq k}$ is constantly $0$. So is $\upphi$.
	
	Consider the $\Vbb\otimes\Vbb$-module $\mbb X:=\bigoplus_{\Mbb\in\mc F}\Mbb\otimes\Mbb'$. Define a linear map $\kappa:\mbb X\rightarrow \Wbb_\blt^*$ as follows. If $m\otimes m'\in\Mbb\otimes\Mbb'$, then the evaluation of $\kappa(m\otimes m')$ with any $w_\blt\in\Wbb_\blt$ is
	\begin{align*}
	\bk{\kappa(m\otimes m'),w_\blt}=\sum_{\Mbb\in\mc F}\uppsi_\Mbb(w_\blt\otimes m\otimes m').
	\end{align*}
	We claim that $\Ker(\kappa)$ is a non-zero subspace of $\mbb X$ invariant under the action of $\Vbb\otimes \Vbb$. If this can be proved, then $\ker(\kappa)$ is a semi-simple $\Vbb\otimes\Vbb$-submodule of $\mbb X$, which must contain $\Mbb\otimes\Mbb'$ for some $\Mbb\in\mc F$. Therefore, $\uppsi_\Mbb(w_\blt\otimes m\otimes m')=0$ for any $w_\blt\in\Wbb_\blt$ and $m\otimes m'\in\Mbb\otimes\Mbb'$. Namely, $\uppsi_\Mbb=0$. So $\Mbb\notin\mc F$, which gives a contradiction. 
	
	For any $n\in\Cbb$, let $P_{(n)}$ be the projection of $\Mbb$ onto its $L_0$-weight $n$ subspace. Then 
	\begin{align*}
	\upphi(w_\blt)(q)=\sum_{\Mbb\in\mc F}\sum_{n\in\Cbb} \uppsi_\Mbb(w_\blt\otimes P_{(n)}\btr\otimes~\btl)q^n.
	\end{align*}
	Since this multivalued function is always $0$, by \cite[Prop. 2.1]{Hua17}, any coefficient before $q^n$ is $0$. Thus $P_{(n)}\btr\otimes~\btl$ (which is an element of $\Mbb_{(n)}\otimes\Mbb_{(n)}'$) is in $\ker(\kappa)$ for any $n$. Thus $\ker(\kappa)$ must be non-empty.
	
	Suppose now that $\sum_j m_j\otimes m'_j\in\Ker(\kappa)$ where each $m_j\otimes m'_j$ belongs to some $\Mbb\otimes\Mbb'$. We set $\uppsi_{\wtd\Mbb}(w_\blt\otimes m_j\otimes m_j')=0$ if $\Mbb,\wtd\Mbb\in\mc F$ and $\Mbb\neq \wtd\Mbb$. Choose any $n\in\Nbb$ and $l\in\Zbb$.  We shall show that $\sum_j Y(u)_l m_j\otimes m_j'\in\Ker(\kappa)$ for any $u\in\Vbb^{\leq n}$. (Here $Y$ denotes $Y_\Mbb$ for a suitable $\Mbb$.) Thus $\Ker(\kappa)$  is $\Vbb\otimes\id$-invariant. A similar argument will show that $\Ker(\kappa)$ is $\id\otimes\Vbb$-invariant, and hence $\Vbb\otimes\Vbb$-invariant.
	
	Set divisors $D_1=x_1+\cdots+x_N$ and $D_2=x'+x''$. Choose a natural number $k_2\geq l$ such that $Y(u)_km_j=Y(u)_km_j'=0$ for any $j$, any $k\geq k_2$,  and any $u\in\Vbb^{\leq n}$. This is possible by the lower truncation property. By Serre's vanishing Theorem, we can find $k_1\in\Nbb$ such that $H^1(\wtd C,\scr V_{\wtd C}^{\leq n}\otimes\omega_{\wtd C}(k_1D_1-k_2D_2))=0$. Thus, the short exact sequence
	\begin{align*}
	0\rightarrow \scr V_{\wtd C}^{\leq n}\otimes\omega_{\wtd C}(k_1D_1-k_2D_2) \rightarrow \scr V_{\wtd C}^{\leq n}\otimes\omega_{\wtd C}(k_1D_1-lD_2)\rightarrow\scr G\rightarrow 0
	\end{align*}
	(where $\scr G$ is the quotient of the previous two sheaves) induces another one
	\begin{align*}
	&0\rightarrow H^0\big(\wtd C,\scr V_{\wtd C}^{\leq n}\otimes\omega_{\wtd C}(k_1D_1-k_2D_2)\big) \rightarrow H^0\big(\wtd C,\scr V_{\wtd C}^{\leq n}\otimes\omega_{\wtd C}(k_1D_1-lD_2)\big)\\
	&\rightarrow H^0(\wtd C,\scr G)\rightarrow 0.
	\end{align*}
	Choose any $u\in\Vbb^{\leq n}$. Choose $v\in H^0(\wtd C,\scr G)$ to be $\mc U_\varrho(\xi)^{-1}u\xi^ld\xi$ near $x'$ and $0$ in $\wtd C-\{x'\}$. Then $v$ has a lift $\nu$ in $H^0\big(\wtd C,\scr V_{\wtd C}^{\leq n}\otimes\omega_{\wtd C}(k_1D_1-lD_2)\big)$, which must be of the form
	\begin{gather*}
	\mc U_\varrho(\xi)\nu|_{U'}=u\xi^ld\xi+\xi^{k_2}(\mathrm{elements~of~}\Vbb^{\leq n}\otimes_\Cbb\scr O(U'))d\xi,\\
	\mc U_\varrho(\varpi)\nu|_{U''}=\varpi^{k_2}(\mathrm{elements~of~}\Vbb^{\leq n}\otimes_\Cbb\scr O(U''))d\varpi,
	\end{gather*}
where $U'\ni x',U''\ni x''$ are open subsets of $\wtd C$ (see \eqref{eq84}). 	It is clear that $\nu\cdot m_j=Y(u)_lm_j$ and $\nu\cdot m_j'=0$. Thus, as each $\uppsi_\Mbb$ vanishes on $\nu\cdot(\Wbb_\blt\otimes\Mbb\otimes\Mbb')$, we have
	\begin{align*}
	&\sum_{\Mbb\in\mc F}\sum_j\uppsi_\Mbb(w_\blt\otimes Y(u)_lm_j\otimes m_j')=-\sum_{\Mbb\in\mc F}\sum_j\uppsi_\Mbb((\nu\cdot w_\blt)\otimes m_j\otimes m_j')\\
	=&-\sum_j\bk{\kappa(m_j\otimes m_j'),\nu\cdot w_\blt}=0.
	\end{align*}
	So $\sum_j Y(u)_l m_j\otimes m_j'\in\Ker(\kappa)$.
	
We have proved the injectivity when $\Vbb$ is $C_2$-cofinite and $\Vbb(0)=\Cbb\id$. If $\Vbb$ is also rational, the surjectivity follows by comparing the dimensions of both sides and using the factorization property proved in \cite{DGT19b}.
\end{proof}

As a consequence of this theorem, we obtain again the well-known fact \cite{DLM00} that any $C_2$-cofinite $\Vbb$ with $\Vbb(0)=\Cbb\id$ has finitely many equivalence classes of simple modules. Indeed, we let $\wtd{\fk X}=(\Pbb^1;1,0,\infty;z-1,z,z^{-1})$, and let $\Wbb_1=\Vbb$. Then $\scr T_{\wtd{\fk X}}^*(\Vbb\otimes\Mbb\otimes\Mbb')$ is nontrivial since $Y_\Mbb$ defines a non-zero element of it. Thus, as the range of $\fk S_q$ is finite-dimensional, $\mc E$ is finite.

\section{Convergence of multiple sewing}\label{lb56}

In this section, we prove the convergence of sewing conformal blocks along several pairs of points, which generalizes Theorem \ref{lb48}. Let $N,M\in\Zbb_+$. Let
\begin{align*}
\wtd{\fk X}=(\wtd\pi:\wtd{\mc C}\rightarrow\wtd{\mc B};\sgm_1,\dots,\sgm_N;\sgm_1',\dots,\sgm_M';\sgm_1'',\dots,\sgm_M'';\eta_1,\dots,\eta_N;\xi_1,\dots,\xi_M;\varpi_1,\dots,\varpi_M)
\end{align*}
be a family of $(N+2M)$-pointed compact Riemann surfaces with local coordinates. So each $\eta_i,\xi_j,\varpi_j$ are local coordinates at $\sgm_i(\wtd{\mc B}),\sgm_j'(\wtd{\mc B}),\sgm_j''(\wtd{\mc B})$ respectively. We assume that for every $b\in\wtd{\mc B}$, each connected component of the fiber  $\wtd{\mc C}_b=\wtd\pi^{-1}(b)$ contains one of $\sgm_1(b),\dots,\sgm_N(b)$.

For each $1\leq j\leq M$ we choose $r_j,\rho_j>0$ and a neighborhood $U_j'$ (resp. $U_j''$) of $\sgm_j'(\wtd {\mc B})$ (resp. $\sgm_j''(\wtd {\mc B})$) such that
\begin{gather}
(\xi_j,\wtd\pi):U_j'\xrightarrow{\simeq} \mc D_{r_j}\times\wtd{\mc B}\qquad\text{resp.}\qquad (\varpi_j,\wtd\pi):U_j''\xrightarrow{\simeq} \mc D_{\rho_j}\times\wtd{\mc B}\label{eq94}
\end{gather}
is a biholomorphic map. We also assume that these $r_i$ and $\rho_j$ are small enough such that the neighborhoods $U_1',\dots,U_M',U_1'',\dots,U_M''$ are mutually disjoint and are also disjoint from $\sgm_1(\wtd{\mc B}),\dots,\sgm_N(\wtd{\mc B})$.

Assume $\Vbb$ is $C_2$-cofinite. Associate to $\sgm_1,\dots,\sgm_N$ finitely-generated (equivalently, finitely $\Lss$-semisimple) $\Vbb$-modules $\Wbb_1,\dots,\Wbb_N$. Associate to $\sgm_1',\dots,\sgm_M'$ finitely-generated  $\Vbb$-modules $\Mbb_1,\dots,\Mbb_M$ whose contragredient modules $\Mbb_1',\dots,\Mbb_M'$ are associated to $\sgm_1'',\dots,\sgm_M''$. Each $\Mbb_j'$ is finitely $\Lss$-semisimple and hence, equivalently, finitely generated.  We understand $\Wbb_\blt\otimes\Mbb_\blt\otimes\Mbb_\blt'$ as
\begin{align*}
\Wbb_1\otimes\cdots\otimes \Wbb_N\otimes\Mbb_1\otimes\Mbb_1'\otimes\cdots\otimes\Mbb_M\otimes\Mbb_M',
\end{align*}
where the order has be changed so that each $\Mbb_j'$ is next to $\Mbb_j$. 

For any $\uppsi\in\scr T_{\wtd{\fk X}}^*(\Wbb_\blt\otimes\Mbb_\blt\otimes\Mbb_\blt')(\wtd{\mc B})$ and $w\in\Wbb_\blt$, \index{S@$\wtd{\mc S}\uppsi,\mc S\uppsi$} we define   
\begin{align}
\mc S\uppsi(w)=\uppsi\Big(w\otimes (q_1^{ L_0}\btr\otimes_1\btl)\otimes\cdots\otimes (q_M^{ L_0}\btr\otimes_M\btl)\Big)\qquad \in\scr O(\wtd{\mc B})\{q_\blt\}[\log q_\blt].
\end{align}
(Recall that the powers the $\log q_j$ are bounded above since each $\Mbb_j$ and $\Mbb_j'$ are finitely-generated; see the paragraph above Thm. \ref{lb38}.) Here, each $q_j^{L_0}\btr\otimes_j\btl\in(\Mbb_j\otimes\Mbb_j')\{q_j\}[\log q_j]$ is defined as in \eqref{eq109}. $\wtd{\mc S}\uppsi(w)$ is defined similarly, except that $L_0$ is replaced by $\wtd L_0$.

Let $\mc D_{r_\blt\rho_\blt}=\mc D_{r_1\rho_1}\times\cdots\times\mc D_{r_M\rho_M}$ and $\mc D_{r_\blt\rho_\blt}^\times=\mc D_{r_1\rho_1}^\times\times\cdots\times\mc D_{r_M\rho_M}^\times$. Let $q_j$ be the standard coordinate of $\mc D_{r_j\rho_j}$. We say that $\wtd{\mc S}\uppsi$ \textbf{converges a.l.u.} if $\wtd{\mc S}\uppsi(w)\in\scr O(\wtd{\mc B}\times\mc D_{r_\blt\rho_\blt})$ for each $w\in\Wbb_\blt$. Write
\begin{align*}
\mc S\uppsi(w)=\sum_{n_\blt\in\Cbb^M,l_\blt\in\Nbb^M}\mc S\uppsi(w)_{n_\blt,l_\blt}q_\blt^{n_\blt}(\log q_\blt)^{l_\blt}	
\end{align*}
where 
\begin{align*}
q_\blt^{n_\blt}=q_1^{n_1}\cdots q_M^{n_M},\qquad (\log q_\blt)^{l_\blt}=(\log q_1)^{l_1}\cdots(\log q_M)^{l_M}.	
\end{align*}
We say $\mc S\uppsi$ \textbf{converges a.l.u.} if for each compact subsets $K\subset\wtd{\mc B}$ and $Q\subset\mc D_{r_\blt\rho_\blt}^\times=\mc D_{r_1\rho_1}^\times\times\cdots\times\mc D_{r_M\rho_M}^\times$, there exists $C>0$ such that
\begin{align*}
\sum_{n_\blt\in\Cbb^M}\big|\mc S\uppsi(w)_{n_\blt,l_\blt}(b)\big|\cdot |q_\blt^{n_\blt}|	\leq C
\end{align*}
for any $b\in K,q_\blt=(q_1,\dots,q_M)\in Q$, and $l_\blt\in\Nbb^M$.
Clearly, if $\mc S\uppsi$ converges a.l.u. then so does $\wtd{\mc S}\uppsi$.

\begin{thm}\label{lb55}
Assume that $\Vbb$ is $C_2$-cofinite and all the $\Vbb$-modules $\Wbb_i,\Mbb_j$ are finitely-generated. Then $\mc S\uppsi$ and $\wtd{\mc S}\uppsi$ converge a.l.u..
\end{thm}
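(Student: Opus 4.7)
My plan is to induct on $M$, reducing the multi-pair sewing to the single-pair sewing of Theorem \ref{lb48}. The base case $M=1$ is precisely that theorem. For the inductive step, assume Theorem \ref{lb55} holds for $M-1$ pairs and let $M\geq 2$. For each fixed $j\in\{1,\dots,M\}$, I would first sew only the $j$-th pair $(\sgm_j',\sgm_j'';\xi_j,\varpi_j)$ via the construction of Section \ref{lb6}, producing a family $\wtd{\fk X}_{(j)}$ of $(N+2(M-1))$-pointed complex curves over $\wtd{\mc B}_{(j)}:=\wtd{\mc B}\times\mc D_{r_j\rho_j}$, with all other marked points and local coordinates extended constantly. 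Correspondingly, define the partial sewing
\begin{align*}
\uppsi_{(j)}(\,\cdot\,):=\uppsi\bigl(\,\cdot\,\otimes q_j^{\wtd L_0}\btr\otimes_j\btl\,\bigr),
\end{align*}
obtained by contracting only the $j$-th pair of modules. Decomposing $\Mbb_j$ into $\Lss$-simple summands (Remark \ref{lb61}) and applying Theorem \ref{lb48} to each summand, $\wtd{\mc S}\uppsi_{(j)}$ converges a.l.u.\ on $\wtd{\mc B}_{(j)}$ and, by Theorem \ref{lb47}, defines a conformal block on the partially sewn family $\wtd{\fk X}_{(j)}$.

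Next, I would restrict to the smooth locus $\wtd{\mc B}_{(j)}^\circ:=\wtd{\mc B}\times\mc D_{r_j\rho_j}^\times$, over which $\wtd{\fk X}_{(j)}$ is a smooth family of compact Riemann surfaces. Since pinching two smooth points of a fiber either preserves or merges its connected components but never splits them, the condition that every connected component of every fiber contains one of $\sgm_1,\dots,\sgm_N$ passes to $\wtd{\fk X}_{(j)}|_{\wtd{\mc B}_{(j)}^\circ}$. The inductive hypothesis therefore applies to $\uppsi_{(j)}|_{\wtd{\mc B}_{(j)}^\circ}$ with the remaining $M-1$ sewing pairs, and since $\wtd{\mc S}\uppsi_{(j)}$ coincides formally with $\wtd{\mc S}\uppsi$ (an immediate unpacking of definitions), this yields a.l.u.\ convergence of $\wtd{\mc S}\uppsi(w)$ on
\begin{align*}
U_j:=\wtd{\mc B}\times\mc D_{r_j\rho_j}^\times\times\prod_{i\neq j}\mc D_{r_i\rho_i}.
\end{align*}

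Running this argument for each $j=1,\dots,M$ gives a.l.u.\ convergence on $\bigcup_{j=1}^M U_j=\wtd{\mc B}\times\bigl(\mc D_{r_\blt\rho_\blt}\setminus\{\mbf 0\}\bigr)$. For $\mc S\uppsi$ the required domain is $\wtd{\mc B}\times\mc D_{r_\blt\rho_\blt}^\times=\bigcap_{j}U_j$, so this is already enough. For $\wtd{\mc S}\uppsi(w)$, I still need to extend the resulting holomorphic function across the analytic subset $\wtd{\mc B}\times\{\mbf 0\}$, which has codimension $M\geq 2$ in $\wtd{\mc B}\times\mc D_{r_\blt\rho_\blt}$. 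Hartogs' extension theorem supplies such an extension, and by uniqueness its Taylor expansion at $q_\blt=\mbf 0$ agrees with the formal power series $\wtd{\mc S}\uppsi(w)\in\scr O(\wtd{\mc B})[[q_1,\dots,q_M]]$; hence the latter converges a.l.u.\ on the entire polydisc base.

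The main obstacle is precisely the extension across the nodal loci $q_j=0$: the inductive hypothesis needs a smooth family, so it cannot be applied directly where the partially sewn family has nodes. The device of running single-pair sewing in each of the $M$ possible orderings, thereby covering everything off the origin, and then invoking Hartogs to fill in the codimension-$M$ origin, is the key manoeuvre that closes the induction. A minor secondary point is checking that partial sewing preserves the connected-component assumption of Theorem \ref{lb55} and that the iterated formal series matches the directly defined $\wtd{\mc S}\uppsi$; both follow from straightforward bookkeeping.
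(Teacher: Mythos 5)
Your strategy --- inducting on the number of sewn pairs, reducing to the single-pair Theorem \ref{lb48}, and filling in the deep stratum by Hartogs --- is genuinely different from the paper's proof, which never iterates: it sews all $M$ pairs simultaneously, lifts each $q_k\partial_{q_k}$ to the total space, shows each projective term is holomorphic on the whole polydisc base, and feeds the resulting system of $M$ regular-singular equations into the appendix Theorem \ref{lb39}. Your route can be made to work, but as written it has two genuine gaps. The first is the patching: before Hartogs can be invoked you need a \emph{single} holomorphic function on $\bigcup_j U_j$, whereas you have produced $M$ functions $F_j$, each the sum of the same formal series $\sum_{n_\blt}\psi_{n_\blt}q_\blt^{n_\blt}$ taken in a different iterated order ($n_j$ first, then $n_{\blt\setminus j}$). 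Iterated sums in different orders need not agree unless the series converges jointly absolutely --- which is precisely the a.l.u.\ convergence you are trying to prove, so it cannot be assumed. This is repairable (e.g.\ compute the Laurent coefficients of $F_j$ and $F_{j'}$ over a torus inside $\bigcap_i U_i$ by term-by-term integration and invoke the identity theorem; or, better, use a Cauchy estimate on a circle $|q_j|=s<r_j\rho_j$ to upgrade a single iterated convergence to joint absolute convergence on the \emph{full} polydisc, which makes both the patching and Hartogs unnecessary), but some such argument is the real content of the inductive step and is absent.

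The second gap is the claim that for $\mc S\uppsi$ ``the required domain is $\bigcap_j U_j$, so this is already enough.'' (i) Your iterated construction contracts the first slot with $q_j^{\wtd L_0}$, so it computes a hybrid of $\wtd{\mc S}$ and $\mc S$, not $\mc S\uppsi$; if you contract with $q_j^{L_0}$ instead, the partial sewing is multivalued in $q_j$ and is not an $\scr O$-module homomorphism to $\scr O$ over $\wtd{\mc B}\times\mc D_{r_j\rho_j}^\times$, i.e.\ not a conformal block in the sense of Definition \ref{lb17}, so the inductive hypothesis does not apply to it. (ii) The a.l.u.\ convergence of $\mc S\uppsi$ demanded in Section \ref{lb56} is joint absolute convergence of $\sum_{n_\blt}\big|\mc S\uppsi(w)_{n_\blt,l_\blt}\big|\cdot|q_\blt^{n_\blt}|$, uniformly in $l_\blt$; convergence of an iterated series on $\bigcap_j U_j$ is strictly weaker. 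The standard repair is to reduce $\mc S$ to $\wtd{\mc S}$: on each $\Lss$-simple summand one has $q_j^{L_0}=q_j^{\lambda_j}e^{\Lni\log q_j}q_j^{\wtd L_0}$ with $\Lni$ nilpotent and commuting with the $\Vbb$-action, so $\mc S\uppsi$ is a \emph{finite} sum of terms $q_\blt^{\lambda_\blt}(\log q_\blt)^{l_\blt}$ multiplied by $\wtd{\mc S}$-sewings of the conformal blocks obtained from $\uppsi$ by precomposing with powers of the $\Lni$'s; the $\mc S$ statement then follows from finitely many instances of the $\wtd{\mc S}$ statement. With these two repairs (and with the torus-integral computation that identifies the Taylor coefficients of the extension with the formal coefficients, which your ``by uniqueness'' elides), your induction closes.
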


\begin{proof}
The main idea is the same as single sewing. We sketch the proof below.

Since the convergence property is local with respect to $\wtd{\mc B}$, we may identify $\wtd{\mc B}$ as an open subset of $\Cbb^m$ and let $\tau_\blt=(\tau_1,\dots,\tau_m)$ denote the set of  $m$ standard  coordinates.


Similar to Sec. \ref{lb6}, we may define a new family of curves $\fk X$ with base manifold $\mc B=\mc D_{r_\blt\rho_\blt}\times\wtd{\mc B}=\mc D_{r_1\rho_1}\times\cdots\times\mc D_{r_M\rho_M}\times\wtd{\mc B}$ to be the simultaneous sewing of $\wtd{\fk X}$ along $\sgm_j'$ and $\sgm_j''$ using the relation $\xi_j\varpi_j=q_j$ for all $1\leq j\leq M$. Let $q_j$ be the standard coordinate of $\mc D_{r_j\rho_j}$. We let $\Sigma\subset\mc C$ be the set of critical points of $\pi:\mc C\rightarrow\mc B$, i.e., the set of all points at which $\pi$ is not submersive. Then $\Delta:=\pi(\Sigma)$ equals $\Delta_1\cup\cdots\cup\Delta_M$ where $\Delta_j$ is the hypersurface of $\mc B$ defined by $q_j=0$. We let $\mc C_\Delta=\pi^{-1}(\Delta)$. $\Theta_{\mc B}(-\log\Delta)$ is defined to be the free $\scr O_{\mc B}$-module generated by
\begin{align*}
	q_1\partial_{q_1},\dots,q_M\partial_{q_M},\partial_{\tau_1},\dots,\partial_{\tau_m}.
\end{align*}
The partial coordinates $\tau_\blt\circ\wtd\pi$ of $\wtd{\mc C}$ and $\tau_\blt\circ\pi$ of $\mc C$ are also denoted by $\tau_\blt$ for simplicity.

To understand $\pi$ locally at $x\in\mc C$, we consider the following two cases:
\begin{enumerate}
\item $x\notin\Sigma$. The fiber $\mc C_{\pi(x)}$ is either smooth or nodal, and $x$ is a smooth point of $\mc C_{\pi(x)}$. Then we may find $r_0>0$ and $r_\blt'<r_\blt,\rho_\blt'<\rho_\blt$ such that $\pi$ on a neighborhood of $x$ is equivalent to part of the projection
\begin{align*}
\mc D_{r_0}\times\mc D_{r_\blt'\rho_\blt'}\times\wtd{\mc B}	\rightarrow\mc D_{r_\blt'\rho_\blt'}\times\wtd{\mc B}.
\end{align*}
The $\scr O_{\mc C}$-module $\Theta_{\mc C}(-\log\mc C_\Delta)$ near $x$ is generated freely by
\begin{align*}
\partial_z,q_1\partial_{q_1},\dots,q_M\partial_{q_M},\partial_{\tau_1},\dots,\partial_{\tau_m}	
\end{align*}
where $z$ is the standard coordinate of $\mc D_{r_0}$. The morphism
\begin{align}
	d\pi:\Theta_{\mc C}(-\log\mc C_\Delta)\rightarrow \pi^*\Theta_{\mc B}(-\log\Delta)\label{eq104}
\end{align}
is defined by killing $\partial_z$, and by keeping all the other generating elements.

\item $x\in\Sigma$. The fiber $\mc C_{\pi(x)}$ is nodal, and $x$ is a node of $\mc C_{\pi(x)}$. Denote by $\mc D_{r_\blt\rho_\blt\setminus j}$ the product of $\mc D_{r_1\rho_1},\dots,\mc D_{r_M\rho_M}$ except $\mc D_{r_j\rho_j}$.  Then $\pi$ at $x$ is locally equivalent to the map
\begin{gather*}
\mc D_{r_j}\times\mc D_{\rho_j}\times\mc D_{r_\blt\rho_\blt\setminus j}\times\wtd{\mc B}\rightarrow \mc D_{r_j\rho_j}\times\mc D_{r_\blt\rho_\blt\setminus j}\times\wtd{\mc B}=\mc B,\\
(\xi_j,\varpi_j,q_{\blt\setminus j},\tau_\blt)\mapsto (\xi_j\varpi_j,q_{\blt\setminus j},\tau_\blt).
\end{gather*}
Near $x$, the $\scr O_{\mc C}$-module $\Theta_C(-\log\Delta)$ is generated freely by
\begin{align*}
\xi_j\partial_{\xi_j},\varpi_j\partial_{\varpi_j},q_1\partial_{q_1},\dots,q_{j-1}\partial_{q_{j-1}},q_{j+1}\partial_{q_{j+1}},\dots,q_M\partial_{q_M},\tau_1,\dots,\tau_m.
\end{align*}
\eqref{eq104} is defined near $x$ by setting
\begin{align}
d\pi(\xi_j\partial_{\xi_j})=d\pi(\varpi_j\partial_{\varpi_j})=q_j\partial_{q_j}\label{eq106}
\end{align}
and keeping the other generators.
\end{enumerate}

We now obtain again an exact sequence \eqref{eq18} where $\Theta_{\mc C/\mc B}$ is defined to be the kernal of $d\pi$, and we again have a natural isomorphism $\Theta_{\mc C/\mc B}|_{\mc C_b}\simeq\Theta_{\mc C_b}$ for each fiber $\mc C_b$ of $\fk X$. So we obtain the long exact sequence \eqref{eq79}. Similar to the proof of Thm. \ref{lb38}, one may lift $q_k\partial_{q_k}\in\Theta_{\mc B}(-\log\Delta)$ to a section
\begin{align*}
\wtd\yk\in H^0(\mc C,\Theta_{\mc C}(-\log\mc C_\Delta+\blt\SX))	
\end{align*}
which has multi series expansion
\begin{align*}
\wtd\yk=\sum_{n_\blt\in\Nbb^M}\wtd\yk_{n_\blt}^\perp \cdot q_\blt^{n_\blt}+q_k\partial_{q_k}
\end{align*}
on $\mc C\setminus\Sigma$. Here, $\partial_{q_k}$ should be understood as the tangent field $\partial_{q_k\circ\pi}$ of $\mc C$ which vanishes when pullbacked to each fiber of $\mc C$. And $\wtd\yk_{n_\blt}^\perp\in H^0(\wtd{\mc C}-\Gamma,\Theta_{\wtd{\mc C}/\wtd{\mc B}})$ where $\Gamma=\bigcup_{1\leq j\leq M}(\sgm_j'(\wtd{\mc B})\cup\sgm_j''(\wtd{\mc B}))$.

Choose $\wtd V_1,\dots,\wtd V_N$ to be neighborhoods of $\sgm_1(\wtd{\mc B}),\dots,\sgm_N(\wtd{\mc B})$. Assume that they are disjoint from $U'_1,\dots,U'_M,U''_1,\dots,U''_M$.  Then $\mc D_{r_\blt\rho_\blt}\times\wtd V_1,\dots,\mc D_{r_\blt\rho_\blt}\times\wtd V_N$ are neighborhoods of $\sgm_1(\mc B),\dots,\sgm_N(\mc B)$ in $\mc C$. By the descriptions of $d\pi$ in \eqref{eq104} and \eqref{eq106}, for each $1\leq i\leq N,1\leq j\leq M$, we can write
\begin{gather*}
	\wtd\yk|_{\mc D_{r_\blt\rho_\blt}\times\wtd V_i}=h_i(q_\blt,\eta_i,\tau_\blt)\partial_{\eta_i}+q_k\partial_{q_k},\\
	\wtd\yk|_{\mc D_{r_j}\times\mc D_{\rho_j}\times\mc D_{r_\blt\rho_\blt\setminus j}\times\wtd{\mc B}}=a_j(\xi_j,\varpi_j,q_{\blt\setminus j},\tau_\blt)\xi_j\partial_{\xi_j}+b_j(\xi_j,\varpi_j,q_{\blt\setminus j},\tau_\blt)\varpi_j\partial_{\varpi_j}+(1-\delta_{j,k})q_k\partial_{q_k}.
\end{gather*}
where $h_i,a_j,b_j$ are holomorphic functions on suitable domains, and
\begin{align*}
	a_j+b_j=\delta_{j,k}.
\end{align*}
This shows in particular that $\wtd\yk_{n_\blt}^\perp\in H^0(\wtd{\mc C},\Theta_{\wtd{\mc C}/\wtd{\mc B}}(\blt\SX))$. Fix a projective structure $\fk P$ on $\wtd{\fk X}$.  Then we can define $\upnu(\wtd\yk_{n_\blt}^\perp)$ and $\#(\wtd\yk_{n_\blt}^\perp)$  as in Sec. \ref{lb41}.

For $q_j^{L_0}\btr\otimes_j\btl\in\Mbb_j\otimes\Mbb_j'\{q_j\}[\log q_j]$, \eqref{eq86} can be  generalized to
	\begin{align}
\delta_{j,k}\cdot L_0q_k^{ L_0}\btr\otimes_k~\btl=\sum_{n_\blt\in\Nbb^M}\upnu(\wtd\yk_{n_\blt}^\perp)q_\blt^{n_\blt}q_j^{L_0}\btr\otimes_j~\btl+\sum_{n_\blt\in\Nbb^M}q_\blt^{n_\blt}q_j^{L_0}\btr\otimes_j ~\upnu(\wtd\yk_{n_\blt}^\perp)\btl
\end{align}
by showing that the right hand side equals
\begin{align*}
&\Res_{\xi_j=0}~Y_{\Mbb_j}(\xi_j^{L_0}\cbf,\xi_j) q_j^{ L_0}\btr\otimes_j~\btl\cdot a_j(\xi_j,q_j/\xi_j,q_{\blt\setminus j},\tau_\blt)\frac{d\xi_j}{\xi_j}\\
+&\Res_{\varpi_j=0}~q_j^{ L_0}\btr\otimes_j~Y_{\Mbb'_j}(\varpi_j^{L_0}\mc U(\upgamma_1)\cbf,\varpi_j)\btl\cdot b_j(q_j/\varpi_j,\varpi_j,q_{\blt\setminus j},\tau_\blt)\frac{d\varpi_j}{\varpi_j}
\end{align*}
and using Lemma \ref{lb32}. Then, similar to Prop. \ref{lb44}, we can prove
\begin{align}
q_k\partial_{q_k}\mc S\uppsi(w_\blt)=\mc S\uppsi(\nabla_{q_k\partial_{q_k}}w_\blt)+\sum_{n_\blt\in\Nbb^M}\#(\wtd\yk_{n_\blt}^\perp)q_\blt^{n_\blt}\cdot\mc S\uppsi(w_\blt)	
\end{align}
where $\nabla_{q_k\partial_{q_k}}w_\blt=-\sum_{n_\blt\in\Nbb^M}q_\blt^{n_\blt}\upnu(\wtd\yk_{n_\blt}^\perp)w_\blt$ is indeed an element of $\Wbb_\blt\otimes\scr O(\mc B)$.

Consider holomorphic functions $\Sbf_{\eta_i}\fk P=\Sbf_{\eta_i}\fk P(\eta_i,\tau_\blt)$, $\Sbf_{\xi_j}\fk P=\Sbf_{\xi_j}\fk P(\xi_j,\tau_\blt)$, $\Sbf_{\varpi_j}\fk P=\Sbf_{\varpi_j}\fk P(\varpi_j,\tau_\blt)$   on $\wtd V_i,U_j',U''_j$ which are identified with their images under $(\eta_i,\tau_\blt),(\xi_j,\tau_\blt),(\varpi_j,\tau_\blt)$   respectively. Under these identifications, choose circles $\gamma_j'$ in the $\xi_j$-plane $\mc D_{r_j}$ of $U_j'$ and $\gamma_j''$ in the $\varpi_j$-plane $\mc D_{\rho_j}$ of $U_j''$ around the origins (see \eqref{eq94}). Then the projective term $\sum_{n_\blt\in\Nbb^M}\#(\wtd\yk_{n_\blt}^\perp)q_\blt^{n_\blt}$ equals
\begin{align}\label{eq108}
	f_k=\frac{c}{12}\cdot\bigg(\sum_{1\leq j\leq M}A_j+\sum_{1\leq j\leq M}B_j+\sum_{1\leq i\leq N}C_i\bigg),
\end{align}
where
\begin{align}\label{eq110}	
\left\{	\begin{array}{l}
	A_j=\displaystyle\oint_{\gamma_j'}\Sbf_{\xi_j}\fk P(\xi_j,\tau_\blt)\cdot a_j(\xi_j,q_j/\xi_j,q_{\blt\setminus j},\tau_\blt)\cdot\xi_jd\xi_j,\vspace{1ex}\\ 
B_j=\displaystyle\oint_{\gamma_j''}\Sbf_{\varpi_j}\fk P(\varpi_j,\tau_\blt)\cdot b_j(q_j/\varpi_j,\varpi_j,q_{\blt\setminus j},\tau_\blt)\cdot \varpi_jd\varpi_j,\vspace{1ex}\\
C_i=\Res_{\eta_i=0}~\Sbf_{\eta_i}\fk P(\eta_i,\tau_\blt)\cdot h_i(q_\blt,\eta_i,\tau_\blt)d\eta_i	
\end{array}	
\right.
\end{align}
whenever $|q_j|/\rho_j$ and $|q_j|/r_j$ are respectively less than the radii of $\gamma_j'$ and $\gamma_j''$. This proves that the projective term is  an element of $\scr O(\mc B)$. 

Thus, as in the last step of the proof of Thm. \ref{lb38}, we conclude that for all large enough $l$, there are $\Upomega_1,\cdots,\Upomega_M\in\End_\Cbb((\Wbb_\blt^{\leq l})^*)\otimes_\Cbb\scr O(\mc B)$ independent on $\Mbb_1,\dots,\Mbb_M$ such that
\begin{align}
	q_k\partial_{q_k}(\mc S\uppsi^{\leq l})=\Upomega_k\cdot \mc S\uppsi^{\leq l}
\end{align}
for every $1\leq k\leq M$. This finishes the proof of that $\mc S\uppsi$ converges a.l.u.,  thanks to Thm. \ref{lb39}.
\end{proof}

In the above proof, we give the formulas \eqref{eq108} \eqref{eq110} for the projective term since we think it might be useful in the future.

\appendix

\section{Differential equations with simple poles and parameters}

Let $V$ be a connected  open subset of $\Cbb^m$ with coordinates $\tau_\blt=(\tau_1,\dots,\tau_m)$. Let $\mc D_{r_\blt}=\mc D_{r_1}\times\cdots\times\mc D_{r_M}$.  We let $q_j$ be the standard variable of $\mc D_{r_j}$.  For each $1\leq j\leq M$, choose
\begin{gather*}
A^j\in\End(\Cbb^N)\otimes\scr O(\mc D_{r_\blt}\times V),\\
\omega^j\in\Cbb^N\otimes\scr O(\mc D_{r_\blt}\times V).
\end{gather*}
Recall $\mc D_r$ is the open disc at $0$ with radius $r$. Consider the following  system of differential equations
\begin{align}
q_j\partial_{q_j}\psi=A^j\psi+\omega^j\label{eq71}
\end{align}
for all $j$, where
\begin{align}
\psi=&\sum_{l_1,\dots,l_M=0}^L\sum_{n_\blt\in \Nbb^M}\psi_{n,l}(\tau_\blt)q_\blt^{n_\blt}(\log q_\blt)^{l_\blt}\nonumber\\
=&\sum_{l_1,\dots,l_M=0}^L\sum_{n_\blt\in \Nbb^M}\psi_{n,l}(\tau_\blt)q_1^{n_1}\cdots q_M^{n_M}(\log q_1)^{l_1}\cdots (\log q_M)^{l_M}\qquad\label{eq77}
\end{align}
is in $\Cbb^N\otimes_\Cbb\scr O(V)[[q_\blt]][\log q_\blt]$ and is a formal solution of this system of equations. The following result is well-known although the reference is not easy to find. Thus, we provide a proof for completeness purpose. (See also \cite{McR21} Appendix A.)

\begin{thm}\label{lb39}
Suppose that the formal series $\psi$ satisfies \eqref{eq71} for all $j$. Then $\psi$ converges a.l.u., namely, for each $l_1,\dots,l_M$ and $|q_j|<r_j$, and each compact subset $K\subset V$,
\begin{align}
\sup_{\tau_\blt\in K}~\sum_{n_\blt\in\Nbb^M}\lVert \psi_{n_\blt,l_\blt}(\tau_\blt) \lVert \cdot |q_\blt^{n_\blt}|<+\infty	\label{eq103}.
\end{align}
\end{thm}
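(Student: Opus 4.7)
The plan is to prove the theorem in two stages: first eliminate the logarithmic terms, then establish a majorant estimate for the resulting log-free recursion via a Frobenius-type argument, adapted to several variables with holomorphic parameters.

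\textbf{Stage 1 (eliminating logarithms).} Write $\psi = \sum_{l_\blt \in \Nbb^M,\, |l_\blt|_\infty \leq L} \psi_{l_\blt}(\tau_\blt, q_\blt) (\log q_\blt)^{l_\blt}$ with each $\psi_{l_\blt} \in \Cbb^N \otimes \scr O(V)[[q_\blt]]$. Using $q_j \partial_{q_j}(\log q_j) = 1$, equating coefficients of $(\log q_\blt)^{l_\blt}$ in \eqref{eq71} yields
\[
q_j \partial_{q_j} \psi_{l_\blt} = A^j \psi_{l_\blt} - (l_j + 1)\psi_{l_\blt + e_j} + \delta_{l_\blt, 0}\, \omega^j.
\]
For componentwise maximal $l_\blt$ in the (finite) support, the $\psi_{l_\blt + e_j}$ terms vanish and $\psi_{l_\blt}$ satisfies a log-free homogeneous system. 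Descending induction on $l_\blt$ then reduces the problem to convergence of log-free formal solutions, the inhomogeneous term at each step being already known to be holomorphic.

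\textbf{Stage 2 (log-free case).} Suppose $\phi = \sum \phi_{n_\blt}(\tau_\blt)\, q_\blt^{n_\blt}$ is a formal solution of $q_j \partial_{q_j}\phi = A^j \phi + \tilde\omega^j$ for all $j$. Expanding $A^j = \sum_{m_\blt} A^j_{m_\blt}(\tau_\blt)\, q_\blt^{m_\blt}$ and $\tilde\omega^j = \sum \tilde\omega^j_{m_\blt}\, q_\blt^{m_\blt}$, and equating coefficients gives
\[
(n_j I - A^j_0(\tau_\blt))\, \phi_{n_\blt} = \sum_{|m_\blt|_1 \geq 1} A^j_{m_\blt}\, \phi_{n_\blt - m_\blt} + \tilde\omega^j_{n_\blt}.
\]
Fix a compact $K \subset V$ and componentwise radii $0 < R_\blt < r'_\blt < r_\blt$. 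Cauchy estimates on $\bar{\mc D}_{r'_\blt} \times K$ yield a constant $C > 0$ with $\sup_K \|A^j_{m_\blt}\|_{\mathrm{op}},\ \sup_K \|\tilde\omega^j_{m_\blt}\| \leq C \prod_k (r'_k)^{-m_k}$. Set $M_0 = 1 + \max_j \sup_K \|A^j_0\|_{\mathrm{op}}$; for $n_j > M_0$, the operator $(n_j I - A^j_0(\tau_\blt))^{-1}$ exists and has $\sup_K$-norm bounded by $(n_j - M_0)^{-1}$.

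\textbf{Stage 3 (majorant induction).} We prove by strong induction on $|n_\blt|_1$ that $\sup_K \|\phi_{n_\blt}\| \leq C_0 \prod_k R_k^{-n_k}$ for a sufficiently large $C_0$. The finite set of indices $\{n_\blt : n_j \leq M_0 \text{ for all } j\}$ consists of elements of $\scr O(V)$ bounded on $K$, hence absorbed by taking $C_0$ large. For the inductive step at other $n_\blt$, pick any $j$ with $n_j > M_0$; combining the recursion, the inductive hypothesis, and the convergent geometric sum $S := \sum_{|m_\blt|_1 \geq 1} \prod_k (R_k/r'_k)^{m_k} < \infty$ gives
\[
\sup_K \|\phi_{n_\blt}\| \leq \frac{1}{n_j - M_0} \Big( C S\, C_0 \prod_k R_k^{-n_k} + C \prod_k (r'_k)^{-n_k} \Big).
\]
Since $R_k < r'_k$, the second term is subdominant, and the induction closes uniformly once $n_j - M_0 > 2CS$; the remaining finitely many $n_\blt$ are absorbed by enlarging $C_0$. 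Letting $r'_\blt \nearrow r_\blt$, $R_\blt \nearrow r'_\blt$, and exhausting compact subsets of $\mc D_{r_\blt} \times V$ yields the desired a.l.u. bound \eqref{eq103}.

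\textbf{Main obstacle.} The essential difficulty is the coexistence of $M$ recursions, only one of which (the one corresponding to some $j$ with $n_j > M_0$) is usable for inversion at any given $n_\blt$. One must verify that the choice of $j$, which varies with $n_\blt$, does not spoil the uniform bookkeeping of radii and majorant constants. Beyond this multivariate indexing, the argument reduces to the classical Frobenius theorem for regular singular ODEs with holomorphic parameters.
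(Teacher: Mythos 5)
Your proposal is correct, and its analytic core is genuinely different from the paper's. You reduce to the log-free case first by descending induction on $l_\blt$ (this part coincides with the paper's last step), but you then treat all $M$ variables simultaneously with a single majorant induction whose bookkeeping is sharp enough to reach the full polydisc: by weighting the convolution sum with the geometric factors $\prod_k(R_k/r_k')^{m_k}$, the inductive bound $\sup_K\lVert\phi_{n_\blt}\rVert\le C_0\prod_k R_k^{-n_k}$ closes for every $R_\blt<r_\blt'<r_\blt$, and letting $R_\blt\uparrow r_\blt$ gives \eqref{eq103} directly. The paper instead proves the $M=1$ log-free case (Lemma \ref{lb59}) with a cruder recursion that only yields convergence on a possibly much smaller disc $\mc D_{r_0}$, $r_0<\gamma^{-1}r_1$, and must then invoke the existence theory for linear ODEs with regular singular points plus single-valuedness to continue the solution to all of $\mc D_r\times V$; the several-variable case is then handled by induction on $M$ (Lemma \ref{lb60}), regrouping the multi-series one variable at a time. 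What your route buys is self-containedness and symmetry in the variables — no appeal to analytic continuation of ODE solutions, no induction on $M$; what the paper's route buys is that each individual estimate is very short once the classical ODE theory is granted. Your identification of the "varying $j$" issue is the right thing to worry about, and your resolution is sound since the constants $C$, $M_0$, $S$ can be taken uniform over the finitely many $j$, and the finitely many exceptional indices $n_\blt$ are honest elements of $\Cbb^N\otimes\scr O(V)$ (being coefficients of the given formal solution) and hence bounded on $K$, so they are absorbed into $C_0$ without disturbing the inductive step.
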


To prove this theorem, we first need:

\begin{lm}\label{lb59}
Suppose $M=1$ and we suppress the subscript $1$.  Suppose that the formal series $\psi$ does not contain $\log q$ and is a formal solution of \eqref{eq71}, namely,
\begin{align*}
q\partial_q \psi=A\psi+\omega.
\end{align*}
Then $\psi$ is an element of $\Cbb^N\otimes_\Cbb\scr O(\mc D_r\times V)$. Equivalently, $\psi$ converges a.l.u..
\end{lm}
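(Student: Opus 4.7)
The plan is to use the recursion extracted from the ODE together with Cauchy estimates on $A$ and $\omega$ to bootstrap uniform bounds on the coefficients $\psi_n(\tau_\blt)$.

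Concretely, fix a compact $K \subset V$ and a number $r'' < r$. Enlarge $K$ to a compact set $K' \subset V$ having $K$ in its interior, and let $r'$ satisfy $r'' < r' < r$. On the compact product $\overline{\mc D_{r'}} \times K'$ both $A$ and $\omega$ are uniformly bounded, so writing the Taylor expansions $A(q,\tau_\blt) = \sum_{k \geq 0} A_k(\tau_\blt) q^k$ and $\omega(q,\tau_\blt) = \sum_{k \geq 0} \omega_k(\tau_\blt) q^k$, Cauchy's inequality yields constants $M_A, M_\omega$ with
\begin{align*}
\sup_{\tau_\blt \in K'} \lVert A_k(\tau_\blt) \rVert \leq M_A (r')^{-k}, \qquad \sup_{\tau_\blt \in K'} \lVert \omega_k(\tau_\blt) \rVert \leq M_\omega (r')^{-k}
\end{align*}
for every $k \in \Nbb$. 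Matching coefficients of $q^n$ in the relation $q\partial_q\psi = A\psi + \omega$ gives the recursion
\begin{align*}
(nI - A_0(\tau_\blt))\,\psi_n(\tau_\blt) = \sum_{k=1}^{n} A_k(\tau_\blt)\,\psi_{n-k}(\tau_\blt) + \omega_n(\tau_\blt).
\end{align*}

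Next I would exploit the fact that $A_0(\tau_\blt)$ is a continuous (hence bounded on $K'$) $\End(\Cbb^N)$-valued function. Thus there exists $n_0 \in \Nbb$ so that for every $n \geq n_0$ the matrix $nI - A_0(\tau_\blt)$ is invertible uniformly in $\tau_\blt \in K'$ with $\lVert (nI - A_0(\tau_\blt))^{-1}\rVert \leq 2/n$. I would then fix any $R > 1/r'$ (equivalently, radius $R^{-1} < r'$) and prove by induction on $n$ the bound $\sup_{\tau_\blt \in K'}\lVert \psi_n(\tau_\blt)\rVert \leq C R^n$ for some constant $C$. Plugging the inductive hypothesis into the recursion and using the geometric series $\sum_{k \geq 1} (Rr')^{-k} = (Rr'-1)^{-1}$, one obtains
\begin{align*}
\lVert \psi_n(\tau_\blt)\rVert \leq \frac{2R^n}{n}\bigg(\frac{CM_A}{Rr'-1} + M_\omega\bigg),
\end{align*}
so by choosing $C$ large enough to dominate the finitely many base cases ($n < n_0$) and large enough so that the bracket in the last inequality is $\leq Cn/2$ for $n \geq n_0$, the induction closes.

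With this bound in hand, the series $\sum_n \psi_n(\tau_\blt) q^n$ converges absolutely and uniformly on $\overline{\mc D_{r''}} \times K$ for every $r'' < r$ and every compact $K \subset V$, giving a.l.u.~convergence on $\mc D_r \times V$; this identifies $\psi$ with a holomorphic section in $\Cbb^N \otimes_\Cbb \scr O(\mc D_r \times V)$. The only mildly delicate point is the handling of the finitely many indices $n$ for which $nI - A_0(\tau_\blt)$ may fail to be invertible (e.g.~when $A_0$ has non-negative integer eigenvalues): there we do not need the recursion to solve for $\psi_n$, since $\psi_n$ is already given as part of the assumed formal solution, and absorbing these into the constant $C$ is harmless. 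The uniform-in-$\tau_\blt$ invertibility for large $n$ on the enlarged compact $K'$ is what allows the inductive bound to hold on the smaller $K$, which is the crux of the argument.
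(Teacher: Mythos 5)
Your argument is correct in substance but follows a genuinely different route from the paper's. The paper first derives the cruder bound $r_1^n\lVert\psi_n\rVert\leq c\gamma^n$ with $\gamma$ possibly large, which only yields convergence on a small disc $\mc D_{r_0}\times U$ with $r_0<\gamma^{-1}r_1$; it then invokes the existence theory for linear ODEs on $\mc D_r^\times\times V$, together with the absence of $\log q$ terms (which forces single-valuedness), to continue $\psi$ holomorphically to all of $\mc D_r\times V$. You instead extract the sharp geometric rate $\lVert\psi_n\rVert\leq CR^n$ for every $R>1/r'$ directly from the recursion, which gives convergence on the full disc in one step and dispenses with the continuation argument entirely. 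This is more self-contained, at the cost of a slightly more delicate estimate of the convolution sum.

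One point needs repair: the induction does not close merely by taking $C$ large. You need $\frac{CM_A}{Rr'-1}+M_\omega\leq \frac{Cn}{2}$ for all $n\geq n_0$, equivalently $C\big(\tfrac{n_0}{2}-\tfrac{M_A}{Rr'-1}\big)\geq M_\omega$ at $n=n_0$; if the threshold $n_0$ (chosen only so that $\lVert(nI-A_0)^{-1}\rVert\leq 2/n$) happens to satisfy $n_0\leq 2M_A/(Rr'-1)$, then no choice of $C$ works, since enlarging $C$ inflates the left-hand side of your bound linearly as well. Note that $Rr'-1$ is forced to be small here, because $R$ must lie in $(1/r',1/r'')$ with both $r',r''$ close to $r$, so this case is not vacuous. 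The fix is immediate: also require $n_0>2M_A/(Rr'-1)$ when choosing the threshold; this only adds finitely many base cases, which are absorbed into $C$ exactly as you already do. With that adjustment the induction closes, and the rest of your argument — uniform convergence on $\overline{\mc D_{r''}}\times K$ for all $r''<r$ and compact $K\subset V$, hence the identification of $\psi$ with an element of $\Cbb^N\otimes_\Cbb\scr O(\mc D_r\times V)$ — goes through.
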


\begin{proof}
It suffices to prove that $\psi$ is an $\Cbb^N$-valued holomorphic function on a neighborhood of $\{0\}\times V\subset\mc D_r\times V$. Then, by basic theory of linear differential equations, there is a (possibly multivalued) $\Cbb^N$-valued holomorphic function on $\mc D_r^\times\times V$ whose ``initial value" is given by $\psi$. (See \cite{Kna} the remark after Thm. B.1.) This function must be single-valued since $\psi$ is so. So $\psi$ is holomorphic on $\mc D_r\times V$.

Consider the series expansion of $A$ and $\omega$:
	\begin{align*}
	A(q,\tau_\blt)=\sum_{n\in\mbb N} A_n(\tau_\blt)q^n,\qquad \omega(q,\tau_\blt)=\sum_{n\in\Nbb}\omega_n(\tau_\blt)q^n
	\end{align*}
	where each $ A_n$ is  in $\End(\Cbb^N)\otimes_\Cbb\scr O(V)$, and each $\omega_n$ is in $\Cbb^N\otimes_\Cbb\scr O(V)$.  Then for each $n\in\mbb N$,
	\begin{align*}
	n\psi_n=\omega_n+\sum_{j=0}^n A_{n-j}\psi_j.
	\end{align*}
	Choose any open subset $U$ of $V$ with compact closure, and choose $B>0$ such that $\lVert A_0(\tau_\bullet)\lVert\leq B$ whenever $\tau_\blt\in U$. (Here $\lVert\cdot\lVert$ is the operator norm.) Then for any $n>B$, $n\id- A_0(\tau_\blt)$ is invertible (with inverse $n^{-1}\sum_{j=0}^\infty( A_0(\tau_\blt)/n)^j $). Thus, whenever $n>B$,
	\begin{align}
	\psi_n=(n- A_0)^{-1}\Big(\omega_n+\sum_{j=0}^{n-1} A_{n-j}\psi_j\Big).\label{eq73}
	\end{align}
	Choose any $r_1<r$ and set
	\begin{align}
	\alpha=\sup_{(q,\tau_\blt)\in {\ovl{\mc D_{r_1}}\times U}}\max\{\lVert A(q,\tau_\blt)\lVert,\lVert\omega(q,\tau_\blt)\lVert\}.\label{eq74}
	\end{align}
	Using $ A_n(\tau_\blt)=\oint_{\partial \mc D_{r_1}} A(q,\tau_\blt)q^{-n-1}\frac{dq}{2\im\pi}$ and a similar relation for $\omega_n$, we have
	\begin{align}
	\lVert A_n(\tau_\blt)\lVert,\lVert\omega_n(\tau_\blt)\lVert\leq \alpha r_1^{-n}\label{eq75}
	\end{align}
	for all $n$ and all  $\tau_\blt$ in $U$.

Choose $\beta>0$ such that  $\lVert (n- A_0(\tau_\blt))^{-1} \lVert\leq \beta n^{-1}$ for any $n> B$ and $\tau_\blt\in U$. (Such $\beta$ can be found using the explicit formula of inverse matrix given above.) Set $\gamma\geq\max\{1,\alpha\beta\}$. Then, from \eqref{eq73} and \eqref{eq75}, we see that for any $n>B$ and $\tau_\blt\in U$,
	\begin{align}
	r_1^n\lVert \psi_n(\tau_\blt) \lVert\leq \gamma n^{-1}\Big(1+\sum_{j=0}^{n-1}r_1^j\lVert \psi_j(\tau_\blt) \lVert\Big).\label{eq76}
	\end{align}
	By induction, one can show that there exists $c>0$ such that
	\begin{align*}
	r_1^n\lVert\psi_n(\tau_\blt)\lVert\leq c\gamma^n 
	\end{align*}
	for any $n\in\mbb N$ and $\tau_\blt\in U$. Indeed, if this is true for $0,1,2,\dots,n-1$ where $n>B$, and if we assume $c(\gamma^{n-1}-1)\geq 1$, then by \eqref{eq76},
	\begin{align*}
	&r_1^n\lVert \psi_n(\tau_\blt) \lVert\leq \gamma n^{-1}\Big(1+\sum_{j=0}^{n-1}c\gamma^j\Big)=\gamma n^{-1}\Big(1+c+\sum_{j=1}^{n-1}c\gamma^j\Big)\nonumber\\
	\leq &\gamma n^{-1}\Big(c\gamma^{n-1}+\sum_{j=1}^{n-1}c\gamma^{n-1}\Big)=c\gamma^n.
	\end{align*}
	Thus $\lVert\psi_n(\tau_\blt)\lVert\leq c\gamma^nr_1^{-n}$ for all $n$ and $\tau_\blt\in U$. Therefore, if we choose any $r_0\in(0,\gamma^{-1}r_1)$, then   the series $\sum_n\lVert \wht \psi_n(\tau_\blt) \lVert \cdot|q|^n$ is uniformly bounded by some positive number for all  $|q|\leq r_0$ and $\tau_\blt\in U$. Since each $\psi_n(\tau_\blt)$ is holomorphic over $\tau_\blt$, the series \eqref{eq77} must converge uniformly to a holomorphic function on $\mc D_{r_0}\times U$. This proves $\psi$ is holomorphic on a neighborhood of $\{0\}\times U$, and hence of $\{0\}\times V$ by choosing arbitrary $U$.
\end{proof}

\begin{lm}\label{lb60}
Thm. \ref{lb39} holds if $\psi$ has no log terms. 
\end{lm}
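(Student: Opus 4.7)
The strategy is induction on the number of sewing parameters $M$. The base case $M = 1$ is exactly Lemma~\ref{lb59}, whose proof combines a Cauchy-type estimate on a neighborhood of $\{0\}\times V$ with holomorphic extension via linear ODE theory to all of $\mc D_r \times V$. For the inductive step, assume that Lemma~\ref{lb60} holds for systems in $M-1$ variables.

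Given the formal solution $\psi$ of the $M$-variable system \eqref{eq71}, write $\psi = \sum_{n_M \geq 0}\phi^{(n_M)}(q_1,\ldots,q_{M-1},\tau_\blt)\,q_M^{n_M}$, where a priori each $\phi^{(n_M)} \in \Cbb^N \otimes_\Cbb \scr O(V)[[q_1,\ldots,q_{M-1}]]$. Similarly expand the holomorphic coefficients $A^j = \sum_{k\geq 0} A^j_{(k)}\,q_M^k$ and $\omega^j = \sum_{k\geq 0}\omega^j_{(k)}\,q_M^k$, with $A^j_{(k)},\omega^j_{(k)}$ holomorphic on $\mc D_{r_1}\times\cdots\times\mc D_{r_{M-1}}\times V$. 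For each $j<M$, matching coefficients of $q_M^{n_M}$ in the $j$-th equation of \eqref{eq71} yields the derived $(M-1)$-variable system
\[q_j\partial_{q_j}\phi^{(n_M)} \;=\; A^j_{(0)}\,\phi^{(n_M)} + \tilde\omega^{\,j}_{(n_M)},\qquad \tilde\omega^{\,j}_{(n_M)} := \omega^j_{(n_M)} + \sum_{k=1}^{n_M} A^j_{(k)}\,\phi^{(n_M-k)}.\]
A secondary induction on $n_M$, combined with the inductive hypothesis on $M$, shows that each $\phi^{(n_M)}$ belongs to $\Cbb^N \otimes_\Cbb \scr O(\mc D_{r_1}\times\cdots\times\mc D_{r_{M-1}}\times V)$: for $n_M = 0$ the inhomogeneity $\omega^j_{(0)}$ is already holomorphic; for $n_M \geq 1$, holomorphicity of $\phi^{(0)},\ldots,\phi^{(n_M-1)}$ forces $\tilde\omega^{\,j}_{(n_M)}$ to be holomorphic, and the inductive hypothesis applies again.

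Finally, regard $\psi = \sum_{n_M}\phi^{(n_M)}\,q_M^{n_M}$ as a formal series in the single variable $q_M$ whose coefficients are holomorphic on $V' := \mc D_{r_1}\times\cdots\times\mc D_{r_{M-1}}\times V$. It is a formal solution of the single-variable ODE $q_M\partial_{q_M}\psi = A^M\psi + \omega^M$, whose coefficients lie in $\End(\Cbb^N)\otimes_\Cbb \scr O(\mc D_{r_M}\times V')$. Applying Lemma~\ref{lb59} with parameter space $V'$ then delivers $\psi \in \Cbb^N \otimes_\Cbb \scr O(\mc D_{r_M}\times V') = \Cbb^N \otimes_\Cbb \scr O(\mc D_{r_\blt}\times V)$, which is the desired a.l.u.\ convergence. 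The main obstacle is purely bookkeeping: at each invocation of the inductive hypothesis one must verify that the derived $(M-1)$-variable system has the form \eqref{eq71} — i.e.\ its coefficients $A^j_{(0)}$ are independent of $q_M$ and its inhomogeneities $\tilde\omega^{\,j}_{(n_M)}$ are holomorphic on the reduced polydisc — and that the formal series $\phi^{(n_M)}$ extracted from $\psi$ indeed solves it. Both facts follow immediately from matching Taylor coefficients, but must be checked in order to invoke the inductive hypothesis legitimately.
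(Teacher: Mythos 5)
Your proof is correct and is essentially the paper's argument: both induct on $M$ and reduce the statement to Lemma \ref{lb59} combined with the $(M-1)$-variable inductive hypothesis. The only (immaterial) difference is the order of reduction --- the paper first applies the single-variable lemma in $q_1$ to the coefficients of the expansion in $(q_2,\dots,q_M)$, bundling the finitely many lower-order coefficients into one larger triangular system, and then invokes the $(M-1)$-case over the enlarged base $\mc D_{r_1}\times V$; you instead invoke the $(M-1)$-case on the coefficients of the $q_M$-expansion first (absorbing lower-order terms into the inhomogeneity via a secondary induction on $n_M$) and apply Lemma \ref{lb59} last over the base $\mc D_{r_1}\times\cdots\times\mc D_{r_{M-1}}\times V$.
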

\begin{proof}
We prove this by induction on $M$. The case $M=1$ has been proved. Suppose the case is proved for $M-1$. In the case $M$, write $n_{\blt\setminus 1}=(n_2,\dots,n_M)$. Set
\begin{gather}
\psi(q_1,\dots,q_M)=\sum_{n_{\blt\setminus 1}\in\Nbb^{M-1}}\psi_{n_{\blt\setminus 1}}\psi(q_1)q_2^{n_2}\cdots q_M^{n_M}\label{eq107}
\end{gather}	
and define $A^1_{n_{\blt\setminus 1}},\omega^1_{n_{\blt\setminus 1}}$ in a similar way. Then
\begin{align*}
q_1\partial_{q_1}\psi_{n_{\blt\setminus 1}}(q_1)=\sum_{k_{\blt\setminus 1}\leq n_{\blt\setminus 1}}A_{n_2-k_2,\dots,n_M-k_M}\psi_{k_2,\dots,k_M}(q_1)+\omega^1_{n_{\blt\setminus 1}}(q_1).	
\end{align*}
Therefore, for each $n_{\blt\setminus 1}$, $\bigoplus_{k_{\blt\setminus 1}\leq n_{\blt\setminus 1}}\psi_{k_{\blt\setminus 1}}\in\Cbb^{N(n_2+1)\cdots (n_M+1)}$ satisfies a differential equation of $\partial_{q_1}$ similar to that in Lemma \ref{lb59}. So $\psi_{n_{\blt\setminus 1}}\in\Cbb^N\otimes\scr O(\mc D_{r_1}\times V)$.

Now consider \eqref{eq107} as a formal series of $q_2,\dots,q_M$ whose coefficients are holomorphic over $\mc D_{r_1}\times V$ and which satisfies a system of $M-1$ differential equations as in \eqref{eq71}. Thus, by the case $M-1$, \eqref{eq107} is the series expansion of an element of $\Cbb^N\otimes\scr O(\mc D_{r_\blt}\times V)$. This finishes the proof.
\end{proof}

\begin{proof}[Proof of Thm. \ref{lb39}]
Write
\begin{align*}
\psi=\sum_{n_\blt,l_\blt}\psi_{n_\blt,l_\blt}q_\blt^{n_\blt}(\log q_\blt)^{l_\blt}=\sum_{l_\blt}\mu_{l_\blt}(\log q_\blt)^{l_\blt}	
\end{align*}
where $\mu_{l_\blt}=\sum_{n_\blt}\psi_{n_\blt,l_\blt}q_\blt^{n_\blt}$. Recall that we assume \eqref{eq71} holds. By looking at the  $(\log q_\blt)^{l_\blt}$ part of this equation, we see
\begin{align*}
	q_j\partial_{q_j}\mu_{l_\blt}+(l_j+1)\mu_{l_1,\dots,l_j+1,\dots,l_M}=A\mu_{l_\blt}	
\end{align*}
when one of $l_1,\dots,l_M$ is non-zero; if $l_1=\cdots=l_M=0$, the right hand side is $A\mu_{l_\blt}+\omega^j$. By induction on $l_1+\cdots+l_M$ (from large to small), and by Lemma \ref{lb60}, we see that each $\mu_{l_\blt}$ belongs to  $\Cbb^N\otimes\scr O(\mc D_{r_\blt}\times V)$. So $\psi$ satisfies \eqref{eq103}.
\end{proof}

\section{Existence of projective structures}\label{lb27}

Let $\fk X=(\pi:\mc C\rightarrow\mc B)$ be a family of compact Riemann surfaces. In \cite[Lemma 5]{Hub81} Hubbard showed that $\fk X$ has a projective structure when $\mc B$ is Stein and the connected components of the fibers have genera $>1$. In this section, we cover the low genus cases.

Let $\fk U=\{U_\alpha\}_{\alpha\in\fk A}$ be a Stein cover \footnote{An open cover $\fk U$ of a complex manifold $X$ is called Stein if each open set $U\in\fk U$ is a Stein manifold. Then any finite intersection of open sets of $\fk U$ is also Stein (see \cite[Sec. 1.4.4]{GR84}).} of $\mc C$ such that each member $U_\alpha$ admits $\eta_\alpha\in\scr O(U_\alpha)$ univalent on each fiber of $U_\alpha$. Define a \v Cech $1$-cochain $\sigma=(\sigma_{\alpha,\beta})_{\alpha,\beta\in\fk A}\in C^1(\fk U,\omega_{\mc C/\mc B}^{\otimes 2})$ such that 
\begin{align*}
\sigma_{\alpha,\beta}=\Sbf_{\eta_\alpha}\eta_\beta\cdot d\eta_\alpha^2\qquad \in\omega_{\mc C/\mc B}^{\otimes 2}(U_\alpha\cap U_\beta).
\end{align*}
By \eqref{eq49}, $\sigma$ is a cocycle and hence can be viewed as an element of $H^1(\mc C,\omega_{\mc C/\mc B}^{\otimes 2})$. The proof of the following Lemma is in \cite[Lemma 5]{Hub81}. We recall the proof for the reader's convenience.

\begin{lm}\label{lb26}
	$\fk X$ admits a projective structure if and only if $\sigma$ is the zero element of $H^1(\mc C,\omega_{\mc C/\mc B}^{\otimes 2})$.
\end{lm}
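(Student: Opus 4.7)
The plan is to verify the two directions of the biconditional by a direct \v Cech calculation, with the key identity
\[
\Sbf_{\eta_\alpha}f\cdot d\eta_\alpha^2 \;=\; \Sbf_{\eta_\beta}f\cdot d\eta_\beta^2 \;+\; \sigma_{\alpha,\beta}
\]
for any fiberwise univalent $f$ on $U_\alpha\cap U_\beta$. I would derive this from the Schwarzian cocycle relations in \eqref{eq46} and \eqref{eq49} applied with $\mu=\eta_\alpha$, $\eta=\eta_\beta$, and the chosen $f$: the second equation of \eqref{eq49} gives $\Sbf_{\eta_\alpha}f\cdot d\eta_\alpha^2+\Sbf_f\eta_\beta\cdot df^2+\Sbf_{\eta_\beta}\eta_\alpha\cdot d\eta_\beta^2=0$, while \eqref{eq46} (or equivalently the first equation of \eqref{eq49}) converts $\Sbf_f\eta_\beta\cdot df^2$ into $-\Sbf_{\eta_\beta}f\cdot d\eta_\beta^2$ and $\Sbf_{\eta_\beta}\eta_\alpha\cdot d\eta_\beta^2$ into $-\sigma_{\alpha,\beta}$. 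This is the only computation required; everything else is a bookkeeping argument.

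For the ``only if'' direction, suppose a projective structure $\fk P$ exists. After passing to a refinement of $\fk U$ (which is harmless since $\sigma$ is viewed as an element of $H^1(\mc C,\omega_{\mc C/\mc B}^{\otimes 2})$ and this does not change under refinement), for each $\alpha$ I can pick $\mu_\alpha\in\scr O(U_\alpha)$ univalent on each fiber with $(U_\alpha,\mu_\alpha)\in\fk P$. Set the $0$-cochain $t_\alpha=\Sbf_{\eta_\alpha}\mu_\alpha\cdot d\eta_\alpha^2\in\omega_{\mc C/\mc B}^{\otimes 2}(U_\alpha)$. On $U_\alpha\cap U_\beta$, since $(U_\alpha,\mu_\alpha),(U_\beta,\mu_\beta)\in\fk P$ we have $\Sbf_{\mu_\alpha}\mu_\beta=0$, hence by Proposition \ref{lb24}(3), $\Sbf_{\eta_\alpha}\mu_\beta=\Sbf_{\eta_\alpha}\mu_\alpha$. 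Applying the key identity with $f=\mu_\beta$ then yields $t_\alpha=t_\beta+\sigma_{\alpha,\beta}$, i.e.\ $\sigma=-\delta t$, so $\sigma=0$ in $H^1$.

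For the ``if'' direction, suppose $\sigma=\delta t$ for some $0$-cochain $t=(t_\alpha)$, with $t_\alpha=Q_\alpha\cdot d\eta_\alpha^2$ and $Q_\alpha\in\scr O(U_\alpha)$. By Proposition \ref{lb24}(2), after refining the cover I can solve the Schwarzian equation $\Sbf_{\eta_\alpha}\mu_\alpha=Q_\alpha$ on each (shrunken) $U_\alpha$ for some $\mu_\alpha$ fiberwise univalent, so that $t_\alpha=\Sbf_{\eta_\alpha}\mu_\alpha\cdot d\eta_\alpha^2$. Applying the key identity with $f=\mu_\beta$ gives $\Sbf_{\eta_\alpha}\mu_\beta\cdot d\eta_\alpha^2=t_\beta+\sigma_{\alpha,\beta}=t_\alpha$ on $U_\alpha\cap U_\beta$, hence $\Sbf_{\eta_\alpha}\mu_\beta=\Sbf_{\eta_\alpha}\mu_\alpha$. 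Proposition \ref{lb24}(3) then yields $\Sbf_{\mu_\alpha}\mu_\beta=0$, so $(U_\alpha,\mu_\alpha)_\alpha$ is a projective chart, defining a projective structure on $\fk X$.

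The only mild subtlety is the cover-refinement step in both directions: in the forward direction one refines so that the chosen members of $\fk P$ cover $\mc C$, and in the backward direction one refines so that Proposition \ref{lb24}(2) produces $\mu_\alpha$ on all of each new open set. Neither causes any issue since we work with $\sigma$ as a cohomology class and projective structures are defined up to equivalence. Thus the main (and essentially only) content is the cocycle manipulation above; the rest is formal.
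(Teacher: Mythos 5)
Your proof is correct and follows essentially the same route as the paper: both directions are handled by the same \v Cech computation, using the Schwarzian cocycle relations \eqref{eq46}--\eqref{eq49} (your ``key identity'' is exactly the relation the paper derives inline) together with Proposition \ref{lb24}(2) to produce the $\mu_\alpha$ after refining the cover and Proposition \ref{lb24}(3) to pass between $\Sbf_{\eta_\alpha}\mu_\alpha=\Sbf_{\eta_\alpha}\mu_\beta$ and $\Sbf_{\mu_\alpha}\mu_\beta=0$. The only blemish is a harmless sign-convention mismatch for $\delta$ between your two directions ($\sigma=-\delta t$ versus $\sigma=\delta t$), which does not affect the conclusion since being a coboundary is sign-independent.
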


Note that $H^1(\mc C,\omega_{\mc C/\mc B}^{\otimes 2})$ equals $H^1(\fk U,\omega_{\mc C/\mc B}^{\otimes 2})$ by Leray's theorem.

\begin{proof}
	``If": We have a $0$-cochain $s=(s_\alpha)_{\alpha\in\fk A}\in C^0(\fk U,\omega_{\mc C/\mc B}^{\otimes 2})$ such that $\delta s=\sigma$. By Proposition \ref{lb24}-(2) and by passing to a finer Stein cover (still denoted by $\fk U$ for simplicity), we can find $f_\alpha\in\scr O(U_\alpha)$ univalent on each fiber, such that
	\begin{align}
	\Sbf_{\eta_\alpha}f_\alpha\cdot d\eta_\alpha^2=s_\alpha.\label{eq50}
	\end{align}
	Thus, on $U_\alpha\cap U_\beta$ we have
	\begin{align*}
	\Sbf_{\eta_\alpha}\eta_\beta\cdot d\eta_\beta^2=\sigma_{\alpha,\beta}=s_\alpha-s_\beta=\Sbf_{\eta_\alpha}f_\alpha\cdot d\eta_\alpha^2-\Sbf_{\eta_\beta}f_\beta\cdot d\eta_\beta^2.
	\end{align*}
	By \eqref{eq49}, we have
	\begin{align*}
	\Sbf_{\eta_\alpha}\eta_\beta\cdot d\eta_\beta^2=\Sbf_{\eta_\alpha}f_\beta\cdot d\eta_\alpha^2-\Sbf_{\eta_\beta}f_\beta\cdot d\eta_\beta^2.
	\end{align*}
	These two imply $\Sbf_{\eta_\alpha}f_\alpha=\Sbf_{\eta_\alpha}f_\beta$. Thus, by Proposition \ref{lb24}-(3), $\Sbf_{f_\alpha}f_\beta=0$. So $(U_\alpha,f_\alpha)_{\alpha\in\fk A}$ is a projective chart.
	
	``Only if": Assume $(U_\alpha,f_\alpha)_{\alpha\in\fk A}$ is a projective chart. Define $s=(s_\alpha)_{\alpha\in\fk A}$ using \eqref{eq50}. Notice $\Sbf_{\eta_\alpha}f_\alpha=\Sbf_{\eta_\alpha}f_\beta$. One can reverse the argument in the first paragraph to show $\delta s=\sigma$.
\end{proof}

\begin{thm}\label{lb40}
	Assume $\mc B$ is a Stein manifold. Then $\fk X$ admits a projective structure.
\end{thm}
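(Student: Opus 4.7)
The plan is to reduce, via Lemma \ref{lb26}, to showing that the cohomology class $[\sigma] \in H^1(\mc C, \omega_{\mc C/\mc B}^{\otimes 2})$ vanishes, and then to check this vanishing fiber-by-fiber using the existence of projective structures on single compact Riemann surfaces (which comes from uniformization).

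First, I would use the Leray spectral sequence to compute $H^1(\mc C, \omega_{\mc C/\mc B}^{\otimes 2})$. Since $\pi$ is proper and $\omega_{\mc C/\mc B}^{\otimes 2}$ is locally free, Grauert's direct image theorem guarantees that $\pi_*\omega_{\mc C/\mc B}^{\otimes 2}$ and $R^1\pi_*\omega_{\mc C/\mc B}^{\otimes 2}$ are coherent $\scr O_{\mc B}$-modules, while $R^p\pi_*\omega_{\mc C/\mc B}^{\otimes 2} = 0$ for $p \geq 2$ because the fibers are one-dimensional. Since $\mc B$ is Stein, Cartan's theorem B kills $H^p(\mc B, R^q\pi_*\omega_{\mc C/\mc B}^{\otimes 2})$ for $p \geq 1$, so the Leray spectral sequence collapses to an isomorphism
\[
H^1(\mc C, \omega_{\mc C/\mc B}^{\otimes 2}) \;\simeq\; H^0\bigl(\mc B, R^1\pi_*\omega_{\mc C/\mc B}^{\otimes 2}\bigr).
\]

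Next, I would invoke Ehresmann's theorem to note that on each connected component of $\mc B$, the fibers are diffeomorphic and hence share a common genus decomposition; in particular $b \mapsto \dim H^1(\mc C_b, \omega_{\mc C_b}^{\otimes 2})$ is locally constant. Theorem \ref{lb11}(b) then implies that $R^1\pi_*\omega_{\mc C/\mc B}^{\otimes 2}$ is locally free, and its fiber at each $b \in \mc B$ is naturally identified, via restriction of sections, with $H^1(\mc C_b, \omega_{\mc C_b}^{\otimes 2})$. Under these identifications, the global section of $R^1\pi_*\omega_{\mc C/\mc B}^{\otimes 2}$ corresponding to $[\sigma]$ sends $b$ to the class of $\sigma|_{\mc C_b}$. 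Since $\eta_\alpha|_{\mc C_b}$ is univalent on each component of $U_\alpha \cap \mc C_b$ and Schwarzian derivatives behave well under fiberwise restriction, $\sigma|_{\mc C_b}$ is exactly the cocycle of Lemma \ref{lb26} attached to the single curve $\mc C_b$.

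Finally, since every connected compact Riemann surface admits a projective structure (for $g = 0$ from $\Pbb^1$ itself, for $g = 1$ from the universal cover $\Cbb$, for $g \geq 2$ from the upper half-plane and $\mathrm{PSL}(2,\Rbb)$), and since Lemma \ref{lb26} applied to the trivial family over a point says that this existence is equivalent to the vanishing of the associated cocycle class, we get $[\sigma|_{\mc C_b}] = 0$ for every $b$. A fiberwise-zero section of a locally free sheaf is zero, so $[\sigma] = 0$ and Lemma \ref{lb26} finishes the proof. The only mildly subtle step is the compatibility of the restriction $H^1(\mc C, \omega_{\mc C/\mc B}^{\otimes 2}) \to H^1(\mc C_b, \omega_{\mc C_b}^{\otimes 2})$ with the Čech cocycle $\sigma$, but this is essentially immediate from the definition since the cover $\{U_\alpha\}$ of $\mc C$ restricts to a cover of $\mc C_b$ and $\omega_{\mc C/\mc B}|_{\mc C_b} \simeq \omega_{\mc C_b}$.
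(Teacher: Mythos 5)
Your proposal is correct and follows essentially the same route as the paper: reduce via Lemma \ref{lb26} to the vanishing of $[\sigma]$, kill it fiberwise using the classical existence of projective structures on a single compact Riemann surface, identify $[\sigma]$ with a section of the locally free sheaf $R^1\pi_*\omega_{\mc C/\mc B}^{\otimes 2}$ via Grauert, and use Steinness of $\mc B$ to upgrade fiberwise vanishing to vanishing in $H^1(\mc C,\omega_{\mc C/\mc B}^{\otimes 2})$. The only cosmetic difference is that you invoke the degenerate Leray spectral sequence for this last step, whereas the paper carries out the corresponding two-level \v Cech patching (with Cartan's Theorem B on $\mc B$) explicitly and treats the genera $0$, $1$, $>1$ separately.
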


\begin{proof}
Assume without loss of generality that the fibers are connected and have genus $g$. (Recall they are diffeomorphic by Ehresmann's theorem.) 
	
Assume $g=1$. Each fiber $\mc C_b$ is a complex torus and hence obviously has a projective structure (e.g. the one from the standard local coordinates of $\mbb C$). Thus, by Lemma \ref{lb26}, for each $b\in\mc B$ the restriction $\sigma|\mc C_b$ is the zero element of $H^1(\mc C_b,\omega_{\mc C_b}^{\otimes 2})$. Since $b\mapsto \dim H^1(\mc C_b,\omega_{\mc C_b}^{\otimes 2})$ is constantly $1$ (because $\omega_{\mc C_b}\simeq \scr O_{\mc C_b}$), by Grauert's Theorem \ref{lb11}, $R^1\pi_*\omega_{\mc C/\mc B}^{\otimes 2}$ is locally free and each fiber is equivalent to $H^1(\mc C_b,\omega_{\mc C_b}^{\otimes 2})$. So $\sigma$ is the zero section of $R^1\pi_*\omega_{\mc C/\mc B}^{\otimes 2}$ over $\mc B$. Thus, we may find a Stein cover $\fk V=(V^i)_{i\in\mc I}$ of $\mc B$ such that  the restriction of $\sigma$ to $\mc C_{V^i}=\pi^{-1}(V^i)$ is the zero element of $H^1(\mc C_{V^i},\omega_{\mc C/\mc B}^{\otimes 2})$.
	
Let $W_\alpha^i=U_\alpha\cap \mc C_{V^i}$ which is Stein (cf. \cite[Sec. 1.4.4]{GR84}). Then for each $i\in\mc I$, $\fk W^i=(W_\alpha^i)_{\alpha\in\fk A}$ is a Stein cover of $\mc C_{V^i}$.  So $\sigma|\mc C_{V^i}$ is the zero element of  $H^1(\fk W^i,\omega_{\mc C/\mc B}^{\otimes 2})$. Choose $t^i=(t^i_\alpha)_{\alpha\in\fk A}$ (where each $t^i_\alpha\in\omega_{\mc C/\mc B}^{\otimes 2}(W_\alpha^i)$) such that $\delta t^i=\sigma|\mc C_{V^i}$. On $U_\alpha\cap U_\beta\cap \mc C_{V^i}\cap \mc C_{V^j}$ we have $t^i_\alpha-t^i_\beta=\sigma_{\alpha,\beta}=t^j_\alpha-t^j_\beta$ and hence $t^i_\alpha-t^j_\alpha=t^i_\beta-t^j_\beta$. Therefore, we have a well-defined element $u=(u^{i,j})_{i,j\in\mc I}$ of $H^1(\fk V,\pi_*\omega_{\mc C/\mc B}^{\otimes 2})$ such that $u^{i,j}$, which is an element of $(\pi_*\omega_{\mc C/\mc B}^{\otimes 2})(V^i\cap V^j)=\omega_{\mc C/\mc B}^{\otimes 2}(\mc C_{V^i}\cap \mc C_{V^j})$, equals $t^i_\alpha-t^j_\alpha$ when restricted to $\mc C_{V^i}\cap \mc C_{V^j}\cap U_\alpha$ for  any $\alpha$. Since $\mc B$ is Stein, by Cartan's Theorem B,  $H^1(\fk V,\pi_*\omega_{\mc C/\mc B}^{\otimes 2})$ is trivial. So there exists $v=(v^i)_{i\in\mc I}$ (where each $v^i\in \omega_{\mc C/\mc B}^{\otimes 2}(\mc C_{V^i})$) such that $v^i-v^j=u^{i,j}$ on $\mc C_{V^i}\cap \mc C_{V^j}$. So $v^i-v^j=t^i_\alpha-t^j_\alpha$ on $\mc C_{V^i}\cap \mc C_{V^j}\cap U_\alpha$. So there is a well-defined $s_\alpha\in\omega_{\mc C/\mc B}^{\otimes 2}(U_\alpha)$ which equals $t^i_\alpha-v^i$ on  $U_\alpha\cap \mc C_{V^i}$ for each $i$. Let $s=(s_\alpha)_{\alpha\in\fk A}$. One checks easily $\delta s=\sigma$. Thus, by Lemma \ref{lb26}, $\fk X$ has a projective structure.
	
Assume $g>1$. Then $H^1(\mc C_b,\omega_{\mc C_b}^{\otimes 2})$ is trivial (since $\deg\omega_{\mc C_b}=2g-2>0$). A similar argument  shows that $\sigma$ is a coboundary. Assume $g=0$. Then $b\mapsto \dim H^1(\mc C_b,\omega_{\mc C_b}^{\otimes 2})$ is still constant since any fiber $\mc C_b$ is biholomorphic to $\Pbb^1$. Moreover, $\mc C_b$ clearly has a projective structure.  The above argument for $g=1$ applies to this case. 
\end{proof}

	\printindex
	
	\newpage

\noindent {\small \sc Yau Mathematical Sciences Center, Tsinghua University, Beijing, China.}

\noindent {\textit{E-mail}}: binguimath@gmail.com\qquad bingui@tsinghua.edu.cn
\end{document}